\newtheorem{theorem}{Theorem}[section]
\newtheorem{lemma}[theorem]{Lemma}
\newtheorem{corollary}[theorem]{Corollary}
\newtheorem{proposition}[theorem]{Proposition}
\newtheorem{conj}[theorem]{Conjecture}
\theoremstyle{remark}
\newtheorem{remark}[theorem]{Remark}
\theoremstyle{definition}
\newtheorem{definition}[theorem]{Definition}
\newcommand{\R}{\mathbb{R}}
\newcommand{\Z}{\mathbb{Z}}
\newcommand{\C}{\mathbb{C}}
\newcommand{\J}{\mathcal{J}}
\newcommand{\N}{\mathbb{N}}
\newcommand{\D}{\mathbb{D}}
\newcommand{\loc}{\mathrm{loc}}
\renewcommand{\bar}[1]{\overline{#1}}
\DeclareMathOperator{\dist}{dist}
\DeclareMathOperator{\inter}{int}
\DeclareMathOperator{\diam}{diam}
\DeclareMathOperator{\Teich}{Teich}
\renewcommand{\Re}{\operatorname{Re}}
\DeclareFontFamily{U}{tipa}{}
\DeclareFontShape{U}{tipa}{m}{n}{<->tipa10}{}
\newcommand{\arc@char}{{\usefont{U}{tipa}{m}{n}\symbol{62}}}%
\newcommand{\arc}[1]{\mathpalette\arc@arc{#1}}
\newcommand{\arc@arc}[2]{%
  \sbox0{$\m@th#1#2$}%
  \vbox{
    \hbox{\resizebox{\wd0}{\height}{\arc@char}}
    \nointerlineskip
    \box0
  }%
}
\numberwithin{equation}{section}
\numberwithin{figure}{section}
\title{Uniformization of gasket Julia sets}
\author{Yusheng Luo}
\address[Yusheng Luo]{Department of Mathematics, Cornell University, 212 Garden Ave, Ithaca, NY 14853, USA}
\email{yusheng.s.luo@gmail.com}
\thanks{The first-named author was partially supported by NSF Grant DMS-2349929.}
\author{Dimitrios Ntalampekos}
\address[Dimitrios Ntalampekos]{Department of Mathematics, Aristotle University of Thessaloniki, Thessaloniki, 54152, Greece.}
\email{dntalam@math.auth.gr}
\thanks{The second-named author was partially supported by NSF Grant DMS-2246485}
\keywords{Gasket, fat gasket, round gasket, limit set, Julia set, circle packing, uniformization, quasiconformal map, David map, subdivision rule}
\subjclass[2020]{Primary 37F10, 37F31; Secondary 30C62, 37F30.}
\begin{document}
\begin{abstract}
    The object of the paper is to characterize gasket Julia sets of rational maps that can be uniformized by round gaskets. We restrict to rational maps without critical points on the Julia set. Under these conditions, we prove that a Julia set can be quasiconformally uniformized by a round gasket if and only if it is a fat gasket, i.e., boundaries of Fatou components intersect tangentially. We also prove that a Julia set can be uniformized by a round gasket with a David homeomorphism if and only if every Fatou component is a quasidisk; equivalently, there are no parabolic cycles of multiplicity $2$. Our theorem applies to show that gasket Julia sets and limit sets of Kleinian groups can be locally quasiconformally homeomorphic, although globally this is conjectured to be false.
\end{abstract}
\maketitle

\setcounter{tocdepth}{1}
\tableofcontents

\section{Introduction}

\subsection{Uniformization of gasket Julia sets}

We present a uniformization result for gaskets arising in complex dynamics as Julia sets of rational maps. Namely, our main theorems describe necessary and sufficient conditions so that a gasket Julia set can be uniformized by a round gasket with a quasiconformal or David homeomorphism of the sphere. 

Such results existed so far only for Sierpi\'nski carpet Julia sets. A \textit{Sierpi\'nski carpet} is a locally connected continuum in $\widehat\C$ that has empty interior and whose complement is the union of countably many Jordan regions with disjoint closures. Bonk \cite{Bonk:uniformization} established a general criterion for the quasiconformal uniformization of carpets by round carpets, and then Bonk--Lyubich--Merenkov \cite{BLM16} proved that carpet Julia sets of subhyperbolic rational maps satisfy that criterion. In contrast, such a criterion does not exist for general gaskets; our uniformization results below are closely tied with the geometry of Julia sets. Let us first introduce the required definitions.

\begin{definition}\label{definition:gasket}
A set $K\subset \widehat\C $ is a \textit{gasket} if it is a locally connected continuum with empty interior that has the following properties. 
\begin{enumerate}
    \item Each complementary component of $K$ is a Jordan region.
    \item The boundaries of any two complementary components of $K$ share at most one point, called a \textit{contact point}.
    \item No point of $\widehat\C$ belongs to the boundaries of three complementary components of $K$.
    \item The \textit{contact graph} corresponding to $K$, obtained by assigning a vertex to each complementary component and an edge if two components share a boundary point, is connected.
\end{enumerate}
\end{definition}

A gasket $K$ is said to be \textit{round} if each complementary component is a geometric disk of $\widehat\C$. In other words, a round gasket corresponds to a circle packing of $\widehat\C$. A relevant notion is that of a \textit{Schottky set} \cite{BonkKleinerMerenkov:schottky}, i.e., the complement of the union of at least three disjoint open balls in $\widehat{\C}$. Note that a set that is homeomorphic to a Schottky set necessarily satisfies conditions (1), (2), and (3) in the above definition. Moreover, condition (4) differentiates carpets from gaskets. Gaskets and round gaskets appear naturally in dynamics and low-dimensional topology.

We say that two disjoint Jordan regions $U,V\subset \widehat\C$ are \textit{tangent to each other at a point} $z_0\in \partial U\cap \partial V\cap \C$ if there exists $\theta_0\in [0,2\pi)$ such that for each $\varepsilon>0$ there exists $\delta>0$ with the property that $\{z_0+ re^{i(\theta_0+\theta)}:  r\in (0,\delta), \,\, |\theta|<\pi/2-\varepsilon\}\subset U$ and $\{z_0+ re^{i(\theta_0+\theta)}:  r\in (-\delta,0), \,\, |\theta|<\pi/2-\varepsilon\}\subset V$. Tangency at the point $\infty$ is also defined in the obvious manner, using a local chart. We say that a gasket $K\subset \widehat\C$ is  \textit{fat} if any two complementary components of $K$ that share a boundary point are tangent to each other at that point.

It is elementary to show that if a gasket Julia set is fat and contains no critical points then each Fatou component is a quasidisk; see Lemma~\ref{lem:tpp}.  Our first main result gives the much stronger conclusion that fatness is necessary and sufficient for a gasket Julia set to admit quasiconformal uniformization by a round gasket.

\begin{theorem}[QC uniformization]\label{thm:QU}
    Let $R$ be a rational map without Julia critical points whose Julia set $\J(R)$ is a gasket. Then the following are equivalent.
    \begin{enumerate}[label=\normalfont(\arabic*)]
        \item (Quasiconformal uniformization) There exists a quasiconformal homeomorphism $\phi \colon \widehat\C\to \widehat\C$ that maps $\J(R)$ onto a round gasket.
        \item (Geometric criterion) $\J(R)$ is a fat gasket.
        \item (Dynamical criterion) Every contact point is eventually mapped to a parabolic periodic point with multiplicity $3$.
    \end{enumerate}
    Moreover, if $\psi$ is any orientation-preserving homeomorphism of $\widehat \C$ that maps $\mathcal J(R)$ onto a round gasket, then $\psi|_{\mathcal J(R)}$ agrees with the map $\phi|_{\J(R)}$ in {\normalfont(1)} up to post-composition with a M\"obius transformation of $\widehat\C$.
\end{theorem}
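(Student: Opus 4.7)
The strategy is to prove $(3)\Rightarrow(2)\Rightarrow(3)$ by dynamical analysis, then $(2)\Rightarrow(1)$ by a construction based on circle packing and the dynamics of $R$, and finally $(1)\Rightarrow(2)$ by a quasiconformal modulus argument, deriving the rigidity along the way. For $(3)\Rightarrow(2)$: at a parabolic periodic point of multiplicity $3$ the appropriate iterate of $R$ has local normal form $\zeta + a\zeta^3 + O(\zeta^4)$, and the Leau--Fatou flower theorem gives two attracting petals asymptotic to half-planes on either side of a common tangent line, so the two attached Fatou components meet tangentially. Since $R$ has no critical points on $\J(R)$, this tangency pulls back under the locally univalent iterates to every preimage of the parabolic point, yielding fatness. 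For $(2)\Rightarrow(3)$: at a contact point $z_0\in\partial U\cap\partial V$, the ordered pair $(U,V)$ is mapped by $R$ to another pair of Fatou components; by Sullivan's non-wandering theorem and the gasket condition that two distinct components share at most one boundary point, every contact point is eventually periodic. At a periodic contact point $z$ of return period $k$, $R^k$ fixes $z$ and preserves the tangent pair. An attracting fixed point is impossible as $z\in \J(R)$; a repelling fixed point is ruled out because Koenigs linearization would force $U, V$ to be sector-shaped at $z$ and $\J(R)$ to be locally a straight line, incompatible with the gasket structure. Hence $z$ is parabolic, and the presence of exactly two tangent attached Fatou components pins down the multiplicity as $3$.

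The constructive direction $(2)\Rightarrow(1)$ is the main step. By fatness together with Lemma~\ref{lem:tpp}, every Fatou component is a quasidisk. Using the contact graph of $\J(R)$, an infinite circle packing theorem (in the spirit of Koebe--Andreev--Thurston, suitably adapted to gasket combinatorics) produces a round gasket $K\subset\widehat\C$ with isomorphic contact graph. The map $\phi$ is then built component-by-component: on each Fatou component one specifies a quasiconformal homeomorphism onto the corresponding round disk, with boundary values matching the contact-point identifications dictated by the graph isomorphism. The principal obstacle is to control the dilatation uniformly as infinitely many Fatou components accumulate onto $\J(R)$. I would address this by leveraging the dynamics: define $\phi$ on a finite set of Fatou components (one representative per periodic cycle), then propagate via iterates of $R$; since $R$ is holomorphic and critical-point-free on $\J(R)$, the pullbacks preserve the dilatation. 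Fatness is essential at contact points, where matching tangency on both sides allows the quasiconformal gluing to close up without distortion blow-up.

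For $(1)\Rightarrow(2)$, note first that if $\phi$ is quasiconformal with $\phi(\J(R))$ round, then every Fatou component $U=\phi^{-1}(D)$ is a quasidisk. At a contact point $z_0\in\partial U\cap\partial V$, tangent versus transverse meeting of the quasidisks is detected by the modulus of the curve family in $\{r<|z-z_0|<2r\}\setminus(U\cup V)$ connecting $\partial U$ to $\partial V$: this modulus stays bounded below in the transverse case (bounded opening angle from the quasidisk property) and tends to $0$ in the tangent case (pinching slivers). Quasiconformality preserves this modulus up to a bounded factor, and since $\phi(U),\phi(V)$ are tangent round disks the image modulus tends to $0$; hence so does the preimage modulus, forcing tangency of $U$ and $V$ at $z_0$ and giving fatness. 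Finally, for the rigidity statement, any orientation-preserving homeomorphism $\psi$ mapping $\J(R)$ onto a round gasket $K'$ yields, via composition with $\phi^{-1}$, a homeomorphism $K\to K'$ of round gaskets respecting the dynamically rich contact graph; rigidity of such circle packings (along the lines of He--Schramm rigidity for gaskets with recurrent combinatorics) forces this map to be M\"obius on $K$, giving $\psi|_{\J(R)} = M\circ \phi|_{\J(R)}$.
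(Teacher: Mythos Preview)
Your overall architecture matches the paper's, and your arguments for $(2)\Leftrightarrow(3)$ and for uniqueness are essentially the same as theirs (Lemma~\ref{lem:tpp}, Lemma~\ref{lemma:tangent_repelling}, and the combinatorial rigidity part of Theorem~\ref{thm:qch}). Your $(1)\Rightarrow(2)$ via a modulus/pinching argument is a legitimate alternative to the paper's route through $(1)\Rightarrow(3)$, though it would need sharpening: ``transverse'' is not the only way two quasidisks can fail to be tangent, and you should phrase the dichotomy purely in terms of the modulus of the complementary slivers.

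The real gaps are in $(2)\Rightarrow(1)$. First, propagating $\phi$ ``via iterates of $R$'' requires a dynamical system on the \emph{target} round gasket that is conjugate to $R|_{\mathcal J(R)}$; otherwise you have nothing to pull back against. The paper devotes substantial effort to this: it builds a finite subdivision rule for the contact graph (Proposition~\ref{prop:subdivision_g}), invokes the circle-packing realization theorem of \cite{LuoZhang:CirclePacking} (not classical Koebe--Andreev--Thurston), and then constructs a Markov map $\Psi$ on the packing (Section~\ref{subsec:MM}) whose dilatation decays exponentially in depth (Proposition~\ref{prop:qce}). Second, your sentence ``matching tangency on both sides allows the quasiconformal gluing to close up'' hides the entire content of the extension step: one must show that the conjugacy on each periodic Fatou boundary extends quasiconformally to the disk, and this is exactly where fatness enters, via the fact that the Markov partition points are \emph{symmetrically parabolic} on both the Blaschke side and the packing side (Theorem~\ref{thm:cmc}, Proposition~\ref{prop:para}), so that Theorem~\ref{theorem:extension_generalization} applies in its quasiconformal (rather than David) regime.

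Third, and most seriously, even once you have $\phi$ quasiconformal on $\widehat\C\setminus\mathcal J(R)$ with uniformly bounded dilatation, you are \emph{not} done: gaskets are not removable for quasiconformal maps (see \cite{Ntalampekos:gasket}), so quasiconformality on the complement does not extend across $\mathcal J(R)$ for free. The paper closes this gap with a separate argument (Theorem~\ref{theorem:qc_david_gasket}) based on the eccentric-distortion characterization of quasiconformality from \cite{Ntalampekos:metric}, using the subdivision structure to produce, at each buried point of $\mathcal J(R)$, a shrinking sequence of sets of uniformly bounded eccentricity whose images also have bounded eccentricity. Your proposal does not address this step at all.
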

We remark that there is an abundance of examples of rational maps whose Julia set is a fat gasket (see \cite{LuoZhang:Nonequiv} for a combinatorial classification). In particular, there are infinitely many non-homeomorphic quadratic fat gasket Julia sets. 

A more general class of maps that appear naturally in conformal dynamics is the class of David maps (see Section \ref{subsec:DavidMap}). These maps connect hyperbolic and parabolic dynamics, a relationship first noted in Ha\"issinsky’s work (see \cites{Hai98, Hai00}). Recently, there has been a growing number of applications of David maps in gluing and mating problems in dynamics (see \cites{PetersenZakeri,Zak08, LyubichMerenkovMukherjeeNtalampekos:David, LMM24, LLM24}). Our second result gives a characterization for the existence of a David uniformizing map.

\begin{theorem}[David uniformization]\label{thm:DU}
    Let $R$ be a rational map without Julia critical points whose Julia set $\J(R)$ is a gasket. Then the following are equivalent.
    \begin{enumerate}[label=\normalfont(\arabic*)]
        \item (David uniformization) There exists a David homeomorphism $\phi \colon \widehat\C\to \widehat\C$ that maps $\J(R)$ onto a round gasket.
        \item (Geometric criterion) Each Fatou component is a quasidisk.
        \item (Dynamical criterion) $R$ has no parabolic cycles of multiplicity $2$.
    \end{enumerate}
    Moreover, if $\psi$ is any orientation-preserving homeomorphism of $\widehat \C$ that maps $\mathcal J(R)$ onto a round gasket, then $\psi|_{\mathcal J(R)}$ agrees with the map $\phi|_{\J(R)}$ in {\normalfont(1)} up to post-composition with a M\"obius transformation of $\widehat\C$.
\end{theorem}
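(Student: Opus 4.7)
The plan is to handle $(2)\Leftrightarrow(3)$ by local dynamical analysis, $(1)\Rightarrow(2)$ by a modulus distortion argument, $(2)\Rightarrow(1)$ by a Ha\"issinsky-type David surgery extending the construction of Theorem~\ref{thm:QU}, and the rigidity by a reflection extension. Under the standing assumption of no critical points on $\J(R)$, attracting and superattracting Fatou components are quasidisks by B\"ottcher and Koenigs linearization. At a parabolic periodic point $p$ of multiplicity $m$ one has $R^q(z)-z = c(z-p)^{m}+O((z-p)^{m+1})$ for the corresponding iterate $R^q$, producing $m-1$ attracting petals. When $m = 2$, the single petal sits in a Fatou component $U$ with $\partial U$ cusped at $p$; bounded turning of $\partial U$ fails, so $U$ is not a quasidisk. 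When $m \geq 3$, each petal's Fatou component has a genuine corner at $p$, preserving the quasidisk property. Thus $(2)\Leftrightarrow(3)$.

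For $(1)\Rightarrow(2)$, I argue by contradiction: assume $\phi$ is a David uniformization and some $U$ fails to be a quasidisk, so by the above there is a parabolic periodic point $p$ of multiplicity $2$ on $\partial U$ at which $\partial U$ is cusped. Since $\phi(U)$ is a round disk, $\partial\phi(U)$ has bounded turning at $\phi(p)$. Compare moduli of nested annuli $A_n \subset U$ separating a fixed-size arc of $\partial U$ from an arc at scale $r_n$ near $p$: the cusp forces $\mod(A_n) \to \infty$ like $\log(1/r_n)$, while the $\phi$-images in the round disk have moduli bounded independently of $n$. This violates the David modulus distortion inequality, which bounds $|\mod(\phi(A)) - \mod(A)|$ by a constant depending only on the David norm once $\mod(A)$ is sufficiently large; see \cites{Hai98, Hai00}. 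The resulting contradiction forces (2).

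The main step is $(2)\Rightarrow(1)$. Enumerate the Fatou components $U_0, U_1, \ldots$; each is a quasidisk by hypothesis. The contact graph of $\J(R)$ is realized by a circle packing $\{D_i\}$ in $\widehat\C$ via Koebe--Andreev--Thurston and a limiting argument, yielding a round gasket $K$ whose tangencies correspond exactly to the eventual preimages of parabolic cycles of multiplicity $\geq 3$. Choose quasiconformal maps $\Phi_i: U_i\to D_i$ matching the boundary combinatorics, and assemble them into a global homeomorphism $\phi$ of $\widehat\C$ using a David extension near each eventual preimage of a parabolic cycle, of two types: (a) over multiplicity-$\geq 3$ parabolics, interpolate two tangent quasidisk boundaries into two tangent circles, as in \cites{Hai98, Hai00, LMM24, LLM24}; (b) over multiplicity-$2$ parabolics, open the cusp of $\partial U$ into a smooth boundary arc of the corresponding round disk, essentially Ha\"issinsky's original setting. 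The chief obstacle is the global David norm: the dilatations summed over countably many eventual preimages must remain exponentially integrable. This is arranged using the exponential decay of preimage diameters---available because $R$ has no critical points on $\J(R)$---together with the thermodynamic formalism for $R|_{\J(R)}$.

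Finally, for rigidity, if $\phi$ and $\psi$ both send $\J(R)$ onto round gaskets $K_1,K_2$, then $h = \psi \circ \phi^{-1}: K_1\to K_2$ is a homeomorphism of round gaskets. Reflecting across each pair of corresponding boundary circles extends $h$ conformally on each bounded complementary disk of $K_1$, and iterating fills $\widehat\C$ on a dense open set. A rigidity statement for round gaskets, analogous to the Schottky set rigidity of \cite{BonkKleinerMerenkov:schottky} and using the connectedness of the contact graph from Definition~\ref{definition:gasket}(4), then forces the global extension to be a single M\"obius transformation of $\widehat\C$, giving the stated rigidity.
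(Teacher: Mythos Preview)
Your outline for $(2)\Leftrightarrow(3)$ is essentially the paper's Lemma~\ref{lem:pt}, and your rigidity sketch is in the right spirit (the paper instead invokes the combinatorial rigidity of circle packings with subdivision rules, Theorem~\ref{thm:qch}). But there are two genuine gaps.

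\textbf{The implication $(1)\Rightarrow(2)$.} Your argument rests on a ``David modulus distortion inequality'' asserting that $|\operatorname{mod}(\phi(A))-\operatorname{mod}(A)|$ is bounded once $\operatorname{mod}(A)$ is large. No such inequality holds for David maps: the whole point of mappings of exponentially integrable distortion is that $K_\phi$ is unbounded, so $\operatorname{mod}(\phi(A))$ can be arbitrarily small compared to $\operatorname{mod}(A)$. Indeed, Ha\"issinsky's surgery \emph{produces} David maps that straighten parabolic cusps, so a naive modulus comparison cannot distinguish multiplicity-$2$ parabolics from higher multiplicity. The paper does not argue via modulus at all; it invokes \cite{LN24}*{Theorem 1.7 (1)}, a targeted result ruling out David maps from a multiplicity-$2$ immediate basin onto a disk.

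\textbf{The implication $(2)\Rightarrow(1)$.} Your sketch assembles maps $\Phi_i\colon U_i\to D_i$ on the Fatou components and then asserts the result is a David homeomorphism of $\widehat\C$. Two issues. First, ``choose quasiconformal maps matching the boundary combinatorics'' hides the main analytic difficulty: the boundary maps must agree at contact points and have controlled distortion, and for this the paper builds a Markov map $\Psi$ on the circle-packing side (Proposition~\ref{prop:induceMarkovMap}), shows each Markov-partition point on a boundary circle is \emph{symmetrically parabolic} (Theorem~\ref{thm:cmc}, Proposition~\ref{prop:para}), and then applies the piecewise-quasiconformal extension theorem (Theorem~\ref{theorem:extension_generalization}) to obtain David extensions on periodic components. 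Second, and more seriously, even after you have a homeomorphism that is David on $\widehat\C\setminus\J(R)$, you cannot conclude it is David on $\widehat\C$: gaskets are \emph{not} expected to be removable (the Sierpi\'nski gasket is known not to be, \cite{Ntalampekos:gasket}). The paper addresses this explicitly via the eccentric-distortion criterion (Theorem~\ref{theorem:eccentric}) combined with the subdivision-rule geometry (Lemma~\ref{lemma:gasket_sub}) to obtain Theorem~\ref{theorem:qc_david_gasket}. Your thermodynamic-formalism remark controls the Beltrami coefficient on the Fatou set but says nothing about absolute continuity across $\J(R)$.
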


We remark that under the assumptions of Theorem \ref{thm:DU}, any contact point is eventually mapped to either a repelling periodic point or a parabolic periodic point with multiplicity $2$ or $3$; see Lemma \ref{lem:pt}. Therefore, condition (3) implies that every contact point is eventually mapped to either a repelling periodic point or a parabolic periodic point with multiplicity $3$; compare to the dynamical criterion in Theorem \ref{thm:QU}.

We remark that Theorem \ref{thm:QU} and Theorem \ref{thm:DU} have an analogue for Kleinian groups, which was known by a classical result of McMullen. More precisely, let $\Gamma$ be a geometrically finite Kleinian group with gasket limit set $\Lambda(\Gamma)$. Then McMullen showed in \cite{McM90} that there exists a unique Kleinian group with a round gasket limit set in its quasiconformal conjugacy class. In particular, there exists a quasiconformal homeomorphism $\phi: \widehat\C \to \widehat \C$ that maps $\Lambda(\Gamma)$ onto a round gasket.

\subsection{Quasiconformal (in-)equivalence of limit sets and Julia sets}
It is a central question in quasiconformal geometry to classify fractal sets up to quasiconformal homeomorphisms. This problem has attracted a good deal of attention in recent years; see, for example, \cites{BK02, HP12, BM13, Mer14, BLM16, LM18, LLMM19}. For fractal sets that emerge in conformal dynamics, ample evidence suggests that it is possible to quasiconformally distinguish Julia sets and limit sets of Kleinian groups.
The following conjecture is stated in \cite{LLMM19}. 

\begin{conj}\label{conj:gqh}
Let $\mathcal J$ be the Julia set of a rational map and $\Lambda$ be the limit set of a Kleinian group.
Suppose that $\mathcal J$ and $\Lambda$ are connected and not homeomorphic to the circle or the $2$-sphere. Then there exists no quasiconformal homeomorphism of the Riemann sphere that maps $\mathcal J$ onto $\Lambda$.
\end{conj}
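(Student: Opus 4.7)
The plan is to attack the gasket subcase of Conjecture \ref{conj:gqh} first, where the full strength of Theorem \ref{thm:QU} together with McMullen's corresponding uniformization \cite{McM90} for Kleinian limit sets is available. Assume for contradiction that $\phi\colon\widehat{\C}\to\widehat{\C}$ is a quasiconformal homeomorphism with $\phi(\mathcal{J})=\Lambda$. McMullen produces a quasiconformal uniformization of $\Lambda$ by a round gasket $\Pi'$, and composing with $\phi$ yields a quasiconformal uniformization of $\mathcal{J}$ by a round gasket $\Pi$. Theorem \ref{thm:QU} then forces $\mathcal{J}$ to be a fat gasket in which every contact point is eventually mapped to a parabolic periodic point of $R$ of multiplicity $3$. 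Applying the rigidity clauses in both Theorem \ref{thm:QU} and \cite{McM90}, the induced quasiconformal map $\Pi\to\Pi'$ is realized on the gaskets themselves by a M\"obius transformation of $\widehat{\C}$.

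After this M\"obius identification, one has two distinct structures living on the same round gasket $\Pi=\Pi'$: the combinatorial dynamics inherited from the rational map $R$, and the action of the Kleinian group $\Gamma$. The plan is to extract a numerical or combinatorial invariant that distinguishes the two. A natural candidate is the parabolic multiplicity at contact points. Condition (3) of Theorem \ref{thm:QU} forces every contact point to be pre-parabolic of multiplicity $3$ for $R$; whereas in a geometrically finite Kleinian gasket group the parabolic stabilizer of each tangency point is virtually $\Z$ and the generating parabolic element has multiplicity $1$ in the Leau--Fatou sense. One would then attempt to show that these two parabolic regimes cannot coexist on the same round gasket by comparing the asymptotic geometry of the Fatou components to the disk orbits under $\Gamma$ at a common parabolic fixed point, using the Leau petal decomposition on one side and cusp fundamental domains on the other; in particular, the three-petal structure of $R$ should give a different asymptotic packing density of complementary disks near the fixed point than the single cusp of $\Gamma$, and this packing profile is a quasisymmetric invariant of $\Pi$.

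A secondary step would be to treat the general (non-gasket) case of Conjecture \ref{conj:gqh}. For carpet Julia and carpet Kleinian limit sets an analogous rigidity is available via \cites{Bonk:uniformization, BLM16}, and one could hope for a parallel parabolic-vs-cusp comparison there; for sets with more complicated topology one must instead exhibit a direct quasisymmetric invariant, such as a refined conformal dimension, porosity statistic, or tangent-set profile along periodic orbits. The main obstacle, and the reason Conjecture \ref{conj:gqh} remains open, is that even in the gasket case the parabolic mismatch above does not immediately contradict the existence of $\phi$, because $\phi$ is not required to intertwine the two dynamics: one needs a subtle argument showing that the ambient asymptotic geometry at parabolic contact points, rather than any dynamical conjugacy, already distinguishes the two classes. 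Producing any such genuinely quasisymmetric invariant, sharp enough to separate rational-map fractals from Kleinian fractals in full generality, is the hardest part of the program.
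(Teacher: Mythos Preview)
The statement you are attempting to prove is Conjecture~\ref{conj:gqh}, which the paper presents as an \emph{open} conjecture; there is no proof of it in the paper to compare against. Indeed, you yourself acknowledge in the final paragraph that your proposal is a program rather than a proof and that the conjecture ``remains open.''

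That said, there is a genuine gap in the heuristic you propose for the gasket case, and it is worth naming because the paper itself supplies the counterexample. Your plan is to distinguish the rational-map side from the Kleinian side via the ``asymptotic packing density of complementary disks near the fixed point,'' arguing that the three-petal parabolic structure of $R$ should leave a different local geometric fingerprint than the cusp structure of $\Gamma$, and that this fingerprint is a quasisymmetric invariant. But Theorem~\ref{theorem:nlo} (and its refinement Theorem~\ref{theorem:snlo}) constructs a gasket Julia set $\mathcal{J}$ and a gasket limit set $\Lambda$ that are \emph{locally} quasiconformally homeomorphic at \emph{every} point of $\mathcal{J}$. In particular, the local asymptotic geometry near contact points---exactly the invariant you propose---agrees on the two sides. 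This shows that no purely local quasisymmetric invariant can separate gasket Julia sets from gasket limit sets; the obstruction, if it exists, must be global and combinatorial, as the paper remarks after Theorem~\ref{theorem:nlo}.

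There is also a more elementary issue with the multiplicity comparison. The parabolic multiplicity $3$ is a property of the map $R$, not of the set $\mathcal{J}$; once both $\mathcal{J}$ and $\Lambda$ have been uniformized to the \emph{same} round gasket $\Pi$, the geometry of $\Pi$ near any tangency point is simply that of two tangent circles, and carries no memory of which dynamics produced it. Without a conjugacy, the dynamical multiplicities on the two sides are not attached to $\Pi$ in any way that a quasisymmetric map is obliged to respect. Finally, note that even your reduction step silently assumes $R$ has no Julia critical points, since that is a hypothesis of Theorem~\ref{thm:QU}.
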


This conjecture is proved in various settings when the Julia set or the limit set is a Sierpi\'nski carpet (see \cites{Mer14, BLM16, QYZ19}). When different Fatou components are allowed to touch, the situation is more subtle. While the conjecture is proved in various special cases of gasket Julia sets and limit sets (see \cite{LuoZhang:Nonequiv}), the following theorem shows that the conjecture is false in the local sense. See Figure \ref{fig:FGJ} for an illustration.

\begin{theorem}[Local QC equivalence]\label{theorem:nlo}
There exist a gasket Julia set $\mathcal{J}$ and a gasket limit set $\Lambda$ so that for any point $z \in \mathcal{J}$, there exist a neighborhood $U$ of $z$ and a quasiconformal map $\phi\colon \widehat\C \to\widehat\C$ so that $\phi(U \cap \mathcal{J}) = \phi(U) \cap \Lambda$.
\end{theorem}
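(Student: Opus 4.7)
The plan is to use Theorem~\ref{thm:QU} to reduce $\mathcal{J}$ to a round gasket model, establish a local quasiconformal equivalence at a single parabolic/cusp point by matching the two local cascades, and then transport this equivalence to every other point of $\mathcal{J}$ via the dynamics of $R$. I would take $\Lambda$ to be the Apollonian gasket, i.e.\ the limit set of the Apollonian Kleinian group, which is already a round gasket. For $\mathcal{J}$ I would choose the Julia set of a rational map $R$ without Julia critical points having a parabolic fixed point $p$ of multiplicity $3$; by Theorem~\ref{thm:QU}, $\mathcal{J}$ is then a fat gasket, and concrete examples can be drawn from the classification in \cite{LuoZhang:Nonequiv}. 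Theorem~\ref{thm:QU} provides a global quasiconformal homeomorphism $\Phi$ of $\widehat{\mathbb{C}}$ sending $\mathcal{J}$ onto a round gasket $G$, with the three Fatou components tangent at $p$ going to three mutually tangent round disks meeting at $\Phi(p)$.

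The core step is to construct a local quasiconformal equivalence between $G$ near $\Phi(p)$ and $\Lambda$ near an Apollonian cusp $q$. Both local pictures consist of the closure of a cascade of tangent round circles inside a triangle formed by three mutually tangent round circles at the apex, governed by a multiplicity-$3$ parabolic dynamics: on one side the cusp element of the Apollonian group, on the other the conjugated parabolic germ $\Phi R \Phi^{-1}$. Quasiconformally straightening the two parabolic germs to a common normal form near the apex and extending the straightening through the infinite cascade of interstices, using Theorem~\ref{thm:QU} on each self-similar sub-configuration to compare it with the corresponding Apollonian triangle, produces a quasiconformal map of $\widehat{\mathbb{C}}$ taking a neighborhood of $\Phi(p)$ in $G$ onto a neighborhood of $q$ in $\Lambda$. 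Pre-composing with $\Phi$ yields the required local equivalence at $p$.

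To propagate to an arbitrary $z \in \mathcal{J}$, observe that since $R$ has no critical points on $\mathcal{J}$, its iterates act as local biholomorphisms on a neighborhood of $\mathcal{J}$, and the preimages of $p$ under iterates of $R$ are dense in $\mathcal{J}$. A branch of $R^{-n}$ on a small neighborhood of $p$ is conformal and pulls back the local equivalence at $p$ to a local equivalence, with the same quasiconformal constant, at a preimage $z_n \to z$. A normal-family argument applied to this sequence of equivalences, together with the uniqueness part of Theorem~\ref{thm:QU}, extracts a limiting local quasiconformal equivalence at $z$ itself.

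The main obstacle is the core step of carrying a quasiconformal identification through the infinite cascade of triangular interstices with uniformly controlled dilatation. This is essentially the same analytic difficulty that appears at multiplicity-$3$ parabolic points in the proof of Theorem~\ref{thm:QU} itself, and invoking that theorem as a black box on both sides of the cascade to straighten each to its unique (up to M\"obius) round model is the natural way through.
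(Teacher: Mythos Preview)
Your core step has a genuine gap: two round gaskets need not be locally quasiconformally equivalent, even near a cusp. When you map $\mathcal{J}$ to a round gasket $G$ by Theorem~\ref{thm:QU}, the combinatorics of the circle packing $G$ (its tangency graph) are dictated by $\mathcal{J}$ and have no reason to match those of the Apollonian gasket. Inside a gap of $G$ bounded by two tangent circles the sub-packing can be completely different from the Apollonian one, and Theorem~\ref{thm:QU} says nothing about comparing two round gaskets with different contact graphs. (Incidentally, at a multiplicity-$3$ parabolic contact point exactly \emph{two} Fatou components touch, not three; see Definition~\ref{definition:gasket}(3).) Invoking Theorem~\ref{thm:QU} ``on each self-similar sub-configuration'' does not help, because that theorem produces a single canonical round model for a given Julia set, not a map between two prescribed round gaskets. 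Your propagation step is also incomplete: if $z$ is a periodic point whose cycle avoids your neighborhood $U_p$, no forward iterate of $z$ enters $U_p$, and the normal-family limit of equivalences on shrinking neighborhoods of nearby preimages can collapse to a constant.

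The paper sidesteps both issues by \emph{designing} $\mathcal{J}$ and $\Lambda$ simultaneously from the same building block. A single subdivision graph $\mathcal{G}$ is glued to itself in two different ways to produce spherical graphs $\mathcal{G}_1$ and $\mathcal{G}_2$; one supports a fat-gasket Julia set (via \cite{LuoZhang:Nonequiv}), the other supports a Kleinian circle packing (by exhibiting enough M\"obius symmetries). Because each half of $\mathcal{J}$ and each half of $\Lambda$ realizes the \emph{same} subdivision graph $\mathcal{G}$, the quasiconformal equivalence of the pieces is immediate from Theorem~\ref{thm:qch}~\ref{thm:qch:ii}\ref{thm:qch:a}. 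Four such half-charts already cover all of $\mathcal{J}$, so no dynamical propagation or limiting argument is needed.
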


We remark that in our example we can construct an atlas of the Julia set consisting of 4 charts (see Figure \ref{fig:FGJ} and Theorem \ref{theorem:snlo} for a more precise statement).
From a different perspective, the Julia set $\mathcal{J}$ and the limit set $\Lambda$ in our example can be decomposed (in two different ways) into two connected pieces $\mathcal{J}_1, \mathcal{J}_2$ and $\Lambda_1, \Lambda_2$ glued along four points, so that the pairs $\mathcal{J}_i, \Lambda_i$, $i=1,2$, are quasiconformally homeomorphic; in Figure \ref{fig:FGJ} one can take $\mathcal J_1=\mathcal J\cap U^+$, $\mathcal J_2=\mathcal J\cap U^-$, $\Lambda_1=\Lambda\cap V^+$, and $\Lambda_1=\Lambda\cap V^-$. Our construction is very flexible and should allow us to construct infinitely many pairs of gasket Julia sets and limit sets with the same property. The technique demonstrates that there are no local or analytic obstructions preventing a Julia set from being quasiconformally equivalent to a limit set. The only obstruction is combinatorial in nature and arises from gluing different quasiconformally equivalent charts. On the other hand, there are many homeomorphic gasket Julia sets and gasket limit sets (see \cite{LLM22}).

\begin{figure}[htp]
    \centering    
    \begin{tikzpicture}
    \node[anchor=south west,inner sep=0, opacity=.5] at (0,0) {\includegraphics[width=0.45\textwidth]{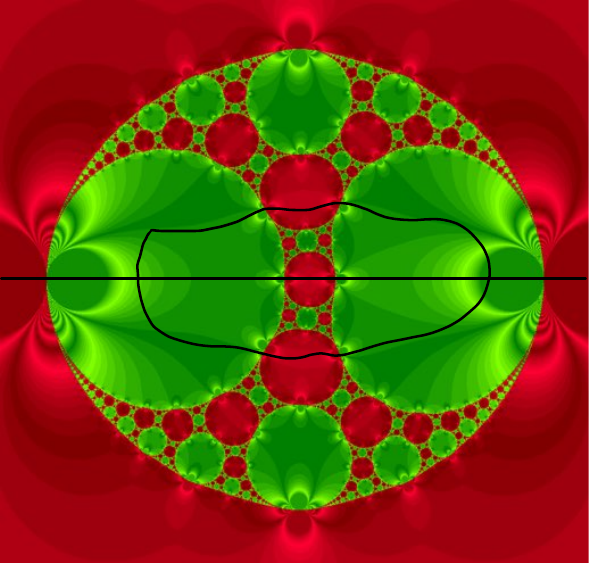}};
    \node at (1,3.05) {\begin{footnotesize}$\widetilde{U}^+$\end{footnotesize}};
    \node at (1,2.55) {\begin{footnotesize}$\widetilde{U}^-$\end{footnotesize}};
    \node at (1.6,2.8) {\begin{footnotesize}$A$\end{footnotesize}};
    \node at (2.95,3.6) {\begin{footnotesize}$B$\end{footnotesize}};
    \node at (4.3,2.8) {\begin{footnotesize}$C$\end{footnotesize}};
    \node at (2.95,1.85) {\begin{footnotesize}$D$\end{footnotesize}};
    \node at (3,2.75) {\begin{footnotesize}$E$\end{footnotesize}};
    \node at (4.5,0.8) 
    {\begin{footnotesize}$F$\end{footnotesize}};
    \node at (3.65,3.15) {\begin{footnotesize}$U^+$\end{footnotesize}};
    \node at (4,3.5) {\begin{footnotesize}$U^-$\end{footnotesize}};
    \end{tikzpicture}
    \begin{tikzpicture}
    \node[anchor=south west,inner sep=0] at (0,0) {\includegraphics[width=0.45\textwidth]{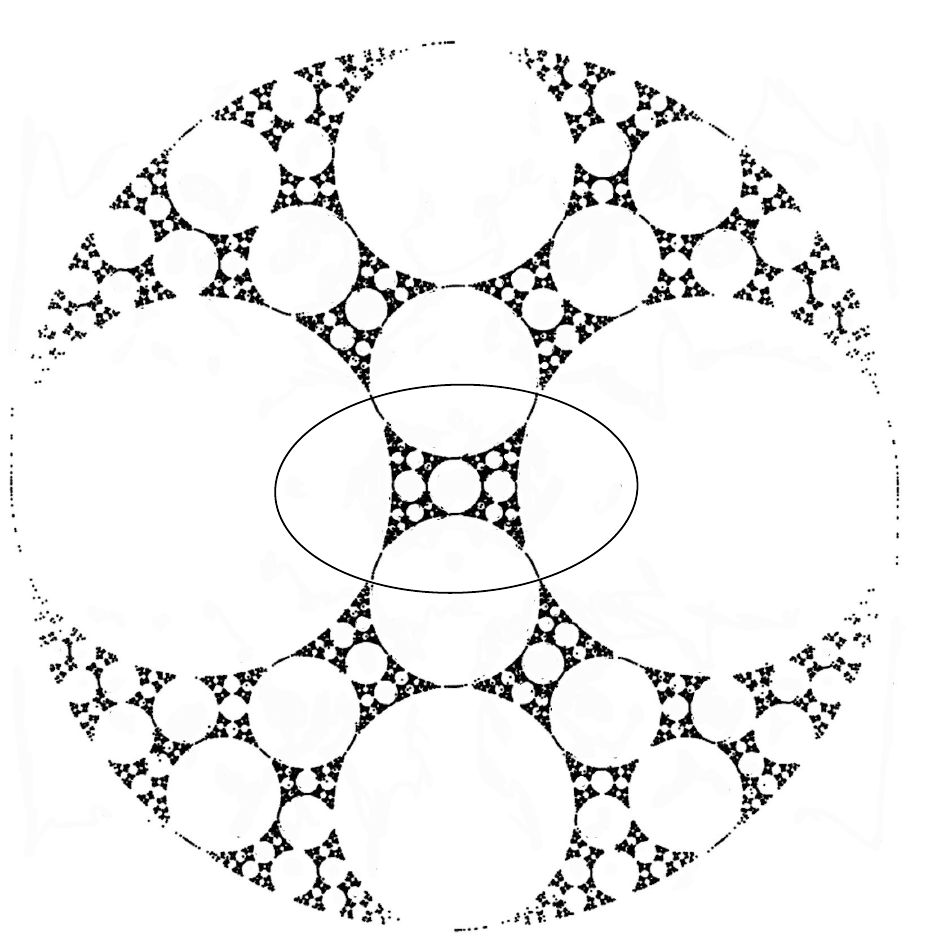}};
    \node at (1.2,2.8) {\begin{footnotesize}$A$\end{footnotesize}};
    \node at (2.75,3.6) {\begin{footnotesize}$B$\end{footnotesize}};
    \node at (4.3,2.8) {\begin{footnotesize}$C$\end{footnotesize}};
    \node at (2.75,2) {\begin{footnotesize}$D$\end{footnotesize}};
    \node at (3.45,3) {\begin{footnotesize}$V^+$\end{footnotesize}};
    \node at (3.8,3.3) {\begin{footnotesize}$V^-$\end{footnotesize}};
    \end{tikzpicture}
    \caption{Left: an example of a Julia set that is quasiconformally homeomorphic to a circle packing. Right: an example of a Kleinian circle packing. These two sets are not homeomorphic, but they are \textit{locally quasiconformally homeomorphic}.}
    \label{fig:FGJ}
\end{figure}

Sierpi\'nski carpet limit sets exhibit very strong local rigidity; see the works \cites{Mer14, Mer14ProcAMS}. Specifically, any quasiregular symmetry of a Sierpi\'nski carpet limit set $\Lambda$ defined on an open and connected subset of $\Lambda$ is the restriction of a M\"obius map and, in particular, it must be injective; this is a consequence of \cite{Mer14}*{Theorem 1.1}. We expect this local rigidity would preclude local quasiconformal maps between Sierpi\'nski carpet Julia sets and limit sets, as opposed to the gasket case in Theorem \ref{theorem:nlo}. As a corollary to Theorem \ref{theorem:nlo}, and in sharp contrast with the case of Sierpi\'nski carpet limit sets, we show that gasket limit sets do admit local quasiregular and non-injective symmetries.

\begin{corollary}[QR symmetries for gasket limit sets]\label{qrsymmetry}
    There exist a gasket limit set $\Lambda$, open and connected sets $W, V\subset \widehat \C$ with $W\cap \Lambda\neq \emptyset$ and a quasiregular branched covering $f\colon W \to V$ of degree $2$ so that $f(W\cap\Lambda) = V \cap \Lambda$.
\end{corollary}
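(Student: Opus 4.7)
The plan is to transport the dynamics of the rational map $R$ with Julia set $\mathcal{J}$ through the local quasiconformal equivalence provided by Theorem~\ref{theorem:nlo}. Take $R$ to be a quadratic rational map, $\mathcal{J}=\mathcal{J}(R)$, with $\Lambda$ the accompanying gasket limit set from Theorem~\ref{theorem:nlo}; such a quadratic example is available by the paragraph following Theorem~\ref{thm:QU}. Since $R$ has no Julia critical points and has degree $2$, the restriction $R|_{\mathcal{J}}\colon \mathcal{J}\to \mathcal{J}$ is a $2$-to-$1$ covering, so any non--critical-value $z_0\in \mathcal{J}$ has two distinct preimages $z_1, z_2\in \mathcal{J}$. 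The proof of Theorem~\ref{theorem:nlo} produces a finite atlas of quasiconformal homeomorphisms $\phi_j\colon \widehat{\mathbb{C}}\to\widehat{\mathbb{C}}$ with macroscopic open sets $U_j\subset \widehat{\mathbb{C}}$ covering $\mathcal{J}$ and satisfying $\phi_j(U_j\cap \mathcal{J})=\phi_j(U_j)\cap \Lambda$.

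The heart of the proof is to exhibit two charts $(U_0,\phi_0), (U_1,\phi_1)$ and an open connected subset $U_0'\subset U_0$ around $z_0$ such that $U_1$ contains both preimages $z_1, z_2$ and the two components of $R^{-1}(U_0')$ meeting $z_1, z_2$ lie inside $U_1$. Then
\[
R\colon R^{-1}(U_0')\cap U_1\to U_0'
\]
is a degree-$2$ covering. Setting $W=\phi_1(R^{-1}(U_0')\cap U_1)$ and $V=\phi_0(U_0')$, I define
\[
f:=\phi_0\circ R\circ \phi_1^{-1}\colon W\to V.
\]
This is a composition of a quasiconformal map, a degree-$2$ holomorphic branched cover, and a quasiconformal map, hence a quasiregular branched covering of degree $2$. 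A direct chase using the chart identities together with the complete invariance $R^{-1}(\mathcal{J})=\mathcal{J}$ yields $f(W\cap\Lambda)=V\cap\Lambda$.

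The main obstacle is verifying, from the explicit construction behind Theorem~\ref{theorem:nlo}, that one chart of the atlas indeed contains both $R$-preimages of some disk lying in another chart. This is a combinatorial check on the configuration of Figure~\ref{fig:FGJ}, where each of the two macroscopic halves $U^\pm$ of the decomposition of $\mathcal{J}$ is large enough to host both branches of $R^{-1}$ over a suitable small disk in the complementary half. One first picks $U_0$ to be a chart whose image meets $\mathcal{J}$ in a region containing $z_0$, shrinks $U_0'$ so that its preimage under $R$ consists of two small disks around $z_1$ and $z_2$, and then appeals to the explicit coverage of the atlas to find a single chart $U_1$ enclosing both of these disks. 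Once this step is in place the rest of the argument is formal.
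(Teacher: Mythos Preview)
Your argument has a genuine gap concerning the connectedness of $W$. You choose a non--critical-value $z_0\in\mathcal{J}$ and shrink $U_0'$ to a small disk around it; since $R$ has no critical points on $\mathcal{J}$ and the (finitely many) critical values lie in the Fatou set, for $U_0'$ small enough the preimage $R^{-1}(U_0')$ consists of two \emph{disjoint} topological disks around $z_1,z_2$. Hence $W=\phi_1(R^{-1}(U_0')\cap U_1)$ is disconnected, contradicting the hypothesis that $W$ be connected, and $f$ is an unbranched $2$-sheeted covering of the simply connected set $V$ rather than a branched covering. A connected degree-$2$ branched cover of a disk necessarily contains a critical point, which for $R$ lies in the Fatou set; your ``shrink around a Julia point'' step precisely excludes this.

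The paper circumvents this by working with a macroscopic chart and an iterate rather than a germ. It takes $\widetilde U=\widetilde U^+$ (one entire half of the sphere in the decomposition of Theorem~\ref{theorem:snlo}) and lets $\widetilde W$ be the component of $R^{-2}(\widetilde U)$ containing the fixed Fatou component at the vertex $B$; the point $B$ is a critical point of $R^{\circ 2}$, so $R^{\circ 2}\colon\widetilde W\to\widetilde U$ is a genuine degree-$2$ branched cover with $\widetilde W$ connected. A combinatorial check on the subdivision (Figure~\ref{fig:TMM}) gives $\widetilde W\cap\mathcal J(R)\subset\widetilde U$, so a \emph{single} chart $\widetilde\phi^+$ suffices: one sets $V=\widetilde\phi^+(\widetilde U)$, $W=\widetilde\phi^+(\widetilde W)$, and $f=\widetilde\phi^+\circ R^{\circ 2}\circ(\widetilde\phi^+)^{-1}$. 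Your two-chart bookkeeping is then unnecessary, and the connectedness and branching come for free from the choice of $\widetilde W$.
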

We remark that in our construction (see Section \ref{sec:lqc} and Figure \ref{fig:QRS}), one can show that the subset $A = W \cap \Lambda$ can be decomposed into $4$ connected pieces $A_1,\dots A_4$, where $A_i\cap A_{i+1}$ is a single point and $f|_{A_i}$ is a M\"obius map.

\subsection{Proof outline}
There are three main ingredients in the proofs of Theorems \ref{thm:QU} and \ref{thm:DU}. First, we use the theory of circle packings with subdivision rules that was recently developed by the first-named author and Y.~Zhang \cite{LuoZhang:CirclePacking} and especially the asymptotic conformality of homeomorphisms between such circle packings. Second, we use an extension result for circle homeomorphisms conjugating two piecewise quasiconformal dynamical systems of the circle; this extension result was developed jointly in earlier work of the authors \cite{LN24}, generalizing a similar result for piecewise analytic dynamical systems of the circle \cite{LyubichMerenkovMukherjeeNtalampekos:David}. This extension result is used in combination with quasiconformal or David surgery to construct the uniformizing maps in Theorems \ref{thm:QU} and \ref{thm:DU}. Finally, the quasiconformal or David regularity of these maps is proved with the aid of a recent metric characterization of quasiconformality due to the second-named author \cite{Ntalampekos:metric}.

We now provide an outline of the basic steps of the proof. The equivalence $(2) \Leftrightarrow (3)$ between the geometric and dynamical criteria in Theorems \ref{thm:QU} and \ref{thm:DU} is proved by analyzing the local dynamics near the fixed points; see Section \ref{section:geom_dyn}. To prove that $(1) \Rightarrow (3)$ in Theorem \ref{thm:QU}, we first show that if $(3)$ does not hold, then either there is a Fatou component that is not a quasidisk or there are two Fatou components that touch at an angle. In the second case, these two Fatou components cannot be mapped to a pair of touching circles by a quasiconformal map. To prove that $(1) \Rightarrow (3)$ in Theorem \ref{thm:DU}, we first show that if $(3)$ does not hold, then there exists a Fatou component that is the immediate basin of a parabolic periodic point with multiplicity $2$. This immediate basin cannot be mapped to a round disk by a David map on $\widehat\C$ as provided by a recent result of the authors \cite{LN24}.

The most technical direction is to show that the geometric and dynamical criteria are also sufficient for uniformization. We remark that a priori, it is not clear that a gasket Julia set is homeomorphic to a circle packing. To this end, we first construct a finite subdivision rule for the the contact graph of the Julia set $\mathcal{J}(R)$. By results of \cite{LuoZhang:CirclePacking}, this allows us to construct an infinite circle packing $\mathcal{P}$ whose limit set $\Lambda(\mathcal{P})$ is homeomorphic to $\mathcal{J}(R)$ (see Section \ref{sec:cp}). The finite subdivision rule induces a Markov map $\Psi\colon\overline{\Omega^1} \to \overline{\Omega^0}$, where $\Omega^n\coloneqq \Psi^{-n}(\Omega^0)$, $n\geq 0$, is some nested sequence open sets that satisfies $\Lambda(\mathcal{P}) = \bigcap_{n=0}^\infty \overline{\Omega^n}$. After passing to some iterate of $R$ if necessary, we show that there exists a homeomorphism $h:\widehat\C \to\widehat\C$ conjugating the dynamics of $R\colon \mathcal{J}(R) \to \mathcal{J}(R)$ to $\Psi\colon \Lambda(\mathcal{P}) \to\Lambda(\mathcal{P})$. Thus, the map $\Psi|_{\Lambda(\mathcal P)}$ plays the role of $R|_{\mathcal J(R)}$. Although the limit set $\Lambda(\mathcal P)$ does not arise from a Kleinian group, so we do not have conformal symmetries, the map $\Psi$ is as conformal as possible in the sense that its dilatation is exponentially small on deep levels, as shown in \cite{LuoZhang:CirclePacking}. More precisely, this means that the quasiconformal distortion of $\Psi$ on $\Omega^n$ is $1+O(\delta^{-n})$ for some $\delta \in (0, 1)$.

To complete the proof, we show that $h|_{\mathcal{J}(R)}$ extends to a quasiconformal or David map on $\widehat\C$. To achieve this, we first show that
\begin{itemize}
       \item if each Fatou component is a quasidisk, then $h$ extends to a David homeomorphism on each periodic Fatou component, and
        \item if $\mathcal{J}(R)$ is a fat gasket, then $h$ extends to a quasiconformal homeomorphism on each periodic Fatou component.
    \end{itemize}
The extension in each Fatou component is established by a recent extension result for piecewise quasiconformal maps on the circle (see \cite{LN24} and Section \ref{sec:epqcm} here). We first extend $h$ to the periodic Fatou components and then using the dynamics, we pull back the David or quasiconformal homeomorphism to all Fatou components, and obtain a homeomorphism $h\colon (\widehat\C,\mathcal{J}(R)) \to (\widehat\C,\Lambda(\mathcal{P}))$ that has David or quasiconformal regularity on $\widehat \C \setminus \mathcal{J}(R)$.

We remark that the Sierpi\'nski gasket is not removable for quasiconformal maps, as shown in \cite{Ntalampekos:gasket}, and it is expected that all gaskets are not removable as well. Hence, the David or quasiconformal regularity of the map $h$ on $\widehat \C\setminus \mathcal J(R)$ does not immediately imply that we have the same regularity on all of $\widehat \C$. To obtain the desired regularity of $h$ on $\mathcal J(R)$ we apply a recent metric characterization of quasiconformality \cite{Ntalampekos:metric}, asserting that for a homeomorphism to be quasiconformal it is sufficient that it maps infinitesimal sets of bounded eccentricity to infinitesimal sets of bounded eccentricity; see Section \ref{section:abs}.

\section{Preliminaries on quasiconformal geometry}

\subsection{Quasisymmetric maps}

Let $(X,d_X),(Y,d_Y)$ be metric spaces. A homeomorphism $f\colon (X,d_X)\to (Y,d_Y)$ is \textit{quasisymmetric} if there exists a homeomorphism $\eta\colon [0,\infty)\to[0,\infty)$ such that for all triples of distinct points $x_1,x_2,x_3\in X$ we have
\begin{align*}
    \frac{d_Y(f(x_1),f(x_2))}{d_Y(f(x_1),f(x_3))} \leq \eta\left( \frac{d_X(x_1,x_2)}{d_X(x_1,x_3)}\right).
\end{align*}
In that case we say that $f$ is $\eta$-quasisymmetric. 

\begin{lemma}[\cite{Kelingos:qs}*{Theorem 3}]\label{lemma:qs_glue}
    Let $f\colon [-1,1]\to \R$ be a homeomorphic embedding such that $f|_{[-1,0]}$ and $f|_{[1,0]}$ are quasisymmetric and there exists $\lambda\geq 1$ with the property that
    \begin{align}\label{lemma:qs_glue_ineq}
        \lambda^{-1} |t|\leq  |f(t)-f(0)|\leq \lambda |t|
    \end{align}
    for all $t\in [-1,1]$. Then $f$ is quasisymmetric.
\end{lemma}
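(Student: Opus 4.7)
The plan is to verify the classical Ahlfors--Beurling three-point criterion for quasisymmetry of a monotone homeomorphism on an interval: it suffices to produce a constant $H\geq 1$ such that $M(x,t):=(f(x+t)-f(x))/(f(x)-f(x-t))\in [1/H,H]$ for every $x\in [-1,1]$ and $t>0$ with $x\pm t\in [-1,1]$. After translating the range I assume $f(0)=0$, and by monotonicity of $f$ I assume that $f$ is increasing. Let $\eta_{+}$ and $\eta_{-}$ denote the quasisymmetry gauges of $f|_{[0,1]}$ and $f|_{[-1,0]}$, respectively. The key observation is that \eqref{lemma:qs_glue_ineq}, together with $f(0)=0$ and monotonicity, yields a bilipschitz estimate for $f$ across the origin: for any $a\leq 0\leq b$ in $[-1,1]$ one has $f(b)-f(a)=f(b)+|f(a)|$, and therefore
\[
\lambda^{-1}(b-a)\leq f(b)-f(a)\leq \lambda(b-a).
\]

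I would then split the verification into two cases. In Case 1, when $[x-t,x+t]\subseteq [0,1]$ or $[x-t,x+t]\subseteq [-1,0]$, applying the quasisymmetry of the corresponding half to the triple $\{x-t,x,x+t\}$ (whose outer distances to $x$ are equal) gives $M(x,t)\in [1/\eta_\pm(1),\eta_\pm(1)]$. In Case 2, when $|x|<t$ so that $0\in (x-t,x+t)$, the bilipschitz estimate across $0$ controls whichever of the two differences in $M(x,t)$ has endpoints straddling $0$: for example, if $x\geq 0$ then $f(x)-f(x-t)\in [\lambda^{-1}t,\lambda t]$. The other quantity $f(x+t)-f(x)$ has both arguments in $[0,1]$, and I would control it by applying the quasisymmetry of $f|_{[0,1]}$ to the triple $\{0,x,x+t\}$ with base point $x+t$, in both directions, which together with \eqref{lemma:qs_glue_ineq} yields
\[
\frac{\lambda^{-1}t}{\eta_+(2)}\leq f(x+t)-f(x)\leq 2\lambda\,\eta_+(1)\,t,
\]
where the inputs $t/(x+t)\leq 1$ and $(x+t)/t\leq 2$ to $\eta_+$ come from the constraint $0\leq x<t$. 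The symmetric subcase $x<0$ is handled analogously via $\eta_-$, and in every situation $M(x,t)$ is bounded above and below by constants depending only on $\lambda$, $\eta_+$, and $\eta_-$.

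The technical heart of the argument lies in Case 2, where the naive gluing of the two quasisymmetric gauges on the two halves is a priori insufficient to bound $M(x,t)$; the bilipschitz matching across $0$ provided by \eqref{lemma:qs_glue_ineq} plays a double role, controlling the straddling difference directly and also allowing the quasisymmetric estimates on each half to be anchored at $0$, which keeps the inputs to $\eta_{\pm}$ bounded uniformly by $2$. Taking $H$ to be the maximum of the constants produced in the two cases then completes the verification of the Ahlfors--Beurling criterion and hence the quasisymmetry of $f$.
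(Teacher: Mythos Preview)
The paper does not supply a proof of this lemma; it is quoted as Theorem~3 of \cite{Kelingos:qs} and used as a black box. Your argument is a correct self-contained proof via the Ahlfors--Beurling $M$-condition, and the two-case split (the symmetric interval lies entirely in one half, or it straddles $0$) together with the bilipschitz control across $0$ coming from \eqref{lemma:qs_glue_ineq} is exactly the right mechanism. One small remark: in Case~2 with $x=0$ the auxiliary triple $\{0,x,x+t\}$ degenerates, but then both differences $f(t)-f(0)$ and $f(0)-f(-t)$ are controlled directly by \eqref{lemma:qs_glue_ineq}, so no appeal to $\eta_\pm$ is needed there; you may want to say this explicitly. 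Otherwise the argument is complete, and it is a reasonable substitute for the citation.
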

For example, condition \eqref{lemma:qs_glue_ineq} is met if the one-sided derivatives of $f$ at $0$ exist and are non-zero.

\subsection{Quasiconformal and quasiregular maps}

Let $U,V\subset \C$ be open sets and $f\colon U\to V$ be an orientation-preserving homeomorphism. We say that $f$ is \textit{quasiconformal} if $f$ lies in the Sobolev space $W^{1,2}_{\loc}(U)$ and there exists $K\geq 1$ such that $\|Df(z)\|^2 \leq K J_f(z)$ for a.e.\ $z\in U$. Here $Df(z)$ is the differential of $f$, which exists at a.e.\ $z\in U$, $\|Df(z)\|$ denotes its operator norm, and $J_f(z)$ is the Jacobian determinant of $Df(z)$. In that case, we say that $f$ is $K$-quasiconformal. Equivalently, one may replace the condition that $\|Df\|^2\leq KJ_f$ with the condition that the \textit{Beltrami coefficient} of $f$, defined by
\begin{align*}
\mu_f= \frac{\partial f/ \partial \bar z}{\partial f/\partial z},
\end{align*}
satisfies $\|\mu_f\|_{L^\infty(U)}<1$. One can define quasiconformal homeomorphisms between domains in the Riemann sphere $\widehat \C$ via local coordinates. A homeomorphism of the Riemann sphere is quasiconformal if and only if it is quasisymmetric in the spherical metric; see \cite{Heinonen:metric}*{Theorem 11.14}.

A \textit{quasidisk} is the image of the unit disk under a quasiconformal homeomorphism of $\widehat \C$.  Several characterizations of quasidisks are available; see \cite{Pommerenke:conformal}*{Section 5.1}. For example,  a Jordan region in $\widehat \C$ is a quasidisk if and only if its boundary is a quasisymmetric image (using the spherical metric) of the unit circle.

Let $U,V\subset \C$ be open sets. A map $f\colon U \to V$ is called \textit{quasiregular} if $f$ lies in the Sobolev space $W^{1,2}_{\loc}(U)$ and there exists $K\geq 1$ such that $\|Df(z)\|^2 \leq K J_f(z)$ for a.e.\ $z\in U$. In that case we say that $f$ is $K$-quasiregular. Equivalently, as above, one may replace the condition that $\|Df\|^2\leq KJ_f$ with the condition that $\|\mu_f\|_{L^\infty(U)}<1$. Quasiregular maps are continuous \cite{AstalaIwaniecMartin:quasiconformal}*{Corollary 5.5.2}, but they are not necessarily injective. Furthermore, it is known that $f$ is quasiregular if and only if we can write $f=g\circ h$, where $g$ is analytic and $h$ is quasiconformal; see \cite{LehtoVirtanen:quasiconformal}*{p.~247} or \cite{AstalaIwaniecMartin:quasiconformal}*{Theorem 5.5.1}.

\begin{lemma}\label{lem:qrextension}
    Let $f\colon \mathbb S^1 \to \mathbb S^1$ be an orientation-preserving covering map of degree $d\geq 2$ that satisfies one of the following conditions.
    \begin{enumerate}[label=\normalfont(\arabic*)]
        \item\label{qr:1}$f$ has a continuous and non-zero derivative on $\mathbb S^1$.
        \item\label{qr:2}$f$ is locally quasisymmetric on $\mathbb S^1$.
    \end{enumerate}
    Then $f$ extends to a quasiregular map $F\colon \D \to \D$ that has a unique critical point at $0$ and $F(0) = 0$.
\end{lemma}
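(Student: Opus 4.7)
The plan is to factor $f$ through the standard $d$-fold power map $p(z)=z^d$, which already has the desired branch structure on $\D$, and then extend the remaining homeomorphic factor to a quasiconformal self-map of $\D$ that fixes the origin. Since both $f$ and $p|_{\mathbb S^1}$ are orientation-preserving degree-$d$ coverings of $\mathbb S^1$, their lifts to $\R$ under the universal cover $\theta\mapsto e^{i\theta}$ can be chosen so that $\tilde f(\theta+2\pi)=\tilde f(\theta)+2\pi d$ and $\tilde p(\theta)=d\theta$. Setting $\tilde\phi(\theta)=\tilde f(\theta)/d$ yields a lift of an orientation-preserving homeomorphism $\phi\colon \mathbb S^1\to\mathbb S^1$ for which $f=p\circ\phi$.

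Next, I argue that $\phi$ is quasisymmetric. Under hypothesis \ref{qr:1}, $\tilde f'$ is continuous and nowhere zero on $\R$, and by compactness it is bounded above and below by positive constants; therefore $\tilde\phi'=\tilde f'/d$ is bi-Lipschitz, whence $\phi$ is quasisymmetric. Under hypothesis \ref{qr:2}, compactness of $\mathbb S^1$ upgrades the local quasisymmetry of $f$ to a uniform local modulus of quasisymmetry, which transfers through the factorization to $\tilde\phi$ on $\R$. Combined with the translation-equivariance $\tilde\phi(\theta+2\pi)=\tilde\phi(\theta)+2\pi$, a standard local-to-global patching argument (in the spirit of Lemma \ref{lemma:qs_glue}) promotes this to global quasisymmetry of $\phi$ on $\mathbb S^1$; this upgrade is where the bulk of the work under \ref{qr:2} lies.

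With $\phi$ quasisymmetric, I apply the Beurling--Ahlfors extension to obtain a quasiconformal homeomorphism $\tilde\Phi\colon \D\to\D$ with boundary values $\phi$. Letting $w_0=\tilde\Phi(0)\in \D$, there exists a quasiconformal self-map $\Lambda$ of $\D$ that is the identity on $\mathbb S^1$ and sends $w_0$ to $0$ (for instance, constructed via the measurable Riemann mapping theorem using a smooth Beltrami coefficient supported on a compact subset of $\D$); setting $\Phi=\Lambda\circ\tilde\Phi$ gives a quasiconformal extension of $\phi$ with $\Phi(0)=0$. Finally, I define $F=p\circ\Phi\colon\D\to\D$. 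As the composition of an analytic map with a quasiconformal one, $F$ is quasiregular; it restricts to $p\circ\phi=f$ on $\mathbb S^1$ and satisfies $F(0)=p(\Phi(0))=0$. Since $p$ has its unique critical point in $\D$ at the origin and $\Phi$ is a homeomorphism with $\Phi(0)=0$, the map $F$ likewise has a unique critical point, located at $0$, completing the construction.
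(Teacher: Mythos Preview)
Your argument is correct and takes a genuinely different route from the paper. You factor $f=p\circ\phi$ with $p(z)=z^d$ and a circle homeomorphism $\phi$, reduce everything to showing $\phi$ is quasisymmetric, and then invoke Beurling--Ahlfors once; this treats both hypotheses uniformly. The paper instead gives, for \ref{qr:1}, an explicit radial extension $F(re^{i\theta})=re^{if(\theta)}$ and computes its Beltrami coefficient directly, while for \ref{qr:2} it first extends $f$ to a quasiregular map on a neighborhood of $\mathbb S^1$ via the locality of the Beurling--Ahlfors operator, straightens by the measurable Riemann mapping theorem to obtain an analytic covering $g$ of $\mathbb S^1$, and then applies part \ref{qr:1} to $g$. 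Your approach is cleaner and more conceptual; the paper's approach yields an explicit formula in case \ref{qr:1} and illustrates a useful ``straighten to analytic'' trick in case \ref{qr:2}. One small remark: your reference to Lemma~\ref{lemma:qs_glue} for the local-to-global quasisymmetry of $\tilde\phi$ is not quite on point, since that lemma concerns gluing at a single point with a bi-Lipschitz estimate; the argument you actually need is the (standard) observation that $\tilde\phi-\mathrm{id}$ is $2\pi$-periodic and hence bounded, which controls large scales, while compactness of a fundamental domain makes the local quasisymmetry modulus uniform and handles small scales.
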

\begin{proof}
    For \ref{qr:1}, we replace $\mathbb S^1$ with $\R/2\pi\mathbb{Z}$ and we treat $f$ as a function of $\theta\in  \R/2\pi \mathbb{Z}$ so that $f'(\theta)>0$ for each $\theta \in \R/2\pi\mathbb{Z}$. We define the extension of $f$ in $\D$ by $F(re^{i\theta}) = re^{if(\theta)}$, $0\leq r<1$, $\theta\in [0,2\pi)$. We have
    \begin{align*}
    \mu_F(re^{i\theta})= \frac{\frac{\partial F}{\partial r} \frac{\partial r}{\partial \bar z} + \frac{\partial F}{\partial \theta} \frac{\partial \theta}{\partial \bar z}}{\frac{\partial F}{\partial r} \frac{\partial r}{\partial z} + \frac{\partial F}{\partial \theta} \frac{\partial \theta}{\partial z}} = \frac{e^{if(\theta)}\frac{e^{i\theta}}{2} + r e^{if(\theta)}\cdot if'(\theta)\cdot \frac{-e^{i\theta}}{2ir}}{e^{if(\theta)}\frac{e^{-i\theta}}{2} + r e^{if(\theta)}\cdot if'(\theta)\cdot \frac{e^{-i\theta}}{2ir}} = e^{2i\theta} \frac{1-f'(\theta)}{1+f'(\theta)}.
    \end{align*}
    Since $f'$ is continuous and positive, we have $\|\mu_F\|_{L^\infty}(\D)<1$. Therefore, $F$ is quasiregular. It follows from the construction that $F(0)=0$ and $F$ is locally injective at each point of $\D\setminus\{0\}$, but not at $0$.
 
    Now we prove the conclusion under condition \ref{qr:2}. The Beurling--Ahlfors extension theorem \cite{BeurlingAhlfors:extension} gives a quasiconformal extension to $\D$ of an orientation-preserving quasisymmetric map of $\mathbb S^1$. The extension is given by a \textit{local} extension operator of continuous and increasing maps of $\R$ to self-maps of the upper half-plane. The locality of the extension operator implies that a locally quasisymmetric map $f\colon \mathbb S^1\to \mathbb S^1$ extends to a map $\widetilde{f} \colon \bar \D \to \bar \D$ with the following property: each point of $\mathbb S^1$ has a neighborhood in $\D$ in which $\widetilde f$ is injective and quasiconformal. Thus, $\widetilde f$ is quasiregular in a neighborhood $U$ of $\mathbb S^1$ in $\D$. We restrict $\widetilde f$ to $U\cup \mathbb S^1$.
    
    We also define $\widetilde f(z)= 1/\bar{\widetilde f(1/\bar z)}$ for points $z\in \C$ with $1/\bar z\in U$.  The removability of $\mathbb S^1$ for quasiconformal maps implies that each point of $\mathbb S^1$ has a neighborhood in $\widehat \C$ in which $\widetilde f$ is quasiconformal.  We define $\widetilde \mu(z)=\mu_{\widetilde f}(z)$ whenever $z\in U$ or $1/\bar z\in U$ and $\widetilde \mu(z)=0$ otherwise. By construction, $\widetilde \mu$ is invariant under the reflection $z\mapsto 1/\bar z$. By the measurable Riemann mapping theorem (\cite{AstalaIwaniecMartin:quasiconformal}*{Theorem 5.3.4}) there exists a quasiconformal map $h$ of $\widehat \C$ with $\mu_h=\widetilde \mu$. By symmetry, after post-composition with a M\"obius map, we may assume that $h(\mathbb S^1) = \mathbb S^1$ and $h(0) = 0$; see e.g.\ \cite{NtalampekosYounsi:rigidity}*{Proposition 8.1} for a more general statement. In particular, $h|_{\mathbb S^1}$ is orientation-preserving. Let $g = \widetilde{f} \circ h^{-1}$, and note that $g$ is locally injective on $\mathbb S^1$ and $g|_{\mathbb S^1}$ is orientation-preserving. By \cite{AstalaIwaniecMartin:quasiconformal}*{Theorem 5.5.1}, $g$ is analytic in a neighborhood of $\mathbb S^1$ in $\widehat \C$.  Since $g$ is locally injective on every point of $\mathbb S^1$, $g$ has no critical point on $\mathbb{S}^1$. By the first part of the lemma, $g|_{\mathbb S^1}$ has a quasiregular extension $G$ on $\D$ that has a unique critical point at $0$ and $G(0) = 0$. We finally set $F = G \circ h$ to obtain the desired extension.
\end{proof}

\subsection{David maps}\label{subsec:DavidMap}
An orientation-preserving homeomorphism $H\colon U\to V$ between domains in the Riemann sphere $\widehat{\C}$ is called a \textit{David homeomorphism} or else \textit{mapping of exponentially integrable distortion}, if it lies in the Sobolev space $W^{1,1}_{\loc}(U)$ (defined via local coordinates) and there exists $p>0$ such that
\begin{align}\label{exp_integral}
\int_U \exp(p K_H) \,d\sigma <\infty;
\end{align}
here $\sigma$ is the spherical measure and $K_H$ is the distortion function of $H$, given by
\begin{align*}
K_H(z)=\frac{1+|\mu_H|}{1-|\mu_H|}.
\end{align*}
Condition \eqref{exp_integral} is equivalent to the existence of constants $C,\alpha>0$ such that
$$\sigma(\{z\in U: |\mu_H(z)|>1-\varepsilon\})\leq C e^{-\alpha/\varepsilon}$$
for all $\varepsilon \in (0,1)$. We direct the reader to \cite{AstalaIwaniecMartin:quasiconformal}*{Chapter 20} for more background.  

If $U$ is an open subset of $\widehat {\C}$ and $\mu \colon U \to \D$ is a measurable function such that $K=(1+|\mu|)/(1-|\mu|)$ is exponentially integrable in $U$ (i.e., it satisfies \eqref{exp_integral}), then we say that $\mu$ is a \textit{David--Beltrami coefficient (in $U$)}.

\begin{proposition}[\cite{LyubichMerenkovMukherjeeNtalampekos:David}*{Proposition 2.5}]\label{prop:david_qc_invariance}
Let $f\colon U\to V$ be a David homeomorphism between open sets $U,V\subset \widehat {\C}$.
\begin{enumerate}[label=\normalfont(\roman*)]
\item If $g\colon V \to \widehat{\C}$ is a quasiconformal embedding then $g\circ f$ is a David map.
\item If $W\subset {\widehat{\C}}$ is an open set and $g\colon W \to U$ is a quasiconformal homeomorphism that extends to a quasiconformal homeomorphism of an open neighborhood of $\overline W$ onto an open neighborhood of $\overline U$, then $f\circ g$ is a David map.
\item If $W\subset \widehat{\C}$ is an open set, and $g\colon W\to U$ is a non-constant quasiregular map that extends to a quasiregular map in an open neighborhood of $\overline W$, then the function $$\mu_{f\circ g}=  \frac{\partial (f\circ g)/ \partial \overline z}{\partial (f\circ g)/\partial z} $$
is a David--Beltrami coefficient in $W$. 
\item If $U,W$ are quasidisks and $g\colon W\to U$ is a quasiconformal homeomorphism, then $f\circ g$ is a David map. 
\end{enumerate}
\end{proposition}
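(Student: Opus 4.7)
The overall strategy is to exploit the chain rule for Beltrami coefficients together with higher integrability of Jacobians of quasiconformal maps. The guiding pointwise inequality is
\begin{align*}
K_{H_2 \circ H_1}(z) \leq K_{H_2}(H_1(z)) \cdot K_{H_1}(z),
\end{align*}
valid for composable Sobolev maps and the key to controlling how distortion behaves under composition. The plan is to dispatch (i) directly, then treat (ii) via a change-of-variables argument that exploits the quasiconformal extension hypothesis; finally, (iii) will be reduced to (ii) through Stoilow factorization, and (iv) to (ii) through quasiconformal reflection across a quasicircle.

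For (i), the quasiconformal $g$ satisfies $K_g \leq K$ for some constant $K$, so the chain rule yields $K_{g \circ f}(z) \leq K \cdot K_f(z)$. Exponential integrability of $K_{g \circ f}$ then follows at once from that of $K_f$ by replacing the David exponent $p$ by $p/K$. The Sobolev regularity $g\circ f \in W^{1,1}_{\loc}$ is obtained from the chain rule together with the absolute continuity on lines of the quasiconformal factor.

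For (ii), the chain rule gives $K_{f\circ g}(z) \leq K \cdot K_f(g(z))$, where $K$ is now the distortion bound of $g$. I would change variables via the extended quasiconformal map $g$:
\begin{align*}
\int_W \exp(pK \cdot K_f(g(z)))\, d\sigma(z) = \int_{g(W)} \exp(pK \cdot K_f(w))\, J_{g^{-1}}(w)\, d\sigma(w).
\end{align*}
By Gehring's higher integrability theorem applied to the quasiconformal extension of $g$ past $\overline W$, the Jacobian $J_{g^{-1}}$ lies in $L^q$ on $g(W)$ for some $q > 1$. H\"older's inequality with conjugate exponent $q'$ then reduces the problem to the integrability of $\exp(pKq' \cdot K_f)$ over $U$, which is ensured by the David condition on $f$ once $p$ is taken sufficiently small. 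For (iii), I would apply the Stoilow factorization $g = G \circ h$, with $h$ quasiconformal and $G$ analytic on a neighborhood of $\overline W$. Since $G$ is analytic, the Beltrami coefficient of $f\circ G$ differs from $\mu_f \circ G$ only by a unimodular factor, so $K_{f \circ G}(w) = K_f(G(w))$ pointwise; the argument of (ii) applied to $h$ then reduces the problem to controlling $\int_{h(W)} \exp(pK\cdot K_f(G(w)))\, J_{h^{-1}}(w)\, d\sigma(w)$, after which the reverse H\"older inequality for the Jacobian of the non-constant quasiregular $g$, combined with the area formula, closes the estimate. Part (iv) reduces to (ii) by quasiconformal reflection across the quasicircles $\partial W$ and $\partial U$, which promotes $g$ to a quasiconformal map on a neighborhood of $\overline W$.

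The main obstacle is quantitative: one must track how much of the David exponent $p$ for $K_f$ is absorbed by the H\"older conjugate exponent $q'$ coming from the higher integrability of the quasiconformal Jacobian. The hypothesis in (ii) that $g$ extends quasiconformally beyond $\overline W$ is precisely what guarantees a \emph{uniform} higher integrability exponent on $g(W)$, rather than only a local one; this is the crucial input that makes the exponential estimate close. A parallel delicacy appears in (iii), where the analogous extension hypothesis on the quasiregular $g$ is used to secure uniform higher integrability of $J_g$.
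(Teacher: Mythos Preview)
The paper does not prove this proposition; it is quoted verbatim from \cite{LyubichMerenkovMukherjeeNtalampekos:David}*{Proposition 2.5} and no proof is given. The only hint in the paper about the argument is Remark~\ref{remark:composition}, which records the pointwise chain-rule inequality $1-|\mu_{g\circ f}|\leq K(1-|\mu_f|)$ for a $K$-quasiconformal $g$, and the identity $|\mu_{f\circ g}|=|\mu_f\circ g|$ when $g$ is conformal. Your strategy is built on exactly these ingredients, so it is aligned with what the paper indicates and with the standard proof in the cited source.

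Your outline is correct in parts (i), (ii), and (iv). In (ii), the key point you identify---that the extension hypothesis on $g$ is what yields a \emph{uniform} higher-integrability exponent for $J_{g^{-1}}$ via Gehring's lemma, so that H\"older closes the estimate with a fixed loss in the David exponent---is exactly the mechanism used in \cite{LyubichMerenkovMukherjeeNtalampekos:David}. In (iv), reflection across the quasicircles $\partial W,\partial U$ is indeed how one reduces to (ii).

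For (iii), your reduction via Stoilow factorization $g=G\circ h$ is the right move, and the identity $K_{f\circ G}=K_f\circ G$ for analytic $G$ is correct. The one imprecision is your appeal to a ``reverse H\"older inequality for the Jacobian of $g$'': reverse H\"older gives higher integrability of $J_g$, whereas what you actually need is local integrability of a \emph{negative} power of $|G'|$ (equivalently of $J_G$) in order to pass from $\int \exp(p'' K_f\circ G)|G'|^2\,d\sigma$ (controlled by the area formula and bounded multiplicity) to the unweighted integral $\int \exp(p'' K_f\circ G)\,d\sigma$. This is available precisely because $G$ is analytic on a neighborhood of the compact set $\overline{h(W)}$, so it has finitely many critical points there of bounded local degree; near a critical point of order $k$ one has $|G'|^{-\beta}$ locally integrable for $\beta<1/k$, and a H\"older splitting with suitably chosen exponents then closes the estimate. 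So your sketch is correct in spirit, but the phrase ``reverse H\"older for $J_g$'' should be replaced by this elementary integrability of $|G'|^{-\beta}$ together with the area formula and bounded multiplicity of $G$ on $\overline{h(W)}$.
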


\begin{remark}\label{remark:composition}
    Following the argument in the proof of Proposition 2.5 in \cite{LyubichMerenkovMukherjeeNtalampekos:David} one can show that if $f$ is a David map on $U$ and $g$ is $K$-quasiconformal for some $K\geq 1$, then for a.e.\ $z\in U$ we have
    $$ 1-|\mu_{g\circ f}(z)| =\frac{2}{K_{g\circ f}(z)} \leq  \frac{2}{K^{-1}K_{f}(z)+1} \leq \frac{2}{K^{-1}(K_f(z)+1)}=K (1-|\mu_f(z)|).$$
    Moreover, if $g$ is conformal, then $|\mu_{f\circ g}|=|\mu_f\circ g|$ almost everywhere.
\end{remark}

\subsection{Absolute continuity}\label{section:abs}
In the proof of the main theorems we will use a recent generalization of the metric definition of quasiconformality due to the second-named author. Let $A\subset \widehat \C$ be a set. The \textit{eccentricity} $E(A)$ of $A$ is the infimum of all numbers $H\geq 1$ for which there exists a spherical open ball $B$ such that $B\subset A\subset HB$; here $HB$ denotes the ball with the same center as that of $B$ and radius multiplied by $H$. Let $f\colon U\to V$ be a homeomorphism between open sets $U,V\subset \widehat \C$. The \textit{eccentric distortion of $f$} at a point $z\in U$, denoted by $E_f(z)$, is the infimum of all values $H\geq 1$ such that there exists a sequence of open sets $A_n\subset U$, $n\in \mathbb N$, containing $z$ with $\diam A_n\to 0$ as $n\to\infty$ and with the property that $E(A_n)\leq H$ and $E(f(A_n))\leq H$ for each $n\in \mathbb N$. The main result in \cite{Ntalampekos:metric} shows that $f$ is quasiconformal if and only if $E_f$ is uniformly bounded. In fact, we will not use directly this result, but we will use the following statement about the absolute continuity of $f$.

\begin{theorem}[\cite{Ntalampekos:metric}*{Theorem 3.1}]\label{theorem:eccentric}
Let $f\colon U\to V$ be a homeomorphism between open sets $U,V\subset \widehat \C$. Let $E\subset U$ be a set of area zero and suppose that there exists $H\geq 1$ such that 
$$E_f(z)\leq H$$
for all $z\in E$. Then for a.e.\ line $L$ parallel to a coordinate direction we have
$$\mathcal H^1(f(L\cap E))=0.$$
\end{theorem}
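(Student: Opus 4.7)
The plan is to reduce to Euclidean charts and to show, for vertical lines $L_x=\{x\}\times\mathbb R$ parametrized by $x\in\mathbb R$, that
\[
\int \mathcal H^1\bigl(f(L_x\cap E)\bigr)\,dx = 0,
\]
which by Fubini yields $\mathcal H^1(f(L_x\cap E))=0$ for a.e.\ $x$; the horizontal case is identical, and by the local nature of the statement we may assume $U,V$ are bounded Euclidean domains. Fix $\varepsilon>0$ and, using $\area(E)=0$, choose an open set $U_\varepsilon\supset E$ with $\area(U_\varepsilon)<\varepsilon$. For each $z\in E$ the hypothesis $E_f(z)\le H$ yields, for arbitrarily small diameters, an open set $A_z\ni z$ with both $A_z$ and $f(A_z)$ of eccentricity $\le H$; shrinking if necessary we may assume $A_z\subset U_\varepsilon$. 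Write $A_z\supset B(c_z,r_z)$ with $A_z\subset B(c_z,Hr_z)\subset B(z,2Hr_z)$, and $f(A_z)\supset B(c'_z,r'_z)$ with $f(A_z)\subset B(c'_z,Hr'_z)$.

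Apply the Besicovitch covering theorem to the family of balls $\{B(z,2Hr_z):z\in E\}$ (centered on $E$) to extract a countable subcover of $E$ with multiplicity bounded by a dimensional constant $N$. The selected sets $\{A_i\}$ then cover $E$ with multiplicity $\le N$, and since $f$ is a homeomorphism, so do the images $\{f(A_i)\}$. For each $x$ and each $i$, since any set $X\subset\R^2$ satisfies $\mathcal H^1(X)\le\diam X$,
\[
\mathcal H^1\bigl(f(L_x\cap A_i\cap E)\bigr)\le \diam f(A_i)\le 2Hr'_i,
\]
while $\{x:L_x\cap A_i\ne\emptyset\}$ (the horizontal projection of $A_i$) has length at most $\diam A_i\le 2Hr_i$. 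Subadditivity of $\mathcal H^1$ and Fubini give
\[
\int \mathcal H^1(f(L_x\cap E))\,dx \le \sum_i (2Hr_i)(2Hr'_i) = 4H^2\sum_i r_ir'_i.
\]

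By Cauchy--Schwarz, together with $\pi r_i^2\le\area(A_i)$, $\pi(r'_i)^2\le\area(f(A_i))$, and the multiplicity bounds applied to $\{A_i\}$ in $U_\varepsilon$ and to $\{f(A_i)\}$ in $V$,
\[
\sum_i r_ir'_i \le \Bigl(\sum_i r_i^2\Bigr)^{1/2}\Bigl(\sum_i (r'_i)^2\Bigr)^{1/2} \le \frac{N}{\pi}\sqrt{\area(U_\varepsilon)\cdot\area(V)} \le \frac{N}{\pi}\sqrt{\varepsilon\cdot\area(V)}.
\]
Since $\area(V)<\infty$, letting $\varepsilon\to 0$ forces the integral on the left to vanish, completing the proof. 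The main obstacle is arranging a cover whose multiplicity is bounded on both sides of $f$ simultaneously: Besicovitch applied to round balls centered on $E$ gives this on the domain side, and bounded multiplicity is automatically preserved by the homeomorphism on the image side. The eccentricity bounds on both $A_i$ and $f(A_i)$ are then precisely what is needed to convert the multiplicity bounds into $L^2$-type control on $r_i$ and $r'_i$ via area comparison, and Cauchy--Schwarz pairs the smallness of $\sum r_i^2$ (from $\area(E)=0$) against the finiteness of $\sum (r'_i)^2$ (from $\area(V)<\infty$) to close the argument.
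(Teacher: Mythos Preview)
The paper does not prove this statement; it is quoted as \cite{Ntalampekos:metric}*{Theorem 3.1} and used as a black box. So there is no paper proof to compare against, and I will assess your argument on its own merits. The overall strategy---cover $E$ by small sets on which both the set and its image have bounded eccentricity, then pair the domain and image radii via Cauchy--Schwarz---is the right one and is essentially how such absolute-continuity results are proved. However, there are two problems.

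\medskip

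\textbf{A minor error.} The assertion ``any set $X\subset\R^2$ satisfies $\mathcal H^1(X)\le\diam X$'' is false (a circle of radius $1$ has $\mathcal H^1=2\pi$ and diameter $2$). What you should do instead is observe that, if the $A_i$ covered $E$, then $\{f(A_i): L_x\cap A_i\neq\emptyset\}$ would be a cover of $f(L_x\cap E)$ by sets of diameter at most $2Hr'_i$, which bounds the Hausdorff \emph{content} $\mathcal H^1_\infty(f(L_x\cap E))$ by $\sum_{i:L_x\cap A_i\neq\emptyset}2Hr'_i$. Since $\mathcal H^1(X)=0$ iff $\mathcal H^1_\infty(X)=0$, this suffices after integration and letting $\varepsilon\to 0$. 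This is easily repaired.

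\medskip

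\textbf{A genuine gap.} The step ``the selected sets $\{A_i\}$ then cover $E$'' is not justified and is in general false. Besicovitch, applied to the balls $B(z,2Hr_z)$ centered at points of $E$, yields a countable subfamily of \emph{balls} $B_i=B(z_i,2Hr_i)$ covering $E$ with bounded overlap. The associated sets $A_i=A_{z_i}$ satisfy $A_i\subset B_i$, so they inherit the overlap bound, but there is no reason they cover $E$: a point $w\in E$ lies in some $B_i$, but nothing forces $w\in A_i$. (Concretely: take $E$ a line segment and for each $z\in E$ let $A_z$ be a disk of eccentricity $1$ with $z$ very close to its boundary; Besicovitch may select a single center $z_1$, yet $A_{z_1}$ misses most of $E$.) This is exactly where you need both sides controlled simultaneously: if you fall back on the balls $B_i$ to cover $E$, you lose control of $\diam f(B_i)$, since the eccentricity hypothesis only bounds $\diam f(A_i)$, not $\diam f(B_i)$. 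Fixing this requires a genuinely different covering argument---for instance, a Besicovitch--Morse type theorem for sets of bounded eccentricity (where the ``center'' need not be the geometric center), or an Eilenberg-inequality reduction to showing $\mathcal H^2\bigl(\{(\Re z,f(z)):z\in E\}\bigr)=0$ together with such a covering lemma. As written, the argument does not close.
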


Here and below, $\mathcal H^1$ denotes the Hausdorff $1$-measure defined using the Euclidean metric. We will also use the removability of quasicircles for continuous $W^{1,1}$ functions in the following sense.
\begin{proposition}\label{proposition:quasidisk}
    Let $U\subset \C$ be a quasidisk and $f\colon \overline U\to \R$ be a continuous function such that $f\in W^{1,1}(U)$. Then for a.e.\ line $L$ parallel to a coordinate direction we have
$$\mathcal H^1(f(\partial U\cap E))=0.$$  
\end{proposition}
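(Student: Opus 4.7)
My plan is to reduce the statement to the ACL characterization of Sobolev functions on $\C$, combined with Lusin's condition $(N)$ for absolutely continuous functions of one real variable. The preparatory step is to produce a continuous extension of $f$ to all of $\C$ that lies in $W^{1,1}_{\loc}(\C)$ and agrees with $f$ pointwise on $\overline U$; such an extension is available because $U$ is a quasidisk.

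For the extension, I would invoke the Sobolev extension property of quasidisks. Since $U$ is a quasidisk, it is a uniform domain, hence an $(\varepsilon,\delta)$-domain, and by Jones' extension theorem it admits a bounded linear extension operator $W^{1,1}(U)\to W^{1,1}(\C)$. Because Jones' operator is built out of Whitney averages of $f$, it preserves continuity up to the boundary: if $f$ is continuous on $\overline U$, the resulting extension $\tilde f$ is continuous on $\C$, lies in $W^{1,1}(\C)$, and satisfies $\tilde f|_{\overline U}=f$ pointwise. I expect this extension step to be the main technical point, because $W^{1,1}$ is \emph{not} preserved under post-composition with a quasiconformal map, so a naive Schwarz-type reflection across $\partial U$ does not directly supply the extension.

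Once the extension is in hand, I would combine two Fubini-type facts. By the ACL characterization of $W^{1,1}(\C)$, for almost every line $L$ parallel to a coordinate direction the restriction $\tilde f|_L$ is absolutely continuous. On the other hand, $\partial U$ is a quasicircle, hence the quasiconformal image of $\mathbb S^1$, so it has $\mathcal L^2$-measure zero; Fubini's theorem then yields $\mathcal L^1(L\cap\partial U)=0$ for almost every coordinate-direction line $L$.

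For any line $L$ satisfying both of these properties, Lusin's condition $(N)$ for absolutely continuous functions of one real variable ensures that $\tilde f|_L$ maps the $\mathcal L^1$-null set $L\cap\partial U$ to an $\mathcal H^1$-null set. Since $\tilde f=f$ pointwise on $\partial U$, this gives $\mathcal H^1(f(L\cap\partial U))=0$ for almost every coordinate-direction line $L$, which is the desired conclusion.
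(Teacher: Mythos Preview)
Your argument is correct. The paper does not supply its own proof but points to \cite{Ntalampekos:removabilitydetour}*{Proposition~5.3}, which treats H\"older domains under the stronger hypothesis $f\in W^{1,2}(U)$ and remarks that for quasidisks (and John domains) the argument simplifies and only requires $f\in W^{1,1}(U)$. Your route through Jones' extension theorem is a clean way to realize that simplification and is genuinely different in spirit from the cited argument: H\"older domains are not in general $(\varepsilon,\delta)$-domains, so the proof in the reference cannot invoke a global $W^{1,1}$ extension and must instead work directly with the interior Whitney geometry of $U$; for a quasidisk the extension is available, after which the conclusion reduces to the ACL characterization of $W^{1,1}(\C)$ together with Lusin's condition~(N), exactly as you wrote. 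The only step that merits a citation is the continuity of the Jones extension across $\partial U$ with $\tilde f|_{\overline U}=f$; this follows from the Whitney-average construction and the uniform continuity of $f$ on the compact set $\overline U$.
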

The proof given in \cite{Ntalampekos:removabilitydetour}*{Proposition 5.3} poses a weaker assumption on $U$, namely $U$ is assumed to be a H\"older domain, and a stronger assumption on $f$, which is assumed to lie in $W^{1,2}(U)$. However, in the case of quasidisks (and also John domains), it suffices to assume that $f\in W^{1,1}(U)$ and the proof in \cite{Ntalampekos:removabilitydetour}*{Proposition 5.3} can be followed with appropriate simplifications. 

Combining the above two results, we obtain the next statement.

\begin{theorem}\label{theorem:qc_david_remov}
    Let $J\subset \widehat\C$ be a gasket of area zero such that the components $U_i$, $i\in \N$, of $\widehat\C \setminus J$ are quasidisks. Let $f\colon \widehat \C\to \widehat\C$ be a homeomorphism and suppose that there exists $H\geq 1$ such that
    $$E_f(z)\leq H$$
    for all $z\in  J\setminus \bigcup_{i\in \N}\partial U_i$. 
    \begin{enumerate}[label=\normalfont(\arabic*)]
        \item If $f$ is quasiconformal on $\widehat\C\setminus J$, then $f$ is quasiconformal on $\widehat\C$.
        \item If $f$ is a David map on $\widehat\C\setminus J$, then $f$ is a David map on $\widehat\C$.
    \end{enumerate}
\end{theorem}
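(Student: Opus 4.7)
The plan is to reduce both statements to establishing the Sobolev regularity $f\in W^{1,1}_{\loc}(\widehat\C)$. Once this is in hand, the partial derivatives of $f$ on $\widehat\C$ agree almost everywhere with those on $\widehat\C\setminus J$ (since $J$ has area zero), so the Beltrami coefficient and the distortion function transfer verbatim from $\widehat\C\setminus J$ to $\widehat\C$. In case (1), the uniform bound $\|\mu_f\|_\infty<1$ then persists, and combined with the fact that the Jacobian of a Sobolev homeomorphism lies in $L^1_{\loc}$ (so $\|Df\|^2\leq K J_f$ gives $\|Df\|\in L^2_{\loc}$), one gets $f\in W^{1,2}_{\loc}(\widehat\C)$ and hence $f$ is quasiconformal. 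In case (2), $W^{1,1}_{\loc}$ together with exponential integrability of $K_f$ is precisely the definition of a David map.

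To get $f\in W^{1,1}_{\loc}(\widehat\C)$, the key step is to verify absolute continuity of $f$ on almost every line parallel to a coordinate direction. I would first prove the Luzin $N$-type statement that for a.e.\ such line $L$, $\mathcal H^1(f(L\cap J))=0$, by splitting $J= E\sqcup F$ with $E=J\setminus\bigcup_{i}\partial U_i$ and $F=J\cap\bigcup_i\partial U_i$. On $E$, the hypothesis $E_f\leq H$ together with $|E|\leq|J|=0$ lets one invoke Theorem \ref{theorem:eccentric} to obtain $\mathcal H^1(f(L\cap E))=0$ for a.e.\ $L$. On each $\partial U_i$, the assumption that $U_i$ is a quasidisk, the continuity of $f|_{\overline{U_i}}$, and the QC (resp.\ David) regularity of $f$ on $\widehat\C\setminus J\supset U_i$ ensure that $f|_{U_i}\in W^{1,1}(U_i)$ (for the David case, this uses $\|Df\|\leq (K_f J_f)^{1/2}$, Cauchy--Schwarz, and the fact that exponential integrability implies $K_f\in L^q$ for every $q$ while $J_f\in L^1$). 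Proposition \ref{proposition:quasidisk} applied to each coordinate of $f$ on each $U_i$ then yields $\mathcal H^1(f(L\cap \partial U_i))=0$ for a.e.\ $L$, and countability promotes this to $\mathcal H^1(f(L\cap F))=0$ for a.e.\ $L$.

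Next, I would combine this null-image property with the ACL regularity of $f$ off $J$. For a.e.\ line $L$, write $L\setminus J=\bigsqcup_k I_k$ with $I_k$ open intervals on which $f|_L$ is absolutely continuous with derivative in $L^1_{\loc}$; Fubini applied to $|Df|\in L^1_{\loc}(\widehat\C\setminus J)$ makes $\sum_k\int_{I_k}|(f|_L)'|$ finite on compact pieces of $L$. Injectivity of $f|_L$ gives the disjoint decomposition $\mathcal H^1(f(L\cap V))=\mathcal H^1(f(L\cap V\cap J))+\sum_k\mathcal H^1(f(I_k\cap V))$, and the first summand now vanishes, so $f|_L$ has finite variation on compacta. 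The Luzin $N$-property holds on $L\setminus J$ by absolute continuity on each $I_k$ and on $L\cap J$ by the result of the previous paragraph, so the Banach--Zarecki theorem produces absolute continuity of $f|_L$ on all of $L$. Applying the same reasoning to vertical lines gives ACL in both directions, whence $f\in W^{1,1}_{\loc}(\widehat\C)$, closing the argument. The principal difficulty lies in this middle step: one has the eccentric distortion hypothesis only on $E$, not on the contact set $F$, and the whole scheme hinges on Proposition \ref{proposition:quasidisk} being strong enough to supply the missing Luzin-type estimate along the quasidisk boundaries.
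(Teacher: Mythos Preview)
Your proposal is correct and follows essentially the same approach as the paper's proof: reduce to ACL via the characterization of Sobolev functions, split $J$ into the buried part $E$ and the boundary part $\bigcup_i\partial U_i$, handle $E$ with Theorem~\ref{theorem:eccentric} and each $\partial U_i$ with Proposition~\ref{proposition:quasidisk}, and then combine the null-image estimate with $f\in W^{1,1}(V\setminus J)$ to obtain absolute continuity on almost every line. The paper's write-up differs only cosmetically---it normalizes so that $\infty\in E$ and $f(\infty)=\infty$ up front, cites \cite{LyubichMerenkovMukherjeeNtalampekos:David} for the $W^{1,1}$ integrability you sketch by hand, and derives absolute continuity by the direct inequality $|f(z_1)-f(z_2)|\leq\int_{(z_1,z_2)\setminus J}\|Df\|\,d\mathcal H^1$ rather than invoking Banach--Zarecki.
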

\begin{proof}
    By composing with M\"obius transformations, we may assume that $\infty \in J\setminus \bigcup_{i\in \N}\partial U_i$, $f(\infty)=\infty$, and $E_f(z)\leq H$ for all $z\in  J\setminus \bigcup_{i\in \N}\partial U_i$. Note that the property of $f$ being a quasiconformal or David map is not affected by such compositions in view of Proposition \ref{prop:david_qc_invariance}. 
    Since $J$ has area zero and Sobolev functions are characterized as functions that are absolutely continuous on almost every line parallel to a coordinate direction (see \cite{Ziemer:Sobolev}*{Theorem 2.1.4}), if suffices to show that $f$ is absolutely continuous on almost every such line.   
    
    By Theorem \ref{theorem:eccentric} and Proposition \ref{proposition:quasidisk}, we have $\mathcal H^1(f(J\cap L))=0$ for a.e.\ line $L$ parallel to a coordinate direction. Moreover, the assumption that $f$ is a quasiconformal or a David map implies that for each bounded open set $V\subset \C$ we have $f\in W^{1,1}(V\setminus J)$; see the argument in the proof of Lemma 2.9 in \cite{LyubichMerenkovMukherjeeNtalampekos:David}. In particular, for a.e.\ line $L$ parallel to a coordinate direction we have $\int_{L\cap V\setminus J} \|Df\|\, d\mathcal H^1<\infty$ and $f|_{L\cap V\setminus J}$ is absolutely continuous. We fix a line $L$ satisfying all the above. We fix a segment $(z_1,z_2)\subset L\cap V$ and denote by $A_i$, $i\in I$, the components of $(z_1,z_2)\setminus J$.  We have 
    \begin{align*}
        |f(z_1)-f(z_2)|&\leq \mathcal H^1(f((z_1,z_2))) \leq \sum_{i\in I} \mathcal H^1(f(A_i)) + \mathcal H^1(f(J\cap L))\\
        &\leq \sum_{i\in I} \int_{A_i} \|Df\|\, d\mathcal H^1= \int_{(z_1,z_2)\setminus J} \|Df\|\, d\mathcal H^1.
    \end{align*}
    This implies that $f$ is absolutely continuous on $L\cap V$, as desired.    
\end{proof}

\subsection{Hyperbolic geometry}
If $X$ is a hyperbolic Riemann surface, we denote by $d_X$ its hyperbolic metric.  

\begin{lemma}[\cite{LuoZhang:CirclePacking}*{Lemma 5.5}]\label{lem:qc}
    Let $K\geq 1$ and $\phi\colon  \widehat \C \to \widehat \C$ be a $K$-quasiconformal map that fixes the points $0,1,\infty$. Then for $X=\widehat\C \setminus  \{0, 1,\infty\}$ and $t\in X$ we have
    $$
    d_{X}(t, \phi(t)) \leq \log K.
    $$
More generally, let $z_i,w_i\in \widehat \C$, $i\in \{1,2,3\}$, $\phi\colon \widehat \C \to \widehat \C$ be a $K$-quasiconformal map with $\phi(z_i)=w_i$, $i\in \{1,2,3\}$, and $M$ be a M\"obius map with $M(z_i)=w_i$, $i\in \{1,2,3\}$.  Then for $t\in \widehat \C \setminus  \{z_1, z_2, z_3\}$ and $X=\widehat \C \setminus   \{w_1,w_2,w_3\}$ we have
    $$
    d_{X}(M(t), \phi(t)) \leq \log K.
    $$
\end{lemma}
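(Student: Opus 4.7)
My plan is to embed $\phi$ in a holomorphic one-parameter family of normalized quasiconformal self-maps of $\widehat{\mathbb C}$ and then apply the Schwarz--Pick contraction lemma to the resulting holomorphic map into the hyperbolic Riemann surface $X=\widehat{\mathbb C}\setminus\{0,1,\infty\}$. This reduces the hyperbolic estimate for $d_X(t,\phi(t))$ to the elementary hyperbolic distance $d_{\mathbb D}(0,k)$, where $k=\|\mu_\phi\|_{L^\infty}$.

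Concretely, set $\mu=\mu_\phi$ and $k=\|\mu\|_{L^\infty}\le (K-1)/(K+1)$. For each $\lambda\in\mathbb D$ define the Beltrami coefficient $\mu_\lambda=(\lambda/k)\mu$, and let $\phi^\lambda\colon\widehat{\mathbb C}\to\widehat{\mathbb C}$ be the unique quasiconformal solution of the Beltrami equation with coefficient $\mu_\lambda$ normalized to fix $0,1,\infty$. By the measurable Riemann mapping theorem with parameters (Ahlfors--Bers), for every fixed $z\in\widehat{\mathbb C}$ the map $\lambda\mapsto\phi^\lambda(z)$ is holomorphic on $\mathbb D$; moreover $\phi^0=\mathrm{id}$ and $\phi^k=\phi$. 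Since each $\phi^\lambda$ is a homeomorphism fixing $\{0,1,\infty\}$, for any $t\in X$ the map $F_t(\lambda)=\phi^\lambda(t)$ is a holomorphic map $\mathbb D\to X$. The Schwarz--Pick lemma then gives
\[
d_X(t,\phi(t))=d_X(F_t(0),F_t(k))\le d_{\mathbb D}(0,k)=\log\frac{1+k}{1-k}\le\log K,
\]
which is the first assertion; note that the bound $\log\bigl((1+k)/(1-k)\bigr)\le\log K$ is in fact an equality at $k=(K-1)/(K+1)$, so the constant $\log K$ is sharp within this argument.

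For the more general statement I would simply repeat the above argument with $z_1,z_2,z_3$ in place of $0,1,\infty$: the map $M^{-1}\circ\phi$ is $K$-quasiconformal (since $M$ is a M\"obius transformation, hence conformal) and fixes $z_1,z_2,z_3$, so the reasoning above, applied on $X'=\widehat{\mathbb C}\setminus\{z_1,z_2,z_3\}$, yields $d_{X'}(t,M^{-1}\phi(t))\le\log K$. Since $M\colon X'\to X$ is a M\"obius isomorphism, it is an isometry between the hyperbolic metrics, and therefore
\[
d_X(M(t),\phi(t))=d_{X'}\bigl(M^{-1}(M(t)),M^{-1}\phi(t)\bigr)=d_{X'}(t,M^{-1}\phi(t))\le\log K.
\]
I do not expect a real technical obstacle in this proof: the two ingredients — Ahlfors--Bers holomorphic dependence of the normalized solution of the Beltrami equation on its parameter, and Schwarz--Pick decrease of the Kobayashi/hyperbolic metric under holomorphic maps — are both classical, and the only small point to verify is that $F_t$ takes values in $X$ (which is immediate from the fact that $\phi^\lambda$ is a homeomorphism fixing $0,1,\infty$).
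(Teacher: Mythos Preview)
Your proof is correct and is the standard argument for this inequality, via holomorphic motions and Schwarz--Pick. The only cosmetic gap is the degenerate case $k=0$, where $\mu_\lambda=(\lambda/k)\mu$ is undefined; but then $\phi$ is conformal and fixes $0,1,\infty$, hence is the identity, so the inequality is trivial.

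Note, however, that the paper does \emph{not} prove this lemma: it is quoted as \cite{LuoZhang:CirclePacking}*{Lemma 5.5} and stated without proof. So there is no ``paper's own proof'' to compare against here. Your argument is exactly the classical one that underlies the cited result.
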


\begin{lemma}\label{lemma:qc_identity}
    Let $K\geq 1$, $U\subsetneq \C$ be a simply connected domain, and $\phi\colon U \to U$ be a $K$-quasiconformal homeomorphism that extends to identity map on $\partial U$.
    Then  
    $$\sup_{z\in U}d_{U}(z,\phi(z))\leq C(K).$$
    Moreover, for each $K_0>1$ there exists some constant $A= A(K_0)>0$ so that if $K\leq  K_0$, then
    $$\sup_{z\in {U}}d_{U}(z,\phi(z))\leq A \log K.$$
\end{lemma}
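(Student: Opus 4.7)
The plan is to reduce via the Riemann map to the case $U=\D$, and then to bound $d_\D(0,g(0))$ for an arbitrary $K$-quasiconformal self-map $g$ of $\D$ with $g|_{\mathbb S^1}=\operatorname{id}$. The main obstacle is that Lemma~\ref{lem:qc} applied to a reflected extension produces a bound only in the metric of a three-punctured sphere, whereas we need a bound in $d_\D$; the two metrics differ dramatically near $\partial\D$. I would overcome this by first confining $g(0)$ to a compact subset of $\D$ via a quasi-isometry plus Morse-lemma argument, and then converting the Lemma~\ref{lem:qc} bound into a $d_\D$-bound by metric comparison on that compact set.

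Let $\psi\colon\D\to U$ be a Riemann map. The pullback $\tilde\phi=\psi^{-1}\circ\phi\circ\psi$ is $K$-quasiconformal on $\D$, and the hypothesis $\phi|_{\partial U}=\operatorname{id}$ combined with the boundary correspondence of $\psi$ gives $\tilde\phi|_{\mathbb S^1}=\operatorname{id}$. Since $\psi$ is a hyperbolic isometry between $(\D,d_\D)$ and $(U,d_U)$, it suffices to bound $d_\D(z,\tilde\phi(z))$. For each fixed $z\in\D$, conjugating by a M\"obius self-map of $\D$ sending $0$ to $z$ produces $g$ with the same properties as $\tilde\phi$ and satisfying $d_\D(0,g(0))=d_\D(z,\tilde\phi(z))$. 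Extending $g$ by reflection across $\mathbb S^1$, $\hat g(w)=1/\overline{g(1/\bar w)}$ for $|w|>1$, yields a $K$-quasiconformal self-map $\hat g$ of $\widehat\C$ fixing $\mathbb S^1$ pointwise. Applying Lemma~\ref{lem:qc} with $z_i=w_i\in\{1,-1,i\}$, so that the matching M\"obius map $M$ is the identity, gives
\begin{equation*}
d_X(0,g(0))\leq\log K,\qquad X=\widehat\C\setminus\{1,-1,i\}.
\end{equation*}

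To deduce $d_\D(0,g(0))\leq C(K)$, I would use that $K$-quasiconformal self-maps of $\D$ are quasi-isometries of the hyperbolic metric. Since $\hat g$ fixes $\pm1$ and $\pm i$, the curves $g([-1,1])$ and $g([-i,i])$ are quasi-geodesics with the same endpoints as the respective geodesic diameters, and by the Morse lemma in $\mathbb H^2$ each of them stays within a hyperbolic distance $C_1(K)$ of the corresponding diameter. Since $g(0)$ lies on both curves, it is within hyperbolic distance $C_1(K)$ of each of two perpendicular diameters through $0$, and elementary hyperbolic geometry then gives $d_\D(0,g(0))\leq C_2(K)$, proving the first assertion.

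For the ``moreover'' statement with $K\leq K_0$, the preceding bound confines $g(0)$ to the fixed compact set $\overline{D(0,r_0)}\subset\D$ with $r_0=r_0(K_0)<1$. The hyperbolic density of the three-punctured sphere $X$ is globally bounded below on $X$ by a positive multiple of the spherical density, since it blows up only at the three punctures; consequently $d_X(0,w)\geq c_0|w|$ for all $w\in\D$ with a constant $c_0>0$ depending only on the chosen punctures. Combining this with the elementary estimate $d_\D(0,w)\leq\frac{2|w|}{1-r_0}$ valid for $|w|\leq r_0$ and the bound $d_X(0,g(0))\leq\log K$ from Lemma~\ref{lem:qc}, we obtain $d_\D(0,g(0))\leq A(K_0)\log K$.
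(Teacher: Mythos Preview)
Your argument is correct, and its overall architecture (reduce to $\D$, normalize $z\mapsto 0$ by a M\"obius map, bound $d_\D(0,g(0))$, then compare $d_\D$ with $d_X$ on a compact set for the refined estimate) matches the paper's. There are two points where you diverge.

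First, for the coarse bound $d_\D(0,g(0))\le C(K)$, the paper simply extends $g$ by the identity on $\widehat\C\setminus\D$ (rather than by reflection) and invokes compactness of the family of $K$-quasiconformal maps of $\widehat\C$ fixing $1,i,-1$; this immediately gives a bound on $|g(0)|$, hence on $d_\D(0,g(0))$. Your route via the quasi-isometry property of $K$-quasiconformal self-maps of $\D$ and the Morse lemma applied to the two quasi-geodesic images of perpendicular diameters is valid and pleasant, but it imports more machinery than needed. The second part is then essentially identical in both proofs: once $g(0)$ is trapped in a fixed hyperbolic ball, $d_\D$ and $d_X$ are comparable there, and Lemma~\ref{lem:qc} finishes. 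Your global inequality $\rho_X\ge c_0\,\rho_\sigma$ is a correct way to see the comparison, though you only need it on the compact ball.

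Second, your reduction to $\D$ is a bit quick. For a general simply connected $U\subsetneq\C$ the Riemann map need not extend continuously to $\partial\D$, so ``the boundary correspondence of $\psi$'' is not available as stated; one must argue via prime ends that an orientation-preserving self-homeomorphism of $U$ extending to the identity on $\partial U$ also acts as the identity on the ideal boundary. The paper cites \cite{McM88}*{Proposition 5.2} for exactly this step. With that citation added, your proof is complete.
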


\begin{proof}
    Since $\phi$ is orientation-preserving, by \cite{McM88}*{Proposition 5.2}, $\phi$ is the identity on the ideal boundary $I(U)$, i.e., the space of prime ends of $U$. By conjugating $\phi$ with the uniformization map $\psi\colon  \D \to U$, we may assume that $U = \D$ and $\phi$ extends to the identity map on $\partial \D$.  We extend $\phi$ to $\widehat\C$ by the identity map in $\widehat \C\setminus \D$ to obtain a $K$-quasiconformal map on $\widehat\C$; here one uses the removability of the circle \cite{LehtoVirtanen:quasiconformal}*{Theorem I.8.3}.  Let $z\in \D$ and $M\colon \bar{\D}\to \bar \D$ be a M\"obius transformation such that $M(z)=0$. Then $\psi=M\circ \phi\circ M^{-1}$ is $K$-quasiconformal in $\D$ and its restriction to $\partial \D$ is the identity map. By replacing $\phi$ with $\psi$, it suffices to show the statement at the origin $z=0$. 
    
    Since $\phi|_{\partial \D}$ is the identity map, in particular, it fixes the points $1, i, -1$.
    By compactness of the space of $K$-quasiconformal maps fixing $1, i, -1$ (see \cite{LehtoVirtanen:quasiconformal}*{Theorem II.5.1}), we conclude that $d_{\D}(0,\phi(0)) \leq C(K)$. This proves the first part. For the second part, if $K \leq K_0$, then by the first part $\phi(0)$ lies in the hyperbolic ball $B_{d_\D}(0, C(K_0))$. We set $X=\widehat \C \setminus  \{1, i, -1\}$. In the ball $B_{d_\D}(0, C(K_0))$ the hyperbolic metrics $d_X$ and $d_\D$ are comparable with multiplicative constants depending on $K_0$. We apply Lemma \ref{lem:qc} to the map $\phi\colon  X\to X$ to conclude that 
    $$
    d_X(0, \phi(0)) \leq \log K.
    $$
    Therefore,  $d_{\D}(0,\phi(0))\leq A\log K$ for some constant $A = A(K_0)$.
\end{proof}

\begin{lemma}\label{lemma:qn}
    Let $\Omega_n=\C \setminus  [n,\infty)$ and $q_n\in \Omega_n$, $n\in \N\cup \{0\}$. Suppose that there exist constants $C>0$ and $\delta\in (0,1)$ such that $$d_{\Omega_n}(q_{n-1},q_n)< C \delta^n$$
    for each $n\in \N$. Then the sequence $\{q_n\}_{n\in \N}$ is convergent in the Euclidean metric and, in particular, it is bounded. 
\end{lemma}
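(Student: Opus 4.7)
The plan is to convert the given hyperbolic bound on $\Omega_n$ into a Euclidean bound using the Koebe two-sided comparison between the hyperbolic density and the reciprocal distance to the boundary, and then telescope. Since $\Omega_n=\C\setminus[n,\infty)$ is a simply connected proper subdomain of $\C$, Koebe's theorem provides absolute constants $c_1,c_2>0$ with
\begin{align*}
    \frac{c_1}{\dist(z,[n,\infty))}\le \rho_{\Omega_n}(z)\le \frac{c_2}{\dist(z,[n,\infty))}
\end{align*}
for every $z\in\Omega_n$. A standard Gr\"onwall argument along a hyperbolic geodesic from $q_{n-1}$ to $q_n$ of length less than $C\delta^n$ then produces a constant $A=A(C,\delta,c_1)$ so that, for every $n\in\N$,
\begin{align*}
    |q_n-q_{n-1}|\le A\,\delta^n\,\dist(q_{n-1},[n,\infty)).
\end{align*}
The crucial feature is that the right-hand side depends only on the starting point $q_{n-1}$, which is what enables the subsequent induction.

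Next, because $[n,\infty)\subset\R$, for any $w\in\C$ one has the trivial bound $\dist(w,[n,\infty))\le |w|+n$. Setting $a_n=|q_n|$, the previous estimate therefore gives the linear recursion
\begin{align*}
    a_n\le (1+A\delta^n)\,a_{n-1}+A\,n\,\delta^n.
\end{align*}
Iterating this recursion and using that $\prod_{k=1}^\infty(1+A\delta^k)$ and $\sum_{k=1}^\infty k\delta^k$ both converge (because $\delta\in(0,1)$) shows that the sequence $(a_n)$ is bounded by some finite constant $M$. Feeding this back into the bound on $|q_n-q_{n-1}|$ yields $|q_n-q_{n-1}|\le A\delta^n(M+n)$, whose series over $n\in\N$ is summable. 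Hence $\{q_n\}$ is a Cauchy sequence in the Euclidean metric, and is in particular convergent and bounded.

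The only delicate step is the passage from the hyperbolic to the Euclidean bound: one needs the Euclidean length of a short hyperbolic geodesic in $\Omega_n$ to be controlled by the Euclidean distance of \emph{one} endpoint to the slit $[n,\infty)$, with a factor that is essentially linear in the hyperbolic length. This is a routine integration: parametrizing the geodesic $\gamma$ by hyperbolic arclength, one has $|\gamma'(t)|_{\mathrm{Euc}}=1/\rho_{\Omega_n}(\gamma(t))\le \dist(\gamma(t),[n,\infty))/c_1$, so $t\mapsto \dist(\gamma(t),[n,\infty))$ is Lipschitz with constant $1/c_1$. Gr\"onwall then gives exponential growth of this distance along the geodesic, and integration produces the bound $|q_n-q_{n-1}|\le (e^{C\delta^n/c_1}-1)\dist(q_{n-1},[n,\infty))$, which in turn is bounded by $A\delta^n \dist(q_{n-1},[n,\infty))$ since $C\delta^n/c_1$ is uniformly bounded for $n\in\N$. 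Everything else is the summation of a series dominated by $n\delta^n$.
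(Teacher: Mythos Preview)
Your proof is correct and follows essentially the same strategy as the paper: convert the hyperbolic bound to a Euclidean one via Koebe (the paper uses the distortion theorem applied to a Riemann map $\psi_n\colon\D\to\Omega_n$, you use the equivalent density estimate plus a Gr\"onwall integration along the geodesic), obtain the recursion $|q_n-q_{n-1}|\lesssim \delta^n(n+|q_{n-1}|)$, and then establish boundedness and summability. The only cosmetic difference is in handling the recursion---the paper argues by contradiction that eventually $|q_{n-1}|<n$ and then inducts, whereas you iterate directly using convergence of $\prod(1+A\delta^k)$ and $\sum k\delta^k$; your route is slightly more streamlined. One wording quibble: ``Lipschitz with constant $1/c_1$'' should read ``satisfies $|D'(t)|\le D(t)/c_1$'', but your next sentence makes clear this is what you mean.
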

\begin{proof}    
    Fix $n\in \N$ and let $\psi_n\colon \D\to \Omega_n$ be a conformal map with $\psi_n(0)=q_{n-1}$; note that $q_{n-1}\in \Omega_{n-1}\subset \Omega_n$. The hyperbolic ball $B_{d_{\D}}(0,C\delta^n)$ is contained in a Euclidean ball $B(0,C_1\delta^n)$, where $C_1>0$ does not depend on $n$.  By Koebe's distortion theorem (\cite{Pommerenke:conformal}*{Section 1.3}), for each $z\in B_{d_{\D}}(0,C\delta^n)$ we have
    $$|\psi_n(z)-q_{n-1}|\leq  C_2|\psi_n'(0)|\delta^n\leq 4C_2\dist(q_{n-1},\partial \Omega_n)\delta^n,$$
    where $C_2>0$ does not depend on $n$. Equivalently, the above holds whenever $\psi_n(z)\in B_{d_{\Omega_n}}(q_{n-1},C\delta^n)$. Since $d_{\Omega_n}(q_{n},q_{n-1})<C\delta^n$, we conclude that 
    $$ |q_{n-1}-q_n|\leq \frac{1}{2}M\dist(q_{n-1},\partial \Omega_n)\delta^n,$$
    where $M=8C_2$. Note that $\dist (q_{n-1}, \partial \Omega_n)\leq n+|q_{n-1}| \leq 2\max\{n, |q_{n-1}|\}$, so
    \begin{align}\label{eq:qn_qn-1}
        |q_{n-1}-q_n|  \leq M\max\{n, |q_{n-1}|\}\cdot \delta^n.
    \end{align}
    Therefore,
    \begin{align}
    \label{eq:iq}    |q_{n}| \leq |q_{n-1}| + M\max\{n, |q_{n-1}|\}\cdot \delta^n
    \end{align}
    for all $n\geq 1$.

    Let $N_0\in \N$ such that $MN_0\delta^{N_0} < 1$. Suppose that $|q_{n-1}| \geq n$ for all $n > N_0$. Then by \eqref{eq:iq}, we have 
    $$
    n+1\leq |q_{n}| \leq |q_{n-1}| + M|q_{n-1}|\cdot \delta^n = (1+M\delta^n)|q_{n-1}| \leq  |q_{N_0}|\prod_{i=N_0+1}^\infty (1+M\delta^i)
    $$
    for $n>N_0$. This is a contradiction. Therefore, there exists $n_0 > N_0$ with $|q_{n_0-1}| < n_0$. By \eqref{eq:iq}, we have
    $$
    |q_{n_0}| \leq  |q_{n_0-1}| + M n_0 \delta^{n_0} \leq |q_{n_0-1}|+ MN_0\delta^{N_0}< |q_{n_0-1}| + 1 < n_0+1.
    $$
    Inductively, we conclude that $|q_{n_0+k-1}| \leq n_0+k$ for $k\geq 0$. Therefore, by \eqref{eq:qn_qn-1} we have
    $$|q_{n_0+k-1}-q_{n_0+k}| \leq M (n_0+k)\delta^{n_0+k}.$$
    for all $k\geq 0$. This implies that $\{q_n\}_{n\in \N}$ is convergent in the Euclidean metric.  
    \end{proof}

\begin{corollary}\label{corollary:pn}
    Let $C>0$, $\delta\in (0,1)$, and for each $n\in \N$ let $\phi_n\colon \widehat \C\to \widehat \C$ be a $(1+C\delta^n)$-quasiconformal map such that $\phi_n|_{[0,1/n]}$ is the identity map. Then for each $p_0\in \widehat \C\setminus [0,1]$ and for $p_n=\phi_n^{-1}(p_{n-1})$, $n\in \N$, the sequence $\{p_n\}_{n\in \N}$ is bounded away from $0$. 
\end{corollary}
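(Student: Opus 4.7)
The plan is to reduce to Lemma \ref{lemma:qn} via the Möbius transformation $T(z) = 1/z - 1$, chosen so that $T([0, 1/n]) = [n-1, \infty) \cup \{\infty\}$ and $T([0, 1]) = [0, \infty) \cup \{\infty\}$. Setting $q_n := T(p_n)$ and $\psi_n := T \circ \phi_n \circ T^{-1}$, the conjugate $\psi_n$ is a $(1+C\delta^n)$-quasiconformal self-homeomorphism of $\widehat{\C}$ that fixes $[n-1, \infty) \cup \{\infty\}$ pointwise and satisfies $\psi_n(q_n) = q_{n-1}$. Once Lemma \ref{lemma:qn} yields the boundedness of $\{q_n\}$ in the Euclidean metric, the identity $p_n = 1/(q_n+1)$ will translate this to $\{p_n\}$ being bounded away from $0$.

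To verify the inclusions $q_n \in \Omega_n = \C \setminus [n, \infty)$ required by Lemma \ref{lemma:qn}, I would use that $\phi_n$ fixes $[0, 1/n]$ pointwise: if $p_n \in [0, 1/n]$ then $p_{n-1} = \phi_n(p_n) = p_n \in [0, 1/n] \subset [0, 1/(n-1)]$, and iterating this implication forces $p_0 \in [0, 1]$, contradicting the hypothesis. Hence $p_n \notin [0, 1/n] \supset [0, 1/(n+1)] = T^{-1}([n, \infty) \cup \{\infty\})$, giving $q_n \in \Omega_n$ for $n \geq 1$. The case $n = 0$ follows from $T^{-1}([0, \infty) \cup \{\infty\}) = [0, 1]$ together with the hypothesis $p_0 \notin [0, 1]$.

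For the hyperbolic distance estimate, the restriction $\psi_n|_{\Omega_{n-1}}$ is a $(1+C\delta^n)$-quasiconformal self-homeomorphism of the simply connected slit plane $\Omega_{n-1} = \C \setminus [n-1, \infty)$ that extends to the identity on $\partial \Omega_{n-1}$, because $\psi_n$ fixes $[n-1, \infty) \cup \{\infty\}$ globally on $\widehat{\C}$. Lemma \ref{lemma:qc_identity}, applied with the uniform upper bound $K_0 = 1 + C$, then yields a constant $A = A(C) > 0$ such that
\begin{align*}
    d_{\Omega_{n-1}}(q_{n-1}, q_n) = d_{\Omega_{n-1}}(q_n, \psi_n(q_n)) \leq A \log(1+C\delta^n) \leq AC \delta^n.
\end{align*}
Since $\Omega_{n-1} \subset \Omega_n$, the Schwarz--Pick lemma gives $d_{\Omega_n}(q_{n-1}, q_n) \leq AC \delta^n$, which matches the hypothesis of Lemma \ref{lemma:qn} with constant $AC$. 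The main subtlety worth highlighting is the choice of Möbius transformation: the naive inversion $z \mapsto 1/z$ sends $[0, 1]$ only onto the proper subset $[1, \infty) \cup \{\infty\}$ of $\partial \Omega_0$, and therefore fails to translate $p_0 \notin [0, 1]$ into $q_0 \in \Omega_0$; the shift by $-1$ in $T$ is precisely what aligns $T([0,1])$ with $\partial \Omega_0$ and makes the reduction go through.
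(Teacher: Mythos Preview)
Your proof is correct and follows essentially the same route as the paper: conjugate by a Möbius inversion, apply Lemma \ref{lemma:qc_identity} to bound the hyperbolic displacement, and feed the result into Lemma \ref{lemma:qn}. The paper uses the plain inversion $z\mapsto 1/z$ (so $\psi_n$ fixes $[n,\infty]$ and Lemma \ref{lemma:qc_identity} is applied directly on $\Omega_n$ with no Schwarz--Pick step); your shifted choice $T(z)=1/z-1$ and the extra care in verifying $q_n\in\Omega_n$ are refinements that make the hypothesis $q_0\in\Omega_0$ hold exactly, but the argument is otherwise identical.
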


\begin{proof}
    Let $\psi_n(z)= 1/\phi_n(1/z)$, $n\in \N$, and note that $\psi_n$ is $(1+C\delta^n)$-quasi\-conformal and $\psi_n|_{[n,\infty]}$ is the identity map. Let $\Omega_n=\C\setminus [n,\infty)$ and $q_n=1/p_n$, $n\in \N\cup \{0\}$. By Lemma \ref{lemma:qc_identity}, there exists a constant $A>0$ such that  
    $$d_{\Omega_n}(q_n, q_{n-1})= d_{\Omega_n}(q_n,\psi_n(q_n))\leq A \log(1+C\delta^n)\leq AC\delta^n$$
    for each $n\in \N$. By Lemma \ref{lemma:qn}, $\{q_n\}_{n\in \N}$ is bounded away from $\infty$, so $\{p_n\}_{n\in \N}$ is bounded away from $0$. 
\end{proof}

\begin{lemma}\label{lemma:hyperbolic_euclidean}
For $t>0$, let $X_t=\widehat \C \setminus  \{0,t,\infty\}$. For $z,w\in (0,t)$ we have
$$ |z-w|\leq Ct \cdot d_{X_t}(z,w),$$
where $C>0$ is a uniform constant.
\end{lemma}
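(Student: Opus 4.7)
The plan is to reduce to the fixed model $X_1=\widehat\C\setminus\{0,1,\infty\}$ by rescaling, and then exploit that $(0,1)$ is a complete hyperbolic geodesic of $X_1$ on which the hyperbolic density is bounded below by a universal positive constant.

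First, I would use the M\"obius transformation $T(z)=z/t$, which maps $X_t$ conformally onto $X_1$ and is therefore an isometry of the hyperbolic metrics: $d_{X_t}(z,w)=d_{X_1}(T(z),T(w))$. Since $|z-w|=t\cdot|T(z)-T(w)|$, the claim reduces to producing a universal constant $C>0$ such that
$$|a-b|\leq C\cdot d_{X_1}(a,b)\quad\text{for all }a,b\in(0,1).$$

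Next I would observe that the open interval $(0,1)$ is a complete hyperbolic geodesic of $X_1$: it is the connected component containing $1/2$ of the fixed-point set of the anti-holomorphic involution $z\mapsto\bar z$ of $X_1$, and the fixed-point set of any anti-holomorphic involution of a hyperbolic Riemann surface is totally geodesic (one sees this by lifting to the universal cover $\mathbb H$, where the involution lifts to a reflection across a hyperbolic geodesic). Writing $\rho(x)$ for the density of the hyperbolic metric of $X_1$ along $(0,1)$ with respect to $|dx|$, this identification gives
$$d_{X_1}(a,b)=\int_a^b \rho(x)\,dx\quad\text{for all }0<a<b<1.$$

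It remains to bound $\rho$ below uniformly on $(0,1)$. The function $\rho$ is real-analytic and strictly positive there, and at each cusp $p\in\{0,1\}$ of $X_1$ it satisfies the standard asymptotic $\rho(x)\asymp 1/(|x-p|\log(1/|x-p|))$ as $x\to p$, which tends to $+\infty$. Consequently $\rho$ attains a positive minimum $m>0$ on $(0,1)$, yielding $d_{X_1}(a,b)\geq m|a-b|$. Combined with the rescaling, this proves the lemma with $C=m^{-1}$. There is no real obstacle in this plan; it relies only on the conformal rescaling $T$, the reflection principle identifying $(0,1)$ as a geodesic, and the well-known cusp asymptotics of the hyperbolic metric on a thrice-punctured sphere.
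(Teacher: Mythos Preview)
Your proof is correct and follows essentially the same approach as the paper: both reduce to the fixed model $X_1$ by scaling, identify $(0,1)$ as a hyperbolic geodesic via the reflection symmetry $z\mapsto\bar z$, and use a uniform positive lower bound on the hyperbolic density along $(0,1)$. The only cosmetic differences are that the paper cites \cite{BeardonPommerenke}*{Theorem 1} for the density bound and \cite{Kobayashi:transformation}*{Theorem II.5.1} for the geodesic fact, whereas you justify both directly (via cusp asymptotics and the reflection principle, respectively).
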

\begin{proof}
    The hyperbolic density $\rho_{X_1}$ of $X_1$ satisfies $\rho_{X_1}\geq C^{-1}$ on $(0,1)$ for some uniform constant $C>0$ (see e.g.\ \cite{BeardonPommerenke}*{Theorem 1}). Therefore, $\rho_{X_t}\geq C^{-1}/t$ on $(0,t)$. By symmetry, the hyperbolic geodesic between two points $z,w\in (0,t)$ in $X_t$ is the segment $[z,w]$; e.g.\ this follows from \cite{Kobayashi:transformation}*{Theorem II.5.1}. This gives the desired inequality. 
\end{proof}

\section{Expansive circle maps and conjugating homeomorphisms}\label{sec:epqcm}
In this section, we summarize the main results of \cite{LN24} on piecewise quasiconformal expansive circle maps and on extending a circle homeomorphism conjugating two such systems to a quasiconformal or David homeomorphism of the disk. See also \cite{LyubichMerenkovMukherjeeNtalampekos:David} for the case of piecewise analytic maps.

\subsection{Markov partitions}\label{section:markov_def}
If $a,b\in \mathbb{S}^1$, we denote by $\arc{[a,b]}$ and $\arc{(a,b)}$ the closed and open arcs, respectively, from $a$ to $b$ in the positive orientation. The arc $\arc{(b,a)}$, for example, is the complementary arc of $\arc{[a,b]}$. We also denote the arc $\arc{(a,b)}$ by $\inter{\arc{[a,b]}} $. We say that two non-overlapping arcs $I,J\subset \mathbb{S}^1$ are \textit{adjacent} if they share an endpoint.

\begin{definition}[Markov partition]\label{definition:markov_partition}
A \textit{Markov partition} associated to a covering map $f\colon \mathbb{S}^1\to \mathbb{S}^1$ is a covering of the unit circle by closed arcs $A_k=\arc{[a_k, a_{k+1}]}$, $k\in \{0,\dots, r\}$, $r\geq 1$, where $a_{r+1}=a_0$, that have disjoint interiors and satisfy the following conditions.
\begin{enumerate}[label={(\roman*)}]
\item\label{markov:i} The map $f_k=f|_{\inter{A_k}}$ is injective for $k\in \{0,\dots,r\}$.
\item\label{markov:ii} If $f(\inter{A_k})\cap \inter{A_j}\neq \emptyset$ for some $k,j\in \{0,\dots,r\}$, then $\inter{A_j}\subset f(\inter{A_k})$. 
\item\label{markov:iii} The set $\{a_0,\dots,a_r\}$ is invariant under $f$.
\end{enumerate}    
We denote the above Markov partition by $\mathcal P(f;\{a_0,\dots,a_r\})$. 
\end{definition}

Note that by definition, the points $a_0,\dots,a_r$ are ordered in the positive orientation if $r\geq 2$; if $r=1$, there is no natural order. Moreover, \ref{markov:ii} and \ref{markov:iii} are equivalent under condition \ref{markov:i}. 

Let $f\colon \mathbb{S}^1\to \mathbb{S}^1$ be a covering map and consider a Markov partition $\mathcal P=\mathcal P(f;\{a_0,\dots,a_r\})$. We can associate a matrix $B=(b_{kj})_{k,j=0}^r$ to $\mathcal P$ so that $b_{kj}=1$ if $f_k(A_k)\supset A_j$ and $b_{kj}=0$ otherwise. In the case $b_{kj}=1$ we define $A_{kj}$ to be the closure of $f_k^{-1}(\inter A_j)$. If $w=(j_1,\dots,j_n)\in \{0,\dots,r\}^n$, $n\in \N$, $k\in \{0,\dots,r\}$, and once $A_w$ has been defined, we define $A_{kw}$ to be the closure of $f_k^{-1}(\inter A_w)$ whenever $b_{kj_1}=1$. A \textit{word} $w=(j_1,\dots,j_n)\in \{0,\dots,r\}^n$, $n\in \N$, is \textit{admissible (for the Markov partition $\mathcal P$)} if $b_{j_1j_2} =\dots=b_{j_{n-1}j_n}=1$. The empty word $w$ is considered to be admissible and we define $A_w=\mathbb S^1$. We also define $A_w=\emptyset$ if $w$ is not admissible. The \textit{length} of a word $w=(j_1,\dots,j_n)\in \{0,\dots,r\}^n$ is defined to be $|w|=n$ and the length of the empty word is $0$. It follows from properties \ref{markov:i} and \ref{markov:ii} that for each $n\in \N$ the arcs $A_w$, where $|w|=n$, have disjoint interiors and their union is equal to $\mathbb{S}^1$. 

Inductively, we have $A_{wj}\subset A_w$ for all admissible words $w$ and $j\in \{0,\dots,r\}$. If $A_{wj}$ is non-empty, we say that $A_{wj}$ is a \textit{child} of $A_w$ and $A_w$ is the \textit{parent} of $A_{wj}$. Thus, $A_w$ has at most $r+1$ children. Also, if $k\in \{0,\dots,r\}$, $w$ is a non-empty word, and $(k,w)$ is admissible, then $f$ maps $\inter A_{kw}$ homeomorphically onto $\inter A_w$. Thus, $f$ acts as a subshift on the space of admissible words. 

We let $F_n$ be the preimages of $F_1= \{a_0,\dots,a_r\}$ under $n-1$ iterations of $f$ and $F_{0}=\emptyset$. Observe that $F_{n}\supset F_{n-1}$ for each $n\in \N$. For each $n\in \N$, the set $\mathbb S^1\setminus F_n$ consists of open arcs. For practical purposes we will use the terminology \textit{complementary arcs of $F_n$} to indicate the family of the \textit{closures} of the components of $\mathbb S^1 \setminus F_n.$ Hence, all complementary arcs of $F_n$ are closed arcs. Note that the complementary arcs of $F_n$ are the arcs $A_w$, where $w$ is an admissible word with $|w|=n$.

\subsection{Expansive maps}
We give the definition of an expansive map of $\mathbb S^1$.

\begin{definition}[Expansive map]\label{definition:expansive}
Let $f\colon \mathbb{S}^1\to \mathbb{S}^1$ be a continuous map. We say that $f$ is \textit{expansive} if there exists $\delta>0$ such that for every $a,b\in \mathbb{S}^1$ with $a\neq b$ we have $|f^{\circ n}(a)-f^{\circ n}(b)|>\delta$ for some $n\in \N\cup \{0\}$.
\end{definition}

An expansive map of $\mathbb S^1$ is necessarily a covering map of degree $d\geq 2$ \cite{HemmingsenReddy:expansive}*{Theorem 2}. 
Using Markov partitions we can give an equivalent definition of an expansive map. 
\begin{enumerate}[label={($E$\arabic*)}]
\item\label{expansive:diameters}
Let $f\colon \mathbb S^1\to \mathbb S^1$ be a continuous map and  $\mathcal P(f;\{a_0,\dots,a_r\})$ be a Markov partition. Then $f$ is expansive if and only if
\begin{align*}
\lim_{n\to\infty}\max\{\diam{A_w}: |w|=n \} =0.
\end{align*}
\end{enumerate}
This can be proved easily using \cite{PrzytyckiUrbanski:book}*{Theorem 3.6.1, p.~143}. We refer to \cite{LN24}*{Section 3.4} for a list of other important properties of expansive maps.

\subsection{Hyperbolic and parabolic points}\label{section:hyperbolic_parabolic_points}

Let $f\colon \mathbb S^1\to \mathbb S^1$ be a covering map and $\mathcal P(f;\{a_0,\dots,a_r\})$ be a Markov partition.

\begin{definition}[Hyperbolic points]\label{definition:hyperbolic}
Let $a \in\{a_0,\dots,a_r\}$. We say that $a^+$ (resp.\ $a^-$) is \textit{hyperbolic} if there exist $\lambda>1$ and $L\geq 1$ such that the following statement is true. 

\smallskip
\noindent
If $I_1,I_2$ are adjacent complementary arcs of $F_n$, $n\geq 1$, $a$ is an endpoint of $I_1$, and $I_1,I_2\subset \arc{[a,z_0]}$ (resp.\ $I_1,I_2\subset \arc{[z_0,a]}$) for some $z_0\neq a$, then for $i\in \{1,2\}$ we have
\begin{align}\label{inequality:adjacenthyperbolic}
L^{-1}\lambda^{-n}&\leq \diam {I_i} \leq L\lambda^{-n}.
\end{align}
In that case we set $\lambda(a^+)=\lambda$ (resp.\ $\lambda(a^-)=\lambda$) and call this number the \textit{multiplier} of $a^+$ (resp.\ $a^-$).
\end{definition}

\begin{definition}[Parabolic points]\label{definition:parabolic}
Let $a \in\{a_0,\dots,a_r\}$. We say that $a^+$  (resp.\ $a^-$) is \textit{parabolic} if there exist $N\in \N$ and $L\geq 1$ such that the following statement is true.

\smallskip
\noindent
If $I_1,I_2$ are adjacent complementary arcs of $F_n$, $n\geq 1$, $a$ is an endpoint of $I_1$, and $I_1,I_2\subset \arc{[a,z_0]}$ (resp.\ $I_1,I_2\subset \arc{[z_0,a]}$) for some $z_0\neq a$, then we have
\begin{align}\label{inequality:parabolic_1}
\frac{L^{-1}}{n^{1/N}}&\leq \diam {I_1} \leq \frac{L}{n^{1/N}} \quad \textrm{and}\quad \frac{L^{-1}}{n^{1/N +1}}\leq \diam {I_2} \leq \frac{L}{n^{1/N +1}}.
\end{align}
In that case we set $N(a^+)=N$ (resp.\ $N(a^-)=N$) and call the number $N(a^+)+1$ (resp.\ $N(a^-)+1$) the \textit{multiplicity} of $a^+$ (resp.\ $a^-$).
\end{definition}

\begin{definition}\label{definition:parabolic_hyperbolic_symmetric}
    Let $a\in \{a_0,\dots,a_r\}$. We say that $a$ is \textit{symmetrically hyperbolic} if $a^+$ and $a^-$ are hyperbolic with $\lambda(a^+)=\lambda(a^-)$. In this case we denote by $\lambda(a)$ the common multiplier. We say that $a$ is \textit{symmetrically parabolic} if $a^{+}$ and $a^-$ are parabolic with $N(a^+)=N(a^-)$. In this case we denote this common number by $N(a)$. 
\end{definition}

\begin{remark}\label{remark:hyp_par_analytic}
If $f\colon \mathbb S^1\to \mathbb S^1$ is analytic and expansive, it is shown in Lemma 4.17 and Lemma 4.18 in \cite{LyubichMerenkovMukherjeeNtalampekos:David} that each point $a\in \{a_0,\dots,a_r\}$ is either symmetrically hyperbolic or symmetrically parabolic. In that case, if $a$ is periodic and $q$ is its period, then $\lambda(a)^q$ (resp.\ $N(a)+1$) is the usual multiplier (resp.\ multiplicity) of the analytic map $f^{\circ q}$.
\end{remark}

\subsection{Extension theorem}\label{section:extension}
Let $f,g\colon \mathbb S^1\to \mathbb S^1$ be covering maps with corresponding Markov partitions $\mathcal P(f;\{a_0,\dots,a_r\})$, $\mathcal P(g;\{b_0,\dots,b_r\})$. We give some definitions below using the notation associated to the map $f$. First we require the following condition.
\begin{enumerate}[label=\normalfont(M1)]
\medskip
    \item\label{condition:hp} For each $a\in \{a_0,\dots,a_r\}$, each of $a^+$ and $a^-$ is either hyperbolic or parabolic, as defined in Section \ref{section:hyperbolic_parabolic_points}.
\medskip
\end{enumerate}
Define $A_k=\arc{[a_k,a_{k+1}]}$ for $k\in \{0,\dots,r\}$ and recall that $f_k=f|_{\inter{A_k}}$ is injective by the definition of a Markov partition. We impose the following condition.
\begin{enumerate}[label=\normalfont(M2)]
\medskip 
    \item\label{condition:uv} For each $k\in \{0,\dots,r\}$ there exist open neighborhoods $U_k$ of $\inter{A_k}$ and $V_k$ of $f_k(\inter{A_k})$ in $\C$ such that $f_k$ has an extension to a {homeomorphism} from $U_k$ onto $V_k$. We denote the extension by $f_k$. Furthermore, we assume that
    \begin{align*}
    \bigcup_{\substack{0\leq j\leq r\\(k,j) \,\, \textrm{admissible}}}U_j \subset V_k
    \end{align*} 
    for all $k\in \{0,\dots,r\}$ and that the regions $U_k$, $k\in \{0,\dots,r\}$, are pairwise disjoint. 
\medskip
\end{enumerate}
We denote by $f$ the map that is equal to $f_k$ on $U_k$, $k\in \{0,\dots,r\}$. 
Using condition \ref{condition:uv}, for each admissible word $w$ we can find open regions $U_w$ with the following properties:
\begin{enumerate}[label=\normalfont{(\roman*)}]
\item $U_{wj} \subset U_w$, if $(w,j)$ is admissible, and 
\item $f$ maps $U_{kw}$ homeomorphically onto $U_w$, if $(k,w)$ is admissible. 
\end{enumerate}
Next, we impose the following condition.
\begin{enumerate}[label=\normalfont(M3)]
\medskip
    \item\label{condition:qs} There exists $K\geq 1$ such that for each $m\in \N\cup \{0\}$ and each admissible word $w$ with $|w|\geq m$ the map $f^{\circ m}|_{U_w}$ is $K$-quasiconformal.  
\medskip 
\end{enumerate}
Finally, we consider a technical condition that is a {refinement} of \ref{condition:qs} and compensates for the unavailability of Koebe's distortion theorem in our setting. For the sake of simplicity we state here a stronger version of this condition than the one given in \cite{LN24}, which is sufficient for our purpose.
\begin{enumerate}[label=\normalfont(M$3^\ast$)]
\medskip
    \item  \label{condition:qs_strong} There exists $C>0$ such that for each $n,m\in \N\cup \{0\}$ and each admissible word $w$ with $|w|=n+m$ the map $f^{\circ m}|_{U_w}$ is $K_n$-quasiconformal for $K_n=1+C(1+\log (n+1))^{-1}$. 
\medskip
\end{enumerate}

The following extension theorem is the main theorem in \cite{LN24}, stated in a slightly simplified form here.

\begin{theorem}[\cite{LN24}*{Theorem 4.1}]\label{theorem:extension_generalization}Let $f,g\colon \mathbb{S}^1\to \mathbb{S}^1$ be expansive covering maps with the same orientation and $\mathcal P(f;\{a_0,\dots,a_r\})$, $\mathcal P(g;\{b_0,\dots,b_r\})$ be Markov partitions satisfying conditions \ref{condition:hp}, \ref{condition:uv}, and \ref{condition:qs}. Suppose that the map $h\colon \{a_0,\dots,a_r\} \to \{b_0,\dots,b_r\}$ defined by $h(a_k)=b_k$, $k\in \{0,\dots,r\}$, conjugates $f$ to $g$ on the set $\{a_0,\dots,a_r\}$ and assume that for each point $a\in \{a_0,\dots,a_r\}$ and for $b=h(a)$ one of the following alternatives occurs.

\smallskip
    
\begin{enumerate}[label= {\textup{\scriptsize(\textbf{H/P${\rightarrow}$H/P})}}, leftmargin=6em] 
\item\label{HH} 
$a$ is symmetrically hyperbolic (resp.\ parabolic) if and only if $b$ is symmetrically hyperbolic (resp.\ parabolic).
\end{enumerate}

\begin{enumerate}[label={\textup{\scriptsize(\textbf{H$\to$P})}}, leftmargin=6em]
\item\label{HP} $a$ is symmetrically hyperbolic and $b$ is symmetrically parabolic.
\end{enumerate}

\smallskip 
\noindent
If the alternative \ref{HP} does not occur, then the map $h$ extends to a  homeomorphism $\widetilde h$ of $\bar \D$ such that $\widetilde h|_{\mathbb S^1}$ conjugates $f$ to $g$ and $\widetilde h|_{\D}$ is a quasiconformal map. 

\smallskip 
\noindent
If $g$ satisfies condition \ref{condition:qs_strong}, then the map $h$ extends to a homeomorphism $\widetilde h$ of $\bar \D$ such that $\widetilde h|_{\mathbb S^1}$ conjugates $f$ to $g$ and $\widetilde h|_{\D}$ is a David map. 
\end{theorem}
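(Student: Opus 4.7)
The plan is to first extend the conjugacy $h$ from the finite set $\{a_0,\dots,a_r\}$ to all of $\mathbb S^1$ using the dynamics, and then extend the resulting circle homeomorphism to $\bar \D$ by a pullback scheme. For the first step, the Markov property together with the relation $h \circ f = g \circ h$ dictates the value of $h$ on the forward-invariant set $\bigcup_n F_n$: given $z \in F_n \setminus F_{n-1}$, one locates the unique admissible word $w$ of length $n-1$ such that $z$ is an endpoint of a child of $A_w^f$ and sends it to the corresponding endpoint for $g$. Expansiveness \ref{expansive:diameters} ensures that the diameters of the arcs $A_w^f$ and $A_w^g$ tend to zero as $|w| \to \infty$, so this order-preserving bijection on a dense subset of $\mathbb S^1$ extends continuously to a circle homeomorphism conjugating $f$ to $g$. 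Condition \ref{condition:qs} further implies that the resulting map is locally quasisymmetric on $\mathbb S^1 \setminus \{a_0,\dots,a_r\}$.

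To extend $h$ into $\bar\D$, I first build an initial extension $\tilde h_0 \colon \bar \D \to \bar \D$ adapted to the local dynamics at each $a_k$. Away from $\{a_0,\dots,a_r\}$ I use a Beurling--Ahlfors type extension, which is quasiconformal by local quasisymmetry. Near each $a_k$ I replace this with an explicit model map: a linearization conjugating the two multipliers in the H$\to$H case of \ref{HH}; a Fatou-flower conjugation between petals of matching multiplicity $N+1$ in the P$\to$P case; and, only in the David setting, a model interpolating between an exponentially contracting and a polynomially contracting petal in the \ref{HP} case. Then I refine $\tilde h_0$ by pulling back through the dynamics: for each admissible word $w$ set $\tilde h|_{U_w^f} = (g_w)^{-1} \circ \tilde h_0 \circ f_w$, where $f_w,g_w$ are the iterated branches provided by \ref{condition:uv}. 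Expansiveness together with condition \ref{condition:qs} gives uniform convergence to a limit homeomorphism $\tilde h$ extending $h$ on $\mathbb S^1$.

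The regularity of $\tilde h$ is controlled by tracking the Beltrami coefficient along the pullback. Away from the periodic points, the orbit of any $z \in \D$ eventually exits any neighborhood of $\{a_0,\dots,a_r\}$, so \ref{condition:qs} and Remark \ref{remark:composition} bound the dilatation uniformly. At hyperbolic periodic points the linearization model has bounded distortion; when \ref{HP} is excluded, the parabolic-to-parabolic model carries distortion that remains bounded in Fatou-flower coordinates, yielding the quasiconformal conclusion. For the David conclusion one allows the \ref{HP} case, and then the dilatation of $\tilde h$ near a parabolic point on the $g$-side blows up. The main obstacle is exactly this: level-$n$ arcs near a parabolic point cluster at a polynomial rate $n^{-1/N}$ rather than exponentially, so a naive pullback estimate fails. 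The strengthened condition \ref{condition:qs_strong}, with its $1 + C/\log(n+1)$ distortion bound on deep iterates, is tuned to cancel this polynomial clustering: the logarithmic decay of the excess dilatation, summed against the logarithmic scale of Fatou-flower coordinates, yields a finite exponential distortion integral and hence the David property.
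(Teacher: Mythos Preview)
This theorem is quoted from \cite{LN24} and the present paper does not supply a proof; it is invoked as a black-box input to Lemma~\ref{lem:fixconjugatedynamics}. So there is no in-paper argument to compare your sketch against.

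That said, your outline has the right overall architecture --- extend the conjugacy to all of $\mathbb S^1$ via expansiveness, then push regularity into $\bar\D$ by a pullback --- but the local-model step is imprecise in this setting. The maps $f,g$ are not assumed analytic, only piecewise quasiconformal via \ref{condition:uv} and \ref{condition:qs}; the notions ``hyperbolic'' and ``parabolic'' in Definitions~\ref{definition:hyperbolic} and~\ref{definition:parabolic} are purely metric statements about the decay of $\diam I_1,\diam I_2$ for adjacent level-$n$ arcs, not statements about multipliers or Fatou coordinates. There is therefore no linearizing chart and no Fatou-flower coordinate to appeal to; any ``model map'' near $a_k$ must be built by hand from the metric data \eqref{inequality:adjacenthyperbolic}--\eqref{inequality:parabolic_1}, and carrying out that construction (together with the gluing across the partition points) is precisely the substantive work of \cite{LN24}. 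A second imprecision: your pullback formula $\tilde h|_{U_w^f}=(g_w)^{-1}\circ\tilde h_0\circ f_w$ uses the neighborhoods $U_w\subset\C$ from \ref{condition:uv}, which straddle $\mathbb S^1$ rather than sit inside $\D$, so as written it does not define a self-map of $\bar\D$; the actual scheme has to be organized either on one side of $\mathbb S^1$ or by first establishing the required quasisymmetry/David-type estimate for $h|_{\mathbb S^1}$ and then invoking an extension theorem.
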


\section{Subdivision rules}\label{sec:cpsr}

In this section we summarize some elementary results on subdivision rules. We refer the reader to \cite{LuoZhang:CirclePacking} for more details.

\subsection{Subdivision rules}
We first recall that a CW complex $Y$ is a \textit{subdivision} of a CW complex $X$ if $X = Y$ (as topological spaces) and every closed cell of $Y$ is contained in a closed cell of $X$.
We define a \textit{polygon} as a finite CW complex homeomorphic to a closed disk 
that consists of one 2-cell, {whose 1-skeleton is a Jordan curve} with at least three 0-cells.
We will also call 0-cells, 1-cells, and 2-cells the {vertices, edges}, and {faces} respectively. We say two vertices are \textit{adjacent} if they are on the boundary of an edge and \textit{non-adjacent} otherwise.
\begin{definition}\label{defn:fsr}
A \textit{finite subdivision rule} $\mathcal{R}$ consists of 
\begin{enumerate}[label=(\arabic*)]
\item\label{defn:fsr:1} a finite collection of {oriented} polygons $\{P_i: i=1,\dots, k\}$,
\item\label{defn:fsr:2} a subdivision $\mathcal{R}(P_i)$ that decomposes each polygon $P_i$ into $m_i \geq 2$ closed 2-cells
$$P_i = \bigcup_{j=1}^{m_i} P_{i, j}$$
so that each edge of $\partial P_i$ contains no vertices of $\mathcal{R}(P_i)$ in its interior and each $2$-cell $P_{i,j}$ inherits the orientation of $P_i$, and
\item\label{defn:fsr:3} a map $\sigma \colon \bigcup_{i=1}^k (\{i\}\times \{1,\dots,m_i\})\to \{1,\dots,k\}$ and a collection of {ori\-en\-tation-preserving} cellular maps, denoted by
$$
\psi_{i,j}: P_{\sigma(i,j)} \to P_{i,j},\quad i\in \{1,\dots,k\}, \,\, j\in \{1,\dots,m_i\},
$$
that are homeomorphisms between the open 2-cells. We call $P_{\sigma(i,j)}$ the \textit{type} of the closed 2-cell $P_{i,j}$.
\end{enumerate}
\end{definition}
For simplicity, we use the notation $\mathcal R=\{P_i\}_{i=1}^k$ for the finite subdivision rule $\mathcal R$, suppressing the decomposition in \ref{defn:fsr:2} and the cellular maps in \ref{defn:fsr:3}.

By condition \ref{defn:fsr:3}, we can iterate the subdivision procedure and obtain for each $n\in \N\cup \{0\}$ a decomposition $\mathcal{R}^n(P_i)$ of $P_i$, where $\mathcal{R}^0(P_i)$=$P_i$ and $\mathcal R^1(P_i)=\mathcal R(P_i)$. By condition \ref{defn:fsr:2}, we can identify the $1$-skeleton $\mathcal{G}^n(P_i)$ of $\mathcal{R}^{n}(P_i)$ as a subgraph of the $1$-skeleton $\mathcal{G}^{n+1}(P_i)$ of $\mathcal{R}^{n+1}(P_i)$.
We consider the direct limit 
$$
\mathcal{G}(P_i) = \lim_{\rightarrow} \mathcal{G}^n(P_i)= \bigcup_{n=0}^{\infty} \mathcal{G}^n(P_i)
$$ 
for $i\in \{1,\dots,k\}$. We call $\mathcal{G}(P_i)$, $i\in \{1,\dots,k\}$, the \textit{subdivision graphs} for $\mathcal{R}$. We consider each polygon $P_i$ as a subset of $\widehat\C$ and we call the set $F_i\coloneqq\overline{\widehat\C \setminus P_i}$ the \textit{external face}. Note that $F_i$ is not subdivided, and remains a face of $\mathcal{G}^n(P_i)$ for all $n\in \N\cup \{0\}$. 

\begin{definition}\label{defn:cyl}
Let $\mathcal{R}=\{P_i\}_{i=1}^k$ be a finite subdivision rule. 
\begin{itemize}
    \item We say that $\mathcal R$ is \textit{simple} if, for each $i\in \{1,\dots,k\}$, $\mathcal{G}(P_i)$ is a simple graph, i.e., no pair of vertices is connected by multiple edges and no edge connects a vertex to itself.
    \item We say that $\mathcal R$ is \textit{irreducible} if, for each $i\in \{1,\dots,k\}$, $\partial P_i$ is an induced subgraph of $\mathcal{G}(P_i)$, i.e., $\partial P_i$ contains all edges of $\mathcal{G}(P_i)$ that connect vertices in $\partial P_i$.
    \item We say that $\mathcal R$ is \textit{acylindrical} if, for each $i\in \{1,\dots,k\}$, for any pair of non-adjacent vertices $v,w \in \partial P_i$, the two components of $\partial P_i \setminus \{v,w\}$ are connected in $\mathcal{G}(P_i) \setminus \{v, w\}$.
    We call $\mathcal R$ \textit{cylindrical} otherwise.
\end{itemize}
\end{definition}

By cutting each polygon $P_i$ into finitely many pieces if necessary, we can always make a finite subdivision rule $\mathcal R$ irreducible.

\begin{figure}
    \centering
    \begin{subfigure}[b]{0.7\textwidth}
        \centering
        \includegraphics[width=\textwidth]{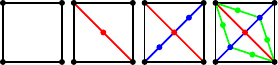}
    \caption{}
    \label{fig:subda}
    \end{subfigure}
    \begin{subfigure}[b]{0.7\textwidth}
        \centering
        \includegraphics[width=\textwidth]{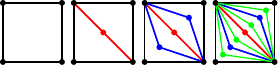}
    \caption{}
    \label{fig:subdb}
    \end{subfigure}
    \caption{An example of an acylindrical subdivision rule on the top, and a cylindrical subdivision rule on the bottom. The top subdivision satisfies condition \ref{condition:si} if we take the second iterate $\mathcal{R}^2$.}
    \label{fig:subd}
\end{figure}

{We remark that in Definition \ref{defn:fsr}, the identification maps $\psi_{i,j}$ are only assumed to be homeomorphisms between the open 2-cells. Thus, a face $F^n$ of $\mathcal{G}^n(P_i)$ may not be a  polygon, i.e., the $1$-skeleton of the face need not be a Jordan curve. The following proposition says that if $\mathcal{R}$ is simple, irreducible, and acylindrical, then one can always enlarge the subdivision and modify the identification maps so that every face is a polygon.

\begin{proposition}\label{prop:jd}
    Let $\mathcal{R}=\{P_i\}_{i=1}^k$ be a simple, irreducible, acylindrical finite subdivision rule. Then there exist $K\in \N$ and a simple, irreducible, acylindrical finite subdivision rule $\widetilde{\mathcal{R}} = \{P_i\}_{i=1}^k \cup \{Q_i\}_{i=1}^l$ with the following properties.
    \begin{enumerate}[label=\normalfont(\arabic*)]
        \item For each $i \in \{ 1,\dots, k\}$ and $n \geq 1$, we have
        $$
        \mathcal{G}^{nK}(P_i) \subset \widetilde{\mathcal{G}}^n(P_i) \subset \mathcal{G}^{(n+1)K}(P_i).
        $$
        In particular, $\mathcal{G}(P_i) = \widetilde{\mathcal{G}}(P_i)$.
        \item For all $i\in \{1,\dots, k\}$ (resp.\ $j\in\{1,\dots, l\}$) and $n \geq 1$, each face $F^n$ of $\widetilde{\mathcal{G}}^n(P_i)$ (resp.\ $\widetilde{\mathcal{G}}^n(Q_j)$) is a polygon and $\partial F^n$ is an induced subgraph of $\widetilde{\mathcal{G}}^n(P_i)$ (resp.\ $\widetilde{\mathcal{G}}^n(Q_j)$).
    \end{enumerate}
\end{proposition}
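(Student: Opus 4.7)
The plan is to iterate the original rule $\mathcal R$ to some sufficiently large depth $K$, and to define $\widetilde{\mathcal R}$ so that its single-step subdivision of each $P_i$ is essentially $\mathcal R^K(P_i)$, with additional polygon types $Q_1, \dots, Q_l$ introduced to record face refinements that do not correspond to any of the original polygons. The slack encoded in the containment $\widetilde{\mathcal G}^n(P_i) \subset \mathcal G^{(n+1)K}(P_i)$ leaves room to perform additional local refinements on the faces of $\mathcal R^K(P_i)$, in order to guarantee that all faces of $\widetilde{\mathcal R}(P_i)$ are polygons with induced-subgraph boundary.

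The main obstacle, and the content of the key step, is to prove that some $K \in \mathbb N$ has the property that for every $i$ and every face $F$ of $\mathcal G^K(P_i)$, the boundary $\partial F$ can be made into a Jordan curve that is an induced subgraph of $\mathcal G^K(P_i)$ by an additional refinement of depth at most $K$. I would argue by contradiction: if no such $K$ exists, then arbitrarily deep iterates produce faces $F_n$ of $\mathcal G^n(P_{i_n})$ whose boundaries carry pinches (a vertex appearing more than once on $\partial F_n$) or chords (an edge of $\mathcal G^n(P_{i_n})$ joining two non-adjacent boundary vertices of $F_n$ through the interior) that persist under further subdivisions. Finiteness of the polygon types $P_i$ together with the cellular structure of the maps $\psi_{i,j}$ allows one to transport such a persistent pathology to a fixed subdivision graph $\mathcal G(P_{i_0})$, where simplicity rules out pinches, irreducibility rules out chords between boundary vertices of $P_{i_0}$, and acylindricity rules out the global configurations in which these defects could accumulate on internal faces, yielding a contradiction.

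With $K$ fixed, I would define $\widetilde{\mathcal R}(P_i)$ as $\mathcal R^K(P_i)$ together with the local face-refinements produced in the previous step, so that every resulting face is a polygon. Faces that are cellularly isomorphic to some original $P_j$ are assigned type $P_j$, while the remaining face types are recorded as new polygon types $Q_1, \dots, Q_l$. For each $Q_j$, which arises as a refinement of some original $P_{i_0}$, I would set $\widetilde{\mathcal R}(Q_j)$ to be the pullback of $\mathcal R^K(P_{i_0})$ (with the analogous local refinements) through the natural cellular map $Q_j \to P_{i_0}$, compatibly extended across the extra boundary vertices of $Q_j$. Both containments of property (1) then follow by induction on $n$, since one $\widetilde{\mathcal R}$-step performs at least $K$ and at most $2K$ levels of the original $\mathcal R$-subdivision on the interior of each face, the upper bound being precisely what leaves the $Q_j$-types room for their extra boundary refinements. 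Finally, the properties simple, irreducible, and acylindrical for $\widetilde{\mathcal R}$ are inherited from those of $\mathcal R$ via the equality $\widetilde{\mathcal G}(P_i) = \mathcal G(P_i)$ guaranteed by (1), since all three properties are encoded in the direct-limit subdivision graphs.
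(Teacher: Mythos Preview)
The paper does not prove this proposition itself; it refers to \cite{LuoZhang:CirclePacking}*{Proposition 4.3} and notes that the additional induced-subgraph assertion follows by the identical argument. There is therefore no in-paper proof to compare your proposal against.

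As to your sketch: the overall strategy (iterate to depth $K$, locally refine to kill pinches and chords, introduce auxiliary polygon types $Q_j$) is the natural one and presumably matches what the cited reference does. The contradiction mechanism you outline, pinning a persistent failure of the polygon or induced-subgraph property on a violation of simplicity, irreducibility, or acylindricity, is also the right idea. However, your justification of the upper containment in (1) has a gap. The claim ``one $\widetilde{\mathcal R}$-step performs at least $K$ and at most $2K$ levels of the original $\mathcal R$-subdivision'' would, taken at face value, yield by induction only $\widetilde{\mathcal G}^n(P_i)\subset\mathcal G^{2nK}(P_i)$, which is strictly weaker than the required $\widetilde{\mathcal G}^n(P_i)\subset\mathcal G^{(n+1)K}(P_i)$. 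To obtain the tight bound one needs each $\widetilde{\mathcal R}$-step to advance the original filtration by exactly $K$ levels, with the extra boundary refinement absorbed once and for all in the definition of the types $Q_j$ (as a $P_{i_0}$ with finitely many extra marked boundary vertices) rather than re-applied and hence compounded at every iteration. Your description of the cellular map $Q_j\to P_{i_0}$ and of $\widetilde{\mathcal R}(Q_j)$ as the pullback of $\mathcal R^K(P_{i_0})$ suggests you have this picture in mind, but the inductive step must then be argued so that the single ``extra $K$'' of slack in $\widetilde{\mathcal G}^1\subset\mathcal G^{2K}$ does not multiply with $n$.
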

The proof can be found in \cite{LuoZhang:CirclePacking}*{Proposition 4.3}.
Although the assertion that $\partial F^n$ is an induced subgraph is not included in \cite{LuoZhang:CirclePacking}*{Proposition 4.3}, the proof is identical to the proof of $F^n$ being a polygon.
}

\subsection{Standing assumptions on subdivision rules}
We will impose some additional assumptions on finite subdivision rules when necessary (e.g., in Sections \ref{sec:cp} and \ref{sec:gmp}). These assumptions are not restrictive in the sense that they can always be achieved by slightly modifying the subdivision rule or by taking an iterate. We will state explicitly when these assumptions are imposed.

Let $\mathcal{R} = \{P_i\}_{i=1}^k$ be a simple, irreducible, acylindrical finite subdivision rule.
By replacing $\mathcal{R}$ with $\widetilde{\mathcal{R}}$ from Proposition \ref{prop:jd}, we will impose the following assumption when necessary.
\begin{enumerate}[label=\normalfont(S1)]
    \item\label{condition:siL} For each $i\in \{1,\dots,k\}$ and $n\geq 0$, each face $F^n$ of $\mathcal{G}^n(P_i)$ is a polygon and $\partial F^n$ is an induced subgraph of $\mathcal{G}^n(P_i)$.
\end{enumerate}

Since $\mathcal{R}$ is acylindrical, there exists $K\in \N$ so that for each $i\in \{1,\dots,k\}$ and for each pair of non-adjacent vertices $v, w \in \partial P_i$, the two components of $\partial P_i \setminus \{v, w\}$ are connected in $\mathcal{G}^K(P_i) \setminus \{v, w\}$.
This implies that for each $i\in \{1,\dots,k\}$ and each face $F^K$ of $\mathcal{G}^K(P_i)$, the set $\partial P_i \cap \partial F^K$ does not contain two non-adjacent vertices of $P_i$; hence, $\partial P_i \cap \partial F^K$ is contained in an edge of $\partial P_i$.
By the properties of the subdivision rule, if $n\geq 0$ and $F^{n+K},F^n$ are faces of $\mathcal{G}^{n+K}(P_i),\mathcal{G}^n(P_i)$, respectively, with $F^{n+K}\subset F^n$, then $\partial F^{n+K} \cap \partial F^n$ is contained in an edge of $\partial F^n$.
Thus by replacing $\mathcal{R}$ with $\mathcal{R}^K$, we may assume that for each $i\in \{1,\dots,k\}$ and $n\geq 0$ the following statement is true.
 
\begin{enumerate}[label=\normalfont(S2)]
    \item\label{condition:si} If $F^{n+1},F^n$ are faces of $\mathcal{G}^{n+1}(P_i),\mathcal{G}^n(P_i)$, respectively, with $F^{n+1}\subset F^n$, then $\partial F^{n+1} \cap \partial F^n$ is either empty, or a vertex, or an edge.
\end{enumerate}

\subsection{$\mathcal{R}$-complexes}
Let $\mathcal R$ be a finite subdivision rule. An $\mathcal{R}$-complex is a finite CW complex $X$ so that for any closed 2-cell $s$ of $X$, there exist $P_{\sigma(s)} \in \{P_i\}_{i=1}^k$ and a cellular map $\psi_s: P_{\sigma(s)} \to s$ that is a homeomorphism between the open 2-cells. We also require that each $2$-cell $s$ of $X$ is oriented and $\psi_s$ preserves orientation (recall that each polygon of $\mathcal R$ is oriented).

The subdivision rule gives a sequence of subdivisions $\mathcal{R}^n(X)$ for $X$, $n\geq 0$.
We always assume that $\mathcal{R}$ is {a} \textit{minimal} finite subdivision rule that supports $X$, i.e., every polygon in $\{P_i\}_{i=1}^k$ appears as the type of some closed 2-cell of $\mathcal{R}^n(X)$ for some $n\geq 0$. For each $n\geq 0$ the 1-skeleton $\mathcal G^n(X)$ of the subdivision $\mathcal{R}^n(X)$ of $X$ is contained in the $1$-skeleton $\mathcal G^{n+1}(X)$ and we consider the direct limit
$$
\mathcal{G}(X) = \lim_{\rightarrow} \mathcal{G}^n(X) = \bigcup_{n=0}^\infty \mathcal{G}^n(X).
$$
If the $\mathcal{R}$-complex $X$ is homeomorphic to a sphere, the plane graph $\mathcal{G}(X)$ is called a \textit{spherical subdivision graph for $\mathcal{R}$}. By enlarging the subdivision rule (see \cite{LuoZhang:CirclePacking}*{Proposition 4.4}), we will impose the following assumption on a spherical subdivision graph $\mathcal G(X)$ when necessary.
\begin{enumerate}[label=\normalfont(S1$^\prime$)]
    \item\label{condition:siG} For each $n\geq 0$, each face $F^n$ of $\mathcal{G}^n(X)$ is a polygon and $\partial F^n$ is an induced subgraph of $\mathcal{G}^n(X)$.
\end{enumerate}

\subsection{Gaskets with finite subdivision rules}\label{section:gasket_subdivision}
Let $J\subset \widehat \C$ be a gasket, as in Definition \ref{definition:gasket}. Denote by $\mathcal G$ the contact graph of $J$. We consider the graph $\mathcal G$ as an embedded subset of $\widehat \C$ so that each vertex of $\mathcal G$ is a point in a component of $\widehat\C\setminus J$ and each edge of $\mathcal G$ is a Jordan arc between two vertices that intersects $J$ at a single point. Suppose that there exists a finite subdivision rule $\mathcal R$   so that $\mathcal G$ is a spherical subdivision graph for $\mathcal R$. This means that there is a CW complex $X$ with $1$-skeleton $\mathcal G^0$ so that each $2$-cell of $X$ corresponds under (the inverse of) a cellular map to a polygon of $\mathcal R$. Then one obtains the $1$-skeletons $\mathcal G^n$ of the subdivisions $\mathcal R^n(X)$, $n\geq 0$, of $X$ and $\mathcal G$ is the direct limit of $\mathcal G^n$ as $n\to\infty$. 

We note that for each $n\geq 0$ the interior of each face $F^n$ (i.e.\ the open $2$-cell of the CW complex $F^n$) of $\mathcal G^n$ naturally embeds in $\widehat \C$. We define $W(F^n)\subset \widehat\C$ to be the open set that is the union of the embedding of the interior of $F^n$ into $\widehat \C$ and of the components of $\widehat \C\setminus J$ corresponding to vertices of $\partial F^n$. We also define $W(\inter F^n)$ to be the set $W(F^n)$ minus the closures of the components of $\widehat \C\setminus J$ corresponding to vertices of $\partial F^n$. The following lemma gives a necessary condition for the contact graph of a gasket to be a spherical subdivision graph (c.f.\ Theorem \ref{thm:qch} \ref{RealizationAcylindrical}).

\begin{lemma}\label{lemma:gasket_subdivision_rule}
    Let $J\subset \widehat  \C$ be a gasket whose contact graph $\mathcal G$ is a spherical subdivision graph for an irreducible finite subdivision rule $\mathcal R$. Then $\mathcal R$ is simple and acylindrical. 
\end{lemma}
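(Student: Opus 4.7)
The plan is to prove the two conclusions separately. Both rely on two preliminary facts. First, by the minimality of $\mathcal{R}$, each polygon $P_i$ appears as the type of some $2$-cell $s$ of $\mathcal{R}^n(X)$ via a cellular map $\psi_s \colon P_i \to s$ that is a homeomorphism on open $2$-cells, so $\mathcal{G}(P_i)$ embeds as a subgraph of $\mathcal{G}$ after restricting to $\inter P_i$. Second, the contact graph $\mathcal{G}$ of a gasket is itself simple: by property (2) of Definition~\ref{definition:gasket}, two complementary components share at most one contact point, ruling out multi-edges, and a complementary component cannot be adjacent to itself.

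For simplicity, I would argue by contradiction: assume $\mathcal{G}(P_i)$ contains two distinct edges $e_1,e_2$ with the same endpoint set $\{u,u'\}$ (allowing $u=u'$). By condition~\ref{defn:fsr:2} in the definition of a subdivision rule, each $1$-cell of $\mathcal{G}(P_i)$ is either contained in $\partial P_i$ or has its interior contained in $\inter P_i$. If both $u,u' \in \partial P_i$, then irreducibility forces $e_1,e_2 \subset \partial P_i$, but $\partial P_i$ is a simple cycle with at least three vertices, ruling out multi-edges and loops. Otherwise at least one endpoint lies in $\inter P_i$, the interiors of $e_1,e_2$ lie in $\inter P_i$, and the injectivity of $\psi_s$ on $\inter P_i$ produces two distinct edges of $\mathcal{G}$ with the same endpoints, contradicting the simplicity of $\mathcal{G}$.

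For acylindricity, fix non-adjacent vertices $v,w \in \partial P_i$, let $\alpha_1,\alpha_2$ be the components of $\partial P_i \setminus \{v,w\}$, and pick polygon vertices $u_0 \in \alpha_1$ and $u_N \in \alpha_2$. After embedding via $\psi_s$ into a face $F^n$ of $\mathcal{G}^n$, the images $v',w'$ correspond to complementary components $U_{v'}, U_{w'}$ of $J$ whose closures are two Jordan disks that are disjoint or meet at a single contact point, by properties (2) and (3) of Definition~\ref{definition:gasket}. The key topological observation is that $\overline{U_{v'}} \cup \overline{U_{w'}}$ does not separate $\widehat{\C}$, hence does not separate any open region around $F^n$; therefore one can choose a Jordan arc $\tau$ in $P_i$ from $u_0$ to $u_N$ with interior in $\inter P_i$ such that $\psi_s(\tau)$ avoids open neighborhoods of both $v'$ and $w'$ in $\widehat{\C}$.

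The last step converts $\tau$ into a combinatorial path. For $m$ sufficiently large, the mesh of $\mathcal{R}^m(P_i)$ in the embedding $\psi_s$ is small enough that no $2$-cell of $\mathcal{R}^m(P_i)$ incident to $v$ or $w$ intersects $\tau$; following the chain of $2$-cells $C_1,\dots,C_k$ crossed by $\tau$ and walking along their boundaries across shared edges yields the desired path in $\mathcal{G}^m(P_i) \setminus \{v,w\} \subset \mathcal{G}(P_i) \setminus \{v,w\}$ from $u_0$ to $u_N$. The main technical obstacle is to justify the mesh-shrinking property in the abstract setting where $J$ is only known to be a gasket; this should follow from the fact that the complementary components of $J$ accumulate at every point of $J$ (since $J$ has empty interior), forcing additional vertices of $\mathcal{G}$ to appear inside any sub-face at deeper levels of subdivision, so that the diameters of the $2$-cells embedded in $F^n$ must tend to zero.
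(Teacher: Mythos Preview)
Your argument for simplicity is correct and in fact more careful than the paper's, which simply observes that $\mathcal{G}$ has no multi-edges or loops and declares $\mathcal{R}$ simple; your case analysis using irreducibility for the boundary case and injectivity of $\psi_s$ on $\inter P_i$ for the interior case is the right way to pass from simplicity of $\mathcal{G}$ to simplicity of each $\mathcal{G}(P_i)$.

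The acylindricity argument, however, has a genuine gap at the mesh-shrinking step, and the gap is essentially circular. You want to show that for large $m$ no $2$-cell of $\mathcal{R}^m(P_i)$ incident to $v$ or $w$ meets $\tau$. But if $\mathcal{R}$ were cylindrical at $(P_i,v,w)$, then for \emph{every} $m$ and every arc $\tau$ from $\alpha_1$ to $\alpha_2$ through $\inter P_i$, the chain of level-$m$ faces crossed by $\tau$ must contain a face incident to $v$ or $w$; otherwise the boundary-walk you describe would already produce a path in $\mathcal{G}^m(P_i)\setminus\{v,w\}$ joining $\alpha_1$ to $\alpha_2$, contradicting cylindricity. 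Concretely, cylindricity forces the existence, at every level, of a face whose boundary meets both components of $\partial P_i\setminus\{v,w\}$, and in the gasket embedding this face stretches between the components $U_{v'}$ and $U_{w'}$, so its diameter is bounded below. Your proposed justification (components of $\widehat\C\setminus J$ accumulate at every point of $J$, hence more vertices appear, hence faces shrink) does not rule this out: new interior vertices subdivide a face but do not prevent one sub-face from remaining long and thin. Indeed Lemma~\ref{lemma:gasket_sub}\,\ref{gs:0}, which is the mesh statement you need, takes acylindricity as a hypothesis.

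The paper avoids this circularity by arguing contrapositively through a structural characterization of cylindricity from \cite{LuoZhang:CirclePacking}*{Proposition 4.5}: if $\mathcal{R}$ is cylindrical, there exist non-adjacent $v,w\in\partial P_i$ and infinitely many pairwise internally disjoint paths in $\mathcal{G}(P_i)$ from $v$ to $w$ of uniformly bounded combinatorial length $K$. Embedding into the gasket, each such path corresponds to a chain of at most $K-1$ complementary components joining a fixed pair of regions at positive distance, so some component in each chain has diameter at least $\dist(I_a,I_b)/(K-1)$. This yields infinitely many complementary components of $J$ with diameter bounded below, contradicting the local connectivity of $J$. That external lemma is the key input your direct approach is missing.
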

\begin{proof}
    Since $J$ is a gasket, there are no multiple edges connecting two vertices, and no edge connecting a vertex to itself in $\mathcal{G}$, so $\mathcal{R}$ is simple. We prove by contradiction that $\mathcal{R}$ is acylindrical. Suppose that $\mathcal{R}$ is cylindrical. Then by \cite{LuoZhang:CirclePacking}*{Proposition 4.5}, there exist a number $K>0$, a polygon $P_i$ of $\mathcal R$, and a pair of non-adjacent vertices $v, w \in \partial P_i$ so that there are infinitely many paths $\gamma_n$, $n\in \N$, of combinatorial length at most $K$ in $\mathcal{G}(P_i)$ connecting $v, w$ that are pairwise disjoint except at the endpoints; see Figure \ref{fig:subdb} for an illustration.

    Recall that $\mathcal R$ is assumed to be a minimal finite subdivision rule that supports $X$, so $P_i$ appears as the type of a $2$-cell of $\mathcal R^n(X)$ for some $n\geq 0$. By replacing $\mathcal R$ with an iterate, we assume that a face $F^0$ of $\mathcal{G}^0$ is associated to the polygon $P_i$. Let $a,b$ be the vertices of $\partial F^0$ associated to $v,w$, respectively. We remark that we might have $a=b$ but for the simplicity of the presentation, we assume that $a\neq b$.
    
    Let $\Omega_a, \Omega_b$ be the complementary components of $\widehat \C\setminus J$ associated to $a, b$, respectively.  Let $I_a,I_b$ be the intersection of $\Omega_a,\Omega_b$, respectively, with the embedding of the interior of the face $F^0$ in $\widehat \C$.  Since $v,w$ are not adjacent,  we have  $\dist(I_a, I_b) > 0$.  For $n\in \N$, let $aa_1^n, a_1^na_2^n, \dots , a_{k_n}^nb$ be the chain of edges in $F^0 \cap \mathcal{G}$ associated to $\gamma_n$.  Note that $k_n \leq K-1$. Let $\Omega^n_j$ be the component of $\widehat \C\setminus J$ that contains $a_j^n$, $j\in \{1,\dots,k_n\}$.  Note that the chain of components $\Omega^n_1,\dots, \Omega^n_{k_n}$ connects $I_a$ and $I_b$. Thus one of the components has diameter at least $ \dist(I_a, I_b)/(K-1)$. Since the paths $\gamma_n$, $n\in \N$, are pairwise disjoint except at the endpoints, there exist infinitely many components of $\widehat \C\setminus J$ with diameter larger than $\dist (I_a, I_b)/(K-1)$. This contradicts the local connectivity of the gasket $J$; see \cite{Milnor:dynamics}*{Problem 19-f}.

    We remark that if $a = b$, then each path $\gamma_n$ corresponds to a chain of components connecting $\partial \Omega_a$ to itself. Given that $v$ and $w$ are not adjacent, we see that the union of the components in this chain has diameter uniformly bounded from below, so the above argument can be applied with appropriate modifications.  
\end{proof}

A gasket $J$ is \textit{quasiround} if there exists $M\geq 1$ such that for each component $U$ of $\widehat \C\setminus J$ we have $E(U)\leq M$; that is, there exists a spherical ball $B$ such that  $B\subset U\subset MB$; see also Section \ref{section:abs}.

\begin{lemma}\label{lemma:gasket_sub}
    Let $J\subset \widehat  \C$ be a gasket whose contact graph $\mathcal G$ is a spherical subdivision graph for a simple, irreducible, acylindrical finite subdivision rule $\mathcal R$.
    \begin{enumerate}[label=\normalfont(\arabic*)]
        \item\label{gs:0}We have $\max\{ \diam W(\inter F^n): \textrm{$F^n$ face of $\mathcal G^n$}\}\to 0$ as $n\to\infty$.
        \item\label{gs:1} For each point $z\in J$ that does not lie on the boundary of any component of $\widehat \C\setminus J$ there exists a sequence of faces $F^n$ of $\mathcal G^n$, $n\geq 0$, such that
        $$\diam W(F^n)\to 0\,\,\textrm{as $n\to \infty$ and }\{z\}=\bigcap_{n=0}^\infty W(F^n).$$ 
        \item\label{gs:2} If $J$ is quasiround, then there exists $M\geq 1$ such that for each face $F^n$ of $\mathcal G^n$, $n\geq 0$, there exists a ball $B\subset \widehat \C\setminus J$ with $B\subset W(F^n) \subset MB$.
        \item\label{gs:3} If $J$ is quasiround and the boundary of each component of $\widehat \C\setminus J$ has area zero, then $J$ has area zero.  
    \end{enumerate}
\end{lemma}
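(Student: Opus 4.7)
My plan is to prove the four assertions in order, using K\"onig's lemma and the finite-subdivision-rule theory from \cite{LuoZhang:CirclePacking} for part (1), leveraging (1) for (2) and (3), and combining (3) with the Lebesgue density theorem for (4). For part (1), I first note the monotonicity $W(\inter F^{n+1})\subset W(\inter F^n)$ whenever $F^{n+1}\subset F^n$: the open $2$-cell of a sub-face lies in the open $2$-cell of its parent (so $U_{n+1}\subset U_n$), and for any vertex $v\in\partial F^n\setminus\partial F^{n+1}$, no edge of $\partial F^{n+1}$ enters $\Omega_v$, so $\Omega_v$ is disjoint from $U_{n+1}$. Suppose for contradiction that $\diam W(\inter F^{n_k})\geq\varepsilon$ for a sequence of faces with $n_k\to\infty$; by K\"onig's lemma applied to the subtree of ``bad'' faces (closed under taking ancestors by the monotonicity), there is a nested sequence $F^0\supset F^1\supset\cdots$ with $\diam W(\inter F^n)\geq\varepsilon$ for all $n$. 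The intersection $K=\bigcap_n\overline{W(\inter F^n)}$ is then a continuum of diameter $\geq\varepsilon$. Ruling this out is the heart of the argument: I would invoke the uniform mesh-shrinking property for simple, irreducible, acylindrical subdivision rules from \cite{LuoZhang:CirclePacking}, together with local connectedness of $J$ and the uniform bound on $|\partial F^n|$ coming from the finitely many polygon types in $\mathcal R$.

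For part (2), since every edge of $\mathcal G^n$ passes through a contact point (which lies in $\bigcup_v\partial\Omega_v$), and $z\in J\setminus\bigcup_v\partial\Omega_v$ avoids all contact points, $z$ cannot lie on any edge of $\mathcal G^n$. Hence $z\in\inter F^n$ for a unique face $F^n$ at each level, producing a nested sequence with $z\in W(F^n)$ for all $n$. I will establish $\max_{v\in\partial F^n}\diam\Omega_v\to 0$: if some fixed vertex $v$ remained in $\partial F^n$ for all large $n$, the nested faces would accumulate along $\partial\Omega_v$, forcing $z\in\partial\Omega_v$ and contradicting our hypothesis. Hence every vertex eventually drops out of $\partial F^n$, and the vertices appearing for large $n$ correspond to components introduced at arbitrarily deep levels; each such component is contained in some $W(\inter F^k)$ for a face $F^k$ at a deep level $k$, and part (1) forces its diameter to zero. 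Combined with $\diam W(\inter F^n)\to 0$ from (1), this yields $\diam W(F^n)\to 0$, whence $\bigcap_n W(F^n)=\{z\}$. For part (3), under quasiroundness of $J$, I would locate a ``central'' complementary component $\Omega_{w^\ast}\subset W(F^n)$ with $\diam\Omega_{w^\ast}\asymp\diam W(F^n)$: a natural choice is a vertex $w^\ast$ introduced in $\inter F^n$ at level $n+1$ by the subdivision of $F^n$, and the finiteness of polygon types together with quasiroundness yields a uniform lower bound on the ratio $\diam\Omega_{w^\ast}/\diam W(F^n)$. Quasiroundness then produces an inscribed ball $B\subset\Omega_{w^\ast}$ with $\diam B\asymp\diam\Omega_{w^\ast}$, so $B\subset W(F^n)\subset MB$ for a universal $M$.

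For part (4), decompose $J=L\cup\bigcup_v\partial\Omega_v$, where $L$ consists of the points of $J$ not on any $\partial\Omega_v$; the second piece has zero area by hypothesis. Suppose $L$ has positive area; by the Lebesgue density theorem, a.e.\ $z\in L$ has density one in $L$, so the density of $\bigcup_v\Omega_v$ at $z$ vanishes. However, fix small $r>0$ and use part (1) to pick $n_0$ such that $\diam W(F^{n_0})\leq r/2$ for every face $F^{n_0}$ of $\mathcal G^{n_0}$. Every such face meeting $B(z,r/2)$ is contained in $B(z,r)$, and by part (3) each such face contains a disjoint ball in $\widehat\C\setminus J$ of area comparable to $\area(W(F^{n_0}))$. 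Summing over the faces covering $B(z,r/2)$ gives $\area\bigl(\bigcup_v\Omega_v\cap B(z,r)\bigr)\gtrsim r^2\asymp\area(B(z,r))$, contradicting vanishing density. Hence $L$, and therefore $J$, has zero area. The main obstacle is part (1): establishing uniform shrinkage of $\diam W(\inter F^n)$ combines the combinatorial acylindricity of $\mathcal R$ with the topology of $J$, and is where the machinery of \cite{LuoZhang:CirclePacking} enters most substantively.
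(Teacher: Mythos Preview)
Your outline has the right overall architecture, but there are two genuine gaps.

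\textbf{Part (3): wrong choice of component.} You propose to take the ball $B$ inside $\Omega_{w^\ast}$ for a vertex $w^\ast$ introduced at level $n+1$ in the interior of $F^n$. This cannot work uniformly. Consider a nested sequence $F^0\supset F^1\supset\cdots$ in which a single boundary vertex $v$ persists for all $n$ (this certainly occurs; it is case (b) in the paper's trichotomy). Then $\Omega_v\subset W(F^n)$ has fixed diameter $d_0>0$ for all $n$, while any interior vertex $w^\ast$ at level $n+1$ satisfies $\Omega_{w^\ast}\subset W(\inter F^n)$, whose diameter tends to $0$ by part (1). Hence $\diam W(F^n)/\diam\Omega_{w^\ast}\to\infty$, and no uniform $M$ can give $W(F^n)\subset MB$ with $B\subset\Omega_{w^\ast}$. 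The paper instead takes $D^n$ to be a \emph{largest} component among those corresponding to the vertices of $\partial F^n$. Since the boundary components form a chain of at most $K$ sets (with $K$ bounded by the number of polygon types) and $W(\inter F^n)$ is eventually small, one gets $D^n\subset W(F^n)\subset B(w,K\diam D^n)$, which is exactly what is needed.

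\textbf{Part (4): you conflate $W(F^n)$ with $W(\inter F^n)$.} You write ``use part (1) to pick $n_0$ such that $\diam W(F^{n_0})\leq r/2$ for every face $F^{n_0}$'', but part (1) only asserts that $\diam W(\inter F^n)\to 0$ uniformly; the sets $W(F^n)$ do \emph{not} shrink uniformly (again because of persistent boundary vertices). Your covering argument therefore does not go through. The paper avoids this entirely: once (2) and (3) are in hand, for each $z\in J$ off all boundaries one has a sequence $W(F^n)\ni z$ with $\diam W(F^n)\to 0$ and $E(W(F^n))\leq M$, with each $W(F^n)$ containing a ball of definite relative size in $\widehat\C\setminus J$. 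Thus $z$ is not a Lebesgue density point of $J$, and (4) follows immediately.

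\textbf{Part (1).} Your reduction via K\"onig's lemma to a nested sequence is fine, but the step ``invoke the uniform mesh-shrinking property from \cite{LuoZhang:CirclePacking}'' is not a proof. The paper gives a short self-contained argument: after passing to an iterate so that condition \ref{condition:si} holds, any nested sequence $F^0\supset F^1\supset\cdots$ falls into one of three cases according to whether zero, one, or two (necessarily adjacent) vertices persist in $\partial F^n$. Local connectedness of $J$ forces the non-persistent boundary components to shrink, and in each case one checks directly that $\diam W(\inter F^n)\to 0$. This trichotomy is also exactly what drives the clean proof of (2).
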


\begin{proof}
    For the proof of \ref{gs:0}, by replacing $\mathcal R$ with an iterate, it suffices to assume that $\mathcal R$ satisfies condition \ref{condition:si}. We argue by contradiction. Suppose that there exists a sequence of faces $F^n$ of $\mathcal G^n$, $n\geq 0$, such that $\diam W(\inter F^n)$ is bounded below away from $0$. We may assume that $F^{n+1}\subset F^n$ for all $n\geq 0$.  In view of \ref{condition:si}, there are three cases: (a) no vertex of $\mathcal G$ lies in $\partial F^n$ for infinitely many $n\geq 0$; (b) there exists a unique vertex $v\in \mathcal G$ that lies in $\partial F^n$ for all but finitely many $n\geq 0$; (c) there exist exactly two vertices $v,w\in \mathcal G$ that lie in $\partial F^n$ for all but finitely many $n\geq 0$, in which case, $v$ and $w$ are adjacent. By the definition of a subdivision rule (see Definition \ref{defn:fsr}), $\partial F^n$ consists of a uniformly bounded number of vertices for all $n\geq 0$. Thus, given that $J$ is locally connected, the diameter of the union of components of $\widehat \C\setminus J$ corresponding to vertices of $\partial F^n$, except for the ones corresponding to $v$ and $w$ in the cases (b) and (c), tends to $0$ as $n\to\infty$. If (a) is true, then the above remark implies that $\diam W(F^n)\to 0$, a contradiction. If (b) is true, then one can see that $W(\inter F^n)$ would converge to a point on the boundary of the component of $\widehat \C\setminus J$ corresponding to $v$, a contradiction to the initial assumption. Similarly, if (c) is true, then $W(\inter F^n)$ would converge to the intersection point of the boundaries of the components corresponding to $v,w$, a contradiction. This completes the proof of \ref{gs:0}.
    
    For the proof of \ref{gs:1}, let $z\in J$ be a point that does not lie on the boundary of any component of $\widehat \C\setminus J$. There exists a unique nested sequence of faces $F^n$ of $\mathcal G^n$, $n\geq 0$, such that $z$ lies in the embedding of the interior of $F^n$ into $\widehat \C$. By performing the above analysis, we see that only alternative (a) can hold. Hence, $\diam W(F^n)\to 0$ and $\{z\} = \bigcap_{n=0}^\infty W(F^n)$.
    
    Let $F^n$ be an arbitrary face of $\mathcal G^n$, $n\geq 0$. Let $D^n$ be a largest component of $\widehat \C\setminus J$ among components corresponding to the vertices of $\partial F^n$. Recall that $\partial F^n$ consists of a uniformly bounded number of vertices, say at most $K$. Therefore, if $W(\inter F^n)$ is sufficiently small (given that we are dealing with subsets of the sphere), then
    \begin{align}\label{lemma:gasket_sub:inclusions}
    D^n\subset W(F^n)\subset B(w, K\diam D^n)
    \end{align}
    for any point $w\in D^n$.  By \ref{gs:0}, there exists $N\in \N$ such that \eqref{lemma:gasket_sub:inclusions} holds for $n\geq N$.  If, in addition $J$ is quasiround, as in \ref{gs:2}, then there exists a uniform $L\geq 1$ such that 
    $$B=B(w,L^{-1}\diam D^n)\subset D^n \subset B(w,K\diam D^n)$$
    for all $n\geq N$. Also, the above inclusions hold trivially with different constants for $n<N$. This proves \ref{gs:2}.  
    
    Finally, we show \ref{gs:3}. By \ref{gs:2} we see that points of $J$ that do not lie on the boundary of any component of $\widehat \C\setminus J$ are not Lebesgue density points of $J$. Since, by assumption, the boundaries of the countably many components of $\widehat \C\setminus J$ have area zero, we conclude that $J$ has area zero.
\end{proof}

\begin{theorem}\label{theorem:qc_david_gasket}
    Let $J\subset \widehat\C$ be a gasket whose contact graph is a spherical subdivision graph for a simple, irreducible, acylindrical finite subdivision rule. Let $f\colon \widehat \C\to \widehat \C$ be a homeomorphism. Suppose that the components of $\widehat \C\setminus J$ and $\widehat \C\setminus f(J)$ are uniform quasidisks.       
    \begin{enumerate}[label=\normalfont(\arabic*)]
        \item If $f$ is quasiconformal on $\widehat\C\setminus J$, then $f$ is quasiconformal on $\widehat\C$.
        \item If $f$ is a David map on $\widehat\C\setminus J$, then $f$ is a David map on $\widehat\C$.
    \end{enumerate}
\end{theorem}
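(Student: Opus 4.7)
The plan is to derive this from Theorem \ref{theorem:qc_david_remov}. That theorem demands two additional hypotheses beyond quasidisk complementary components: the gasket $J$ must have area zero, and the eccentric distortion $E_f(z)$ must be uniformly bounded for $z\in J\setminus \bigcup_{i}\partial U_i$, where the $U_i$ are the components of $\widehat\C\setminus J$. Since the $U_i$ are uniform quasidisks by hypothesis, the proof reduces to verifying these two conditions, and the combinatorial control furnished by the finite subdivision rule (through Lemma \ref{lemma:gasket_sub}) is precisely what is needed.

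For the area-zero claim, the first subgoal is to show that $J$ is \emph{quasiround} in the sense of Section \ref{section:gasket_subdivision}. This I would derive from the classical fact that a $K$-quasidisk is a John domain with John constant depending only on $K$, so each $U_i$ contains a spherical ball of radius comparable to its spherical diameter; hence uniform $K$-quasidisks have uniformly bounded eccentricity. With $J$ quasiround and the boundaries of the $U_i$ being quasicircles of Hausdorff dimension strictly less than $2$ (and therefore of area zero), Lemma \ref{lemma:gasket_sub}\ref{gs:3} yields $\area(J)=0$. Since $f$ is a homeomorphism of $\widehat\C$, it transfers the spherical subdivision graph structure of the contact graph of $J$ to that of $f(J)$; applying the same argument to $f(J)$ (whose complementary components are also uniform quasidisks) gives $\area(f(J))=0$.

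For the eccentric distortion bound, fix $z\in J\setminus \bigcup_i \partial U_i$. Lemma \ref{lemma:gasket_sub}\ref{gs:1} supplies a nested sequence of faces $F^n$ with $\diam W(F^n)\to 0$ and $\bigcap_n W(F^n)=\{z\}$. Applying Lemma \ref{lemma:gasket_sub}\ref{gs:2} to both $J$ and $f(J)$ produces a uniform constant $M\geq 1$ and spherical balls $B^n, B'^n$ with $B^n\subset W(F^n)\subset MB^n$ and $B'^n\subset f(W(F^n))\subset MB'^n$; continuity of $f$ guarantees $\diam f(W(F^n))\to 0$ as well. Setting $A_n = W(F^n)$ exhibits a neighborhood basis of $z$ demonstrating $E_f(z)\leq M$, and Theorem \ref{theorem:qc_david_remov} then delivers both conclusions.

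The main subtlety will be the reduction to quasiroundness: a quasidisk in the Euclidean plane can be arbitrarily elongated, and it is only by working in the spherical metric, where the complementary components of a gasket are uniformly small in diameter, that the John property converts into a uniform bound on the eccentricities $E(U_i)$. Once this observation is in place, Lemma \ref{lemma:gasket_sub} does essentially all of the remaining work by exporting the combinatorial data of the subdivision rule into the metric hypotheses required by Theorem \ref{theorem:qc_david_remov}.
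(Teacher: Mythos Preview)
Your proposal is correct and follows essentially the same approach as the paper's proof: both reduce to Theorem \ref{theorem:qc_david_remov} by using Lemma \ref{lemma:gasket_sub} to establish quasiroundness, area zero, and the uniform bound on the eccentric distortion off the boundaries of the complementary components. The paper cites \cite{Bonk:uniformization}*{Proposition 4.3} for the passage from uniform quasidisks to quasiroundness, which is exactly the step you identify as the main subtlety.
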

\begin{proof}
    Since the components of $\widehat \C\setminus J$ and $\widehat \C\setminus f(J)$ are uniform quasidisks, we see that the gaskets $J$ and $f(J)$ are quasiround; see \cite{Bonk:uniformization}*{Proposition 4.3} for an argument. By Lemma \ref{lemma:gasket_sub} \ref{gs:3}, $J$ has area zero. Let $z\in J$ be a point that does not lie on the boundary of any component of $\widehat \C\setminus J$.  By \ref{gs:1} and \ref{gs:2} in Lemma \ref{lemma:gasket_sub} there exists a sequence of faces $F^n$ of $\mathcal G^n$, $n\geq 0$, shrinking to $z$ such that $E(W(F^n))\leq M$ for all $n\geq 0$, where $M\geq 1$ is a uniform constant. By applying Lemma \ref{lemma:gasket_sub} \ref{gs:2} to the quasiround gasket $f(J)$, we see that $E(f(W(F^n)))\leq M'$ for all $n\geq 0$ and for a uniform constant $M'\geq 1$. We conclude that $E_f(z)\leq \max\{M,M'\}$ for every such point $z$. Finally, we invoke Theorem \ref{theorem:qc_david_remov} to obtain the conclusion. 
\end{proof}

\section{Circle packings with finite subdivision rules}\label{sec:cp}

We assume that all finite subdivision rules appearing in this section satisfy the standing assumptions \ref{condition:siL}, \ref{condition:siG}, and \ref{condition:si} from Section \ref{sec:cpsr}.

Consider a collection $\mathcal{P}$ of geometric disks of $\widehat\C$ with disjoint interiors.
We can associate to it a graph $\mathcal{G}$, called the \textit{tangency graph} of $\mathcal{P}$.
The tangency graph $\mathcal{G}$ assigns a vertex $v$ to each disk $D_v$ of $\mathcal{P}$ and an edge between $v, w$ if $\partial D_v \cap \partial D_w \neq \emptyset$.
The tangency graph $\mathcal{G}$ naturally embeds in $\widehat\C$, so it is a {plane graph}.
We say that the collection $\mathcal{P}$ is a \textit{circle packing} if the tangency graph $\mathcal{G}$ is connected.
The  \textit{limit set} $\Lambda(\mathcal{P})$ of $\mathcal P$ is the closure of the union of all boundary circles corresponding to disks in $\mathcal{P}$. {Note that the limit set $\Lambda(\mathcal P)$ of a circle packing $\mathcal P$ is a gasket according to Definition \ref{definition:gasket}. In the following considerations we will frequently refer to circles in $\mathcal P$, meaning boundaries of disks in $\mathcal P$.
We say that two plane graphs are isomorphic if they are isomorphic as graphs, and the isomorphism respects the embeddings in $\widehat\C$.

In this paper, we will always \textit{mark} the circle packing $\mathcal{P}$ with tangency graph $\mathcal{G}$, i.e, label each circle in $\mathcal{P}$ with tangency graph $\mathcal{G}$ by the corresponding vertex in $\mathcal{G}$.
By a homeomorphism $\psi$ between two marked circle packings $\mathcal{P}, \mathcal{P}'$ with tangency graph $\mathcal{G}$, we mean an {blue}{orientation-preserving} homeomorphism $\psi$ of the sphere $\widehat \C$ such that $\psi(\Lambda(\mathcal{P})) = \Lambda(\mathcal{P}')$ and $\psi$ preserves the labeling.

\begin{definition}
    Let $\mathcal{R}$ be a finite subdivision rule and $\mathcal P$ be a circle packing.
    \begin{itemize}
        \item We say that $\mathcal P$ is a circle packing with finite subdivision rule of $\mathcal{R}$ if its tangency graph is a subdivision graph for $\mathcal{R}$.
        \item We say that $\mathcal P$ is a circle packing with spherical subdivision rule of $\mathcal{R}$ if its tangency graph is a spherical subdivision graph for $\mathcal{R}$.
    \end{itemize}   
\end{definition}

Let $\mathcal R$ be a finite subdivision rule and $\mathcal{G}$ be a subdivision graph or a spherical subdivision graph for $\mathcal{R}$.  Let $\mathcal P$ be a circle packing with tangency graph $\mathcal G$. We denote by $\mathcal{M}(\mathcal{G})$ the (marked) moduli space of circle packings with tangency graph $\mathcal{G}$. More precisely, $\mathcal{M}(\mathcal{G})$ consists of equivalence classes of marked circle packings {whose tangency graph is isomorphic to $\mathcal{G}$ as plane graphs}, where two marked circle packings $\mathcal{P}$ and $\mathcal{P}'$ are equivalent if there exists a M\"obius map sending each circle $C$ of $\mathcal{P}$ to the corresponding circle $C'$ in $\mathcal{P}'$. Abusing notation, we will often write $\mathcal{P} \in \mathcal{M}(\mathcal{G})$ to represent a marked circle packing with tangency graph $\mathcal{G}$. 
 
\begin{theorem}[Realization and uniqueness; \cite{LuoZhang:CirclePacking}*{Theorems A, E}]\label{thm:qch}
Let $\mathcal{R}=\{P_i\}_{i=1}^k$ be a simple, irreducible finite subdivision rule.
\begin{enumerate}[label=\normalfont(\roman*)]
\item\label{thm:qch:i} Each subdivision graph $\mathcal{G}(P_i)$, $i\in \{1,\dots,k\}$, is isomorphic to the tangency graph of an infinite circle packing if and only if $\mathcal{R}$ is acylindrical.
\item\label{thm:qch:ii} If $\mathcal R$ is acylindrical, then the following statements are true for each $i\in \{1,\dots,k\}$.
\begin{enumerate}[label=\normalfont(\alph*)]
    \item\label{thm:qch:a} Any two marked circle packings $\mathcal{P}_i, \mathcal{P}_i'$ with tangency graph $\mathcal{G}(P_i)$ are quasiconformally homeomorphic, i.e., there exists a quasiconformal map $\Psi\colon \widehat\C \to \widehat\C$ between $\mathcal{P}_i$ and $\mathcal{P}_i'$.
    
    \item The moduli space $\mathcal{M}(\mathcal{G}(P_i))$ is isometrically homeomorphic to the Teichm\"uller space $\Teich(\Pi_{F_i})$ of the ideal polygon associated to the external face of $F_i$. 
\end{enumerate}
\end{enumerate}
Let $\mathcal{G}(X)$ be a simple spherical subdivision graph for $\mathcal{R}$.
\begin{enumerate}[label=\normalfont(\Roman*)]
\item {\normalfont (Realization)}\label{RealizationAcylindrical} The graph $\mathcal{G}(X)$ is isomorphic to the tangency graph of a circle packing $\mathcal{P}$ if and only if $\mathcal{R}$ is acylindrical.
\item {\normalfont (Combinatorial rigidity)} If the graph $\mathcal{G}(X)$ is isomorphic to the tangency graph of a circle packing, then this circle packing is unique up to M\"obius maps.
\end{enumerate}
\end{theorem}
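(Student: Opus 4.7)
The plan is to reduce each statement to finite circle packings, where the Koebe--Andreev--Thurston (KAT) theorem applies, and then to use compactness together with rigidity to pass to the limit. For the realization claims (i) and (I) under the assumption that $\mathcal R$ is simple, irreducible, and acylindrical, I would proceed as follows. For each level $n\geq 0$, triangulate the polygonal CW complex $\mathcal R^n(X)$ (resp.\ $\mathcal R^n(P_i)$) and apply KAT to obtain a finite circle packing $\mathcal P^n$ of $\widehat \C$ (resp.\ of a topological disk) realizing the tangency nerve. Normalize $\mathcal P^n$ by a M\"obius transformation fixing three reference circles. The acylindricity of $\mathcal R$ combined with a Rodin--Sullivan-type ring lemma tailored to the bounded combinatorial complexity of each polygon of $\mathcal R$ should give uniform comparability of radii of adjacent circles at each fixed combinatorial distance from the reference circles; a diagonal subsequence then converges in Hausdorff distance to a circle packing $\mathcal P$ whose tangency graph coincides with the direct limit $\mathcal G(X)$ (resp.\ $\mathcal G(P_i)$), with no circle collapsing.

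For the converse direction of (i) and (I), suppose $\mathcal R$ is cylindrical. Then by \cite{LuoZhang:CirclePacking}*{Proposition 4.5}, some $P_i$ contains two non-adjacent vertices $v, w\in \partial P_i$ joined by infinitely many pairwise disjoint paths $\gamma_n$ in $\mathcal G(P_i)$ of combinatorial length bounded by a fixed integer $L$. In any hypothetical realization, each $\gamma_n$ translates into a chain of at most $L$ tangent circles separating the two geometric circles associated to $v$ and $w$. Since those two circles are fixed, each separating chain must contain a circle of diameter bounded below by a positive constant depending only on $L$. Infinitely many disjoint such chains produce infinitely many circles of uniformly bounded diameter, which is impossible in a circle packing of $\widehat \C$. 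This parallels the argument already carried out in Lemma \ref{lemma:gasket_subdivision_rule}.

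For the quasiconformal equivalence (ii)(a), compare two marked realizations $\mathcal P_i, \mathcal P_i'$ level by level. At level $n$, the truncations are related by a $K$-quasiconformal map $h_n$ of $\widehat \C$ whose dilatation on each polygonal face is bounded in terms of the combinatorial type of the face alone; this uses KAT rigidity on each finite piece combined with a ring-lemma-type distortion estimate. After normalization, extracting a limit of $h_n$ yields a quasiconformal homeomorphism mapping $\mathcal P_i$ onto $\mathcal P_i'$. For (ii)(b), send each marked packing to the conformal class of the complementary component of its limit set corresponding to the external face $F_i$, viewed as an ideal polygon with marked ideal vertices at the tangency points along $\partial P_i$; injectivity follows from rigidity, surjectivity is achieved by quasiconformal deformation and reuniformization, and the isometry with $\Teich(\Pi_{F_i})$ is obtained by matching the Teichm\"uller distance with the natural moduli distance on $\mathcal M(\mathcal G(P_i))$. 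Finally, (II) follows from (ii)(b): each face of $\mathcal G^0(X)$ carries a packing rigid up to M\"obius on $\widehat \C$, and since the sphere admits no free moduli, gluing face by face yields uniqueness. The main obstacle is quantitative: upgrading the classical ring lemma (stated for triangulations) to a distortion bound that depends only on the finite list of combinatorial types of polygons of $\mathcal R$, so that the quasiconformal constants stay uniform in $n$ when passing to the limit; without this, neither the non-collapse argument for realization nor the construction of the limiting quasiconformal conjugacy goes through.
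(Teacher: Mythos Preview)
This theorem is not proved in the present paper: it is quoted verbatim from \cite{LuoZhang:CirclePacking}*{Theorems A, E}, and the paper uses it as a black box. So there is no in-paper proof to compare your proposal against.

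That said, your sketch is broadly along the right lines for how such results are established in \cite{LuoZhang:CirclePacking}, and you have correctly identified the heart of the matter. A few remarks. First, your argument for the ``only if'' direction of (i) and (I) is essentially the same as the proof of Lemma~\ref{lemma:gasket_subdivision_rule} in this paper (which you cite), so that part is solid. Second, for the realization direction, the outline via KAT on finite truncations plus compactness is the standard template, but your caveat at the end is exactly the crux: the classical Rodin--Sullivan ring lemma is for triangulations with bounded vertex degree, and here one must control non-collapse for circle packings whose nerves are built from polygons of bounded combinatorial type. The actual proof in \cite{LuoZhang:CirclePacking} handles this via a renormalization scheme and the exponential contraction estimate (restated here as Theorem~\ref{thm:expr}), not by a direct ring-lemma argument; this is also what drives (ii)(b) and the Teichm\"uller isometry. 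So while your plan is plausible, the mechanism that actually makes the constants uniform in $n$ is more delicate than a ring-lemma upgrade, and your proposal as written does not supply it.
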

The metric used in part \ref{thm:qch:ii} of the above theorem is the \textit{Teichm\"uller distance} in $\mathcal{M}(\mathcal{G}(P_i))$, namely
$$
d(\mathcal{P}, \mathcal{P}') \coloneqq  \frac{1}{2}\log K,
$$
where $K$ is the smallest dilatation of quasiconformal homeomorphisms between $\mathcal{P}$ and $\mathcal{P}'$.

\subsection{Markov Tiles}\label{subset:mt}
Let $\mathcal{R} = \{P_i\}_{i=1}^k$ be a simple, irreducible, acylindrical finite subdivision rule.
Let $i\in \{1,\dots,k\}$ and $\mathcal{G}=  \mathcal{G}(P_i)$ be a subdivision graph for $\mathcal{R}$ or let $\mathcal G=\mathcal G(X)$ be a spherical subdivision graph for $\mathcal R$.   Consider a a non-external face $F$ of $\mathcal{G}^n$ for some $n\geq 0$. We fix a circle packing $\mathcal{P} \in \mathcal{M}(\mathcal{G})$. {By Assumptions \ref{condition:siL} and \ref{condition:siG}, the circles corresponding to the vertices of $\partial F$ form a chain, i.e., each circle is tangent to exactly two circles.}
Let $\Gamma_{F}$ be the group generated by reflections along circles associated to the finite set of vertices in $\partial F$. Then the index two subgroup of $\Gamma_{F}$ consisting of orientation-preserving elements is a quasi-Fuchsian group, and the limit set of $\Gamma_{F}$ is a quasicircle; see \cite{Bishop:welding}*{Section 4} for an argument. The domain of discontinuity of $\Gamma_{F}$ has two components, and one has non-empty intersection with circles in $\mathcal{P}$ that correspond to vertices of $\mathcal G$ lying in $F\setminus \partial F$.
We denote this component by $\Omega_F$ and the limit set by $\Lambda_F = \partial \Omega_F$.
We call $\Omega_F$ the \textit{open (Markov) tile} associated to $F$ and $\overline{\Omega_F}$ the \textit{(closed Markov) tile} associated to $F$. The \textit{level} of $\Omega_F$ and $\overline{\Omega_F}$ is defined to be $n$.

Denote the vertices of the boundary $\partial F$ by $v_1,\dots, v_r$ and the corresponding circles by $C_1,\dots, C_r$.
For $i\in \{1,\dots,r\}$, denote by $D_i$ the closed disk bounded by $C_i$ and the corresponding reflection by $g_i$.
We call a sequence $(i_j)_{j=1}^\infty$ \textit{admissible} if $i_j \in \{1,\dots , r\}$ and $i_j \neq i_{j+1}$ for $j\in \N$. If $(i_j)_{j=1}^\infty$ is admissible, we define inductively $D_{i_1i_2\dots i_l} = g_{i_1}(D_{i_2\dots i_l})$ for $l\in \N$.
Note that $D_{i_1i_2 \dots i_l}$, $l\in \N$, is a sequence of nested closed disks. We set $D_{(i_j)}= \bigcap_{l=1}^\infty D_{i_1i_2\dots i_l}$, which is a single point, and observe that
$$
\partial \Omega_F = \bigcup_{(i_j)_{j=1}^\infty\text{ admissible}} D_{(i_j)}.
$$
Using this, one can verify the following statement for Markov tiles (see Figure \ref{fig:nestedF}).

\begin{lemma}[Properties of tiles]\label{lem:intersectionBoundary}Let $\mathcal{R}$ be a simple, irreducible, acylindrical finite subdivision rule, $\mathcal{G}$ be a (spherical) subdivision graph for $\mathcal R$, and $\mathcal P\in \mathcal M(\mathcal G)$. For $n,m\geq 0$, let $F^n, F^m$ be faces of $\mathcal{G}^n,\mathcal{G}^m$, respectively, and define $U^n= \Omega_{F^n}$ and $U^m= \Omega_{F^m}$. 
    \begin{enumerate}[label=\normalfont(\arabic*)]
        \item If $\partial F^n \cap \partial F^m$ is empty or a single vertex, then $\partial U^n \cap \partial U^m = \emptyset$.
        \item If $\partial F^n \cap \partial F^m$ is an edge, then $\partial U^n \cap \partial U^m$ is a single point.
        \item If $\partial F^n \cap  \partial F^m$ is the union of two non-adjacent vertices, then $\partial U^n \cap \partial U^m$ consists of two points.
        \item If $\partial F^n \cap  \partial F^m$ contains at least three vertices, then $\partial U^n \cap \partial U^m$ contains infinitely many points.
    \end{enumerate}
    In the above cases the common points of $\partial U^n$ and $\partial U^m$ correspond precisely to admissible sequences $(i_j)$ that contain only indices of the common vertices of $\partial F^n$ and $\partial F^m$. Moreover, the following statements are true. 
    \begin{enumerate}[label=\normalfont(\arabic*)]\setcounter{enumi}{4}
        \item\label{lem:boundary:5} If the faces $F^n$ and $F^m$ have disjoint interiors, then $U^n\cap U^m=\emptyset$. 
        \item\label{lem:boundary:6} If $F^m\subsetneq F^n$, then $U^m\subsetneq U^n$ and  $\partial U^n\cap \partial U^m$ contains at most one point, as a consequence of the above and \ref{condition:si}. 
        \item\label{lem:boundary:7} $\partial U^n \cap \Lambda(\mathcal{P}) = \bigcup_{i=1}^{r-1} (C_i \cap C_{i+1})$, where $\{C_i: i=1,\dots,r\}$ is the collection of circles of $\mathcal P$ corresponding to the vertices of $\partial F^n$.
    \end{enumerate}
\end{lemma}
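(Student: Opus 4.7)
The plan is to exploit the symbolic coding of $\partial U^n = \Lambda_{F^n}$: every $p \in \Lambda_{F^n}$ has the form $p = D_{(i_j)} = \bigcap_{l \geq 1} D_{i_1 i_2 \cdots i_l}$ for some admissible sequence $(i_j)_{j\geq 1}$ over the vertex index set $V^n$ of $\partial F^n$, and two such sequences represent the same point only under the ``cusp doubling'' $(v,w,v,w,\ldots) \leftrightarrow (w,v,w,v,\ldots)$ when $v, w \in V^n$ are adjacent in the chain, in which case both converge to the parabolic fixed point $C_v \cap C_w$. The backbone of the argument is the following structural claim: for any vertex $x$ of the contact graph $\mathcal G$, $C_x \cap \Lambda_{F^n} = \emptyset$ if $x \notin V^n$, while if $x \in V^n$ then $C_x \cap \Lambda_{F^n}$ is precisely the pair of tangent points of $C_x$ with its two chain-neighbors in $\partial F^n$.

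I will establish the structural claim by combining two observations: $D_i^\circ$ lies in the discontinuity domain of $\Gamma_{F^n}$, and if $p \in C_x \cap \Lambda_{F^n}$ is represented by $(i_j) \subset V^n$, then $p \in C_x \cap D_{i_1}$, so the disjoint-interior property of $\mathcal P$ forces either $i_1 = x$ (hence $x \in V^n$, and applying $g_{i_1}$ iterates the argument to the child disks) or $p$ to be the unique tangent point $C_x \cap C_{i_1}$. The latter case similarly propagates to show $p$ is actually a chain tangent cusp of $\partial F^n$. With the structural claim in hand, (1)--(4) reduce to enumeration. If $p \in \partial U^n \cap \partial U^m$, then its two admissible representations $(i_j) \subset V^n$ and $(i'_j) \subset V^m$ satisfy $p \in D_{i_1} \cap D_{i'_1}$, which is nonempty only if $i_1 = i'_1$ or $C_{i_1}, C_{i'_1}$ are tangent at $p$; in the latter case the structural claim applied to both $\Lambda_{F^n}$ and $\Lambda_{F^m}$ forces $\{i_1, i'_1\} \subset V^n \cap V^m$. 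Iterating by peeling off letters shows that both sequences lie entirely in $V^n \cap V^m$. Counting admissible sequences over $V^n \cap V^m$ gives zero points when $|V^n \cap V^m|\leq 1$, a single parabolic cusp when the two common vertices are adjacent, two distinct points (interior to the disks $D_v, D_w$ respectively) when the two common vertices are non-adjacent, and uncountably many distinct limit points when $|V^n \cap V^m|\geq 3$.

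For (5)--(6) I will use the geometric faithfulness of the tile construction: the open tile $U^n$ lies in the topological face-region of $F^n$ realized via $\mathcal P$, which is determined by the separating chain of circles $C_1, \ldots, C_{r_n}$. Disjoint faces $F^n, F^m$ give disjoint face-regions and hence $U^n \cap U^m = \emptyset$; the nesting $F^m \subsetneq F^n$ gives $U^m \subsetneq U^n$ by the same token, and assumption \ref{condition:si} restricts $\partial F^n \cap \partial F^m$ to at most an edge, which by (1) or (2) means $\partial U^n \cap \partial U^m$ contains at most one point. Finally, (7) follows immediately from the structural claim together with the observation that no accumulation point of shrinking packing circles in $\overline{U^n}$ lies on $\partial U^n$ except at a chain cusp; such accumulation points lie in the ``residual'' sub-gasket inside $\overline{U^n}$, which touches the quasicircle $\partial U^n$ only at the parabolic cusps.

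The main obstacle is the structural claim, which must be proved in a way that genuinely uses the packing property of $\mathcal P$ to rule out coincidental intersections between the reflection-group quasicircle $\Lambda_{F^n}$ and circles $C_x$ for $x \notin V^n$. Once that is established, the combinatorial enumeration for (1)--(4) and the geometric consistency arguments for (5)--(7) become routine.
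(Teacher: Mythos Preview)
Your proposal is correct and follows the same approach as the paper: the paper does not actually give a proof but merely asserts that the lemma can be verified from the symbolic description $\partial\Omega_F=\bigcup_{(i_j)\text{ admissible}}D_{(i_j)}$, and your argument carries out precisely that verification. Your ``structural claim'' about $C_x\cap\Lambda_{F^n}$ is the right organizing device, and the peeling-off-letters induction together with the no-three-circles-through-a-point property of the packing is exactly what makes the enumeration in (1)--(4) and the identity in (7) go through.
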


\begin{figure}[htp]
    \centering
    \begin{tikzpicture}
    \node[anchor=south west,inner sep=0] at (0,0) {\includegraphics[width=0.75\textwidth]{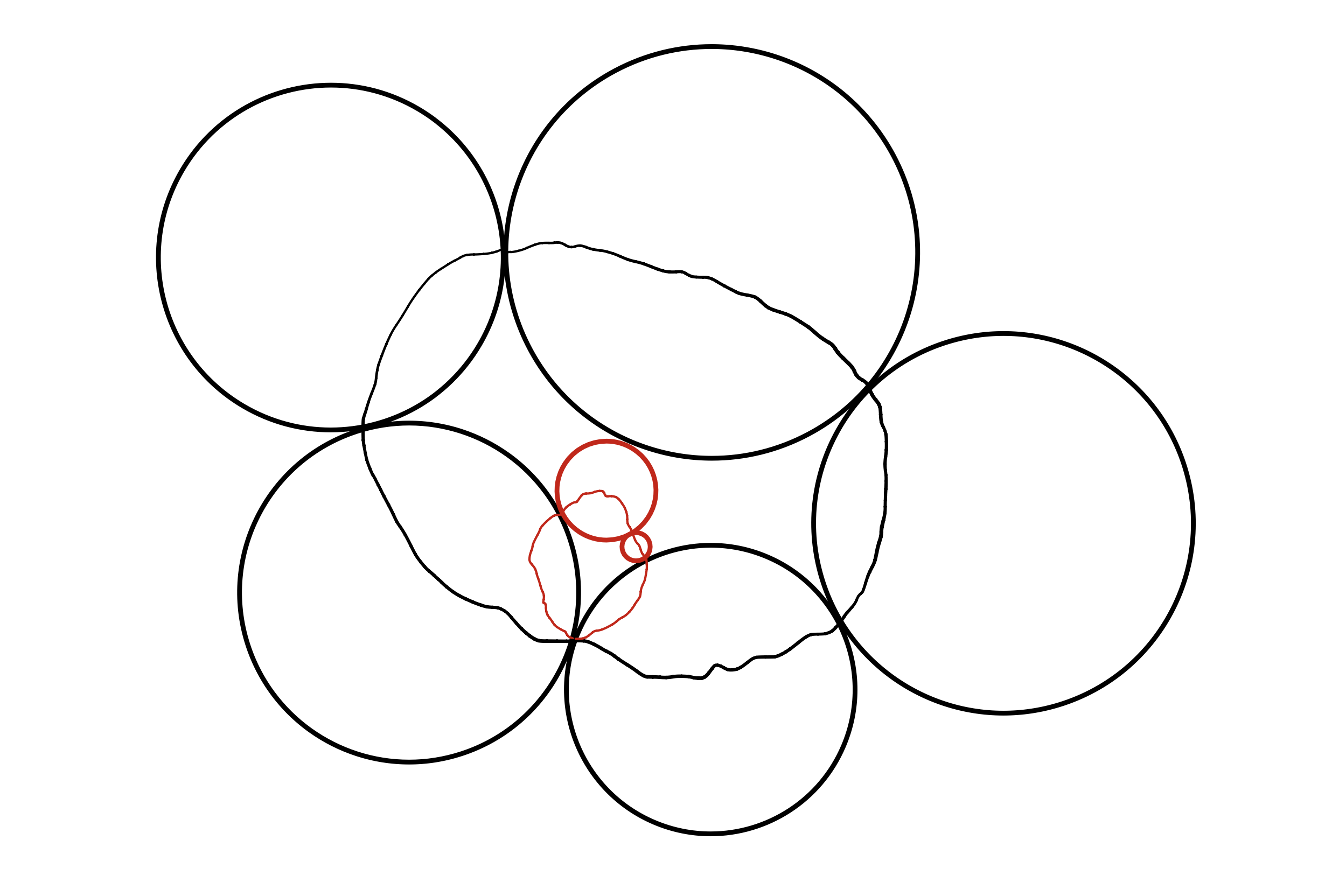}};
    \node at (4.25,2.4) {\begin{footnotesize}\textcolor{red}{$\Omega_{F^2}$}\end{footnotesize}};
    \node at (4.9,2.1) {\begin{footnotesize}\textcolor{red}{$\Lambda_{F^2}$}\end{footnotesize}};
    \node at (5.1,4.1) {\begin{footnotesize}$\Omega_{F^1}$\end{footnotesize}};
    \node at (5.5,4.7) {\begin{footnotesize}$\Lambda_{F^1} = \partial \Omega_{F^1}$\end{footnotesize}};
    \end{tikzpicture}
    \caption{An illustration of open tiles $\Omega_{F^2}, \Omega_{F^1}$ and limit sets $\Lambda_{F^2}, \Lambda_{F^1}$ corresponding to nested faces $F^2 \subset F^1$ such that $\partial F^2 \cap \partial F^1$ is an edge.}
    \label{fig:nestedF}
\end{figure}

\begin{proposition}[Diameters of tiles]\label{prop:diam->0}
    Let $\mathcal{R}$ be a simple, irreducible, acylindrical finite subdivision rule, $\mathcal{G}$ be a (spherical) subdivision graph for $\mathcal R$, and $\mathcal P\in \mathcal M(\mathcal G)$. Then $\max\{\diam\Omega_{F}: F \text{ face of } \mathcal{G}^n\}\to 0$ as $n\to\infty$.
\end{proposition}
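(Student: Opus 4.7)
The plan is to argue by contradiction, reducing to a case analysis on nested sequences of faces, in direct parallel with the proof of Lemma \ref{lemma:gasket_sub}\ref{gs:0}. The key geometric observation I will use is the containment $\Omega_F\subset W(F)$ for every face $F$: each point of the tile lies either in a disk bounded by a circle on $\partial F$ (a ``lens'' region cut off by the quasicircle $\Lambda_F$) or in the interstitial region enclosed by the chain of circles on $\partial F$, which is part of the embedded interior of $F$ making up $W(F)$.

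Suppose, toward contradiction, that there exist $\varepsilon>0$ and faces $F^n$ of $\mathcal G^n$ with $\diam \Omega_{F^n}\geq \varepsilon$ for all $n\geq 0$. A standard pigeonhole and diagonal extraction argument, based on the finiteness of the children of each face at each level together with the monotonicity in Lemma \ref{lem:intersectionBoundary}\ref{lem:boundary:6}, reduces us to a nested sequence $F^0\supset F^1\supset \cdots$ with the same diameter bound; after passing to an iterate of $\mathcal R$, I may assume assumption \ref{condition:si} holds. The trichotomy established in the proof of Lemma \ref{lemma:gasket_sub}\ref{gs:0} then applies: either (a) no vertex lies on $\partial F^n$ for infinitely many $n$, or (b) exactly one vertex $v$ lies on $\partial F^n$ for all but finitely many $n$, or (c) exactly two adjacent vertices $v,w$ lie on $\partial F^n$ for all but finitely many $n$. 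In case (a), I combine $\diam W(\inter F^n)\to 0$ from Lemma \ref{lemma:gasket_sub}\ref{gs:0} with the fact that only finitely many disks of $\mathcal P$ have spherical diameter above any fixed threshold (a consequence of bounded total area) to deduce that all disks on $\partial F^n$ shrink; hence $\diam W(F^n)\to 0$, and the inclusion $\Omega_{F^n}\subset W(F^n)$ gives $\diam \Omega_{F^n}\to 0$, contradicting the assumption.

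The main obstacle lies in cases (b) and (c), in which $W(F^n)$ does not shrink because it contains the persistent disk $D_v$ (and $D_w$), so the crude inclusion $\Omega_{F^n}\subset W(F^n)$ is insufficient. In case (b), I would instead analyze the lens $L_n = \Omega_{F^n}\cap D_v$ directly. Its boundary in $\overline{D_v}$ consists of an arc $\alpha_n\subset C_v$, whose endpoints are the two tangency points of $C_v$ with its neighbors on $\partial F^n$, together with an arc $\beta_n$ of $\Lambda_{F^n}$ crossing the interior of $D_v$. Since the sequence $W(\inter F^n)$ is nested and decreasing with diameter tending to zero, and since $W(\inter F^n)$ borders $D_v$ along an arc of $C_v$ for every large $n$, the closures of $W(\inter F^n)$ intersect in a single point $p_v\in C_v$; in particular $\alpha_n$ collapses to $p_v$. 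For $\beta_n$, I would use the invariance $g_v(\Lambda_{F^n})=\Lambda_{F^n}$ under the reflection $g_v$ in $C_v$ to write $\beta_n=g_v(\beta_n')$, where $\beta_n'$ is the complementary arc of $\Lambda_{F^n}$ in $\widehat \C\setminus D_v$. The case (a) argument applied to the non-persistent disks on $\partial F^n$ shows that $\beta_n'\subset \overline{W(F^n)}\setminus D_v$ collapses to $p_v$; since $g_v$ is smooth with bounded derivative at its fixed point $p_v\in C_v$ (an inversion is a local isometry at points on the inversion circle), $\beta_n$ also collapses to $p_v$. Combining with the shrinking of the remaining components of $\Omega_{F^n}$ (the interstitial region and the lenses in the non-persistent disks) yields $\diam \Omega_{F^n}\to 0$, the desired contradiction. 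Case (c) is handled analogously, with $\Omega_{F^n}$ collapsing to the tangency point $C_v\cap C_w$.
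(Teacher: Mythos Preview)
Your approach is essentially the paper's: both argue by contradiction via a nested sequence of faces, invoke the trichotomy (a)/(b)/(c) coming from \ref{condition:si}, observe that there are only boundedly many vertices on $\partial F^n$ and that the non-persistent boundary disks must shrink, and conclude that $\diam\Omega_{F^n}\to 0$. Your write-up is in fact more detailed than the paper's, which simply asserts that the shrinking of the non-persistent disks ``implies that $\diam\Omega_{F^n}\to 0$'' without spelling out the reflection step you give in case (b).

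One point deserves tightening: your ``analogously'' for case (c) is not immediate from the case (b) argument. In case (b) the identity $\beta_n=g_v(\beta_n')$ works because $\beta_n'=\Lambda_{F^n}\setminus D_v$ lies entirely in the shrinking disks. In case (c), however, $\Lambda_{F^n}\setminus D_v$ also meets the persistent disk $D_w$, so the single reflection gives only $\beta_n^v\subset g_v(\beta_n^w)\cup(\text{small set near }p)$, a circular dependence between the two lenses. The clean resolution is to normalize so that $C_v,C_w$ are parallel lines tangent at $p=\infty$; then $g_v,g_w$ (and hence the parabolic subgroup $\langle g_v,g_w\rangle$) preserve the real part, and since every point of $\Lambda_{F^n}\setminus\{p\}$ lies in the $\langle g_v,g_w\rangle$-orbit of a point in one of the small disks, one gets $\Lambda_{F^n}\setminus\{\infty\}\subset\{\operatorname{Re} z\ge m_n\}$ with $m_n\to\infty$, whence $\Omega_{F^n}\to p$. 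The paper is equally terse on this step, so this is not a divergence in strategy---just a detail that both treatments leave implicit and that you should make explicit.
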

Here diameters are measured in the spherical metric. Compare to Lemma \ref{lemma:gasket_sub}.
\begin{proof}
    For the sake of contradiction, suppose that there exists $\delta>0$ and a sequence faces $F^n$ of $\mathcal{G}^n$, $n\geq 0$, with $\diam \Omega_{F^n} \geq \delta$ for all $n\geq 0$. We may assume that the faces $F^n$, $n\geq 0$ are nested. In view of \ref{condition:si}, there are three cases: (a) no vertex of $\mathcal G$ lies in $\partial F^n$ for infinitely many $n\geq 0$; (b) there exists a vertex $v\in \mathcal G$ that lies in $\partial F^n$ for all but finitely many $n\geq 0$; (c) there exist two adjacent vertices $v,w\in \mathcal G$ that lie in $\partial F^n$ for all but finitely many $n\geq 0$.  By the definition of a subdivision rule (see Definition \ref{defn:fsr}), $\partial F^n$ consists of a uniformly bounded number of vertices for all $n\geq 0$. Thus, the diameter of the union of disks corresponding to vertices of $\partial F^n$, except for the ones corresponding to $v$ and $w$ in the cases (b) and (c), tends to $0$ as $n\to\infty$. This implies that $\diam \Omega_{F^n}\to 0$ as $n\to\infty$, which is a contradiction.
\end{proof}

For $n\geq 0$ we define 
$$\Omega^n= \bigcup_{F\text{ face }\mathcal G^n}\Omega_F.$$
For $n\geq 0$, each face of $\mathcal G^{n+1}$ is contained in a face of $\mathcal G^n$ so each open tile of level $n+1$ is contained in an open tile of level $n$. Thus, $\Omega^{n+1} \subset \Omega^n$ for $n\geq 0$. Also observe that by Lemma \ref{lem:intersectionBoundary} \ref{lem:boundary:5} any two open tiles of level $n$ are disjoint. Hence, the components of $\Omega^n$ are precisely the open tiles of level $n$.

Let $\mathcal{G} = \mathcal{G}(P_i)$ be a subdivision graph for $\mathcal R$ and $\mathcal{P}\in \mathcal{M}(\mathcal{G})$. Consider the disks of $\mathcal P$ associated to $\partial P_i$. The union of those closed disks has two complementary components. We denote by $\Pi(\mathcal P)$ the component that has non-empty intersection with the limit set $\Lambda (\mathcal P)$. Compare the following lemma with Lemma \ref{lemma:gasket_sub}.

\begin{lemma}[Tiles and limit set]\label{cor:density}
    Let $\mathcal{R}$ be a simple, irreducible, acylindrical finite subdivision rule. Let $\mathcal{G}$ be a (resp.\ spherical) subdivision graph for $\mathcal R$ and $\mathcal{P} \in \mathcal{M}(\mathcal{G})$.
    Then the following statements are true. 
    \begin{enumerate}[label=\normalfont(\arabic*)]
        \item\label{d:1} ${\Lambda(\mathcal{P})\cap \overline{\Pi(\mathcal{P})}} = \bigcap_{n=0}^\infty \overline{\Omega^n}$ (resp.\ $\Lambda(\mathcal{P}) = \bigcap_{n=0}^\infty \overline{\Omega^n}$).
        \item\label{d:2} For each point $x\in {\Lambda(\mathcal{P})\cap \overline{\Pi(\mathcal{P})}}$ (resp.\ $x\in \Lambda(\mathcal P)$) there exists a nested sequence of faces $F^n$ of $\mathcal G^n$, $n\geq 0$, such that $\{x\}=\bigcap_{n=0}^\infty \overline {\Omega_{F^n}}$.  
        \item\label{d:3} For each circle $C$ in $\mathcal P$ the tangency points of $\mathcal P$ that lie on $C$ are dense in $C\cap \overline{\Pi(\mathcal{P})}$ (resp.\ $C$).
        \item\label{d:4}The tangency points of $\mathcal P$ are dense in ${\Lambda(\mathcal{P})\cap \overline{\Pi(\mathcal{P})}}$ (resp.\ $\Lambda(\mathcal P)$).
    \end{enumerate}
\end{lemma}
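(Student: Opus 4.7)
The approach is to prove the four parts in the order (3), (4), (2), (1), using Proposition \ref{prop:diam->0} (which asserts that $\max\{\diam \Omega_F : F \text{ face of } \mathcal G^n\} \to 0$) as the main tool throughout, together with the structural properties of tiles given in Lemma \ref{lem:intersectionBoundary}.

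For (3), I would fix a vertex $v$ and take $n$ large enough that $v \in \mathcal G^n$. The neighbors of $v$ in $\mathcal G^n$ correspond to the circles of $\mathcal P$ tangent to $C_v$, and their tangency points with $C_v$ divide $C_v \cap \overline{\Pi(\mathcal P)}$ (respectively, $C_v$) into arcs; each such arc is the portion of some $\Lambda_F = \partial \Omega_F$ lying on $C_v$, for a face $F$ of $\mathcal G^n$ incident to $v$. Proposition \ref{prop:diam->0} forces the diameters of these arcs to tend to $0$ uniformly as $n\to\infty$, which yields density. For (4), take $x \in \Lambda(\mathcal P)\cap \overline{\Pi(\mathcal P)}$; if $x$ lies on some circle $C_v$, then (3) applies directly. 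Otherwise $x$ is a limit of points $p_k \in C_{v_k}$ for distinct vertices $v_k$, and since only finitely many circles of $\mathcal P$ have diameter above any fixed $\varepsilon>0$, we may assume $\diam C_{v_k} \to 0$; any tangency point $q_k \in C_{v_k}$ then satisfies $|q_k - x| \leq |p_k-x| + \diam C_{v_k} \to 0$.

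For (2), I would construct the nested sequence $F^n$ by induction. Take $F^0 = P_i$ in the subdivision-graph case (so that $\overline{\Omega_{F^0}} = \overline{\Pi(\mathcal P)}$), or any face $F^0$ of $\mathcal G^0$ with $x \in \overline{\Omega_{F^0}}$ in the spherical case. Given $x \in \overline{\Omega_{F^n}}$, the key identity
\[
\overline{\Omega_{F^n}} = \bigcup_{F' \subset F^n} \overline{\Omega_{F'}} \;\cup\; \bigcup_{v \in (F^n \setminus \partial F^n)\cap \mathcal G^{n+1}} \overline{D_v},
\]
where $F'$ ranges over faces of $\mathcal G^{n+1}$ contained in $F^n$, combined with the fact that $x \in \Lambda(\mathcal P)$ cannot lie in any open disk $D_v$, forces $x \in \overline{\Omega_{F^{n+1}}}$ for some child face $F^{n+1}$; I take this as the next term of the sequence. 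Proposition \ref{prop:diam->0} then yields $\{x\} = \bigcap_n \overline{\Omega_{F^n}}$.

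Part (1) follows quickly: the inclusion $\Lambda(\mathcal P)\cap \overline{\Pi(\mathcal P)} \subset \bigcap_n \overline{\Omega^n}$ is immediate from (2). For the reverse inclusion, if $x \in \bigcap_n \overline{\Omega^n}$ then $x \in \overline{\Omega_{F^n}}$ for some face $F^n$ of $\mathcal G^n$; by Lemma \ref{lem:intersectionBoundary}\ref{lem:boundary:7}, the set $\partial \Omega_{F^n}\cap \Lambda(\mathcal P)$ contains the tangency points of consecutive circles on $\partial F^n$, and these approximate $x$ as $\diam \Omega_{F^n}\to 0$; the containment $\bigcap_n \overline{\Omega^n}\subset \overline{\Pi(\mathcal P)}$ is a consequence of $\Omega^n \subset \Pi(\mathcal P)$. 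The step I expect to require the most care is justifying the decomposition of $\overline{\Omega_{F^n}}$ used in the inductive step of (2), which must be argued from the reflection-group definition of $\Omega_F$ together with condition \ref{condition:si} of the subdivision rule in order to ensure that the closures of child tiles and of disks associated to new interior vertices exhaust $\overline{\Omega_{F^n}}$ without gaps.
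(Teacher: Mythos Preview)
Your overall plan is reasonable, but the argument for (2) rests on a decomposition identity that is \emph{false} as written, and this is not just a matter of needing more care. You claim
\[
\overline{\Omega_{F^n}} \;=\; \bigcup_{F' \subset F^n} \overline{\Omega_{F'}} \;\cup\; \bigcup_{v} \overline{D_v}
\]
with $v$ ranging over interior level-$(n+1)$ vertices. But by Lemma~\ref{lem:intersectionBoundary}\ref{lem:boundary:6}, for each child $F'\subsetneq F^n$ the intersection $\partial\Omega_{F^n}\cap\overline{\Omega_{F'}}$ is at most a single point, and clearly $\partial\Omega_{F^n}$ avoids every $\overline{D_v}$ for interior $v$. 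Hence the right-hand side meets the quasicircle $\partial\Omega_{F^n}$ in only finitely many points, so the equality cannot hold. What you actually need is the weaker implication ``$x\in\Lambda(\mathcal P)\cap\overline{\Omega_{F^n}}\Rightarrow x\in\overline{\Omega_{F'}}$ for some child $F'$'', which is true but requires a different justification (the obstruction above disappears because, by Lemma~\ref{lem:intersectionBoundary}\ref{lem:boundary:7}, $\Lambda(\mathcal P)\cap\partial\Omega_{F^n}$ is already a finite set of tangency points). There are two smaller slips as well: in (3) the arcs of $C_v$ between consecutive tangency points lie in $\overline{\Omega_F}$, not on $\partial\Omega_F=\Lambda_F$; and in (1) the inclusion $\Omega^n\subset\Pi(\mathcal P)$ is false, since each $\Omega_F$ protrudes into the open boundary disks.

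The paper avoids the decomposition difficulty entirely by reversing your order. It first proves directly (using only Lemma~\ref{lem:intersectionBoundary}\ref{lem:boundary:7}) that every circle of $\mathcal P$, intersected with $\overline{\Pi(\mathcal P)}$, lies in $\overline{\Omega^n}$ for all $n$; since $\Lambda(\mathcal P)$ is the closure of the union of circles, this gives $\Lambda(\mathcal P)\cap\overline{\Pi(\mathcal P)}\subset\bigcap_n\overline{\Omega^n}$. The reverse inclusion and (2) then follow by picking for each $n$ a face $F^n$ with $x\in\overline{\Omega_{F^n}}$ and passing to a nested subsequence, with Proposition~\ref{prop:diam->0} giving shrinking. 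Parts (3) and (4) are then short consequences. This route sidesteps any need to decompose $\overline{\Omega_{F^n}}$ and handles the containment in $\overline{\Pi(\mathcal P)}$ by observing that all but finitely many circles lie there.
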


\begin{proof}Suppose that $\mathcal G$ is a subdivision graph for $\mathcal R$;  the proofs for spherical subdivision graphs are similar. We claim that for each $n\geq 0$, $\overline{\Omega_n}$ contains all intersections of circles of $\mathcal P$ with $\overline{\Pi(\mathcal{P})}$. Indeed, let $C_v$ be a circle of $\mathcal P$ corresponding to a vertex $v$ of $\mathcal G$. If $v$ is not a vertex of $\mathcal{G}^n$, then $v$ is contained in a face $F$ of $\mathcal{G}^n$. Lemma \ref{lem:intersectionBoundary} \ref{lem:boundary:7} implies that the circle $C_v$ is disjoint from $\partial \Omega_F$ so it is contained in $\Omega_F$. If $v$ is a vertex of $\mathcal G^n$, we denote by $F_1,..., F_j$ be the faces of $\mathcal{G}^n$ that have $v$ on their boundary. Again, Lemma \ref{lem:intersectionBoundary} \ref{lem:boundary:7} implies that each arc of $C_v\cap \overline{\Pi(\mathcal{P})}\setminus \bigcup_{i=1}^j \partial \Omega_{F_i}$ is contained in some $\Omega_{F_i}$, so $C_v \cap \overline{\Pi(\mathcal{P})}\subset \bigcup_{i=1}^j \overline{\Omega_{F_i}}$.

Since $\Lambda(\mathcal P)$ is the closure of the union of circles in $\mathcal{P}$, by the above claim we conclude that ${\Lambda(\mathcal{P})\cap \overline{\Pi(\mathcal{P})}}\subset \bigcap_{n=0}^\infty\overline{\Omega_n}$. Conversely, let $x\in \bigcap_{n=0}^\infty \overline{\Omega_n}$. Then there exists a nested sequence of faces $F^n$ of $\mathcal G^n$, $n\geq 0$, so that $x \in \bigcap_{n=0}^\infty \overline{\Omega_{F^n}}$. By Proposition \ref{prop:diam->0}, $\diam \Omega_{F^n} \to 0$. Thus, there exists a sequence of distinct circles of $\mathcal P$ converging to $x$. Since all but finitely many circles of $\mathcal P$ lie in ${\Lambda(\mathcal{P})\cap \overline{\Pi(\mathcal{P})}}$ and the latter is closed, we conclude that $x\in {\Lambda(\mathcal{P})\cap \overline{\Pi(\mathcal{P})}}$. This proves part \ref{d:1}. Note that \ref{d:2} follows from \ref{d:1} by repeating the above argument.

Let $C$ be a circle in $\mathcal P$ and $x\in C\cap \overline{\Pi(\mathcal P)}$.  By \ref{d:2} there exists a nested sequence of  faces $F^n$ of $\mathcal G^n$, $n\geq 0$, so that $\{x\}= \bigcap_{n=0}^\infty\overline{\Omega_{F^n}}$. For all sufficiently large $n\geq 0$, the vertex of $\mathcal G$ corresponding to $C$ lies in $\partial F^n$, so the tile $\overline{\Omega_{F^n}}$ contains a tangency point on $C$; see Lemma \ref{lem:intersectionBoundary} \ref{lem:boundary:7}. Thus, we conclude that the tangency points of $\mathcal P$ that lie on $C$ are dense in $C\cap \overline{\Pi(\mathcal{P})}$, as in \ref{d:3}. Part \ref{d:4} follows from \ref{d:3} and the fact that the circles of $\mathcal P$ are dense in $\Lambda(\mathcal P)$ by definition.
\end{proof}

The next statement is an immediate consequence of the density of tangency points in the limit set.

\begin{corollary}\label{cor:sr}
     Let $\mathcal{R}$ be a simple, irreducible, acylindrical finite subdivision rule. Let $\mathcal{G}$ be a (resp.\ spherical) subdivision graph for $\mathcal R$ and $\mathcal P,\mathcal P'\in \mathcal M(\mathcal G)$. Then any two homeomorphisms between $\mathcal{P}$ and $\mathcal{P}'$ coincide on ${\Lambda(\mathcal{P})\cap \overline{\Pi(\mathcal{P})}}$ (resp.\ $\Lambda(\mathcal P)$).
\end{corollary}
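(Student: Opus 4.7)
The plan is to derive the corollary directly from the density of tangency points established in Lemma~\ref{cor:density}\ref{d:4}. The first step is to observe that if $\phi\colon\widehat\C\to\widehat\C$ is any homeomorphism between the marked circle packings $\mathcal P$ and $\mathcal P'$, then by definition of the marking it must carry each circle $C_v$ of $\mathcal P$ onto the correspondingly labeled circle $C_v'$ of $\mathcal P'$. Now suppose $v,w$ are adjacent vertices of $\mathcal G$. Since $\mathcal P$ is a circle packing, $C_v$ and $C_w$ meet in a single tangency point $p$, and similarly $C_v'\cap C_w'=\{p'\}$. Because $\phi(C_v)=C_v'$ and $\phi(C_w)=C_w'$, we must have $\phi(p)\in C_v'\cap C_w'=\{p'\}$, so the value $\phi(p)=p'$ is forced by the marking alone. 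Consequently, any two marked homeomorphisms $\phi,\psi$ between $\mathcal P$ and $\mathcal P'$ agree on every tangency point of $\mathcal P$.

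For the second step, I would invoke Lemma~\ref{cor:density}\ref{d:4}, which asserts that the tangency points of $\mathcal P$ are dense in $\Lambda(\mathcal P)\cap\overline{\Pi(\mathcal P)}$ in the subdivision graph case and in all of $\Lambda(\mathcal P)$ in the spherical subdivision graph case. Since $\phi$ and $\psi$ are continuous and coincide on this dense subset, they must coincide on its closure, giving the desired equality on $\Lambda(\mathcal P)\cap\overline{\Pi(\mathcal P)}$ (respectively, $\Lambda(\mathcal P)$).

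There is essentially no obstacle to overcome: the real content, namely that tangency points exist in sufficient abundance to reach the limit set, was already carried out in Lemma~\ref{cor:density}, and the present corollary merely records the immediate consequence. The only thing one should take care to state explicitly is that a marked homeomorphism is required to preserve the labeling of circles, which is exactly what converts the combinatorial agreement at tangency points into a forced pointwise identification.
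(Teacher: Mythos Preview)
Your proof is correct and is exactly the argument the paper has in mind: the paper simply states that the corollary is ``an immediate consequence of the density of tangency points in the limit set'' (Lemma~\ref{cor:density}\ref{d:4}), and you have spelled out precisely that consequence.
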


\subsection{Exponential contraction}
Let $\mathcal{R} = \{P_i\}_{i=1}^k$ be a simple, irreducible, acylindrical finite subdivision rule. 
Let $i\in \{1,\dots,k\}$ and $\mathcal{G}= \mathcal{G}(P_i)$ be a subdivision graph for $\mathcal{R}$ and let $\mathcal{P} \in \mathcal{M}(\mathcal{G})$. Let $F$ be a non-external face of $\mathcal{G}^n= \mathcal{G}^n(P_i)$ for some $n\geq 0$.
Denote by $\mathcal{P}_F$ the infinite sub-circle packing of $\mathcal{P}$ associated to the subgraph $F \cap \mathcal{G}$ in $\mathcal{G}$. Using renormalization methods, it is proved that the Teichm\"uller distance between sub-circle packings at deeper levels decays exponentially.
\begin{theorem}[Exponential contraction; \cite{LuoZhang:CirclePacking}*{Theorem C}]\label{thm:expr}
    Let $\mathcal{P}, \mathcal{P}' \in \mathcal{M}(\mathcal{G})$.
    Let $F$ be a non-external face of $\mathcal{G}^n$, $n\geq 0$.
    Then 
    $$
    d(\mathcal{P}_F, \mathcal{P}'_F) = O(\delta^n)\,\,\,\textrm{as $n\to\infty$}
    $$
    for some universal constant $\delta \in (0,1)$.   
\end{theorem}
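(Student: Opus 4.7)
The plan is to prove this via a renormalization argument that reduces everything to a strict contraction at a single subdivision step. First I would observe that each sub-packing $\mathcal{P}_F$ associated to a face $F$ of $\mathcal{G}^n$ is itself a marked circle packing whose tangency graph is canonically isomorphic, via the cellular map $\psi\colon P_{\sigma(F)}\to F$, to the subdivision graph $\mathcal{G}(P_{\sigma(F)})$ for the polygon type $P_{\sigma(F)}$ of $\mathcal{R}$. Thus $\mathcal{P}_F,\mathcal{P}'_F\in \mathcal{M}(\mathcal{G}(P_{\sigma(F)}))$, and by Theorem \ref{thm:qch}\ref{thm:qch:ii}(b) we may measure their distance in the Teichm\"uller metric inherited from $\Teich(\Pi_{F_{\sigma(F)}})$, which is a finite-dimensional complete $\mathrm{CAT}(0)$-type space.

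Next I would define, for each polygon type $P_i$ of $\mathcal{R}$, a \emph{renormalization operator}
\begin{align*}
    \Phi_i\colon \mathcal{M}(\mathcal{G}(P_i)) \to \prod_{j=1}^{m_i}\mathcal{M}(\mathcal{G}(P_{\sigma(i,j)})),\qquad \Phi_i(\mathcal{P}) = (\mathcal{P}_{P_{i,j}})_{j=1}^{m_i},
\end{align*}
that records the induced sub-packings on each child face of $P_i$. The key claim is that each $\Phi_i$ is $1$-Lipschitz in the Teichm\"uller metric (this is a Schwarz--Pick phenomenon: extending a sub-packing marking to a full packing of $P_i$ cannot increase the optimal quasiconformal dilatation), and is in fact \emph{strictly} contracting with a uniform factor $\delta_0\in(0,1)$ depending only on $\mathcal{R}$. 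Since there are only finitely many polygon types, a single $\delta_0<1$ will suffice, and iterating $\Phi$ down through $n$ levels of subdivision yields $d(\mathcal{P}_F,\mathcal{P}'_F)\le \delta_0^{\,n}\,d(\mathcal{P},\mathcal{P}')$, which is the required bound.

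The main obstacle is establishing the strict uniform contraction $\delta_0<1$ at the single-step level. Here I would use acylindricity in an essential way: because no pair of non-adjacent vertices on $\partial P_i$ admits a bounded-length separating cycle in $\mathcal{G}(P_i)$, Proposition \ref{prop:diam->0} and Lemma \ref{lem:intersectionBoundary} give that the tiles $\Omega_{P_{i,j}}$ are uniformly "fat" inside $\Omega_{P_i}$ and their boundary limit quasicircles $\Lambda_{P_{i,j}}$ stay a definite modulus apart from $\partial \Omega_{P_i}$. This geometric separation should force any extremal Beltrami coefficient realizing $d(\Phi_i(\mathcal{P}),\Phi_i(\mathcal{P}'))$ to be bounded away, in the relevant compact-open sense, from any extremal for $d(\mathcal{P},\mathcal{P}')$ on the "collar" between the child tiles and $\partial \Omega_{P_i}$, producing genuine dilatation loss. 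Making this quantitative requires a compactness/normal-families argument: if $\delta_0=1$, one extracts a limiting degenerate packing in which $\Phi_i$ is an isometry, and uses the combinatorial rigidity of Theorem \ref{thm:qch}\ref{RealizationAcylindrical} together with acylindricity to derive a contradiction. This normal-families step, verifying that no limit of circle packings in $\mathcal{M}(\mathcal{G}(P_i))$ can escape to infinity without violating the acylindrical combinatorics, is the technical core of the argument and is where the proof in \cite{LuoZhang:CirclePacking} invests the most effort.
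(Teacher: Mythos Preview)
This theorem is not proved in the present paper at all: it is quoted from \cite{LuoZhang:CirclePacking}*{Theorem C} as a black box, with only the one-line remark that it is obtained ``using renormalization methods.'' So there is no paper-side proof to compare your proposal against.

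That said, your outline is a faithful sketch of the strategy that \cite{LuoZhang:CirclePacking} actually implements: pass from a sub-packing $\mathcal{P}_F$ to the moduli space $\mathcal{M}(\mathcal{G}(P_{\sigma(F)}))$ via the cellular identification, define a one-step renormalization operator on $\prod_i \mathcal{M}(\mathcal{G}(P_i))$, observe that it is nonexpanding by a Schwarz--Pick argument, and then upgrade to a uniform strict contraction using acylindricity. The one place your proposal is vague in a way that matters is the last step: the moduli spaces $\mathcal{M}(\mathcal{G}(P_i))\cong \Teich(\Pi_{F_i})$ are noncompact, so a naive ``if $\delta_0=1$ extract a limit and contradict rigidity'' argument does not immediately apply. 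In \cite{LuoZhang:CirclePacking} this is handled by showing the renormalization operator is holomorphic between the Teichm\"uller spaces and is not a covering (equivalently, not a local isometry), which via the Schwarz lemma on bounded symmetric domains forces a genuine contraction factor $\delta_0<1$; acylindricity enters precisely to rule out the isometry case. Your ``geometric separation / collar'' heuristic points in the right direction but would need to be recast in these holomorphic terms to close the argument.
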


We discuss some implications of the exponential contraction.

\subsubsection{Asymptotic conformality}
Let $X \subset \C$ be a set, not necessarily open, and $\alpha>0$.
A map $\Psi\colon X \to \C$ is \textit{$C^{1+\alpha}$-conformal} at a point $z\in  X$ if the complex derivative 
$$
\Psi'(z)\coloneqq \lim_{z+t \in X, t\to 0} \frac{\Psi(z+t) - \Psi(z)}{t}
$$ 
exists, and
$$
\Psi(z+t) = \Psi(z) + \Psi'(z)t + O(|t|^{1+\alpha})
$$
for all $z+t \in X$ and $t$ sufficiently small (cf.\ \cite{McM96}*{Section B.6}).
As a corollary of exponential contraction, we have the following theorem.
\begin{theorem}[Asymptotic conformality; \cite{LuoZhang:CirclePacking}*{Theorem D}]\label{thm:asympConf}
    Let $\Psi\colon\widehat \C \to \widehat\C$ be a homeomorphism with $\Psi(\infty)=\infty$ between two circle packings $\mathcal{P}, \mathcal{P}' \in \mathcal{M}(\mathcal{G})$. Then $\Psi|_{\Lambda(\mathcal{P})\cap \overline{\Pi(\mathcal{P})}}$ is $C^{1+\alpha}$-conformal at every point on $\Lambda(\mathcal{P})\cap \overline{\Pi(\mathcal{P})}\cap \C$ and $\Psi'$ is non-zero on $\Lambda(\mathcal P)\cap \overline{\Pi(\mathcal P)}\cap \C$.
\end{theorem}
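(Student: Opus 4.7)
The plan is to combine the exponential contraction in Theorem \ref{thm:expr} with the rigidity Corollary \ref{cor:sr} and the hyperbolic control from Lemma \ref{lem:qc} to approximate $\Psi$ near $z_0$ by a sequence of Möbius maps whose error decays geometrically. Fix $z_0 \in \Lambda(\mathcal{P})\cap \overline{\Pi(\mathcal{P})}\cap \C$. By Lemma \ref{cor:density} there exists a nested sequence of faces $F^n$ of $\mathcal{G}^n$ with $\{z_0\}=\bigcap_n \overline{\Omega_{F^n}}$. By Lemma \ref{cor:density} \ref{d:3}, for each $n$ we may choose three tangency points $p_1^n,p_2^n,p_3^n$ on $\partial \Omega_{F^n}$ whose pairwise spherical distances are comparable to $\diam \Omega_{F^n}$. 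Let $q_i^n := \Psi(p_i^n)$ and let $M_n$ be the Möbius map with $M_n(p_i^n)=q_i^n$.

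Next, I would invoke Theorem \ref{thm:expr} to obtain, for each $n$, a quasiconformal homeomorphism $\Phi_n \colon \widehat\C \to \widehat\C$ between the marked sub-packings $\mathcal{P}_{F^n}$ and $\mathcal{P}'_{F^n}$ of dilatation $K_n = 1+O(\delta^n)$. Post-composing with a Möbius map does not change the dilatation, so we may normalize $\Phi_n(p_i^n)=q_i^n$. The map $\Psi$ is itself a homeomorphism between the marked sub-packings $\mathcal{P}_{F^n}$ and $\mathcal{P}'_{F^n}$, so by Corollary \ref{cor:sr} applied to the sub-packings we have $\Phi_n=\Psi$ on $\Lambda(\mathcal{P}_{F^n}) \cap \overline{\Pi(\mathcal{P}_{F^n})}$, a set that contains $\Lambda(\mathcal{P}) \cap \overline{\Omega_{F^n}}$.

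Applying Lemma \ref{lem:qc} to $\Phi_n$ and $M_n$ with the three marked points, we obtain, for $z \in \widehat\C \setminus\{p_1^n,p_2^n,p_3^n\}$ and $Y_n = \widehat\C \setminus \{q_1^n,q_2^n,q_3^n\}$,
\[
d_{Y_n}(M_n(z), \Phi_n(z)) \leq \log K_n = O(\delta^n).
\]
For $z \in \overline{\Omega_{F^n}} \cap \Lambda(\mathcal P)$, both $M_n(z)$ and $\Psi(z)=\Phi_n(z)$ lie in a definite subset of $\overline{\Omega'_{F^n}}$, on which the hyperbolic density of $Y_n$ is comparable to $(\diam \Omega'_{F^n})^{-1}$ (in the spirit of Lemma \ref{lemma:hyperbolic_euclidean}). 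Converting the hyperbolic estimate yields
\[
|M_n(z) - \Psi(z)| \lesssim \delta^n \cdot \diam \Omega'_{F^n}.
\]
A separate input I will use is that $\diam \Omega_{F^n}$ and $\diam \Omega'_{F^n}$ decay exponentially, at rates $\lambda^{-n}$ and $\mu^{-n}$ for some $\lambda,\mu>1$; this comes from the finite subdivision rule structure, the ring lemma for circle packings, and the uniformly bounded Teichmüller distance within $\mathcal{M}(\mathcal G)$. Given this, the Möbius derivative $|M_n'(z_0)|$ is comparable to $\diam \Omega'_{F^n}/\diam \Omega_{F^n}$, and the estimate above applied to the triples $p_i^{n+1}$ inside $\overline{\Omega_{F^n}}$ gives $|M_{n+1}'(z_0)-M_n'(z_0)| = O(\delta^n)$, producing a Cauchy sequence with non-zero limit $\Psi'(z_0)$.

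Finally, for $z_0+t \in \Lambda(\mathcal P) \cap \overline{\Pi(\mathcal P)}$ with $|t|$ small, I would choose the largest $n$ with $z_0+t \in \overline{\Omega_{F^n}}$, so $|t| \asymp \diam \Omega_{F^n} \asymp \lambda^{-n}$. Combining $\Psi(z_0+t) = M_n(z_0+t) + O(\delta^n \diam \Omega'_{F^n})$ with the Taylor expansion $M_n(z_0+t) = M_n(z_0) + M_n'(z_0) t + O(|t|^2/\diam \Omega_{F^n})$ and the comparison $|M_n'(z_0) - \Psi'(z_0)| = O(\delta^n \mu^{-n}/\lambda^{-n})$ yields the $C^{1+\alpha}$ estimate with $\alpha = \log(\delta^{-1})/\log \lambda$. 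The main obstacle is the bookkeeping in this last step: translating the Teichmüller-distance bound into a genuine Euclidean error that scales correctly with both $\diam \Omega_{F^n}$ and $\diam \Omega'_{F^n}$, and extracting a single exponent $\alpha > 0$ uniformly over $z_0$; once the exponential decay of tile diameters is secured from the subdivision rule, everything else reduces to Koebe-type distortion and normalization estimates.
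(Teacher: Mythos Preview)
The paper does not give its own proof of this theorem; it is quoted from \cite{LuoZhang:CirclePacking}*{Theorem D}. The only argument the paper supplies is for the last clause: once $C^{1+\alpha}$-conformality is known, $\Psi'\neq 0$ follows by applying the same cited result to $\Psi^{-1}$, which produces $(\Psi^{-1})'$ at the image point and forces $\Psi'(z_0)\neq 0$. Your proposal instead attempts a self-contained derivation from the exponential contraction Theorem~\ref{thm:expr}, so there is nothing in the paper to compare your argument against line by line. Your route to $\Psi'\neq 0$ via a Cauchy sequence of M\"obius derivatives is more laborious than the paper's one-line symmetry trick.

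Your outline has the correct architecture (approximate $\Psi$ on $\overline{\Omega_{F^n}}$ by a M\"obius map agreeing at three tangency points, use Lemma~\ref{lem:qc} and Theorem~\ref{thm:expr} to get an $O(\delta^n)$ hyperbolic error, convert to a Euclidean error), but it contains a genuine gap. You assert as a ``separate input'' that $\diam\Omega_{F^n}$ decays like $\lambda^{-n}$ for some $\lambda>1$, and then set $\alpha=\log(\delta^{-1})/\log\lambda$. This is false precisely at the points the theorem must cover. When $z_0$ is a tangency point of two circles of $\mathcal P$, every face $F^n$ in the nested sequence contains the corresponding edge on its boundary, so $\{F^n\}$ is parabolic in the sense of Definition~\ref{defn:pf}. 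By Theorem~\ref{thm:rp} the M\"obius symmetry of the associated renormalization-periodic packing is a \emph{parabolic} M\"obius map, and the tiles shrink only polynomially; this is exactly the computation $\diam I^n\simeq 1/n$ carried out in Proposition~\ref{prop:para}. The ring lemma and bounded Teichm\"uller distance do not rescue exponential decay in this regime. Consequently your formula for $\alpha$ is undefined at tangency points, and the final bookkeeping step does not close as written. A correct argument must treat the scale $|t|\asymp 1/n$ separately; the saving feature is that then $\delta^n=\delta^{c/|t|}=O(|t|^{1+\alpha})$ for every $\alpha>0$, but the comparison of $M_n'(z_0)$ with its limit and the second-order M\"obius error both need to be redone without an exponential scale for $|t|$.
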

The assumption that $\Psi$ fixes $\infty$ is only to ensure that $\Psi'$ can be defined on points $z\neq \infty$. The last statement that $\Psi'$ is non-zero is not contained in  \cite{LuoZhang:CirclePacking}*{Theorem D}, but it follows from applying  \cite{LuoZhang:CirclePacking}*{Theorem D} to $\Psi^{-1}$, which gives the existence of $(\Psi^{-1})'$.

\subsubsection{Renormalization periodic circle packings}
For $n\geq 0$, let $F^n$ be a non-external face of $\mathcal{G}^n=\mathcal{G}^n(P_i)$ and assume that $F^{n+1}\subset F^n$, $n\geq 0$.
Recall that each face $F^n$ is identified with a polygon $P_{\sigma(n)} \in \{P_i\}_{i=1}^k$, and $F^0 = P_i = P_{\sigma(0)}$.
Denote this identification by $\psi_n\colon F^n \to P_{\sigma(n)}$. (Since all faces are assumed to be polygons, the cellular identification map in Definition \ref{defn:fsr} may be taken to be a homeomorphism in the entire face, rather than in the interior only. The map $\psi_n$ here is obtained from the inverse of that identification map.)
We say that the sequence $\{F^n\}_{n\geq 0}$ is periodic of period $p$ if $\sigma(n) = \sigma(n+p)$ and $\psi_{n}(F^{n+p}) = \psi_{n+p}(F^{n+2p}) \subset P_{\sigma(n)} = P_{\sigma(n+p)}$ for all $n\geq 0$.

Let $\mathcal{P}\in \mathcal{M}(\mathcal{G})$.
Let $\{F^n\}_{n\geq 0}$ be periodic with period $p$.
Then the identification $\psi_p \colon F^p \to P_{\sigma(p)}$ gives an identification of $F^p \cap \mathcal{G}$ with $\mathcal{G}=\mathcal G(P_{\sigma(0)})=\mathcal G(P_{\sigma(p)})$. Therefore, by Theorem \ref{thm:qch}, this identification induces a quasiconformal homeomorphism $\Psi$ between $\mathcal{P}_{F^p}$ and $\mathcal{P}$. We call this map $\Psi$ a \textit{quasiconformal symmetry} of $\mathcal{P}$ for $\{F^n\}_{n\geq 0}$. By Corollary \ref{cor:sr}, the restriction $\Psi|_{\Lambda(\mathcal{P}_{F^p})\cap \overline{\Pi(\mathcal{P}_{F^p})}}$ is uniquely defined.

\begin{definition}\label{defn:pf}
    Suppose that the sequence $\{F^n\}_{n\geq 0}$ is periodic with period $p$.
We say that $\{F^n\}_{n\geq 0}$ is \textit{hyperbolic} if $\partial F^{p} \cap \partial F^0$ is either empty or a single vertex. Otherwise, we say that $\{F^n\}_{n\geq 0}$ is \textit{parabolic}.
\end{definition}

Recall that for a non-external face $F$ of $\mathcal{G}^n$, $n\geq 0$, we denote by $\Omega_F$ be the corresponding open Markov tile (see Section \ref{subset:mt}) and by $\Lambda_F$ its boundary.

\begin{theorem}[Periodic circle packing; \cite{LuoZhang:CirclePacking}*{Theorem D}]\label{thm:rp}
    Suppose that the sequence $\{F^n\}_{n\geq 0}$ is periodic with period $p$. Then there exists a unique up to M\"obius maps circle packing  $\mathcal{P}\in \mathcal M(\mathcal{G})$ so that for each $n\geq 0$ we have 
    $$
    d(\mathcal{P}_{F^n}, \mathcal{P}_{F^{n+p}}) = 0.
    $$
    In particular, there is a M\"obius map $\Psi$ with $\Psi(\mathcal{P}_{F^p}) = \mathcal{P}$.
    The map $\Psi$ is a parabolic M\"obius map if $\{F^n\}_{n\geq 0}$ is parabolic, and a loxodromic M\"obius map if $\{F^n\}_{n\geq 0}$ is hyperbolic.
    Moreover, the sequence of sets $\{\Omega_{F^{kp}}\}_{k\geq 0}$ converges to the non-attracting fixed point of $\Psi$. 
\end{theorem}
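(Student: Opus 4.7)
The plan is to obtain $\mathcal{P}$ as the unique fixed point of a renormalization operator on moduli space, and then to derive the dynamical properties of the associated M\"obius map from the combinatorial distinction between the hyperbolic and parabolic cases.

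\emph{Step 1: Renormalization operator.} The periodicity hypothesis gives $\sigma(n)=\sigma(n+p)$ and $\psi_n(F^{n+p})=\psi_{n+p}(F^{n+2p})$ for all $n\geq 0$, so the cellular identification $\psi_p\colon F^p\to P_{\sigma(p)}=P_{\sigma(0)}$ induces a canonical graph isomorphism between the sub-graph $F^p\cap \mathcal G$ and $\mathcal G=\mathcal G(P_{\sigma(0)})$. Consequently, for each $\mathcal Q\in \mathcal M(\mathcal G)$ the sub-circle packing $\mathcal Q_{F^p}$ is naturally an element of $\mathcal M(\mathcal G)$, and I define the renormalization operator $T\colon \mathcal M(\mathcal G)\to \mathcal M(\mathcal G)$ by $T(\mathcal Q)=\mathcal Q_{F^p}$. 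By iteration, $T^k(\mathcal Q)=\mathcal Q_{F^{kp}}$ for all $k\geq 0$.

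\emph{Step 2: Fixed point via exponential contraction.} By Theorem \ref{thm:expr} applied to the face $F^{kp}$ at level $kp$, for all $\mathcal Q, \mathcal Q'\in \mathcal M(\mathcal G)$,
\begin{align*}
    d(T^k \mathcal Q, T^k \mathcal Q')=d(\mathcal Q_{F^{kp}}, \mathcal Q'_{F^{kp}})=O(\delta^{kp}).
\end{align*}
Fixing a basepoint $\mathcal Q_0$ and taking $\mathcal Q'=T^m \mathcal Q_0$ yields $d(T^k \mathcal Q_0, T^{k+m}\mathcal Q_0)=O(\delta^{kp})$ uniformly in $m$, so the orbit $\{T^k \mathcal Q_0\}$ is Cauchy in $\mathcal M(\mathcal G)$. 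Since $\mathcal M(\mathcal G)$ is complete, being isometric to a Teichm\"uller space by Theorem \ref{thm:qch}, the orbit converges to a limit $\mathcal P$. The same exponential estimate yields continuity of $T$, hence $T(\mathcal P)=\mathcal P$, i.e.\ $d(\mathcal P, \mathcal P_{F^p})=0$. Uniqueness is immediate: if $\mathcal P, \widetilde{\mathcal P}$ are both fixed, then $d(\mathcal P, \widetilde{\mathcal P}) = d(T^k \mathcal P, T^k \widetilde{\mathcal P})=O(\delta^{kp})\to 0$. The identity $d(\mathcal P_{F^n}, \mathcal P_{F^{n+p}})=0$ for every $n\geq 0$ follows by applying the fixed point relation inside the sub-graph $F^n\cap \mathcal G$, using the compatibility of the identification maps in the subdivision rule.

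\emph{Step 3: M\"obius map.} Because $d(\mathcal P, \mathcal P_{F^p})=0$, the minimizing conjugacy in the definition of the Teichm\"uller distance is $1$-quasiconformal, hence a M\"obius map $\Psi$ with $\Psi(\mathcal P_{F^p})=\mathcal P$. Since $\Omega_{F^p}$ and $\Omega_{F^0}$ are the Schottky-type domains bounded by the circles of $\mathcal P_{F^p}$ and $\mathcal P$ corresponding to $\partial F^p$ and $\partial F^0$, respectively, $\Psi$ sends $\Omega_{F^p}$ onto $\Omega_{F^0}$, and iterating, $\Psi^{-k}(\Omega_{F^0})=\Omega_{F^{kp}}$. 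By Proposition \ref{prop:diam->0} the nested sets $\overline{\Omega_{F^{kp}}}$ shrink to a single point $x^*$, which must be a fixed point of $\Psi$.

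\emph{Step 4: Classification, and main hurdle.} If $\{F^n\}$ is hyperbolic, then $\partial F^p\cap \partial F^0$ is empty or a single vertex, so by Lemma \ref{lem:intersectionBoundary} we have $\partial \Omega_{F^p}\cap \partial \Omega_{F^0}=\emptyset$, giving $\overline{\Omega_{F^p}}\subset \Omega_{F^0}$. Hence $\Psi^{-1}$ maps the Jordan region $\Omega_{F^0}$ with its closure strictly into itself; this forces $\Psi^{-1}$ to have an attracting fixed point $x^*\in \Omega_{F^0}$ and a second fixed point outside, so $\Psi$ is loxodromic and $x^*$ is the non-attracting fixed point of $\Psi$. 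If $\{F^n\}$ is parabolic, then $\partial F^p\cap \partial F^0$ is an edge, so by Lemma \ref{lem:intersectionBoundary} the set $\partial \Omega_{F^p}\cap \partial \Omega_{F^0}$ is a single point; this point lies in every $\overline{\Omega_{F^{kp}}}$ and hence equals $x^*$. Thus $\Psi$ has a unique fixed point and is parabolic. I expect the main obstacle to be Step 2, namely ensuring that the implicit constant in Theorem \ref{thm:expr} is uniform as the packings vary along the orbit of $T$; this requires either a uniform version of Theorem \ref{thm:expr} or an a priori compactness argument confining all iterates to a bounded subset of $\mathcal M(\mathcal G)$.
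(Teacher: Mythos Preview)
This theorem is not proved in the present paper; it is quoted verbatim from \cite{LuoZhang:CirclePacking}*{Theorem D}. There is therefore no in-paper proof to compare against. Your outline is the natural one and is almost certainly the skeleton of the argument in the cited source: build a renormalization operator on $\mathcal M(\mathcal G)$, get a fixed point from exponential contraction, and read off the dynamical type of the resulting M\"obius symmetry from the combinatorics of $\partial F^p\cap\partial F^0$.

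Your stated worry about Step 2 is not a real obstacle. Instead of applying Theorem \ref{thm:expr} to the pair $(\mathcal Q_0,T^m\mathcal Q_0)$ with $m$ varying, apply it once to the \emph{fixed} pair $(\mathcal Q_0,T\mathcal Q_0)$: this gives $d(T^k\mathcal Q_0,T^{k+1}\mathcal Q_0)=d((\mathcal Q_0)_{F^{kp}},(T\mathcal Q_0)_{F^{kp}})\leq C\,\delta^{kp}$ with a single constant $C=C(\mathcal Q_0,T\mathcal Q_0)$, and the Cauchy property follows by summing the geometric series. Completeness of $\mathcal M(\mathcal G)$ then comes from Theorem \ref{thm:qch}.

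The actual soft spot is Step 4 in the parabolic case. Two things are missing. First, the assertion that the tangency point $y=\partial\Omega_{F^0}\cap\partial\Omega_{F^p}$ lies in every $\overline{\Omega_{F^{kp}}}$ amounts to saying that the common edge $e=\partial F^0\cap\partial F^p$ lies in $\partial F^{kp}$ for all $k$; this is not automatic from Definition \ref{defn:pf} alone and requires that the cellular identification $\psi_0^{-1}\circ\psi_p$ fix the endpoints of $e$. Second, even granting $y=x^*$, knowing that $x^*$ is \emph{a} fixed point on $\partial\Omega_{F^0}$ does not rule out a loxodromic $\Psi$ with its attracting fixed point outside $\overline{\Omega_{F^0}}$. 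What actually pins down parabolicity is that $\Psi$ fixes \emph{both} tangent circles $C_v,C_w$ at $y$ (which is exactly what the first point gives you): a M\"obius map preserving two mutually tangent circles has a unique fixed point, at the tangency. Note that in the paper's own application (proof of Proposition \ref{prop:para}) the edge $[v,w]$ is explicitly assumed to lie in $\partial F^n$ for all $n$, so this issue does not arise there; for the general statement you must extract it from the periodicity hypothesis.
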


Let $\{F^n\}_{n\geq 0}$ be periodic with period $p$.
We will call the unique circle packing $\mathcal{P}$ in Theorem \ref{thm:rp} the \textit{renormalization periodic circle packing} associated to $\{F^n\}_{n\geq 0}$, and the M\"obius map $\Psi$ in Theorem \ref{thm:rp} the \textit{M\"obius symmetry} of $\mathcal{P}$ for $\{F^n\}_{n\geq 0}$.

\subsubsection{Teichm\"uller mappings}
Let $\mathcal{P}, \mathcal{P}' \in \mathcal{M}(\mathcal{G})$.
By Theorem \ref{thm:qch}, there exists a quasiconformal map between $\mathcal P$ and $\mathcal P'$. Here we define the notion of a Teichm\"uller mapping between two circle packings, by imposing some conditions in addition to quasiconformality.

\begin{definition}Let $\mathcal{P}, \mathcal{P}' \in \mathcal{M}(\mathcal{G})$.  A quasiconformal homeomorphism $\Psi: \widehat \C \to \widehat \C$ between $\mathcal{P}$ and $\mathcal{P}'$ is said to be a \textit{Teichm\"uller mapping} if for every $n\geq 0$ and for every non-external face $F$ of $\mathcal{G}^n$ we have
\begin{itemize}
    \item $\frac{1}{2} \log K(\Psi|_{\Omega_{F}}) = d(\mathcal{P}_{F}, \mathcal{P}'_{F})$ and
    \item $\Psi|_{\Lambda_{F}}$ conjugates the dynamics of $\Gamma_{F}$ and $\Gamma'_{F}$.
\end{itemize}
\end{definition}

In particular, $\Psi$ minimizes the dilatation on $\Omega_{F}$ for every face $F$ of $\mathcal{G}^n$. The conjugation condition allows us to have the following extension property.

\begin{proposition}\label{prop:ext}
    Let $\Psi: \widehat \C \to \widehat \C$ be a Teichm\"uller mapping between $\mathcal{P}, \mathcal{P}' \in \mathcal{M}(\mathcal{G})$.
    Then for every $n\geq 0$ and for every non-external face $F$ of $\mathcal{G}^n$, $\Psi|_{\Omega_{F}}$ extends to a quasiconformal homeomorphism $\Psi_F\colon \widehat\C \to\widehat\C$ between $\mathcal{P}_F$ and $\mathcal{P}'_{F}$ with dilatation
    $K(\Psi_{F})$ satisfying $\frac{1}{2} \log K(\Psi_{F}) = d(\mathcal{P}_{F}, \mathcal{P}_{F}')$.
\end{proposition}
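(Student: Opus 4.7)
The plan is to extend $\Psi|_{\overline{\Omega_F}}$ to $\widehat\C$ by equivariance with respect to the reflection group $\Gamma_F$, using the hypothesis that $\Psi|_{\Lambda_F}$ conjugates $\Gamma_F$ to $\Gamma_F'$. The key observation is that $\Gamma_F$ is generated by reflections $g_1,\dots,g_r$ in the circles $C_1,\dots,C_r$ forming $\partial\overline{\Omega_F}$, so by the Poincar\'e polygon theorem $\overline{\Omega_F}$ is a fundamental polygon for $\Gamma_F$ acting on $\widehat\C$: the translates $\{\gamma(\overline{\Omega_F}) : \gamma\in\Gamma_F\}$ cover $\widehat\C$ with pairwise disjoint interiors, and their shared boundaries are contained in the invariant limit set $\Lambda_F$. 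The correspondence $g_i\mapsto g_i'$, where $g_i'$ is the reflection in the corresponding circle $C_i'$ of $\mathcal{P}'$, extends to a group isomorphism $\gamma\mapsto\gamma'$ between $\Gamma_F$ and $\Gamma_F'$ (this isomorphism is precisely what the conjugation hypothesis provides on $\Lambda_F$).

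I would then define
\begin{align*}
\Psi_F(z) := \gamma'\circ \Psi\circ \gamma^{-1}(z), \qquad z\in\gamma(\overline{\Omega_F}),\ \gamma\in\Gamma_F,
\end{align*}
which agrees with $\Psi$ on $\overline{\Omega_F}$ (take $\gamma=\mathrm{id}$). Consistency on overlaps of tiles, all of which lie in $\Lambda_F$, is immediate from the conjugation $\Psi\circ \gamma|_{\Lambda_F}=\gamma'\circ\Psi|_{\Lambda_F}$: if $z\in \gamma_1(\overline{\Omega_F})\cap \gamma_2(\overline{\Omega_F})$ and $w_i=\gamma_i^{-1}(z)\in\Lambda_F$, then $w_1=(\gamma_1^{-1}\gamma_2)(w_2)$, and applying $\Psi$ followed by $\gamma_1'$ yields $\gamma_1'(\Psi(w_1))=\gamma_2'(\Psi(w_2))$. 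Continuity at points of $\Lambda_F$ where infinitely many tiles accumulate uses the standard fact that $\diam\gamma_n(\overline{\Omega_F})\to 0$ and $\diam\gamma_n'(\overline{\Omega_F'})\to 0$ for any sequence $\gamma_n\to\infty$ in $\Gamma_F$, with the two tiles shrinking to matching limit points under $\Psi|_{\Lambda_F}$. Injectivity and surjectivity of $\Psi_F$ then follow from the analogous tiling of $\widehat\C$ by $\Gamma_F'$-translates of $\overline{\Omega_F'}$ on the target side.

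The dilatation computation is immediate: on each open tile $\gamma(\Omega_F)$, the map $\Psi_F=\gamma'\circ\Psi\circ\gamma^{-1}$ is the composition of $\Psi|_{\Omega_F}$ with (anti-)M\"obius transformations, so it has the same pointwise Beltrami coefficient norm as $\Psi|_{\Omega_F}$. Hence $\Psi_F$ is $K$-quasiconformal on $\widehat\C\setminus\Lambda_F$ with $K=K(\Psi|_{\Omega_F})$. Since $\Lambda_F$ is a quasicircle, it has area zero and is removable for quasiconformal homeomorphisms, so $\Psi_F$ is $K$-quasiconformal on all of $\widehat\C$. The Teichm\"uller property of $\Psi$ then gives
\begin{align*}
\tfrac{1}{2}\log K(\Psi_F)=\tfrac{1}{2}\log K(\Psi|_{\Omega_F})=d(\mathcal P_F,\mathcal P_F').
\end{align*}
Finally, all circles of $\mathcal P_F$ lie in $\overline{\Omega_F}$, where $\Psi_F=\Psi$ already realizes the marked correspondence $\mathcal P_F\to\mathcal P_F'$, so $\Psi_F$ is a marked homeomorphism between the two packings.

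The main obstacle is verifying continuity of $\Psi_F$ at the quasicircle $\Lambda_F$: one needs to know that the $\Gamma_F$-tile decomposition on the $\mathcal P$-side and the $\Gamma_F'$-tile decomposition on the $\mathcal P'$-side degenerate in matching fashion so that the tile-by-tile definition assembles into a continuous homeomorphism. This is precisely the role of the conjugation hypothesis in the definition of a Teichm\"uller mapping, combined with the standard proper-discontinuity dynamics of the Kleinian reflection groups $\Gamma_F$ and $\Gamma_F'$ on their quasicircle limit sets.
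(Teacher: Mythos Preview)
There is a genuine gap: your central claim that $\overline{\Omega_F}$ is a fundamental domain for $\Gamma_F$ acting on $\widehat\C$ is false. By definition (Section~\ref{subset:mt}), $\Omega_F$ is a \emph{component of the domain of discontinuity} of $\Gamma_F$, hence it is $\Gamma_F$-invariant: $\gamma(\Omega_F)=\Omega_F$ for every $\gamma\in\Gamma_F$. The translates $\gamma(\overline{\Omega_F})$ are all equal to $\overline{\Omega_F}$ and certainly do not tile $\widehat\C$. A fundamental domain for $\Gamma_F$ is the common exterior of the disks $D_1,\dots,D_r$, which meets \emph{both} components of the domain of discontinuity. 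Consequently your formula $\Psi_F=\gamma'\circ\Psi\circ\gamma^{-1}$ on $\gamma(\overline{\Omega_F})$ does not define anything on $\widehat\C\setminus\overline{\Omega_F}$; and even on $\Omega_F$ itself, well-definedness would force $\Psi|_{\Omega_F}$ to be $\Gamma_F$-equivariant, which is far stronger than the conjugation hypothesis on $\Lambda_F$ and is not assumed.

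The paper's argument proceeds differently. One already has $\Psi$ on $\overline{\Omega_F}$ and needs only to extend across the \emph{other} component $\widehat\C\setminus\overline{\Omega_F}$. That component, modulo $\Gamma_F$, is an ideal polygon $\Pi_F$; the conjugation hypothesis on $\Lambda_F$ determines the boundary correspondence between $\Pi_F$ and the corresponding ideal polygon $\Pi_F'$ on the $\mathcal P'$-side. One then extends $\Psi|_{\Lambda_F}$ by the Teichm\"uller map between these ideal polygons, whose dilatation $K$ satisfies $\tfrac12\log K=d_{\Teich}(\Pi_F,\Pi_F')$. The key input you are missing is Theorem~\ref{thm:qch}\,(ii)(b), which identifies $\mathcal M(\mathcal G(P_i))$ isometrically with $\Teich(\Pi_{F_i})$; this gives $d_{\Teich}(\Pi_F,\Pi_F')=d(\mathcal P_F,\mathcal P_F')$, matching the dilatation on $\Omega_F$ and yielding the stated bound for the global extension.
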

\begin{proof}
    Since the restriction of $\Psi$ on the limit set $\Lambda_{F}$ is a conjugation, one can extend the map $\Psi|_{\Lambda_{F}}$ to a quasiconformal homeomorphism between $\widehat\C \setminus \overline{\Omega_{F}}$ and $\widehat\C \setminus \overline{\Omega'_{F}}$ sending the ideal polygon in $\widehat\C \setminus \overline{\Omega_{F}}$ associated to the external face to the corresponding ideal polygon in $\widehat\C \setminus \overline{\Omega'_{F}}$, and whose dilatation $K$ satisfies that $\frac{1}{2}\log K$ is equal to the Teichm\"uller distance between the two ideal polygons.
    By Theorem \ref{thm:qch}, the Teichm\"uller distance between the two ideal polygons equals $d(\mathcal{P}_{F}, \mathcal{P}_{F}')$.
    The proposition now follows.
\end{proof}

\begin{theorem}[\cite{LuoZhang:CirclePacking}*{Theorem 5.15}]\label{thm:existenceTeich}
    There exists a Teichm\"uller mapping between any two circle packings $\mathcal{P},\mathcal{P}'\in \mathcal M(\mathcal G)$.
\end{theorem}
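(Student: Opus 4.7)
The plan is to build $\Psi$ tile-by-tile, using on each open tile $\Omega_F$ the unique extremal map realizing $d(\mathcal P_F,\mathcal P'_F)$ and gluing these extremal pieces along the quasicircles $\Lambda_F$ where compatibility is forced by Corollary \ref{cor:sr}. The limit map is then shown to be quasiconformal using the exponential contraction from Theorem \ref{thm:expr}.

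First I would construct, for each non-external face $F$ of $\mathcal G^n$, a preferred quasiconformal homeomorphism $\Psi_F\colon \widehat\C\to\widehat\C$ between $\mathcal P_F$ and $\mathcal P'_F$ with $\tfrac12 \log K(\Psi_F|_{\Omega_F}) = d(\mathcal P_F,\mathcal P'_F)$ and with $\Psi_F|_{\Lambda_F}$ conjugating the quasi-Fuchsian groups $\Gamma_F$ and $\Gamma'_F$. This exists because of Theorem \ref{thm:qch}\ref{thm:qch:ii}\ref{thm:qch:a}: the moduli space $\mathcal M(\mathcal G)$ is isometric to the Teichm\"uller space of an ideal polygon, so the minimal dilatation is realized by a classical Teichm\"uller representative on the ideal polygon side, and the boundary values on $\Lambda_F$ are recovered by running Proposition \ref{prop:ext} in reverse (i.e.\ extending the optimal map on the ideal polygon through its boundary conjugation on $\Lambda_F$ into $\Omega_F$).

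Next I would check compatibility along the tile hierarchy. Whenever $F'\subset F$ with $F'$ a face of $\mathcal G^{n+1}$ and $F$ a face of $\mathcal G^n$, both $\Psi_F|_{\Lambda_{F'}}$ and $\Psi_{F'}|_{\Lambda_{F'}}$ are orientation-preserving homeomorphisms $\Lambda_{F'}\to \Lambda'_{F'}$ that respect the vertex labelling of the sub-circle packing $\mathcal P_{F'}$; by Lemma \ref{cor:density} the tangency points are dense in $\Lambda_{F'}\cap \overline{\Pi(\mathcal P_{F'})}$, so Corollary \ref{cor:sr} forces these two restrictions to agree. This allows me to define $\Psi_n\colon \widehat\C\to\widehat\C$ by declaring $\Psi_n = \Psi_F$ on each closed tile $\overline{\Omega_F}$ with $F$ a face of $\mathcal G^n$, and $\Psi_n$ equal to a fixed top-level Teichm\"uller map outside $\Omega^n$; compatibility on the boundaries and the density of tangency points make $\Psi_n$ a well-defined homeomorphism of $\widehat\C$.

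Finally I would pass to the limit $\Psi=\lim_{n\to\infty}\Psi_n$. By Theorem \ref{thm:expr} we have $d(\mathcal P_F,\mathcal P'_F)=O(\delta^n)$ uniformly over faces $F$ of $\mathcal G^n$, so on any fixed level-$n$ tile $\Omega_F$ the maps $\Psi_m$ with $m\geq n$ all agree on the complement of $\Omega^{n+1}\cap \Omega_F$ and differ on each sub-tile only by a $(1+O(\delta^n))$-quasiconformal modification. Combined with Proposition \ref{prop:diam->0}, which guarantees that $\max_F \diam \Omega_F\to 0$, the sequence $(\Psi_n)$ is Cauchy in the uniform spherical topology and its dilatations are bounded globally by the top-level dilatation $K_0=e^{2 d(\mathcal P,\mathcal P')}$. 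The Ahlfors--Bers convergence theorem then yields a $K_0$-quasiconformal limit $\Psi$, and by construction $\tfrac12\log K(\Psi|_{\Omega_F})=d(\mathcal P_F,\mathcal P'_F)$ for every non-external face $F$ of every $\mathcal G^n$.

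The main obstacle I expect is the final dilatation identity on \emph{every} tile (not merely the outer ones), because naive restriction of a fixed quasiconformal map to deeper tiles only gives an upper bound on the dilatation. The point of the iterative replacement in the construction of $\Psi_n$ is precisely to saturate this bound on level-$n$ tiles, and the exponential decay $O(\delta^n)$ of the Teichm\"uller distances at deeper levels is what keeps the replacements from blowing up the global dilatation, so that the limit $\Psi$ is simultaneously extremal on every tile.
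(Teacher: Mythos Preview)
The paper does not prove this theorem; it is quoted verbatim from \cite{LuoZhang:CirclePacking}*{Theorem 5.15} and used as a black box. So there is no ``paper's own proof'' to compare against, only the cited source.

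That said, your outline has a genuine gap in the compatibility step. You claim that for $F'\subset F$ the restrictions $\Psi_F|_{\Lambda_{F'}}$ and $\Psi_{F'}|_{\Lambda_{F'}}$ agree, and you justify this via Corollary~\ref{cor:sr}. But Corollary~\ref{cor:sr} gives agreement only on $\Lambda(\mathcal P_{F'})\cap\overline{\Pi(\mathcal P_{F'})}$, the \emph{circle packing} limit set, whereas you need agreement on $\Lambda_{F'}=\partial\Omega_{F'}$, the \emph{quasi-Fuchsian} limit set. By Lemma~\ref{lem:intersectionBoundary}~\ref{lem:boundary:7} these two sets meet only at the finitely many tangency points of the boundary chain, so Corollary~\ref{cor:sr} gives you agreement at finitely many points of $\Lambda_{F'}$, not on all of it. More fundamentally, you have not shown that $\Psi_F(\Lambda_{F'})=\Lambda'_{F'}$ at all: a homeomorphism between the circle packings $\mathcal P_F$ and $\mathcal P'_F$ sends the circles of $\partial F'$ to the corresponding circles, but it need not intertwine the reflections in those circles, so it need not carry the nested disks $D_{i_1\cdots i_l}$ to $D'_{i_1\cdots i_l}$ and hence need not map $\Lambda_{F'}$ onto $\Lambda'_{F'}$.

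Without this, your replacement $\Psi_n\mapsto\Psi_{n+1}$ does not produce a continuous map, and the limiting argument collapses. The fix is to construct each $\Psi_F$ so that it already conjugates $\Gamma_{F'}$ to $\Gamma'_{F'}$ for \emph{every} subface $F'\subset F$, not merely for $F$ itself; equivalently, one must build the extremal map on the quotient ideal polygon and lift it in a way that is equivariant for the full tower of reflection groups. This is presumably what the proof in \cite{LuoZhang:CirclePacking} does, and it is substantially more delicate than choosing, for each $F$ separately, some extremal representative with the conjugation property only on $\Lambda_F$. Your identification of the exponential contraction (Theorem~\ref{thm:expr}) and the shrinking tiles (Proposition~\ref{prop:diam->0}) as the engine for the convergence and the global dilatation bound is correct once the gluing issue is resolved.
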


\subsection{Markov maps}\label{subsec:MM}
Let $\mathcal{R}= \{P_i\}_{i=1}^k$ be a simple, irreducible, acylindrical finite subdivision rule.
Let $\mathcal{G} = \mathcal{G}(X)$ be a spherical subdivision graph for some $\mathcal{R}$-complex $X$ homeomorphic to a sphere.
Let $\mathcal{P}$ be the corresponding circle packing for $\mathcal{G}$. In this section, we discuss how the subdivision rule induces a Markov map for the circle packing $\mathcal{P}$.

Let $n\geq 0$ and $F$ be a face of $\mathcal{G}^n$.
Then $F$ is identified with some polygon $P_{\sigma(F)}\in  \{P_i\}_{i=1}^k$. Denote the identification map by 
$$
\phi_F\colon F \to P_{\sigma(F)}.
$$
This map induces a canonical identification of the subgraph $F \cap \mathcal{G}$ with the subdivision graph $\mathcal{G}(P_{\sigma(F)})$ for $\mathcal{R}$.
Recall that $\mathcal{P}_F$ is the infinite sub-circle packing of $\mathcal{P}$ associated to the subgraph $F \cap \mathcal{G}$ in $\mathcal{G}$.
Thus, $\mathcal{P}_F$ has tangency graph isomorphic to $\mathcal{G}(P_{\sigma(F)})$.
Recall that $\Omega_F$ is the Markov tile associated to the face $F$, and
$$\Omega^1=\bigcup_{F \text{ face } \mathcal{G}^1} \Omega_F \,\,\, \textrm{and}\,\,\, \Omega^0=\bigcup_{F \text{ face } \mathcal{G}^0} \Omega_{F}.$$

\begin{definition}
    A map $\psi\colon \mathcal{G}^1 \to \mathcal{G}^0$ is called a \textit{subdivision homomorphism} if it is a homomorphism between the plane graphs $\mathcal{G}^1$ and $\mathcal{G}^0$ and for each face $F$ of $\mathcal{G}^1$, there exists a face $F'$ of $\mathcal G^0$ that we denote by $\psi_*(F)$ such that $\sigma(F) = \sigma(F')$ and
    $$
    \psi|_{\partial F} = \phi_{F'}^{-1} \circ \phi_F |_{\partial F}. 
    $$
\end{definition}

In the next proposition, for a vertex $v$ of $\mathcal G^1$ we denote by $C_v$ the circle of $\mathcal P$ associated to $v$. By Lemma \ref{cor:density} \ref{d:1} we have $C_v\subset \overline {\Omega^1}$.

\begin{proposition}[Induced Markov map]\label{prop:induceMarkovMap}
Let $\psi\colon \mathcal{G}^1 \to \mathcal{G}^0$ be subdivision homomorphism.
Then $\psi$ induces a map $\Psi\colon \Omega^1 \to \Omega^0$ with the following properties.
\begin{enumerate}[label=\normalfont(\arabic*)]
    \item\label{ma:1} For each face $F$ of $\mathcal G^1$ we have $\Psi({\Omega_F})=\Omega_{\psi_*(F)}$ and $\Psi|_{\Omega_F}$ is the restriction of a Teichm\"uller mapping between $\mathcal P_F$ and $\mathcal P_{\psi_*(F)}$.
    \item\label{ma:11} For each $n\geq 0$ and each face $F$ of $\mathcal G^{n+1}$ there exists a face $F'$ of $\mathcal G^n$ so that $F$ and $F'$ are canonically identified with the same face of $\mathcal G^n(P_i)$ for some $i\in \{1,\dots,k\}$ and $\Psi(\Omega_F)=\Omega_{F'}$.
    \item\label{ma:2} $\Psi$ extends continuously to $\overline{\Omega^1}$.
    \item\label{ma:3} For each vertex $v$ of $\mathcal{G}^1$ the map $\Psi|_{C_v}$ is a covering map from $C_v$ onto $C_{\psi(v)}$.
    \item\label{ma:4} For each edge $e = [v,w]$ of $\mathcal{G}^1$ we have $\Psi(C_v \cap C_w) = C_{\psi(v)} \cap C_{\psi(w)}$.
\end{enumerate}
\end{proposition}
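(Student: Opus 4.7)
The plan is to construct $\Psi$ piecewise on the tiles $\Omega_F$ of level $1$ using the Teichm\"uller mappings from Theorem~\ref{thm:existenceTeich}, and then glue the pieces together. The main obstacle will be verifying that the Teichm\"uller mappings chosen independently for different tiles agree on their shared boundary points; this is where the canonicity of the boundary values provided by Corollary~\ref{cor:sr} will play the essential role.

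First, for each face $F$ of $\mathcal{G}^1$, the definition of a subdivision homomorphism yields $\sigma(\psi_*(F))=\sigma(F)$, so $\mathcal{P}_F$ and $\mathcal{P}_{\psi_*(F)}$ are circle packings with the same tangency graph $\mathcal{G}(P_{\sigma(F)})$. I would invoke Theorem~\ref{thm:existenceTeich} to select a Teichm\"uller mapping $\Psi_F\colon\widehat\C\to\widehat\C$ between them and set $\Psi|_{\Omega_F}\coloneqq \Psi_F|_{\Omega_F}$; this directly establishes property~(1). For property~(2), a face $F$ of $\mathcal{G}^{n+1}$ lies in a unique face $\widetilde F$ of $\mathcal{G}^1$, and the identification $\phi_{\widetilde F}\colon\widetilde F\to P_{\sigma(\widetilde F)}$ sends $F$ onto a face $F_P$ of $\mathcal{G}^n(P_{\sigma(\widetilde F)})$. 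Setting $F'=\phi_{\psi_*(\widetilde F)}^{-1}(F_P)$ produces a face of $\mathcal{G}^n$ contained in $\psi_*(\widetilde F)$ that is canonically identified with the same $F_P$, and since $\Psi_{\widetilde F}$ respects the sub-packing structure, it maps the sub-tile $\Omega_F$ of $\mathcal{P}_{\widetilde F}$ onto the sub-tile $\Omega_{F'}$ of $\mathcal{P}_{\psi_*(\widetilde F)}$.

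Next, for the continuity statement~(3), each $\Psi_F$ extends continuously to a homeomorphism on $\overline{\Omega_F}$. By Lemma~\ref{lem:intersectionBoundary} and the standing assumption~\ref{condition:si}, two distinct closed tiles $\overline{\Omega_{F_1}}$ and $\overline{\Omega_{F_2}}$ at level $1$ meet at most at finitely many points, all of which are tangency points of $\mathcal{P}$ of the form $C_v\cap C_w$ for adjacent vertices $v,w\in \partial F_1\cap\partial F_2$. By Corollary~\ref{cor:sr}, any homeomorphism between $\mathcal{P}_{F_i}$ and $\mathcal{P}_{\psi_*(F_i)}$ is uniquely determined on such tangency points, and this forced value is $C_{\psi(v)}\cap C_{\psi(w)}$; hence $\Psi_{F_1}$ and $\Psi_{F_2}$ agree at every point of $\overline{\Omega_{F_1}}\cap\overline{\Omega_{F_2}}$. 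The pieces therefore glue to a continuous map on $\overline{\Omega^1}$, which simultaneously proves~(5).

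Finally, for property~(4), I would fix a vertex $v$ of $\mathcal{G}^1$ and decompose the circle $C_v$ into the arcs cut out by its tangency points with neighboring circles, noting that each such arc lies in $\overline{\Omega_F}$ for some face $F$ of $\mathcal{G}^1$ containing $v$ on its boundary. The Teichm\"uller mapping $\Psi_F$ sends the arc homeomorphically onto the corresponding arc of $C_{\psi(v)}$ lying in $\overline{\Omega_{\psi_*(F)}}$. Since all the $\Psi_F$ are orientation-preserving and the arcs match continuously at tangency points by the previous step, the combined map $\Psi|_{C_v}\colon C_v\to C_{\psi(v)}$ is a continuous local homeomorphism between circles, and thus automatically a covering map.
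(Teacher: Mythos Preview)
Your construction of $\Psi$ tile-by-tile via Theorem~\ref{thm:existenceTeich} and your treatment of parts~(1), (2), (4), and~(5) are essentially the paper's argument. The gap is in your gluing argument for~(3).

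You claim, citing Lemma~\ref{lem:intersectionBoundary} and assumption~\ref{condition:si}, that for two distinct level-$1$ faces $F_1,F_2$ the closed tiles $\overline{\Omega_{F_1}}$ and $\overline{\Omega_{F_2}}$ meet in finitely many points, all of which are tangency points of~$\mathcal P$. Neither assertion is justified. Condition~\ref{condition:si} controls the intersection $\partial F^{n+1}\cap\partial F^n$ for \emph{nested} faces at consecutive levels, not the intersection of two faces at the \emph{same} level; nothing in the standing assumptions prevents $\partial F_1\cap\partial F_2$ from containing three or more vertices, in which case Lemma~\ref{lem:intersectionBoundary} gives infinitely many points in $\partial\Omega_{F_1}\cap\partial\Omega_{F_2}$. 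Moreover, by Lemma~\ref{lem:intersectionBoundary}~\ref{lem:boundary:7} the only points of $\partial\Omega_{F_i}$ lying in $\Lambda(\mathcal P)$ are the finitely many tangency points of the boundary circles; all other points of $\partial\Omega_{F_1}\cap\partial\Omega_{F_2}$ are limit points of the reflection groups $\Gamma_{F_i}$ and lie outside $\Lambda(\mathcal P_{F_i})\cap\overline{\Pi(\mathcal P_{F_i})}$. Corollary~\ref{cor:sr} therefore says nothing about the values of $\Psi_{F_1}$ and $\Psi_{F_2}$ at such points, and your argument does not force agreement there.

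The paper closes this gap by using the second defining property of a Teichm\"uller mapping: $\Psi_{F_i}|_{\Lambda_{F_i}}$ conjugates the reflection group $\Gamma_{F_i}$ to $\Gamma_{\psi_*(F_i)}$. Since the points of $\partial\Omega_{F_1}\cap\partial\Omega_{F_2}$ are exactly those whose admissible coding in Lemma~\ref{lem:intersectionBoundary} uses only the circles corresponding to the common vertices of $\partial F_1$ and $\partial F_2$, and since $\psi$ sends $\partial F_1\cap\partial F_2$ into $\partial\psi_*(F_1)\cap\partial\psi_*(F_2)$, the conjugation relation forces $\Psi_{F_1}$ and $\Psi_{F_2}$ to agree on \emph{every} such point, not just the tangency points. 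You should replace your appeal to Corollary~\ref{cor:sr} with this conjugation argument.
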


\begin{proof}
    Let $F$ be a face of $\mathcal{G}^1$ and $F' = \psi_*(F)$ be the corresponding face of $\mathcal{G}^0$. Then $\phi_{F'}^{-1} \circ \phi_F$ induces an identification of the tangency graph of $\mathcal P_F$ with the tangency graph of $\mathcal P_{F'}$. Let $\Omega_F$ and $\Omega_{F'}$ be the corresponding Markov tiles. Then, by Theorem \ref{thm:existenceTeich}, there exists a map $$\Psi_{F\to F'}\colon \Omega_F \to \Omega_{F'}$$ that is the restriction of a Teichm\"uller mapping between $\mathcal{P}_F$ and $\mathcal{P}_{F'}$.  Repeating this process for each face $F$ of $\mathcal G^1$ gives the desired map $\Psi\colon \Omega^1 \to \Omega^0$ satisfying \ref{ma:1}. 

    Let $F$ be a face of $\mathcal G^{n+1}$ and let $F^1$ be the face of $\mathcal G^1$ containing $F$. The subdivision homomorphism $\psi$ sends the face $F^1$ to a face $\psi_*(F^1)=F^0$ of $\mathcal G^0$. Note that by the definition of a subdivision homomorphism the faces $F^1, F^0$ are identified with the same polygon in the subdivision rule, say $P_i$.  Also, the face $F$ is identified with a face of $\mathcal G^n(P_i)$. This face corresponds canonically to a face $F'$ in $F^0$. By part \ref{ma:1}, $\Psi|_{\Omega_{F^1}}$ is the restriction of a Teichm\"uller mapping, and in particular of a homeomorphism, between the circle packings $\mathcal{P}_{F^1}$ and $\mathcal{P}_{F^0}$. Given that the labeling is preserved and that $\Psi$ conjugates $\Gamma_{F}$ to $\Gamma_{F'}$, we conclude that $\Psi(\Omega_F)=\Omega_{F'}$. This proves \ref{ma:11}.

    To prove \ref{ma:2}, suppose that two faces $F_1, F_2$ of $\mathcal{G}^1$ share part of their boundary. Then $\psi$ sends $\partial F_1 \cap \partial F_2$ to $\partial \psi_*(F_1) \cap \partial \psi_*(F_2)$. Let $\Psi_i=\Psi|_{\Omega_{F_i}}$, $i=1,2$. By the properties of a Teichm\"uller mapping, for $i=1,2$ the map $\Psi_i$ extends to $\Lambda_{F_i}$ such that $\Psi_i|_{\Lambda_{F_i}}$ conjugates the dynamics of $\Gamma_{F_i}$ and $\Gamma_{\psi_*(F_i)}$. In combination with Lemma \ref{lem:intersectionBoundary}, this implies that the extensions of $\Psi|_{\Omega_{F_1}}$ and $\Psi|_{\Omega_{F_2}}$ agree on $\partial \Omega_{F_1} \cap \partial \Omega_{F_2}$. Hence, $\Psi$ has a continuous extension to $\overline{\Omega^1}$.

    We now prove \ref{ma:3}. Let $F$ be a face of $\mathcal G^1$ such that $v\in \partial F$. Since $\Psi|_{\Omega_F}$ extends to an orientation-preserving homeomorphism between $\mathcal P_F$ and $\mathcal P_{\psi_*(F)}$, we see that it maps circles of $\mathcal P_F$ to the corresponding circles of $\mathcal P_{\psi_*(F)}$. Thus, $\Psi|_{\Omega_F}$ maps the arc $\Omega_F\cap C_v$ injectively onto the arc $\Omega_{\psi_*(F)}\cap C_{\psi(v)}$. Moreover, the extension of $\Psi|_{\Omega_F}$ maps the disk of $\mathcal P_F$ bounded by $C_v$ to the disk of $\mathcal P_{\psi_*(F)}$ bounded by $C_{\psi(v)}$. This implies that if $F_1,F_2$ are faces of $\mathcal G^1$ such that $\partial F_1\cap \partial F_2$ contains an edge that has $v$ as an endpoint, then $\Psi|_{C_v}$ is locally injective at the common endpoint of the arcs $\Omega_{F_1}\cap C_v$ and $\Omega_{F_2}\cap C_v$. Thus, $\Psi|_{C_v}$ is a covering map. Finally, observe that \ref{ma:4} follows from \ref{ma:2} and \ref{ma:3}.
\end{proof}

We call the map $\Psi\colon \Omega^1\to \Omega^0$ provided by Proposition \ref{prop:induceMarkovMap} a \textit{Markov map} for $\mathcal P$ (induced by the subdivision homomorphism $\psi\colon \mathcal{G}^1 \to \mathcal{G}^0$). By construction, we also have the following extension property.

\begin{proposition}[Uniform quasiconformality]\label{prop:qce}
    Let $\Psi$ be a Markov map for a circle packing $\mathcal P$ with spherical subdivision rule.
    There exist constants $C>0$, $K\geq 1$, and $\delta\in (0,1)$ so that for each $n,m\geq 0$ the following statements are true.
    \begin{enumerate}[label=\normalfont(\arabic*)]
        \item For each open tile $U$ of level $n+1$ the map $\Psi|_U$  has a $(1+ C\delta^n)$-quasi\-conformal extension to $\widehat\C$.
        \item For each open tile $U$ of level $n+m$ the map $\Psi^{\circ m}|_U$ has a $(1+C\delta^n)$-quasiconformal extension to $\widehat\C$. 
    \end{enumerate}
\end{proposition}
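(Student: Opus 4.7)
The plan is to combine three already-available results: the Teichm\"uller mapping structure of $\Psi$ on level-$1$ tiles from Proposition~\ref{prop:induceMarkovMap}~\ref{ma:1}, the extension result of Proposition~\ref{prop:ext}, and the exponential contraction estimate of Theorem~\ref{thm:expr}. The key observation driving the argument is that a tile $\Omega_F$ at level $n+1$ in the ambient packing $\mathcal P$ coincides, as a subset of $\widehat\C$, with a tile at \emph{internal} level $n$ in the sub-packing $\mathcal P_{F^1}$ corresponding to the unique face $F^1$ of $\mathcal G^1$ with $F\subset F^1$ (since the reflection groups defining the two tiles are generated by the same family of circles of $\mathcal P$). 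This identification is what lets us invoke exponential contraction at depth $n$ rather than $n+1$. Concretely, for part (1), fix $n\geq 0$ and an open tile $U=\Omega_F$ of level $n+1$; by Proposition~\ref{prop:induceMarkovMap}~\ref{ma:1}, $\Psi|_{\Omega_{F^1}}$ is the restriction of a Teichm\"uller mapping $\widetilde{\Psi}\colon \widehat\C\to\widehat\C$ between $\mathcal{P}_{F^1}$ and $\mathcal{P}_{\psi_*(F^1)}$, both in $\mathcal{M}(\mathcal{G}(P_{\sigma(F^1)}))$. Under $\phi_{F^1}$ the face $F$ corresponds to a non-external face $\widehat F$ of $\mathcal{G}^n(P_{\sigma(F^1)})$ with $\Omega_F=\Omega_{\widehat F}$, and the Teichm\"uller property gives
\[
\tfrac{1}{2}\log K(\widetilde{\Psi}|_{\Omega_F}) = d\bigl((\mathcal{P}_{F^1})_{\widehat F},(\mathcal{P}_{\psi_*(F^1)})_{\widehat F}\bigr),
\]
which by Theorem~\ref{thm:expr} is $O(\delta^n)$, uniformly in the finitely many choices of $F^1$. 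Proposition~\ref{prop:ext} applied to $\widetilde{\Psi}$ at $\widehat F$ then extends $\widetilde{\Psi}|_{\Omega_F}=\Psi|_U$ to a quasiconformal homeomorphism of $\widehat\C$ with the same dilatation, yielding a $(1+C\delta^n)$-qua\-si\-con\-for\-mal extension for a uniform $C$, enlarged if necessary to absorb the small-$n$ cases.

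For part (2), I induct on $m$; the case $m=0$ is trivial. Given an open tile $U$ of level $n+m$, set $U_j=\Psi^{\circ j}(U)$ for $0\leq j\leq m$; by Proposition~\ref{prop:induceMarkovMap}~\ref{ma:11} each $U_j$ is an open tile of level $n+m-j$. Part (1) produces a $(1+C\delta^{n+m-j-1})$-quasiconformal extension $\widetilde{\Psi}_j\colon\widehat\C\to\widehat\C$ of $\Psi|_{U_j}\colon U_j\to U_{j+1}$ for each $j\in\{0,\dots,m-1\}$. The composition $\widetilde{\Psi}_{m-1}\circ\cdots\circ\widetilde{\Psi}_0$ is a quasiconformal self-map of $\widehat\C$ that agrees with $\Psi^{\circ m}$ on $U$, since at each stage the previous composition sends $U$ onto $U_j$ where the next extension restricts to $\Psi|_{U_j}$. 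Multiplicativity of quasiconformal distortion under composition bounds its total dilatation by
\[
\prod_{k=n}^{n+m-1}(1+C\delta^k) \leq \exp\!\Bigl(\sum_{k=n}^{\infty} C\delta^k\Bigr) = \exp\!\Bigl(\tfrac{C\delta^n}{1-\delta}\Bigr) \leq 1+C'\delta^n
\]
for a suitable uniform constant $C'>0$.

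The main technical point is the sub-tile identification $\Omega_F=\Omega_{\widehat F}$ at the heart of part (1), which reduces the problem from level $n+1$ in $\mathcal P$ to internal level $n$ in $\mathcal{P}_{F^1}$, where the exponential contraction estimate is directly available. Once this reduction is made, the remainder is a direct application of Proposition~\ref{prop:ext} plus the geometric-series bookkeeping for the composition in part (2); the only further care needed is tracking uniformity of constants over the finitely many faces of $\mathcal G^1$ and the finitely many polygons of $\mathcal R$.
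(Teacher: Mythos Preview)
Your proposal is correct and follows essentially the same approach as the paper: for part (1) you make the same sub-tile identification (a level-$(n+1)$ tile of $\mathcal P$ is an internal level-$n$ tile of $\mathcal P_{F^1}$), apply the Teichm\"uller property plus exponential contraction (Theorem~\ref{thm:expr}) and the extension result (Proposition~\ref{prop:ext}); for part (2) you compose the level-wise extensions and control the product via the geometric series, which is exactly what the paper's one-line ``since $\sum \delta^n<\infty$, the second part follows'' is abbreviating.
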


\begin{proof}
Let $F_U$ be the face of $\mathcal G^{n+1}$ corresponding to the tile $U$ and let $F^1$ be the face of $\mathcal G^1$ containing $F_U$. The subdivision homomorphism that induces the Markov map $\Psi$ sends the face $F^1$ to a face $F^0$ of $\mathcal G^0$.  Note that by the definition of a subdivision homomorphism the faces $F^1, F^0$ are identified with the same polygon in the subdivision rule, say $P_i$, so $\mathcal{G} \cap F^1$ and $\mathcal{G} \cap F^0$ are identified with $\mathcal{G}(P_i)$.  Also, the face $F_U$ is identified with a face of $\mathcal G^n(P_i)$. This face corresponds to a face $F_V$ in $F^0$, where $V$ is an open tile of level $n$, and we have $V=\Psi(U)$ by Proposition \ref{prop:induceMarkovMap} \ref{ma:11}. By treating $\mathcal P_{F^1},\mathcal P_{F^0}$ as elements of $\mathcal M(\mathcal G(P_i))$ and $F_U,F_V$ as non-external faces of $\mathcal G^n(P_i)$, we may apply Theorem \ref{thm:expr} to deduce that $d(\mathcal{P}_{F_U}, \mathcal{P}_{F_V}) = O(\delta^n)$ for some constant $\delta\in (0,1)$. Moreover, by Proposition \ref{prop:induceMarkovMap} \ref{ma:1}, $\Psi|_{\Omega_{F^1}}$ is the restriction of a Teichm\"uller mapping between $\mathcal P_{F^1},\mathcal P_{F^0}\in \mathcal M(\mathcal G(P_i))$. By Proposition \ref{prop:ext}, $\Psi|_U$ has an extension to a quasiconformal homeomorphism of $\widehat\C$ between $\mathcal{P}_{F_U}$ and $\mathcal{P}_{F_V}$ with dilatation $K$ satisfying  $\frac{1}{2}\log K = d(\mathcal{P}_{F_U}, \mathcal{P}_{F_V})=O(\delta^n)$. This proves the first part. Since $\sum_{n=1}^\infty \delta^n < \infty$, the second part follows. 
\end{proof}

\section{Geometry of the Markov tiles}\label{sec:gmp}
We assume that all finite subdivision rules appearing in this section satisfy the standing assumptions \ref{condition:siL}, \ref{condition:siG}, and \ref{condition:si} from Section \ref{sec:cpsr}.

For a circle packing $\mathcal P$ with spherical subdivision rule and with tangency graph $\mathcal G=\lim_{\to}\mathcal G^n$ we denote by $C_v$ the circle corresponding to the vertex $v\in \mathcal G$. 

\begin{theorem}\label{thm:cmc}
    Let $\Psi$ be a Markov map for a circle packing $\mathcal P$ with spherical subdivision rule. Let $v\in \mathcal G^1$ such that $C_v\subset \C$ and all open tiles of level $0$ that intersect $C_v$ are contained in $\C$. Suppose that $\Psi(C_v)=C_v$, let $u_i$, $i\in \{0,\dots,r\}$, be the collection of vertices of $\mathcal G^1$ that are adjacent to $v$, numbered in counter-clockwise order, and define $a_i=C_v\cap  C_{u_i}$. The following statements are true.
    \begin{enumerate}[label=\normalfont(\arabic*)]
        \item\label{cmc:1} The map $\Psi|_{C_v}\colon C_v\to C_v$ is an expansive covering map.
        \item\label{cmc:2} The set $\{a_0,\dots,a_r\}$ defines a Markov partition for $\Psi|_{C_v}$ that satisfies conditions \ref{condition:hp}, \ref{condition:uv}, \ref{condition:qs} and \ref{condition:qs_strong}.
        \item\label{cmc:3} Every point in $\{a_0,\dots, a_r\}$ is symmetrically parabolic.  
    \end{enumerate}  
\end{theorem}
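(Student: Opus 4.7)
The plan is to establish the three assertions in sequence. For (1), Proposition \ref{prop:induceMarkovMap} \ref{ma:3} gives that $\Psi|_{C_v}$ is a covering map, and expansiveness follows from the characterization \ref{expansive:diameters}: assuming the Markov partition of (2), the complementary arcs of $F_n$ correspond to faces of $\mathcal G^n$ adjacent to $v$, in the sense that $A_w$ with $|w|=n$ equals $C_v\cap\overline{\Omega_F}$ for some face $F$ of $\mathcal G^n$ with $v\in\partial F$ (using that $\Psi$ maps tiles of level $n+1$ to tiles of level $n$, Proposition \ref{prop:induceMarkovMap} \ref{ma:11}). Proposition \ref{prop:diam->0} then gives $\max\diam\Omega_F\to 0$ over such faces, so the arc diameters shrink uniformly.

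For (2), cyclically order the faces of $\mathcal G^1$ adjacent to $v$ as $F^1_0,\dots,F^1_r$, with $F^1_k$ having $v,u_k,u_{k+1}$ among its boundary vertices, so that $A_k = C_v\cap\overline{\Omega_{F^1_k}}$. Since $\psi$ is a graph homomorphism fixing $v$, Proposition \ref{prop:induceMarkovMap} \ref{ma:4} gives $\Psi(a_k) = C_v\cap C_{\psi(u_k)}$; as $\psi(u_k)$ is adjacent to $v$ in $\mathcal G^0\subset\mathcal G^1$, we obtain $\Psi(a_k)\in\{a_0,\dots,a_r\}$, verifying (iii) of Definition \ref{definition:markov_partition}. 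Property (i) follows from $\Psi|_{\Omega_{F^1_k}}$ being a Teichm\"uller mapping, and (ii) from (i) and (iii). For \ref{condition:uv}, Proposition \ref{prop:qce}(1) extends each $\Psi|_{\Omega_{F^1_k}}$ to a $(1+C)$-quasiconformal map $\widetilde\Psi_k$ of $\widehat\C$, and we take $U_k$ to be a small tubular neighborhood of $\inter A_k$ in $\C$ on which $\widetilde\Psi_k$ is injective, chosen pairwise disjoint. For \ref{condition:qs_strong} (which implies \ref{condition:qs}), an arc $A_w$ with $|w|=n+m$ lies in a tile of level $n+m$, on which $\Psi^{\circ m}$ admits a $(1+C\delta^n)$-quasiconformal extension to $\widehat\C$ by Proposition \ref{prop:qce}(2); since $\delta^n$ decays exponentially, it is dominated by $C'(1+\log(n+1))^{-1}$ for suitable $C'$. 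Condition \ref{condition:hp} follows from (3).

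For (3), fix $a_i = C_v\cap C_{u_i}$. Let $\{F_1^n\}_{n\geq 1}$ and $\{F_2^n\}_{n\geq 1}$ be the two nested sequences of faces of $\mathcal G^n$ containing the edge $[v,u_i]$ in their boundary, lying on opposite sides of this edge; these correspond to the two sides of $a_i$ along $C_v$. Since the subdivision rule has only finitely many polygon types and, within each polygon, finitely many admissible positions for a subface, the pairs recording the type of $F_j^n$ and its position inside $F_j^{n-1}$ take values in a finite set; passing to a common periodic tail and replacing $\Psi$ by a suitable iterate, we may assume each $\{F_j^n\}$ is periodic of common period $p$ as in Definition \ref{defn:pf}. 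The edge $[v,u_i]$ lies in $\partial F_j^p\cap\partial F_j^0$, so each sequence is parabolic. Theorem \ref{thm:rp} then yields renormalization periodic circle packings $\mathcal P^*_j$ and parabolic M\"obius symmetries $\widehat\Psi_j$ fixing $a_i$. Combined with the exponential contraction $d(\mathcal P_{F_j^{np}},\mathcal P^*_{j,F_j^{np}})=O(\delta^{np})$ from Theorem \ref{thm:expr}, the local dynamics of $\Psi^{\circ p}$ near $a_i$ on each side of $a_i$ along $C_v$ is asymptotically modeled by the parabolic M\"obius $\widehat\Psi_j$, which in Fatou coordinates is conjugate to $z\mapsto z+c_j$. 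A standard Fatou-coordinate analysis then yields the polynomial decay of arc diameters required by Definition \ref{definition:parabolic}, with exponent $N=1$ on both sides (any parabolic M\"obius map has this normal form), so $a_i$ is symmetrically parabolic.

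The main obstacle is part (3): one must transfer the parabolic asymptotics from the renormalization M\"obius model to the actual packing via exponential contraction, while simultaneously matching the discretized parabolic orbits to the combinatorial Markov partition arcs. The exponential rate $\delta^{np}$ overwhelms the $1/n$-decay of parabolic orbits, so the near-identity corrections do not perturb the asymptotic exponent, but verifying the precise bounds of Definition \ref{definition:parabolic} uniformly in $n$ requires careful bookkeeping.
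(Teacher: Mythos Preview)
Your outline for (1) and (2) matches the paper's approach, but there is a concrete mismatch in your choice of the regions $U_k$ for condition \ref{condition:uv}. You propose small tubular neighborhoods of $\inter A_k$, while for \ref{condition:qs_strong} you implicitly switch to tiles (``$A_w$ \dots lies in a tile of level $n+m$''). These are different objects: the regions $U_w$ in \ref{condition:qs} and \ref{condition:qs_strong} are defined recursively by pulling back the $U_k$ under the extensions $f_k$, so if the $U_k$ are tubular neighborhoods then the $U_w$ are \emph{not} tiles, and Proposition \ref{prop:qce} no longer applies directly. Your tubular neighborhoods also do not obviously satisfy the nesting $\bigcup_{(k,j)\text{ adm.}} U_j\subset V_k$. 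The paper avoids all of this by taking $U_k=\Omega_{F_k}$, the level-$1$ open Markov tiles themselves: the nesting is then Lemma \ref{lem:intersectionBoundary} \ref{lem:boundary:6}, pairwise disjointness is \ref{lem:boundary:5}, and $U_w$ is exactly the level-$|w|$ tile, so Proposition \ref{prop:qce} yields \ref{condition:qs} and \ref{condition:qs_strong} verbatim.

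For (3), your strategy (renormalization periodic packing plus exponential contraction) is the paper's, but the step you label ``standard Fatou-coordinate analysis'' is where the real work lies and is not routine. After normalizing so that the M\"obius model is $z\mapsto z/(z+1)$ with $\widetilde a_n=1/n$, the $C^{1+\alpha}$-conformality of the Teichm\"uller comparison $\Phi$ (Theorem \ref{thm:asympConf}) gives $\diam I^n\simeq 1/n$ directly, but \emph{not} $\diam(I^n\setminus I^{n+1})\simeq 1/n^2$: the error $O(1/n^{1+\alpha})$ swamps the difference $1/n-1/(n+1)\simeq 1/n^2$ unless $\alpha\ge 1$, which is not available. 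The paper closes this gap by an additional argument (Proposition \ref{prop:para}): it builds a M\"obius map $M_n$ agreeing with the level-$n$ comparison map $f_n$ at $0$, $1/n$, and the pole $p_n$; controls $|1/p_n|=O(1)$ via Corollary \ref{corollary:pn}; uses Lemma \ref{lem:qc} to bound $d_{X_n}(a_{n+1},M_n(1/(n+1)))=O(\delta^n)$ in the hyperbolic metric of $X_n=\widehat\C\setminus\{0,a_n,\infty\}$; and converts this to a Euclidean bound via Lemma \ref{lemma:hyperbolic_euclidean}. Finally, your ``passing to a common periodic tail'' hides the strictly pre-periodic case, which the paper treats separately by induction, rerunning the same estimates with $\Psi$ itself in place of $\Phi$.
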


Recall the definition of a Markov map from Section \ref{subsec:MM} and of the notions appearing in \ref{cmc:2}, \ref{cmc:3} from Section \ref{sec:epqcm}. The main difficulty of the proof is in establishing that each point of $\{a_0,\dots,a_r\}$ is symmetrically parabolic. We prove this below in Proposition \ref{prop:para}. Assuming this, we give the proof of Theorem \ref{thm:cmc}.

\begin{figure}[htp]
    \centering
    \includegraphics[width=0.75\textwidth]{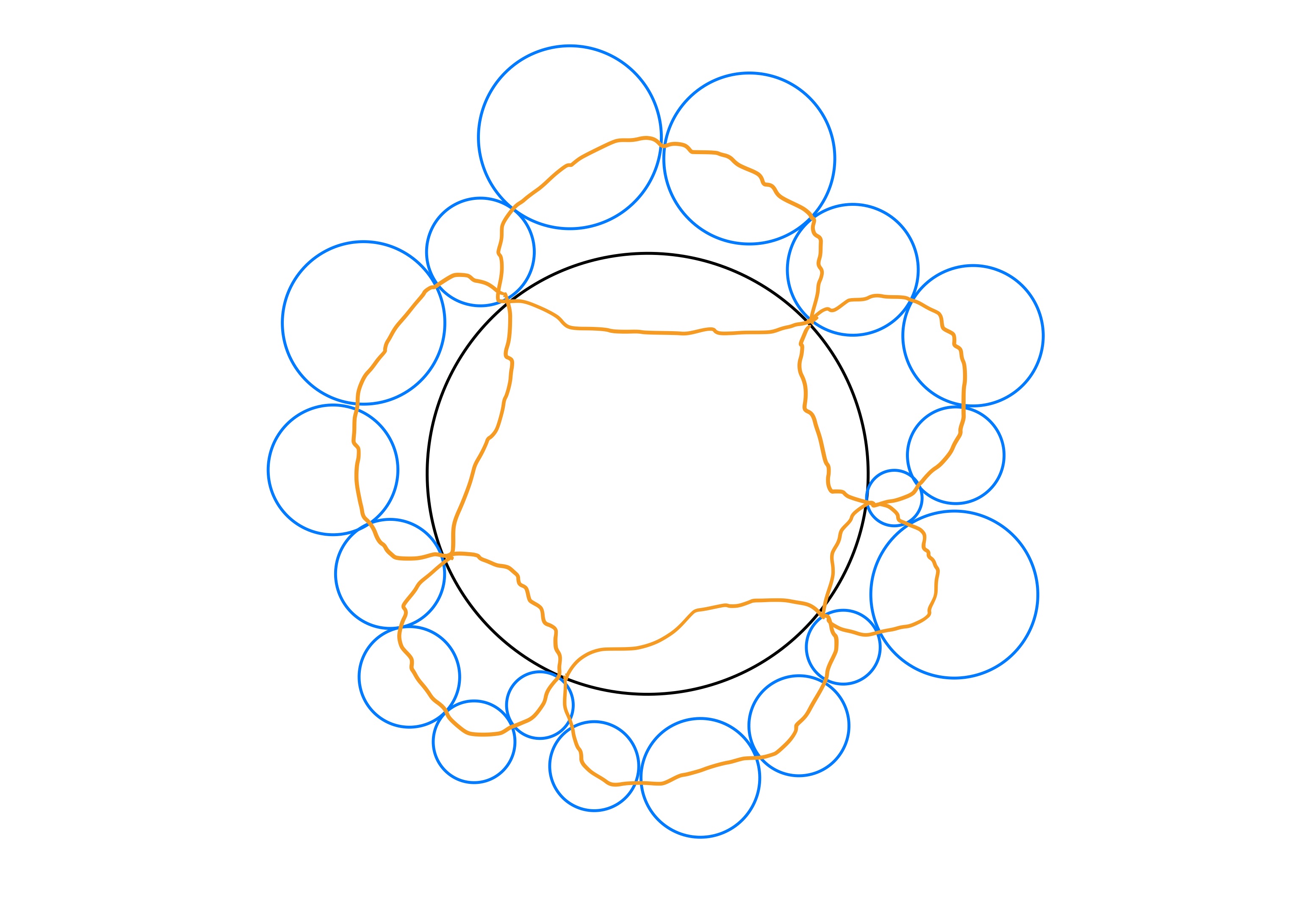}
    \caption{Tiles of level $0$ that intersect the circle $C$.}
    \label{fig:tileC}
\end{figure}

\begin{proof}
    Suppose that $\Psi$ is induced by a subdivision homomorphism $\psi\colon \mathcal G^1\to \mathcal G^0$. Let $C=C_v$. By Proposition \ref{prop:induceMarkovMap} \ref{ma:3}, $\psi(v)=v$ and $\Psi|_{C}$ is a covering map from $C$ onto itself. For $i\in \{0,\dots,r\}$ let $F_i$ be the face of $\mathcal G^1$ that contains the edges $[v,u_i]$ and $[v,u_{i+1}]$ in its $1$-skeleton, where $u_{r+1}\coloneqq u_0$; if $i=1$, there are only two faces. By condition \ref{condition:siG}, the faces $F_0,\dots,F_r$ are distinct.  For each face $F_i$ the face $\psi_*(F_i)$ of $\mathcal G^0$ contains the vertex $v$ in its $1$-skeleton and contains some of the faces $F_j$, $j\in \{0,\dots,r\}$, of $\mathcal G^1$. The $1$-skeleton of $\psi_*(F_i)$ contains the edges $[v,\psi(u_i)]$ and $[v,\psi(u_{i+1})]$, where $\psi(u_i),\psi(u_{i+1})\in \{u_0,\dots,u_r\}$ since $\mathcal G^0\subset \mathcal G^1$.

    We define $U_{i}=\Omega_{F_i}$ for $i\in \{0,\dots,r\}$. Each level $1$ tile $U_i$ is mapped by $\Psi$ homeomorphically to the level $0$ tile $\Omega_{\psi_*(F_i)}$ by Proposition \ref{prop:induceMarkovMap} \ref{ma:1}. We denote by $V_0,\dots,V_l$ the list of level $0$ tiles arising in this way; see Figure \ref{fig:tileC}. Note that the open tiles $V_0,\dots,V_l$ are pairwise disjoint and the open tiles $U_0,\dots,U_r$ are also pairwise disjoint (by Lemma \ref{lem:intersectionBoundary} \ref{lem:boundary:5}). Also, $\partial U_i\cap C=\{a_i,a_{i+1}\}$ (by Lemma \ref{lem:intersectionBoundary} \ref{lem:boundary:7}), where $a_{r+1}\coloneqq a_0$. By Proposition \ref{prop:induceMarkovMap} \ref{ma:4}, $\Psi$ maps $a_i$ to the point $C\cap C_{\psi(u_i)}$ which is contained in $\{a_0,\dots,a_r\}$. Therefore, $\Psi$ is injective in each arc $\arc{(a_i,a_{i+1})}$ and leaves the set $\{a_0,\dots,a_r\}$ invariant. This implies that the set $\{a_0,\dots,a_r\}$ gives a Markov partition for $\Psi|_{C}$; see Definition \ref{definition:markov_partition}. The preimages of each arc $\arc{(a_i,a_{i+1})}$ under $(\Psi|_C)^{\circ n}$ are arcs, each of which is contained in open tiles of level $n$, where $n\in \N$, as a consequence of Proposition \ref{prop:induceMarkovMap} \ref{ma:11}. By Proposition \ref{prop:diam->0} and property \ref{expansive:diameters}, $\Psi$ is expansive on $C$. We have verified \ref{cmc:1}.

    Now, we verify \ref{cmc:2}. As discussed, each open tile $U_i$ is mapped by $\Psi$ homeomorphically to a level $0$ tile. Moreover, if an open tile $U_i$ intersects an open tile $V_k$, then $U_i\subset V_k$; see Lemma \ref{lem:intersectionBoundary} \ref{lem:boundary:5} and  \ref{lem:boundary:6}. Since $V_k\subset \C$ for each $k\in \{0,\dots,r\}$, we have verified condition \ref{condition:uv}. Conditions \ref{condition:qs} and  \ref{condition:qs_strong} follow immediately from Proposition \ref{prop:qce}. 
\end{proof}

Following the notation in Section \ref{sec:epqcm}, we denote by $I_w$ the arc of $C_v$ corresponding to an admissible word $w$ whose length is denoted by $|w|$, and by $U_w$ the corresponding open tile of level $|w|$ for the Markov map $\Psi$. Note that $U_w\supset \inter I_w$.
\begin{proposition}\label{prop:para}
    Let $a \in \{a_0, \dots, a_r\}$. Then $a^+$ and $a^-$ are parabolic with multiplicity $2$.
\end{proposition}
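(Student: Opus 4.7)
The plan is to reduce the statement to the analogous statement for an explicit parabolic M\"obius model via the renormalization theorem (Theorem \ref{thm:rp}) and asymptotic conformality (Theorem \ref{thm:asympConf}). First, by passing to an iterate of $\Psi$, I may assume that every $a_i$ is a fixed point of $\Psi$; equivalently, the subdivision homomorphism $\psi$ fixes each $u_i$. This reduction preserves parabolic multiplicity, since Proposition \ref{prop:qce} gives uniform quasiconformal comparisons between adjacent levels, so level-$n$ Markov arcs for $\Psi^{\circ m}$ have diameters comparable to level-$nm$ arcs for $\Psi$.

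Fix $a = a_i$ and a side of $a$ on $C_v$. I would construct a nested sequence of faces $F^n$ of $\mathcal{G}^n$, $n \geq 0$, with $F^{n+1} \subset F^n$, by starting from the face of $\mathcal{G}^1$ with boundary containing the edges $[v, u_i]$ and $[v, u_{i\pm 1}]$ (depending on the chosen side), and using condition \ref{condition:siG} to choose at each step the unique face of $\mathcal{G}^{n+1}$ inside $F^n$ whose boundary contains both $v$ and $u_i$. Because $\psi$ fixes $v$ and $u_i$, the sequence $\{F^n\}$ is periodic of period $1$ in the sense of Definition \ref{defn:pf}; moreover $\partial F^{n+1} \cap \partial F^n$ contains the edge $[v, u_i]$, so the sequence is parabolic.

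Applying Theorem \ref{thm:rp} produces a renormalization periodic circle packing $\mathcal{P}^* \in \mathcal{M}(\mathcal{G})$ with a parabolic M\"obius symmetry $\Psi^*$ whose sequence of Markov tiles $\Omega^*_{F^n}$ shrinks to the parabolic fixed point of $\Psi^*$, which must therefore be $a^* = C^*_v \cap C^*_{u_i}$, the point of $\mathcal{P}^*$ corresponding to $a$. Conjugating by a M\"obius transformation sending $a^*$ to $\infty$ and $C^*_v$ to $\mathbb{R}$, I can model $\Psi^*|_{C^*_v}$ by the parabolic affine map $z \mapsto z + 1$. A direct calculation, standard for a Farey-type map, shows that on both sides of $a^*$ the level-$n$ Markov arc adjacent to $a^*$ has diameter comparable to $1/n$, while the next arc has diameter comparable to $1/n^2$; this establishes parabolic multiplicity $2$ for $a^*$.

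To transfer the estimates to $\Psi$, I would combine Theorem \ref{thm:expr}, which gives $d(\mathcal{P}_{F^n}, \mathcal{P}^*_{F^n}) = O(\delta^n)$, with Theorem \ref{thm:asympConf} to obtain a homeomorphism between $\mathcal{P}$ and $\mathcal{P}^*$ that is $C^{1+\alpha}$-conformal at $a$ with non-zero derivative. Such a comparison preserves diameters of level-$n$ Markov arcs near $a$ up to multiplicative constants and therefore transfers parabolic multiplicity $2$ from $a^*$ to $a$ on both sides. The hard part will be the bookkeeping needed to align the tile structures in $\mathcal{P}$ and $\mathcal{P}^*$ at all levels and to ensure that the $C^{1+\alpha}$-conformal comparison applies uniformly to yield matching asymptotics both for the arc immediately adjacent to $a$ and for the next arc.
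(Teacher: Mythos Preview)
Your overall strategy---compare to the renormalization periodic packing via Theorem \ref{thm:rp} and transfer estimates through the Teichm\"uller/asymptotic-conformality machinery---is exactly the paper's approach, and your handling of the first arc $I^n$ (diameter $\simeq 1/n$) via $C^{1+\alpha}$-conformality is correct. The genuine gap is the step you flag as ``bookkeeping'': $C^{1+\alpha}$-conformality at $a$ does \emph{not} by itself transfer the estimate $\diam(I^n\setminus I^{n+1})\simeq 1/n^2$. The reason is that $a_n$ and $a_{n+1}$ both lie at distance $\simeq 1/n$ from $a$, so the error term in $\Phi(a+t)=\Phi(a)+\Phi'(a)t+O(|t|^{1+\alpha})$ is $O(1/n^{1+\alpha})$ at each point; if $\alpha<1$ this error swamps the target $1/n^2$ when you subtract. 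The paper addresses this explicitly (``$C^{1+\alpha}$-conformality does not immediately imply that $\diam(I^n\setminus I^{n+1})\simeq 1/n^2$'') and instead compares $\Phi|_{\widetilde U^n}$ to a M\"obius map $M_n$ agreeing with it at three points, uses Lemma \ref{lem:qc} to bound the hyperbolic deviation by $O(\delta^n)$ in $\widehat\C\setminus\{0,a_n,\infty\}$, and then Lemma \ref{lemma:hyperbolic_euclidean} to convert this to a Euclidean bound of order $a_n\cdot O(\delta^n)$, which is negligible compared to $1/n^2$. The pole control needed for $|M_n'|\simeq 1$ comes from Corollary \ref{corollary:pn}.

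A smaller issue: passing to an iterate of $\Psi$ makes periodic $a_i$ fixed, but strictly pre-periodic $a_i$ remain strictly pre-periodic, so you cannot assume every $a_i$ is fixed. The paper handles the strictly pre-periodic case by an inductive pull-back argument, replacing the conjugacy $\Phi$ by $\Psi$ itself and using that $\Psi$ on a level-$(n{+}1)$ tile is $(1+O(\delta^n))$-quasiconformal (Proposition \ref{prop:qce}); the same M\"obius-comparison trick then applies.
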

\begin{proof}We will verify that $a^+$ is parabolic with multiplicity $2$; see Definition \ref{definition:parabolic}. The proof for $a^-$ is the same. First suppose that $a$ is periodic. For simplicity of the presentation, we assume that $a$ is fixed by $\Psi$. Then there exists a circle $C_w$ of the circle packing $\mathcal P$ such that $C_v\cap C_w=\{a\}$ and $C_w$ is fixed by $\Psi$. The edge $[v,w]$ is on the $1$-skeleton of a face $F^0$ of $\mathcal G^0$ such that the tile $\Omega_{F^0}$ contains an arc of $C_v$ with an endpoint at $a$ towards the positive orientation. We also consider a face $F^1\subset F^0$ of $\mathcal G^1$ that contains $[v,w]$ in its $1$-skeleton. If $\Psi$ arises from the subdivision homomorphism $\psi\colon \mathcal G^1\to \mathcal G^0$, then we have $\psi_*(F^1)=F^0$. Hence, $\Psi(\Omega_{F^1})=\Omega_{F^0}$; see Proposition \ref{prop:induceMarkovMap} \ref{ma:1}. Moreover, if, for each $n\in \N$, $F^n$ is a face of $\mathcal G^n$ that contains the edge $[v,w]$ in its $1$-skeleton, then we have $F^{n}\subset F^{n-1}$ and $\Psi(\Omega_{F^n})=\Omega_{F^{n-1}}$. We define $U^n=\Omega_{F^n}$, $n\geq 0$.

After normalization by a M\"obius map, we assume that $C_v=C = \R\cup \{\infty\}$, the disk $D_v$ bounded by $C_v$ is the upper half-plane, $\overline{U^0}\subset \C$, and $a = 0$. For each $n\geq 0$  we have $U^{n+1}\subset U^{n}$, $\Psi(U^{n+1})=U^{n}$, and $a\in \partial U^{n}$ with $(a, a+\epsilon) \subset U^{n}$ for some $\epsilon > 0$ depending on $n$.  For $n\geq 1$, let 
    $$
    I^{n} = [a, a_n]\coloneqq \overline{U^{n} \cap C}.
    $$
It suffices to show that there exists $L\geq 1$ such that for all $n\geq 1$ we have
        $$\frac{L^{-1}}{n}\leq \diam I^{n} \leq \frac{L}{n}\quad \textrm{and}\quad \frac{L^{-1}}{n^2}\leq \diam (I^{n}\setminus I^{n+1}) \leq \frac{L}{n^2}.$$
        
    Note that the faces $\{F^n\}_{n\geq 0}$ corresponding to $\{U^{n}\}_{n\geq 0}$ are periodic, and $a$ is a tangency point of two circles $C_v,C_w$ of $\mathcal{P}$. The edge $[v,w]$ lies in $\partial F^{n}$ for all $n\geq 0$. Thus, $\{F^n\}_{n\geq 0}$ is parabolic, as in Definition \ref{defn:pf}.
    By Theorem \ref{thm:rp} there exist 
        \begin{itemize}
            \item the renormalization periodic circle packing $\widetilde{\mathcal{P}} \in \mathcal{M}(F^0 \cap \mathcal{G})$ associated to $\{F^n\}_{n\geq 0}$, a point $\widetilde a\in \Lambda(\widetilde P)$, and for each $n\geq 0$ an open tile $\widetilde U^n \coloneqq \widetilde{\Omega}_{F^n}$ of level $n$ such that $\widetilde a\in \partial \widetilde U^n$ and $\widetilde{U}^{n+1}\subset \widetilde{U}^{n}$ for each $n\geq 0$, and
            \item the M\"obius symmetry of $\widetilde{\mathcal{P}}$ for $\{F^n\}_{n\geq 0}$, denoted by $\widetilde \Psi$, which maps $\widetilde{U}^1$ onto $\widetilde{U}^0$ and has a parabolic fixed point at $\widetilde{a}$.
        \end{itemize}
        Let $\Phi$ be a Teichm\"uller mapping, as provided by Theorem \ref{thm:existenceTeich}, between $\widetilde{\mathcal{P}},\mathcal{P}_{F^0}\in \mathcal{M}(F^0 \cap \mathcal{G})$. Since $\Phi^{-1} \circ \Psi \circ \Phi$ induces the identification of $F^1 \cap \mathcal{G}$ with $F^0\cap \mathcal{G}$, by Corollary \ref{cor:sr} we have $\Phi^{-1} \circ \Psi \circ \Phi = \widetilde{\Psi}$ on ${\Lambda(\widetilde{\mathcal{P}}_{F^1})\cap \overline{\Pi(\widetilde{\mathcal{P}}_{F^1})}}$.  
        
    We normalize by some M\"obius maps so that  $\widetilde{C} = \Phi^{-1}(C) = \R\cup \{\infty\}$, $\widetilde{a} = \Phi^{-1}(a) = 0$, $\widetilde{a}^+$ corresponds to the positive real direction,  $\widetilde{\Psi}(z) = \frac{z}{z+1}$, and $\widetilde{a}_1 = \Phi^{-1}(a_1)= 1$. In particular, $\widetilde{a}_n =\Phi^{-1}(a_n)= \frac{1}{n}$. Define $\widetilde{I}^{n}=\Phi^{-1}(I_n)=[\widetilde{a}, \widetilde{a}_n]$, $n\geq 0$. By our normalization, we have 
    $$
    \diam \widetilde{I}^{n} = \frac{1}{n} \quad \textrm{and} \quad  \diam(\widetilde{I}^{n}\setminus\widetilde{I}^{n+1}) = \frac{1}{n^2}.
    $$
    By Theorem \ref{thm:asympConf}, $\Phi|_{\widetilde{I}^0}$ is $C^{1+\alpha}$-conformal at $0$.
    Therefore, we have 
    \begin{align}
    \label{eq:I_n}    a_n=\diam I^{n} \simeq \frac{1}{n}+ O\left(\frac{1}{n^{1+\alpha}}\right ) \simeq \frac{1}{n}.
    \end{align}
    This verifies the first condition in Definition \ref{definition:parabolic}. However, $C^{1+\alpha}$-conformality does not immediately imply that $\diam(I^{n}\setminus I^{n+1}) \simeq \frac{1}{n^2}$. We will show the latter using a similar argument as in \cite{LuoZhang:CirclePacking}*{Theorem 6.1}.

    By Proposition \ref{prop:ext} and Theorem \ref{thm:expr}, $\Phi|_{\widetilde U^n}$ extends to a $(1+O(\delta^n))$-quasi\-conformal homeomorphism $f_n\colon \widehat \C \to \widehat \C$ between $\widetilde{\mathcal{P}}_{F^n}$ and $\mathcal{P}_{F^n}$. Let $p_n = f_n^{-1}(\infty)$ be the pole of $f_n$. By Corollary \ref{corollary:pn}, applied to the map $f_{n-1}^{-1}\circ f_n$,
 we have 
    \begin{align}
    \label{eq:p_n}   \left|\frac{1}{p_n}\right| = O(1).
    \end{align}
Let $M_n(z)=\frac{A_nz}{1-z/p_n}$ be a M\"obius map so that
    $$
    M_n(0) = f_n(0) = 0,\,\, M_n\left(\frac{1}{n}\right) = f_n\left(\frac{1}{n}\right) = a_n,\,\,\textrm{and}\,\,\,  M_n(p_n) = f_n(p_n) = \infty. 
    $$
    By \eqref{eq:p_n}, we have $|M_n(z)|\simeq |A_nz|$ in a fixed ball $B(0,R)\subset B(0,|p_n|/2)$.  By \eqref{eq:I_n}, we conclude that $|A_n| \simeq 1$.  We have $M_n'(z) = \frac{A_n}{(1-z/p_n)^2}$, so $|M_n'(z)|\simeq 1$ on $B(0,R)$. We conclude that
    \begin{align}\label{eqn:6.1}
    \left|a_n - M_n\left(\frac{1}{n+1}\right)\right| = \left|M_n\left(\frac{1}{n}\right) - M_n\left(\frac{1}{n+1}\right)\right| \simeq \frac{1}{n^2}.
    \end{align}
    Since $f_n$ is $(1+O(\delta^n))$-quasiconformal and agrees with $M_n$ at the points $0, \frac{1}{n}, p_n$, {by Lemma \ref{lem:qc}} we have
    $$
    d_{X_n}\left(a_{n+1}, M_n\left(\frac{1}{n+1}\right)\right) = d_{X_n}\left(f_n\left(\frac{1}{n+1}\right), M_n\left(\frac{1}{n+1}\right)\right) = O(\delta^n),
    $$
    where $d_{X_n}$ is the hyperbolic metric on $X_n \coloneqq \widehat \C \setminus  \{0, a_n, \infty\}$. By Lemma \ref{lemma:hyperbolic_euclidean}, there exists a uniform constant $c'>0$ such that
    \begin{align}\label{eqn:6.2}
        \left|a_{n+1} - M_n\left(\frac{1}{n+1}\right)\right| \leq c'a_n \cdot d_{X_n}\left(a_{n+1}, M_n\left(\frac{1}{n+1}\right)\right) = O(\delta^n).
    \end{align}
    By combining \eqref{eqn:6.1} and \eqref{eqn:6.2}, we conclude that $$
    \diam(I^{n}\setminus I^{n+1}) = |a_n-a_{n+1}| \simeq \frac{1}{n^2}.
    $$
   This completes the proof for the case of fixed points. The proof for the case of higher period is similar.

    Now suppose that $a$ is strictly pre-periodic. We continue to assume that $C=\R\cup \{\infty\}$. For $n\geq 1$, let $I^n$ be the interval of level $n$ (i.e., $I^n$ is the intersection of a level $n$ tile with the circle $C$) having $a$ as an endpoint and so that $(a,a+\epsilon)\subset I^n$ for some $\epsilon>0$ depending on $n$. Let $b= \Psi(a)$, and $\widetilde{I}^n = \Psi(I^{n+1})$ be the corresponding interval of level $n$ that has $b$ as an endpoint.
    We argue by induction and suppose that we have
    $$
    \diam \widetilde{I}^{n} \simeq \frac{1}{n} \quad \textrm{and} \quad \diam(\widetilde{I}^{n}\setminus \widetilde{I}^{n+1}) \simeq \frac{1}{n^2}.
    $$
    By applying the same argument as above and substituting the conjugacy map $\Phi$ in the periodic case with the map $\Psi$ in the strictly periodic case, we conclude that 
    $$
    \diam I^{n} \simeq \frac{1}{n} \quad \textrm{and} \quad \diam(I^n\setminus I^{n+1}) \simeq \frac{1}{n^2}.
    $$
    This completes the proof.
    \end{proof}

\section{Geometric and dynamical criteria}\label{section:geom_dyn}

In this section we establish the main tools for the proofs of Theorems \ref{thm:QU} and \ref{thm:DU} regarding the geometric and dynamical criteria.
We will use the following standard facts from dynamics of rational maps; see \cite{Milnor:dynamics}*{Theorem 11.17}.

\begin{theorem}\label{theorem:facts_dynamics}
    Let $R$ be a rational map without Julia critical points. Then each periodic point of $R$ is either attracting or repelling or parabolic. In particular, there are no Cremer points and Siegel disks. 
\end{theorem}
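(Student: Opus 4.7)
The plan is to combine the standard multiplier classification of periodic cycles with Sullivan's no-wandering-domain theorem and with classical results bounding Siegel disks and Cremer points in terms of the postcritical set. Recall that every periodic point of a rational map has a multiplier $\lambda$ which is either $|\lambda|<1$ (attracting), $|\lambda|>1$ (repelling), $\lambda$ a root of unity (parabolic), or $|\lambda|=1$ but not a root of unity (irrationally indifferent). In the last case the point is either the center of a Siegel disk in $F(R)$ or a Cremer point in $\mathcal J(R)$. It therefore suffices to rule out both Siegel disks and Cremer cycles under the hypothesis that $\mathrm{Crit}(R)\cap \mathcal J(R)=\emptyset$.

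First I would show that under this hypothesis the postcritical set $P(R)=\overline{\bigcup_{n\ge 1}R^{\circ n}(\mathrm{Crit}(R))}$ is finite. By assumption every critical point lies in the Fatou set, and Sullivan's theorem guarantees that every Fatou component is eventually periodic. The possible periodic Fatou components are attracting basins, parabolic basins, Siegel disks, and Herman rings; the interior of a Siegel disk or Herman ring is a rotation domain and thus contains no critical points. Hence every critical orbit eventually lands in an attracting basin or a parabolic basin, where it converges to the corresponding (finite) attracting or parabolic cycle. Consequently $P(R)$ is a finite set.

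Next, to exclude Siegel disks (and Herman rings), I would invoke the classical Fatou theorem asserting that the boundary of every such periodic Fatou component is contained in $P(R)$. Since that boundary is uncountable while $P(R)$ is finite, no such component can exist. To exclude Cremer cycles I would appeal to Ma\~n\'e's theorem: any Cremer periodic point must lie in the $\omega$-limit set of a recurrent critical point contained in $\mathcal J(R)$; as no critical points lie in $\mathcal J(R)$ by hypothesis, no Cremer cycles can exist. The main obstacle is merely to invoke the correct black-box results from holomorphic dynamics; there is no delicate estimate to carry out, and the statement is packaged in Milnor's book.
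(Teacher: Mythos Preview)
The paper does not prove this statement; it records it as a standard fact and cites Milnor's book. Your outline is essentially the classical argument, but the first step contains a circularity. You argue that since periodic rotation domains contain no critical points, every critical orbit must eventually land in an attracting or parabolic basin. This inference fails: a critical point may lie in a \emph{strictly preperiodic} Fatou component that eventually maps into a Siegel disk or Herman ring, in which case its forward orbit enters the rotation domain and accumulates on an invariant real-analytic circle there. In that scenario $P(R)$ is uncountable, not finite (and even when every critical orbit does converge to a cycle, $P(R)$ is typically only countable). You are implicitly assuming no rotation domains exist in order to bound $P(R)$, and then using that bound to rule out rotation domains.

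The repair is easy. Rather than bounding all of $P(R)$, observe that regardless of where a Fatou critical orbit eventually lands, its accumulation set meets $\mathcal J(R)$ only in finitely many parabolic periodic points: orbits entering attracting basins accumulate inside $F(R)$, orbits entering rotation domains accumulate on circles inside $F(R)$, and orbits entering parabolic basins accumulate on the parabolic cycle. Hence $P(R)\cap\mathcal J(R)$ is finite, and since the boundary of any Siegel disk is an uncountable subset of $P(R)\cap\mathcal J(R)$ this already yields the desired contradiction. Alternatively, simply apply Ma\~n\'e's theorem to Siegel disk boundaries as well as to Cremer points (it covers both, and your invocation of it for Cremer points is correct), dispensing with the finiteness claim altogether.
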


Recall that a point of a gasket Julia set is called a {contact point} if it is the unique intersection point of the boundaries of two Fatou components.

\begin{lemma}\label{lem:pt}
    Let $R$ be a rational map without Julia critical points whose Julia set $\mathcal J(R)$ is a gasket. The following statements are true.
    \begin{enumerate}[label=\normalfont(\roman*)]
        \item\label{lem:pt:i} Every contact point is eventually mapped to a periodic contact point.
        \item\label{lem:pt:ii} Every periodic contact point is either repelling or parabolic with multiplicity $2$ or $3$.
        \item\label{lem:pt:iii} If there is no periodic point that is parabolic with multiplicity $2$, then every Fatou component is a quasidisk.
    \end{enumerate} 
\end{lemma}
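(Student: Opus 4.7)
The plan is to prove the three parts in sequence, with the earlier parts feeding into the later ones.

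For (i), first I would check that $R$ maps contact points to contact points. If $p \in \partial U \cap \partial V$ with $U, V$ distinct Fatou components, then $R$ is a local homeomorphism at $p$ (since there are no Julia critical points), so the two local sectors of $U$ and $V$ at $p$ map to disjoint sectors at $R(p)$. If these sectors lay in a common Fatou component $W = R(U) = R(V)$, then $\partial W$ would pass through $R(p)$ along two distinct local arcs, violating the Jordan property from Definition \ref{definition:gasket}(1); hence $R(U) \neq R(V)$ and $R(p)$ is a contact point between them. By Sullivan's no-wandering-domains theorem, there exists $n_0$ with $R^{n_0}(U)$ and $R^{n_0}(V)$ both periodic. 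The set of periodic Fatou components is finite, and Definition \ref{definition:gasket}(2) says any two share at most one point, so the set of contact points between periodic Fatou components is finite. Consequently the forward orbit of $p$ enters this finite set at step $n_0$ and is eventually periodic.

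For (ii), let $p$ be a periodic contact point of period $q$, set $g = R^q$, and let $U_1, U_2$ be the two Fatou components meeting at $p$. By Theorem \ref{theorem:facts_dynamics}, $p$ is either repelling or parabolic; assume the parabolic case with multiplier $\lambda = e^{2\pi i k/s}$ (primitive $s$-th root of unity). I would first argue that $s = 1$: if $s \geq 2$, then $g^s$ has multiplier $1$ at $p$ and its attracting petals are cyclically permuted by $g$ in orbits of length $s$. Each $U_j$ is $g$-invariant, so contains an entire such orbit of at least two petals at $p$; but then $\partial U_j$ would pass through $p$ along two distinct local arcs, contradicting Definition \ref{definition:gasket}(1). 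Therefore $\lambda = 1$ and $g(z) = z + a z^{N+1} + O(z^{N+2})$ in a local coordinate, with multiplicity $m = N+1$. By the Leau-Fatou flower theorem, the $N$ attracting petals of $g$ at $p$ lie in $N$ distinct Fatou components of the cycle of $p$, each having $p$ on its boundary. Definition \ref{definition:gasket}(3) then forces $N \leq 2$, giving $m \in \{2, 3\}$.

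For (iii), parts (i) and (ii) combined with the hypothesis say every periodic contact point is either repelling or parabolic of multiplicity exactly $3$. A pullback argument reduces the quasidisk claim to periodic Fatou components: if $V = R^n(W)$ is periodic and a quasidisk, then since $R^n$ has no critical points on $\partial W \subset \mathcal J(R)$, the restriction $R^n|_{\partial W}$ is a local analytic homeomorphism, so $\partial W$ inherits the quasicircle property from $\partial V$. For a periodic Fatou component $U$ of period $q$ with $g = R^q$, I would construct a locally quasisymmetric parametrization of $\partial U$ using Koenigs or B\"ottcher coordinates at attracting fixed points inside $U$, linearization at repelling periodic points on $\partial U$, Leau-Fatou coordinates at parabolic periodic points of multiplicity $3$ on $\partial U$, and the expansion of $g$ on $\partial U$ to propagate these local models to preperiodic and generic boundary points. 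Assembling a uniform bounded-turning constant from these local models then characterizes $\partial U$ as a quasicircle.

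The main obstacle is the analysis at parabolic periodic points. In Fatou coordinates a multiplicity-$3$ parabolic point produces a sector of $U$ with a definite opening (yielding local quasisymmetric structure), whereas a multiplicity-$2$ parabolic point produces an immediate-basin-tangency cusp that destroys the quasicircle property. Making this dichotomy sharp and then upgrading the local quasisymmetric estimates to a global bounded-turning constant on $\partial U$, through the non-uniformly expanding dynamics of $g|_{\partial U}$ near parabolic points, is the delicate technical step.
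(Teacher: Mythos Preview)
Your arguments for (i) and (ii) are correct and essentially match the paper's proof; the paper is terser (it does not separately rule out a nontrivial root-of-unity multiplier, absorbing this into the notion of multiplicity), but the ideas coincide.

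For (iii) your plan is workable in outline but, as you acknowledge, leaves the parabolic analysis unresolved. The paper takes a cleaner route by first extracting a structural fact you do not use: since there are no Julia critical points, Theorem~\ref{theorem:facts_dynamics} forces every post-critical point lying in $\mathcal J(R)$ to be a parabolic periodic point. From this the paper shows that for a periodic Fatou component $U$ the boundary $\partial U$ meets the post-critical set in at most one point. Indeed, if $a \in \partial U$ is parabolic then by hypothesis its multiplicity is at least $3$, so there are at least two attracting petals at $a$; if $U$ is an attracting basin these petals lie in two Fatou components other than $U$, putting three components through $a$ and contradicting the gasket axiom. Thus in the attracting case $\partial U$ is disjoint from the post-critical set, and Koebe's distortion theorem applied to inverse branches of $R^{q}$ near $\partial U$ yields the quasidisk property directly. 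In the parabolic case the same argument shows the post-critical set meets $\partial U$ only at the parabolic fixed point itself, and the paper then invokes \cite{LN24}*{Theorem 1.7 (3)}. Your approach of assembling local models at each boundary point would have to reprove this last citation by hand, which is exactly the ``delicate technical step'' you flagged; the paper's reduction to the location of the post-critical set is what makes the black-box citation (and Koebe in the attracting case) applicable and sidesteps the non-uniform expansion issue entirely.
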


\begin{proof}
    Let $x$ be the contact point of two Fatou components $U, V$; note that $x$ is unique, since $\mathcal J(R)$ is a gasket. Since there are no Julia critical points, $R^{\circ n}(U)$ and $R^{\circ n}(V)$ are two distinct Fatou components touching at $R^{\circ n}(x)$ for each $n\in \N$. Since $U, V$ are pre-periodic, $x$ is pre-periodic. This proves part \ref{lem:pt:i}. Suppose that $x$ is a periodic contact point. Since $x$ is in the Julia set, by Theorem \ref{theorem:facts_dynamics} it is either repelling or parabolic. Suppose $x$ is parabolic with multiplicity strictly greater than $3$. Then there are at least $3$ attracting petals at $x$. Thus there are at least $3$ Fatou components touching at $x$, which contradicts the definition of a gasket. This proves \ref{lem:pt:ii}.

    Finally, we prove \ref{lem:pt:iii}. Since $R$ has no Julia critical points, $R$ is locally bi-Lipschitz on the Julia set. Hence, it suffices to prove the statement for periodic Fatou components. Also, by Theorem \ref{theorem:facts_dynamics}, every point in the Julia set that lies in the post-critical set of $R$ is a parabolic periodic point. Let $U$ be a periodic Fatou component.  
    
    Suppose that $U$ is an immediate basin of an attracting point with period $p$.  Suppose that $\partial U$ intersects the post-critical set, so by the above it contains a parabolic periodic point $a$. By assumption, the multiplicity of this point is at least $3$. This implies that there exist at least two parabolic basins in $\widehat \C\setminus \overline U$ meeting at $a$. This is a contradiction, since $\mathcal J(R)$ is a gasket and no point can be in the boundary of three Fatou components.  Thus, the post-critical set is disjoint from $\partial U$. By applying Koebe's distortion theorem locally to iterates of the map $R^{\circ p}|_{\partial U}$ we conclude that $\partial U$ is a quasidisk. 
    
    Next, suppose that $U$ is an immediate basin of a parabolic point $a$. The above argument shows that the post-critical set intersects $\partial U$ only at the point $a$. By \cite{LN24}*{Theorem 1.7 (3)} we conclude that $U$ is a quasidisk.
\end{proof}

\begin{lemma}\label{lemma:tangent_repelling}
    Let $R$ be a rational map such that two fixed Fatou components $U,V$ are Jordan regions whose boundaries intersect at a repelling fixed point $z_0$. 
    \begin{enumerate}[label=\normalfont(\arabic*)]
        \item\label{lemma:tangent_repelling:1} If $U$ and $V$ are tangent to each other at $z_0$, then $\J(R)$ is a Jordan curve and in particular it is not a gasket.
        \item\label{lemma:tangent_repelling:2} There exists no quasiconformal homeomorphism of the sphere that maps $U$ and $V$ to two disks that are tangent to each other.
    \end{enumerate}
\end{lemma}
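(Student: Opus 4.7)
For part \ref{lemma:tangent_repelling:1}, the plan is to exploit the conformal linearization of $R$ at $z_0$. Introduce a coordinate $w = L(z)$ near $z_0$ with $L(z_0) = 0$ conformal and $L \circ R \circ L^{-1}(w) = \lambda w$ for some $|\lambda| > 1$. Writing $U' = L(U \cap W)$ and $V' = L(V \cap W)$ for a small neighborhood $W$ of $z_0$, the invariance $R(U) = U$ together with the existence of a local inverse branch of $R$ fixing $z_0$ yields $\lambda^{-1} U' \subseteq U'$ near $0$, and similarly for $V'$. The tangency of $U, V$ at $z_0$ transfers to $U', V'$ at $0$ with a unique tangent direction $\theta_0$. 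Since $w \mapsto \lambda^{-1} w$ rotates this direction by $-\arg \lambda$, uniqueness forces $\arg \lambda \in 2\pi \mathbb{Z}$, hence $\lambda > 0$ is real. Under positive-real scaling every ray through $0$ is preserved, and the tangency forces $U' \cup V'$ to exhaust a punctured neighborhood of $0$ apart from arbitrarily thin sectorial wedges around the two normal rays $\theta_0 \pm \pi/2$. Combined with the self-similarity supplied by $\lambda$-invariance, the arcs $\partial U'$ and $\partial V'$ must coincide near $0$: any persistent gap would reappear at every scale $\lambda^{-n} W$ and produce an accumulation of Fatou components of definite relative size, which is impossible. Propagating this coincidence through $R$-invariance of each boundary and the density of the backward $R$-orbit of $z_0$ in $\partial U$ and $\partial V$ extends $\partial U = \partial V$ globally; since $\partial U \subseteq \J(R) \subseteq \partial U \cup \partial V = \partial U$, the Julia set equals the Jordan curve $\partial U$.

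For part \ref{lemma:tangent_repelling:2}, I would argue by contradiction. Assume $\phi \colon \widehat{\C} \to \widehat{\C}$ is quasiconformal with $\phi(U) = D_1$ and $\phi(V) = D_2$ tangent disks at $p = \phi(z_0)$. Set $g = \phi \circ L^{-1}$; this is quasiconformal near $0$ and intertwines $w \mapsto \lambda w$ with $\tilde R = \phi R \phi^{-1}$, a quasiconformal self-map of $\widehat{\C}$ that fixes $p$ and preserves $D_1 \cup D_2$. Pre-composing $g$ with the contractions $w \mapsto \lambda^{-n} w$ and post-composing with M\"obius rescalings of $\widehat{\C}$ around $p$ (normalized so that three suitable reference points on $\partial D_1 \cup \partial D_2$ stay fixed) produces a sequence $g_n$ of quasiconformal maps with $\|\mu_{g_n}\|_\infty = \|\mu_g\|_\infty$. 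Compactness of uniformly quasiconformal families with three-point normalization yields a subsequential limit $g_\infty$ which sends the Hausdorff limit of the rescaled pair $(D_1, D_2)$, a pair of tangent half-planes, onto the tangent cone of $(U, V)$ at $z_0$; thus this tangent cone is itself a pair of tangent half-planes, meaning $U$ and $V$ are Euclidean-tangent at $z_0$. Part \ref{lemma:tangent_repelling:1} then forces $\J(R) = \partial U = \partial V$. However, $\phi$ is a bijection sending $\partial U \cap \partial V$ onto $\partial D_1 \cap \partial D_2 = \{p\}$, so $\partial U \cap \partial V$ is a single point; combined with $\partial U = \partial V$, the Jordan curve $\partial U$ would reduce to a point, a contradiction.

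The principal obstacle is the renormalization step in part \ref{lemma:tangent_repelling:2}: controlling the convergence $g_n \to g_\infty$ and identifying the Hausdorff limits of the rescaled tangent disks with a pair of tangent half-planes, and of the rescaled Fatou components with the tangent cone of $(U, V)$ at $z_0$. The uniform dilatation bound $\|\mu_{g_n}\|_\infty = \|\mu_g\|_\infty$ combined with the three-point normalization ensures precompactness of $\{g_n\}$; the exponential contraction of $w \mapsto \lambda^{-n} w$, together with the quasisymmetry estimates for $g$, drives the convergence of the rescaled sets to the claimed Hausdorff limits. One must also verify that the tangent cone of $(U, V)$ at $z_0$ is a genuine Hausdorff limit along the geometric subsequence $\{\lambda^{-n}\}$, which ultimately rests on the dynamical rigidity at the expanding fixed point.
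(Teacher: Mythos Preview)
Your treatment of part~(1) follows the paper's route: linearize, transfer the tangency, and use uniqueness of the tangent direction to force $\lambda>0$. The one soft spot is your final step: you argue that a ``persistent gap'' between $\partial U'$ and $\partial V'$ would create ``an accumulation of Fatou components of definite relative size, which is impossible,'' but nothing in the hypotheses forbids this --- the gap could be Julia set, or a single other Fatou component seen at every scale. The paper is more direct: once $\lambda>0$, every $z$ in the open right half-plane has $\lambda^{-n}z$ in one of the tangency sectors of $U'$ for large $n$, hence $z\in U'$ by forward invariance; thus $U'$ and $V'$ are exactly the two open half-planes and $\partial U'=\partial V'$ on an arc through $0$.

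In part~(2) there is a genuine gap. Granting your blow-up converges, the limit $g_\infty$ is merely $K$-quasiconformal, and a quasiconformal map does not preserve angles: from $g_\infty$ sending the $\lambda$-self-similar extensions $\widetilde{L(U)},\widetilde{L(V)}$ onto complementary half-planes you can only conclude that $\widetilde{L(U)},\widetilde{L(V)}$ are complementary quasidisks, not half-planes. (You also have not shown $\lambda\in\R$ here, so $\widetilde{L(U)}$ could a priori be a spiral region.) Hence you have not established tangency of $U,V$ at $z_0$, and your appeal to part~(1) is unjustified. The paper avoids any limiting argument: since $\psi$ is quasisymmetric, each of $\phi(\partial U\cap W)$ and $\phi(\partial V\cap W)$ is a quasiarc, and the $\lambda$-self-similarity then forces every Jordan arc in $\phi(\partial U\cap W)\cup\phi(\partial V\cap W)$ through $0$ --- including those that switch from one boundary to the other --- to be a quasiarc (rescale any pair of points into a fixed annulus, where the two boundary pieces lie a positive distance apart). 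Under $\psi\circ\phi^{-1}$ such an arc maps to an arc in $\partial D_1\cup\partial D_2$ through the tangency point; but two of those image arcs are cusps (an arc of $\partial D_1$ and an arc of $\partial D_2$ leaving $p$ in the same tangent direction), hence not quasiarcs, contradicting the quasisymmetric invariance of the quasiarc property. Your approach can be salvaged --- complementarity alone already yields $\partial U\cap\partial V\supsetneq\{z_0\}$, which immediately contradicts $\phi(\partial U\cap\partial V)=\{p\}$ without invoking part~(1) --- but as written the argument does not close.
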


\begin{proof}
    There exists a neighborhood $W$ of $z_0$ and a conformal map $\phi\colon W\to \D$ such that $\phi(z_0)=0$ and $\phi$ conjugates $R$ to $z\mapsto \lambda z$, where $|\lambda|>1$. By conformality the tangency is preserved. Specifically, we may assume that for each $\varepsilon>0$ there exists $\delta>0$ with the property that $\{re^{i\theta}: r\in (0,\delta), |\theta|<\pi/2-\varepsilon\}\subset \phi(W\cap U)$ and $\{re^{i\theta}: r\in (-\delta,0), |\theta|<\pi/2-\varepsilon\}\subset \phi(W\cap V)$. Given that $R$ fixes $U$, we see that for each $\theta\in (-\pi/2,\pi/2)$ and all sufficiently small $r>0$ the point $\lambda re^{i\theta}$ cannot lie in the left half-plane $\{z\in \C: \Re(z)<0\}$. This implies that $\lambda$ is a positive real number. Given a point in the right half-plane $\{z\in \C: \Re(z)>0\}$, we have $\lambda^{-n}z\in \phi(W\cap U)$ for some $n\in \N$. The invariance of $U$ implies that $\D\cap \{z\in \C:\Re(z)>0\}\subset \phi(W\cap U)$. Similarly, $\D\cap \{z\in \C:\Re(z)<0\}\subset \phi(W\cap V)$. Thus, $\partial U\cap \partial V$ contains a Jordan arc $E=W\cap \mathcal J(R)$. There exists $N\in \N$ such that $R^{\circ N}(W)\supset \J(R)$. The invariance of $U$ and $V$ implies that $R$ does not have other Fatou components, so $\J(R)=\partial U=\partial V$.

    Suppose that there exists a quasiconformal homeomorphism $\psi\colon \widehat \C\to \widehat \C$ mapping $U$ and $V$ to disks that are tangent to each other. Consider the linearizing coordinate $\phi$ as above. Since $\psi$ is quasisymmetric, each arc in $\phi(W\cap \partial U)$ and in $\phi(W\cap \partial V)$ is a quasiarc. Using the dynamics of $z\mapsto \lambda z$ we can show that each arc in $\phi(W\cap \partial U)\cup \phi(W\cap \partial V)$ is also a quasiarc. This is a contradiction.   
\end{proof}

We derive the following dynamical characterization of fat gasket Julia sets. 

\begin{lemma}\label{lem:tpp}
    Let $R$ be a rational map without Julia critical points whose Julia set $\J(R)$ is a gasket. The following are equivalent.
    \begin{enumerate}[label=\normalfont (\arabic*)]
        \item $\mathcal{J}(R)$ is a fat gasket.
        \item Every contact point between two Fatou components is eventually mapped to a periodic contact point that is parabolic with multiplicity $3$.
    \end{enumerate}
    In that case, each periodic Fatou component $U$ has exactly one periodic contact point on its boundary, and every periodic point on $\partial U$ that is not a contact point is repelling. In particular, each Fatou component is a quasidisk.
\end{lemma}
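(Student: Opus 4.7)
The plan is to establish the equivalence (1) $\Leftrightarrow$ (2) by local dynamical analysis at periodic contact points and then derive the three closing statements from the Leau--Fatou flower theorem together with the gasket constraint that at most two Fatou components meet at any single point.

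For (1) $\Rightarrow$ (2), given a periodic contact point $z_0 \in \partial U \cap \partial V$, I pass to an iterate $R^{\circ p}$ fixing $z_0$, $U$, and $V$. Lemma \ref{lem:pt}\ref{lem:pt:ii} leaves three possibilities. Fatness together with Lemma \ref{lemma:tangent_repelling}\ref{lemma:tangent_repelling:1} rules out the repelling case, since otherwise $\J(R^{\circ p}) = \J(R)$ would be a Jordan curve rather than a gasket. Multiplicity $2$ provides only a single attracting petal at $z_0$, whereas both $U$ and $V$, being $R^{\circ p}$-invariant Fatou components with $z_0$ on their boundary, must each contain at least one attracting petal by a standard flower-theoretic argument; this forces multiplicity at least $3$, hence exactly $3$. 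For (2) $\Rightarrow$ (1), any contact point is pushed forward by Lemma \ref{lem:pt}\ref{lem:pt:i} to a periodic contact point where multiplicity $3$ yields exactly two attracting petals in opposite axial directions, tangent to a common line perpendicular to the petal axis and contained in the two adjacent Fatou components. Pulling back by the relevant iterate of $R$ preserves this tangency because $R$ is locally conformal on $\J(R)$, so $U$ and $V$ are tangent at $z_0$ and $\J(R)$ is fat.

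For the concluding assertions, let $U$ be a periodic Fatou component of period $p$. Connectedness of the contact graph forces $\partial U$ to contain a contact point; iterating and applying (2) produces a periodic parabolic point $c_0 \in \partial U$ of multiplicity $3$, exhibiting $U$ as a parabolic basin of the cycle of $c_0$. Any parabolic periodic point on $\partial U$ must supply an attracting petal to $U$ by Leau--Fatou, hence must lie in the unique parabolic cycle attracting $U$. Since multiplicity is constant along an $R$-cycle (different points in a cycle are conjugated by iterates of $R$, which are local biholomorphisms off the critical set), this rules out multiplicity-$2$ parabolic periodic points on $\partial U$. The gasket condition forces the two petals at $c_0$ into distinct Fatou components, so $U$ contains exactly one of them and $c_0$ is the unique periodic parabolic point on $\partial U$, and therefore the unique periodic contact point there; every other periodic point of $R^{\circ p}|_{\partial U}$ is repelling by Theorem \ref{theorem:facts_dynamics}. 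Running the same argument over all periodic Fatou components rules out multiplicity-$2$ parabolic periodic points anywhere in $\J(R)$, so Lemma \ref{lem:pt}\ref{lem:pt:iii} concludes that every Fatou component is a quasidisk.

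The main obstacle is the careful bookkeeping around parabolic cycles and Fatou-component periods: making precise that a Fatou component lies in a unique parabolic basin even when multiple parabolic points might a priori sit on its boundary, and that the multiplicity of a periodic parabolic point is determined by its $R$-cycle. Once these structural facts are invoked, the uniqueness of the periodic contact point on $\partial U$ and the absence of multiplicity-$2$ parabolic points follow cleanly.
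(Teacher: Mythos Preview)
There is a gap in your argument for ruling out multiplicity $2$ in the direction (1) $\Rightarrow$ (2). You assert that both $U$ and $V$, being invariant Fatou components with $z_0$ on their boundary, ``must each contain at least one attracting petal by a standard flower-theoretic argument.'' This is not a consequence of Leau--Fatou and is false in general: for $f(z)=z^2+\tfrac14$ the basin of $\infty$ is a fixed Fatou component with the parabolic fixed point $\tfrac12$ on its boundary, yet it contains no attracting petal (the unique petal lies in the bounded parabolic basin). The flower theorem only guarantees that each attracting petal lies in \emph{some} immediate parabolic basin touching $z_0$; it does not force every invariant Fatou component touching $z_0$ to receive one.

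The paper closes this case differently and directly: if $z_0$ has multiplicity $2$, the single attracting petal lies in one of the two components meeting at $z_0$ (say $U$), so $U$ is an immediate parabolic basin and therefore has a cusp at $z_0$; a cusp is incompatible with the tangency required by fatness. Apart from this step, your proof follows the paper's line. One further remark in the same vein: your later claim that any parabolic periodic point on $\partial U$ supplies a petal to $U$ is correct here but again not ``by Leau--Fatou'' alone---it uses the gasket constraint that at most two Fatou components meet at a point, together with hypothesis (2) (so that any parabolic contact point has exactly two petals, which are then forced into the two adjacent components, one of which is $U$).
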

\begin{proof}
    We first assume that $\mathcal{J}(R)$ is a fat gasket. Let $x$ be a periodic contact point between two Fatou components $U, V$. By Lemma \ref{lem:pt} \ref{lem:pt:ii}, $x$ is either repelling or parabolic with multiplicity $2$ or $3$. If $x$ is repelling, then by Lemma \ref{lemma:tangent_repelling} \ref{lemma:tangent_repelling:1} the two Fatou components cannot intersect tangentially. Thus $x$ is parabolic. If the multiplicity at $x$ is $2$, then there is a unique attracting petal at $x$ and one Fatou component, say $U$, has a cusp at $x$. In particular, the components $U, V$ cannot be tangent at $x$. Therefore, $x$ must be parabolic with multiplicity $3$. By Lemma \ref{lem:pt} \ref{lem:pt:i}, every contact point is eventually mapped to a periodic contact point, which has to be a parabolic point with multiplicity $3$ by the above.
    
    Conversely, suppose that every periodic contact point is parabolic with multiplicity $3$.
    Let $x$ be such a point. By using the Fatou coordinate of $x$, we see that the two adjacent Fatou components are tangent at $x$ (see also \cite{CarlesonGamelin}*{Section II.5}). By Lemma \ref{lem:pt} \ref{lem:pt:i}, every contact point is pre-periodic. Since the tangency condition is invariant under pull-back of local conformal maps, we conclude any two Fatou components with a common point at the boundary are tangent to each other. Thus, $\mathcal{J}(R)$ is a fat gasket. 
    
    We now prove the last assertion. If $U$ is a periodic Fatou component, then $\partial U$ must contain some contact point  because $\mathcal J(R)$ is a gasket. By Lemma \ref{lem:pt} \ref{lem:pt:i}, this contact point is pre-periodic, so there exists a periodic contact point $x\in \partial U$. By the above, $x$ is parabolic with multiplicity $3$ and attracts points in $U$; in particular the point $x$ is the unique periodic contact point on $\partial U$. Each periodic point in $\partial U\setminus \{x\}$ cannot attract orbits so it is not parabolic, but it is repelling by Theorem \ref{theorem:facts_dynamics}. In particular, $\partial U$ has no periodic parabolic points with multiplicity $2$. We conclude that $R$ has no periodic parabolic points with multiplicity $2$. By Lemma \ref{lem:pt} \ref{lem:pt:iii}, every Fatou component is a quasidisk.  This concludes the lemma.
\end{proof}

\begin{remark}
  It is proved in \cite{LuoZhang:Nonequiv}*{Theorem 3.1 and Theorem 4.1} that there is a combinatorial obstruction for the existence of two or more periodic contact points. Specifically, a fat gasket has a unique parabolic cycle and this cycle consists of a single parabolic fixed point. We do not need this more refined classification in this paper. 
\end{remark}

\begin{lemma}\label{lemma:bilip_blowup}
    Let $R$ be a rational map without Julia critical points and with connected Julia set. Then there exists a finite collection of Fatou components $V_1,\dots,V_l$ {that contain all critical orbits} and $L\geq 1$ such that for every Fatou component $W$ the following statements are true.
    \begin{enumerate}[label=\normalfont(\arabic*)]
        \item\label{lemma:bilip_blowup:1} There exists a minimal $n\geq 0$ such that $R^{\circ n}(W)=V_j$ for some $j\in \{1,\dots,l\}$. 
        \item\label{lemma:bilip_blowup:2} $R^{\circ n}$ maps $W$ conformally onto $V_j$ and for every $z,w\in W$ we have
        $$L^{-1} \frac{\sigma(z,w)}{\diam W}\leq \sigma(R^{\circ n}(z),R^{\circ n}(w)) \leq L \frac{\sigma(z,w)}{\diam W}.$$
    \end{enumerate}
    Moreover, we may have that the collection $V_1,\dots,V_l$ contains any given finite collection of Fatou components.  
\end{lemma}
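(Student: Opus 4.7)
The plan is to choose $V_1,\dots,V_l$ to be the finite collection of Fatou components visited by the forward orbit of any critical point of $R$ (together with the specified additional collection), and then to apply Koebe's distortion theorem to obtain the bi-Lipschitz estimate. Since $R$ has no Julia critical points, all critical orbits lie in the Fatou set, and by Sullivan's no-wandering-domains theorem each critical orbit visits only finitely many Fatou components. Let $\mathcal C$ denote the finite union of all Fatou components meeting some critical orbit. By Theorem \ref{theorem:facts_dynamics} there are no Siegel disks or Cremer points, so every periodic cycle of Fatou components is either attracting or parabolic (or a Herman ring, accommodated by adding one representative per such cycle to $\mathcal C$), and hence every periodic cycle contains a component of $\mathcal C$. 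We take $V_1,\dots,V_l$ to be $\mathcal C$ together with the specified finite collection of Fatou components.

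For any Fatou component $W$, Sullivan's theorem gives that $W$ is pre-periodic, and its eventual cycle contains some $V_j$, which yields the minimal $n\geq 0$ of (1). By minimality, each of $W, R(W),\dots, R^{\circ(n-1)}(W)$ lies outside $\{V_1,\dots,V_l\}$, hence contains no point of a critical orbit and in particular no critical point; therefore $R^{\circ n}|_W\colon W\to V_j$ is a local biholomorphism and, being a proper map, an unbranched covering. Since $\mathcal J(R)$ is connected, every Fatou component is simply connected by Alexander duality; in particular $V_j$ is simply connected, so the cover has degree one, yielding the conformal isomorphism claimed in (2).

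For the bi-Lipschitz estimate, we fix for each $j$ a Riemann map $\psi_j\colon \D\to V_j$ and consider the univalent function $g=(R^{\circ n}|_W)^{-1}\circ \psi_j\colon \D\to W$. Koebe's $1/4$ and growth theorems give $\diam W\asymp |g'(0)|$ with constants depending only on the finite collection $V_1,\dots,V_l$, and the Koebe distortion theorem controls the ratio $|g(\zeta)-g(\zeta')|/(|g'(0)|\cdot|\zeta-\zeta'|)$ uniformly on compact subsets of $\D$. Pulling back through $\psi_j$, converting from the Euclidean to the spherical metric (which are bi-Lipschitz equivalent on bounded regions), and passing to the forward map $R^{\circ n}|_W$ then produce the stated inequality with $L$ uniform over the finitely many $V_j$. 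The principal obstacle is to upgrade these Koebe bounds from compact subsets of the uniformizing disk to uniform estimates on all of $W$; this rests on the finitely many targets $V_j$ being nicely shaped (John disks or quasidisks in the present setting, which is forced by the absence of Julia critical points), whose boundary regularity then transfers to $W$ via the conformal map $R^{\circ n}|_W$ with uniformly controlled constants.
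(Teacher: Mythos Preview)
Your construction of the collection and the verification that $R^{\circ n}|_W$ is a conformal isomorphism onto $V_j$ are fine. The gap is exactly where you flag it, and your proposed resolution does not work.

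First, the claim $\diam W\asymp |g'(0)|$ already fails without shape control on $W$: Koebe gives $\dist(g(0),\partial W)\asymp |g'(0)|$, but $\diam W$ can be arbitrarily larger. Second, the assertion that the absence of Julia critical points forces the $V_j$ to be John domains or quasidisks is false in the generality of the lemma: if $R$ has a parabolic periodic point of multiplicity~$2$, its immediate basin has a cusp and is not a quasidisk (see Lemma~\ref{lem:pt}). The lemma is stated for arbitrary $R$ without Julia critical points and with connected Julia set, with no further hypothesis on the Fatou components, so you cannot appeal to boundary regularity that is only established later under stronger assumptions.

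The paper avoids the issue by choosing the $V_j$ differently. Rather than taking the components that \emph{contain} the critical orbits, it lets $m$ be the maximal rank among components whose \emph{closure} meets the post-critical set (and among the given finite family), and takes $V_1,\dots,V_l$ to be all components of rank at most $m+1$. For $W$ outside this list the minimal $n$ lands $R^{\circ n}(W)$ on a component $V_j$ of rank exactly $m+1$, whose closure is therefore disjoint from the post-critical set. Hence the inverse branch of $R^{\circ n}$ extends univalently to a fixed neighborhood of $\overline{V_j}$, and Koebe's distortion theorem applied on that larger domain yields the bi-Lipschitz estimate uniformly on all of $V_j$, with no boundary regularity required.
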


\begin{proof}
    By conjugating $R$ with an isometry of the sphere, we may assume that the point at $\infty$ is a periodic point in the Fatou set. We define the \textit{rank} $r(W)$ of a Fatou component $W$ to be the smallest index $j\geq 0$ such that $R^{\circ j}(W)$ is a periodic Fatou component. Since there are no critical points on the Julia set, by Theorem \ref{theorem:facts_dynamics} each critical point is attracted to a parabolic cycle or to an attracting cycle. Thus, there exist finitely many Fatou components that intersect the critical orbits of $R$. Note there are only finitely many Fatou components attached to a parabolic periodic point (namely, if a Fatou component contains a (parabolic) periodic point in its boundary, it must be itself periodic). Therefore, there exist finitely many Fatou components whose closure intersects the post-critical set. We also consider a given finite collection of Fatou components as in the very last part of the lemma. Let $m\geq 0$ be the maximum of the ranks among the given family of components and among components whose closure intersects the post-critical set. Denote by $V_1,\dots,V_l$ the family of all Fatou components of $R$ with rank at most $m+1$. 
    
    Let $W$ be a Fatou component of rank $r(W)>m+1$. Then $W$ is mapped onto $V_j$ for some $j\in \{1,\dots,l\}$ after $n(W)=r(W)-m-1$ iterations. Note that $n(w)$ is the smallest number with this property, as required in \ref{lemma:bilip_blowup:1}. The Fatou components $W,V_j$ are subsets of $\C$ and they are both simply connected, since the Julia set is connected. By our choice of $m$, there is a neighborhood of $\overline{V_j}$ that is disjoint from the the post-critical set of $R$. This implies that $R^{\circ n(W)}$ maps a neighborhood of $\overline{W}$ conformally onto a neighborhood of $\overline{V_j}$.  Moreover, by Koebe's distortion theorem, applied to a suitable branch of $(R^{\circ n(W)})^{-1}$, \ref{lemma:bilip_blowup:2} holds with the Euclidean rather than the spherical metric. Since the two metrics are comparable away from the periodic Fatou component that contains the point at $\infty$, we obtain the desired conclusion.    
\end{proof}

\begin{corollary}\label{corollary:quasidisks}
    Let $R$ be a rational map without Julia critical points whose Julia set $\mathcal J(R)$ is a gasket. If there is no periodic point that is parabolic with multiplicity $2$, then the Fatou components of $R$ are uniform quasidisks.
\end{corollary}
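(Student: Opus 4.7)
The plan is to combine Lemma~\ref{lem:pt} \ref{lem:pt:iii}, which gives that each Fatou component of $R$ is a quasidisk individually, with the bi-Lipschitz blow-up of Lemma~\ref{lemma:bilip_blowup}, which provides uniform control of the return dynamics to a finite set of representative Fatou components. The combination should upgrade the individual quasidisk property to a uniform one.

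First, I would invoke Lemma~\ref{lemma:bilip_blowup} to fix a finite collection $V_1,\dots,V_l$ of Fatou components and a constant $L\geq 1$ such that every Fatou component $W$ admits some $n\geq 0$ and $j\in\{1,\dots,l\}$ with $R^{\circ n}|_{\overline W}\colon \overline W\to \overline{V_j}$ conformal and satisfying the scaled bi-Lipschitz estimate
\[
L^{-1}\,\frac{\sigma(z,w)}{\diam W} \leq \sigma(R^{\circ n}(z),R^{\circ n}(w)) \leq L\,\frac{\sigma(z,w)}{\diam W}
\]
for all $z,w\in W$. By the hypothesis combined with Lemma~\ref{lem:pt} \ref{lem:pt:iii}, each $V_j$ is a quasidisk; since the collection is finite, there is a common distortion function $\eta_0$ such that every $\partial V_j$ is the $\eta_0$-quasisymmetric image of $\mathbb S^1$ in the spherical metric.

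Second, I would observe that the displayed estimate immediately implies that the inverse $(R^{\circ n}|_{\overline W})^{-1}\colon \overline{V_j}\to \overline W$ is $\eta_L$-quasisymmetric, where $\eta_L(t)=L^2 t$, because the factor $\diam W$ cancels in every ratio of spherical distances; in particular the quasisymmetry modulus does not depend on $W$. Composing with an $\eta_0$-quasisymmetric parametrization of $\partial V_j$ from $\mathbb S^1$ produces a parametrization of $\partial W$ from $\mathbb S^1$ that is quasisymmetric with modulus depending only on $L$ and $\eta_0$. Hence every $\partial W$ is a quasicircle with uniform modulus, so every Fatou component is a uniform quasidisk.

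I expect no significant obstacle; the argument is essentially a bookkeeping exercise confirming that the two lemmas combine cleanly. The key conceptual point is that quasisymmetry is scale invariant, so the $\diam W$-dependence of the bi-Lipschitz estimate from Lemma~\ref{lemma:bilip_blowup} is harmless at the level of distortion moduli, and passing from the $V_j$'s to arbitrary $W$'s costs only a multiplicative composition of two uniform quasisymmetry moduli.
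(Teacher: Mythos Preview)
Your proposal is correct and follows essentially the same approach as the paper: invoke Lemma~\ref{lem:pt}~\ref{lem:pt:iii} to see that the finitely many representatives $V_1,\dots,V_l$ from Lemma~\ref{lemma:bilip_blowup} are quasidisks, then use the uniform bi-Lipschitz estimate of Lemma~\ref{lemma:bilip_blowup} (which becomes a uniform quasisymmetry after the $\diam W$ factor cancels in ratios) to transport this uniformly to every other Fatou component. The paper's proof is terser but identical in structure; you have simply spelled out why the scaled bi-Lipschitz estimate yields uniform quasisymmetry.
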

\begin{proof}
    By Lemma \ref{lem:pt}, each Fatou component is a quasidisk. In particular, the Fatou components $V_1,\dots,V_l$ from Lemma \ref{lemma:bilip_blowup} are quasidisks. By Lemma \ref{lemma:bilip_blowup}, every other Fatou component is mapped to one of $V_1,\dots,V_l$ with an iterate of $R$ that is uniformly quasisymmetric in the spherical metric. Hence, all Fatou components are uniform quasidisks. 
\end{proof}

\section{Construction of a subdivision rule from a gasket Julia set}

In this section we show how a gasket Julia set gives rise to a finite subdivision rule. The following statement is the main result of the section. Recall the discussion in Section \ref{subsec:MM} regarding subdivision homomorphisms.

\begin{proposition}\label{prop:subdivision_g}
    Let $f$ be a rational map without Julia critical points whose Julia set $\mathcal J(f)$ is a gasket. Then there exists a simple, irreducible, acylindrical finite subdivision rule $\mathcal R$ and a graph $\mathcal G^0$ that subdivides the sphere into polygons giving rise to a CW complex $X$ so that the following statements are true.
    \begin{enumerate}[label=\normalfont(\arabic*)]
        \item\label{sg:1} The $1$-skeletons $\mathcal G^n$ of the subdivision $\mathcal R^n(X)$, $n\geq 0$, have the contact graph $\mathcal G$ of the gasket $\mathcal J(f)$  as a direct limit.
        \item\label{sg:2} Each periodic Fatou component of $f$ corresponds to a vertex of $\mathcal G$ that is also a vertex of $\mathcal G^0$.   
        \item\label{sg:3} There exists $N\in \N$ so that each periodic Fatou component of $f^{\circ N}$ is fixed and the map $f^{\circ N}$ induces a subdivision homomorphism between $\mathcal G^1$ and $\mathcal G^0$.
        \item\label{sg:4} The standing assumptions \ref{condition:siL}, \ref{condition:siG}, \ref{condition:si} in Section \ref{sec:cpsr} are satisfied.
    \end{enumerate} 
\end{proposition}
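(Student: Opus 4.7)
The plan is to construct $\mathcal G^0$ as the $1$-skeleton of a CW decomposition of the sphere such that, for a sufficiently large iterate $g=f^{\circ N}$, we have $g(\mathcal G^0)\subset \mathcal G^0$, and then to take $\mathcal G^n:=g^{-n}(\mathcal G^0)$; the sought finite subdivision rule is read off from the face-types of $\mathcal G^0$ and the subdivision homomorphism of (3) is $g|_{\mathcal G^1}$. By Sullivan's no-wandering-domains theorem every Fatou component of $f$ is pre-periodic, and the absence of Julia critical points ensures only finitely many cycles of periodic Fatou components; by Lemma~\ref{lem:pt} there are also only finitely many periodic contact points. I would take $N$ to be a common multiple of all these periods so that $g=f^{\circ N}$ fixes every periodic Fatou component and every periodic contact point. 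Applying Lemma~\ref{lemma:bilip_blowup}, I pick a finite list $V_1,\dots,V_l$ of Fatou components containing every periodic Fatou component and every Fatou component whose closure meets the postcritical set. For each $j$ I mark a base point $v_j\in V_j$, and for each pair $(V_i,V_j)$ meeting at a fixed contact point $p$ I choose a Jordan arc from $v_i$ through $p$ to $v_j$, taken to be $g$-invariant by means of B\"ottcher coordinates in attracting basins and Fatou coordinates in parabolic basins. Adjoining finitely many auxiliary cells inside complementary regions if necessary to make each region a Jordan polygon with at least three boundary vertices produces the $g$-forward-invariant $1$-skeleton $\mathcal G^0$.

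Setting $\mathcal G^n:=g^{-n}(\mathcal G^0)$, the map $g|_{\mathcal G^1}$ sends each closed cell of $\mathcal G^1$ homeomorphically onto a closed cell of $\mathcal G^0$, providing the subdivision homomorphism of (3). There are finitely many face-types in $\mathcal G^0$, and the subdivision of a face $F$ of $\mathcal G^n$ at level $n+1$ is canonically identified, via $g^{\circ n}|_F$, with the subdivision at level $1$ of the face $g^{\circ n}(F)$ of $\mathcal G^0$; this packages the data into a finite subdivision rule $\mathcal R$ of which $\widehat\C$ is an $\mathcal R$-complex with $1$-skeleton $\mathcal G^0$. Since every Fatou component lands in $\{V_1,\dots,V_l\}$ after finitely many iterates of $g$ and every contact point is pre-periodic by Lemma~\ref{lem:pt}, the direct limit $\lim_\to \mathcal G^n$ coincides with the contact graph $\mathcal G$, yielding (1); item (2) is built into the construction. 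Simplicity is the gasket property of $\mathcal J(f)$; acylindricality holds because a cylindrical structure would, by the argument in the proof of Lemma~\ref{lemma:gasket_subdivision_rule}, contradict local connectivity of $\mathcal J(f)$. Irreducibility and the standing assumptions \ref{condition:siL}, \ref{condition:siG}, \ref{condition:si} are then obtained by a bounded combinatorial refinement and, if necessary, by replacing $\mathcal R$ by an iterate $\mathcal R^K$, as in the discussion after Definition~\ref{defn:cyl} and around Proposition~\ref{prop:jd}.

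The main obstacle is the parabolic case: interior points of a parabolic basin are never $g$-fixed, so the naive choice of $v_j$ makes $g(v_j)$ an interior point of an edge rather than another $0$-cell, which threatens the CW-refinement property $\mathcal G^0\subset \mathcal G^1$ at the combinatorial level. The resolution is to realize the nested family $\mathcal G^0\subset \mathcal G^1\subset\cdots$ coherently by placing $v_j$ on a $g$-invariant internal trajectory, running the edges adjacent to $v_j$ along that same trajectory, and enlarging the $0$-cell set of $\mathcal G^n$ at each level to include the finitely many orbit points of $v_j$ visible up to that level; one then checks that the combinatorial subdivision rule on face-types remains independent of level. Once this realization is in place, the remaining properties follow from the outline above.
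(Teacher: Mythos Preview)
Your outline is close in spirit to the paper's argument, but it misses the key device the paper uses to eliminate the very obstacle you flag at the end. The paper does \emph{not} work directly with $f$ (or $f^{\circ N}$). Instead, it first invokes Lemma~\ref{thm:pcfrepresentative} to replace $f$ by a \emph{hyperbolic post-critically finite} rational map $g$, topologically conjugate to $f$ on the Julia set. For such a $g$ every Fatou component carries a canonical center (the unique point whose orbit hits a critical point), periodic centers are genuinely fixed by the appropriate iterate, and each contact point is connected to the two adjacent centers by pre-periodic internal rays. This gives a canonical, $g$-forward-invariant realization of any finite subgraph of the contact graph as an embedded plane graph, and then $\mathcal G^n\coloneqq g^{-n}(\mathcal G^0)$ is literally a nested sequence of CW $1$-skeletons with no new $0$-cells appearing in the interior of old edges. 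The subdivision rule for $f$ is then obtained by transporting the one for $g$ through the conjugacy $\phi$ of Lemma~\ref{thm:pcfrepresentative}.

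Your direct approach runs into a real problem in the parabolic case, and your proposed fix does not resolve it. If $W_v$ is a parabolic basin, no interior point is fixed by any iterate, so the marked point $v_j$ has an infinite forward orbit inside $W_v$. Your suggestion to ``enlarge the $0$-cell set of $\mathcal G^n$ at each level to include the finitely many orbit points of $v_j$ visible up to that level'' means that edges of $\partial P_i$ acquire new interior vertices under subdivision, which is explicitly forbidden by Definition~\ref{defn:fsr}~\ref{defn:fsr:2}. Equivalently, the face-type of a cell would no longer be determined by finitely many polygon models, and the inclusions $\mathcal G^n\subset \mathcal G^{n+1}$ would not be inclusions of CW subcomplexes. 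One could try to repair this by a more elaborate combinatorial bookkeeping, but the paper's passage to a hyperbolic PCF model via Lemma~\ref{thm:pcfrepresentative} sidesteps the issue entirely and is the intended route.
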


The proof relies on the following conjugacy result.

\begin{lemma}\label{thm:pcfrepresentative}
    Let $f$ be a rational map without Julia critical points whose Julia set $\mathcal J(f)$ is a gasket. There exists a hyperbolic and post-critically finite rational map $g$, and a homeomorphism $\phi\colon \widehat\C \to \widehat\C$ so that 
    $$
        g \circ \phi (z) = \phi \circ f(z) \quad \text{for all} \quad z\in \mathcal J(f).
    $$ 
\end{lemma}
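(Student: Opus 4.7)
The plan is to construct $g$ by performing two successive surgeries on $f$. By Theorem \ref{theorem:facts_dynamics}, every critical point of $f$ lies in the Fatou set and converges to either an attracting or parabolic cycle. The first surgery, following Ha\"issinsky \cite{Hai98, Hai00}, replaces each parabolic cycle by an attracting one and thus produces a hyperbolic rational map topologically conjugate to $f$ on its Julia set; the second, classical surgery tunes each attracting basin to make its critical points periodic, yielding a hyperbolic post-critically finite map.

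For the parabolic surgery, fix a parabolic periodic point $p$ of period $q$ with immediate basin $U$. Using a Fatou coordinate on $U$ I construct a quasi-regular model $g_U$ on $\overline U$ that has an attracting fixed point in $U$ and coincides with $f^{\circ q}$ on $\partial U$. Gluing such models on every immediate parabolic basin and setting the map equal to $f$ elsewhere produces a topological branched self-cover $\widetilde f$ of $\widehat\C$. Pulling back the Beltrami coefficient of $g_U$ through iterates of $\widetilde f$ gives an $\widetilde f$-invariant Beltrami coefficient $\mu_1$ on $\widehat \C$ whose distortion is exponentially integrable, i.e.\ a David--Beltrami coefficient. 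The David version of the measurable Riemann mapping theorem (cf.\ \cite{AstalaIwaniecMartin:quasiconformal, LyubichMerenkovMukherjeeNtalampekos:David}) then yields a David homeomorphism $\psi_1\colon \widehat{\C}\to \widehat{\C}$ such that $f_1\coloneqq \psi_1\circ \widetilde f\circ \psi_1^{-1}$ is a rational map. Since $\widetilde f=f$ on a neighborhood of $\J(f)$, we have $f_1\circ \psi_1=\psi_1\circ f$ on $\J(f)$, and $f_1$ is hyperbolic.

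The second surgery upgrades $f_1$ to a post-critically finite map. In each immediate attracting basin $V$ of period $q'$, I replace $f_1^{\circ q'}|_V$, via a standard quasiconformal surgery in a K\"onigs or B\"ottcher coordinate, by a Blaschke-type model $z\mapsto z^d$ whose unique critical point is the attracting fixed point. Pulling this modification back through the dynamics produces a bounded invariant Beltrami coefficient $\mu_2$; the classical measurable Riemann mapping theorem yields a quasiconformal homeomorphism $\psi_2\colon\widehat\C\to\widehat\C$ so that $g\coloneqq \psi_2\circ f_1\circ \psi_2^{-1}$ is a rational map. By construction every critical orbit of $g$ lands on the superattracting cycle, so $g$ is hyperbolic and post-critically finite, and $\phi\coloneqq \psi_2\circ \psi_1$ satisfies $g\circ \phi=\phi\circ f$ on $\J(f)$.

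The main technical obstacle is the first step: one must verify that the invariant Beltrami coefficient $\mu_1$ resulting from the parabolic surgery is indeed a David--Beltrami coefficient on all of $\widehat\C$. This requires sharp control of the distortion of the model map $g_U$ near $\partial U$, strong enough that the exponential integrability of the distortion is preserved through infinitely many pullbacks by $\widetilde f$ across the Julia set. These estimates follow Ha\"issinsky's original framework and are by now standard tools in David surgery (cf.\ \cite{Hai98, Hai00, LyubichMerenkovMukherjeeNtalampekos:David}); the second surgery involves only bounded distortion and is routine.
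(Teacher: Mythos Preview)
Your strategy---parabolic surgery \`a la Ha\"issinsky followed by the standard quasiconformal deformation to the post-critically finite center---is a legitimate route and differs from the paper's. The paper does not perform any surgery on $f$ itself; instead it quotes \cite{CT18}*{Theorems 1.3, 1.4} to obtain a sub-hyperbolic map $\tilde g$ together with a homeomorphism $\tilde\phi\colon\J(f)\to\J(\tilde g)$ conjugating the Julia dynamics, extends $\tilde\phi$ to $\widehat\C$ using the gasket structure (Jordan complementary components, local connectivity), observes that $\tilde g$ is actually hyperbolic because it has no Julia critical points, and then invokes \cite{McM88}*{Corollary 3.6} to pass to the post-critically finite center of the hyperbolic component. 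Your second step is the same McMullen step; the novelty is in your first step, where you replace the Cui--Tan black box by an explicit construction. That is a reasonable trade: you avoid the extension argument for $\tilde\phi$, at the cost of redoing analysis that \cite{CT18} already packages.

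That said, your sketch of the parabolic surgery contains genuine inaccuracies that would need to be fixed before it counts as a proof. First, you call $g_U$ a ``quasi-regular model'' and then say the resulting invariant Beltrami coefficient is only exponentially integrable; these are inconsistent. If $g_U$ were quasiregular on $\overline U$ and agreed with $f^{\circ q}$ on $\partial U$, the Beltrami coefficient would be bounded and ordinary measurable Riemann mapping would suffice---but such a $g_U$ cannot exist, because near the parabolic point $p\in\partial U$ the map $f^{\circ q}$ is tangent to the identity while any attracting model is not, and matching them along $\partial U$ forces unbounded distortion as you approach $p$. In Ha\"issinsky's construction the model is only David-regular near $p$, and this is exactly where the exponential integrability estimate lives; you should state this correctly. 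Second, the sentence ``$\widetilde f=f$ on a neighborhood of $\J(f)$'' is false: the parabolic point $p$ lies on $\J(f)$, and your modification takes place arbitrarily close to $p$ inside $U$. What is true (and sufficient) is that $\widetilde f=f$ on $\J(f)$ itself, since the modification is confined to the open Fatou set; this already gives $f_1\circ\psi_1=\psi_1\circ f$ on $\J(f)$. Finally, the assertion that the pulled-back coefficient is David on all of $\widehat\C$ is the heart of the matter and cannot be dismissed as ``by now standard''; it requires an area estimate over the full grand orbit of the parabolic basins, controlling how the high-distortion region near $p$ spreads under preimages. This is in Ha\"issinsky's work, but you should either cite a precise statement or indicate the mechanism (Koebe distortion away from the post-critical set, together with the local estimate at $p$).
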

\begin{proof}
    By \cite{CT18}*{Theorem 1.3 and Theorem 1.4}, there exists a sub-hyperbolic rational map $\tilde{g}$ and a homeomorphism $\tilde{\phi}\colon \mathcal J(f) \to \mathcal J(\tilde{g})$ conjugating the dynamics of $f$ and $\tilde g$. Moreover, $\tilde{\phi}$ is a uniform limit of homeomorphisms of $\widehat \C$. If $U$ is a complementary component of $\mathcal{J}(f)$, then $U$ is a Jordan region because $\mathcal J(f)$ is a gasket. Since $\tilde \phi|_{\mathcal J(f)}$ is a homeomorphism, $\tilde \phi(\partial U)$ is a Jordan curve. Since $\tilde \phi$ is a limit of homeomorphisms, there exists a Jordan region $V$ bounded by $\tilde \phi(\partial U)$ such that $V$ is disjoint from $\mathcal J(\tilde g)$. Thus, $\tilde \phi$ extends to a homeomorphism from the $\overline U$ to $\overline V$. We repeat this procedure for every complementary component of $\mathcal J(f)$. Using the local connectivity of the gaskets $\mathcal J(f),\mathcal J(\tilde g)$, we conclude that $\tilde{\phi}$ extends to a homeomorphism of $\widehat\C$.
    
    Since $f$ has no Julia critical points, the topological conjugacy implies that $\tilde{g}$ has no Julia critical points. Thus, $\tilde{g}$ is hyperbolic. Since $\mathcal J(\tilde{g})$ is connected, there exists a hyperbolic post-critically finite rational map $g$ and a homeomorphism of $\widehat\C$ which restricts to a topological conjugacy between the Julia set $\mathcal{J}(\tilde{g})$ and $\mathcal{J}(g)$ (see \cite{McM88}*{Corollary 3.6}). This proves the lemma.
\end{proof}

\begin{proof}[Proof of Proposition \ref{prop:subdivision_g}]

Let $f$ be a rational map without Julia critical points whose Julia set $\mathcal J(f)$ is a gasket. Let $g$ be the hyperbolic post-critically finite rational map corresponding to $f$ as provided by Lemma \ref{thm:pcfrepresentative}. Using the map $g$ we will construct a spherical subdivision rule that has the contact graph $\mathcal G$ of the gasket $\mathcal J(g)$ as a spherical subdivision graph so that the conclusions of Proposition \ref{prop:subdivision_g} are satisfied for $g$. Using the homeomorphism $\phi$ from Lemma \ref{thm:pcfrepresentative}, which identifies the contact graph of $\mathcal J(g)$ with the contact graph of $\mathcal J(f)$ and is a conjugacy on the Julia sets, we will draw the desired conclusion for $f$.

Since the map $g$ is post-critically finite, in each Fatou component, there exists a unique point whose orbit contains critical points, which is called the \textit{center} of the corresponding Fatou component. Suppose $U, V$ are two Fatou components whose boundaries intersect at a point $x$.
By Lemma \ref{lem:pt} \ref{lem:pt:i}, $x$ is pre-periodic.
Therefore, there is a pre-periodic internal ray connecting the center of $U$ (resp.\ $V$) to $x$. Thus, any finite subgraph $\mathcal{H}$ of the contact graph $\mathcal{G}$ of the gasket $\mathcal J(g)$ can be realized canonically as a plane graph by identifying 
    \begin{itemize}
        \item the vertices of $\mathcal{H}$ with the centers of the corresponding Fatou components of $g$, and
        \item each edge of $\mathcal{H}$ with the closure of the union of two internal rays connecting the corresponding centers of the Fatou components and the contact point.
    \end{itemize}

Abusing notation, we shall not distinguish a finite subgraph of $\mathcal{G}$ from its canonical realization. Each component of $\widehat\C \setminus \mathcal{H}$ gives rise to a CW complex consisting of one $2$-cell that we call a \textit{face} of $\mathcal{H}$. The interior of that $2$-cell is naturally identified with the corresponding component of $\widehat \C\setminus \mathcal H$.

There exists a connected, induced, finite subgraph $\mathcal G_0$ of $\mathcal{G}$ that is forward invariant and contains the post-critical set $P(g)$. Here, induced means that $\mathcal{G}^0$ contains all edges in $\mathcal{G}$ connecting vertices in $\mathcal{G}^0$. The graph $\mathcal{G}^0$ is not unique and can be constructed in the following way. Since $\mathcal J(g)$ is a gasket, there exists a finite connected graph $\mathcal{H}^0$ containing all critical values of $g$. Since each Fatou component is eventually periodic, the union $\bigcup_{n=0}^\infty g^n(\mathcal{H}^0)$ is a finite and forward invariant graph that contains the post-critical set. We obtain $\mathcal{G}^0$ by adding to the graph $\bigcup_{n=0}^\infty g^n(\mathcal{H}^0)$ all edges of $\mathcal G$ connecting vertices of $\bigcup_{n=0}^\infty g^n(\mathcal{H}^0)$. We define
$$
\mathcal{G}^{n}\coloneqq g^{-n}(\mathcal{G}^0), \, n\in \N.
$$

Since $\mathcal{G}^0$ is connected, each component of $\widehat\C\setminus \mathcal G^0$ is simply connected. Since $\mathcal{G}^0$ contains the post-critical set of $g$, each component of $\widehat \C\setminus \mathcal{G}^1$ is simply connected and is mapped conformally by $g$  to a component of $\widehat \C\setminus \mathcal{G}^0$. Thus, $\mathcal{G}^1$ is also connected. Since $\mathcal{G}^0$ is forward invariant, we have $\mathcal{G}^0 \subset \mathcal{G}^1$. Therefore, each face of $\mathcal{G}^0$ is the union of the faces of $\mathcal{G}^1$ that are contained in that face.

We can construct a finite subdivision rule $\mathcal{R}$ as follows.
Associate to each face $F_i$ of $\mathcal{G}^0$ a polygon $P_i$, whose number of sides is equal to the number of sides in the ideal boundary of the face. The polygon $P_i$ is subdivided into finitely many polygons $P_{i,j}$, where each polygon $P_{i,j}$ corresponds to a face $F_{i,j}$ of $\mathcal{G}^1$ in $F_i$. Note that the image $g(F_{i,j})$ is a face $F_{\sigma(i,j)}$ of $\mathcal{G}^0$. Each conformal map $g^{-1}\colon  \inter F_{\sigma(i,j)} \to \inter F_{i,j}$ induces a cellular map $\psi_{i,j}\colon P_{\sigma(i,j)} \to P_{i,j}$.

Since every vertex of $\mathcal{G}$ is eventually mapped to a vertex in $\mathcal{G}^0$, every edge of $\mathcal{G}$ is eventually mapped to an edge in $\mathcal{G}^0$ as well.
Thus, the contact graph $\mathcal{G}$ of the gasket $\mathcal J(g)$ is the exactly the direct limit of the graphs $\mathcal{G}^n$, $n\geq 0$, so $\mathcal{G}$ is a spherical subdivision graph for $\mathcal{R}$. Since $\mathcal{G}^0$ is an induced subgraph of $\mathcal{G}$, i.e., $\mathcal{G}^0$ contains all edges in $\mathcal{G}$ connecting vertices in $\mathcal{G}^0$, we also conclude that $\mathcal{R}$ is irreducible. By Lemma \ref{lemma:gasket_subdivision_rule}, the subdivision rule $\mathcal R$ is simple and acylindrical. 

Note that the faces of the graphs $\mathcal G^n$, $n\geq 0$, are not necessarily polygons. To amend this, we will modify the graph $\mathcal G^0$ and the subdivision rule $\mathcal R$.

By the above, $\mathcal{R}$ is simple, irreducible and acylindrical. Thus, by Proposition \ref{prop:jd}, we can obtain a graph $\widetilde {\mathcal G}^0$ with
$$
\mathcal{G}^0 \subset \widetilde{\mathcal{G}}^0 \subset \mathcal{G}^K
$$
for some constant $K\in \N$, so that $\widetilde{\mathcal{G}}^0$ is connected, every face $F$ of $\widetilde{\mathcal{G}}^0$ is a polygon and $\partial F$ is an induced subgraph of $\mathcal{G}$. Note that $\widetilde{\mathcal{G}}^0$ is forward invariant under $g^{\circ N}$ for all $N \geq K$ and contains the post-critical set $P(g^{\circ N}) = P(g)$. Thus, we can construct a subdivision rule $\widetilde{\mathcal R}$ from $\widetilde{\mathcal{G}}^0$ using the map $g^{\circ N}$ in the same way as we obtained $\mathcal{R}$ from $\mathcal G^0$ using the map $g$. Since the $1$-skeleton of every face of $\mathcal G^0$ is an induced subgraph of $\mathcal G$, we conclude that $\widetilde{\mathcal R}$  is irreducible. 

Since all faces of $\widetilde{\mathcal G}^0$ are polygons and all post-critical points are vertices of $\widetilde{\mathcal G}^0$, we conclude that every face $F^1$ of $\widetilde{\mathcal{G}}^1 = g^{-N}(\widetilde{\mathcal{G}}^0)$ is a polygon and $\partial F^1$ is an induced subgraph of $\mathcal{G}$; see \cite{BM17}*{Lemma 5.12}.
Thus, inductively, for each $n\geq 0$, every face $F^n$ of $\widetilde{\mathcal{G}}^n$ is a polygon and $\partial F^n$ is an induced subgraph of $\mathcal{G}$. 

By construction, conditions \ref{condition:siL} and \ref{condition:siG} are satisfied. We can now invoke again Lemma \ref{lemma:gasket_subdivision_rule} to show that $\mathcal {\widetilde R}$ is simple and acylindrical. Finally, we may replace $\widetilde {\mathcal R}$ and $g^{\circ N}$ with a suitable iterate to ensure that condition \ref{condition:si} is met and that each periodic Fatou component of $g$ is fixed under that iterate.
\end{proof}

\section{Proof of the uniformization theorems}
In this section we complete the proofs of Theorems \ref{thm:QU} and \ref{thm:DU}. The proofs are based on the next result. 

\begin{theorem}\label{thm:JuliaHomeoCP}
    Let $f$ be a rational map without Julia critical points whose Julia set $\mathcal J(f)$ is a gasket. Then there exists a simple, irreducible, acylindrical finite subdivision rule $\mathcal R$, a circle packing $\mathcal{P}$ with spherical subdivision rule of $\mathcal R$, a Markov map $\Psi$, and a homeomorphism $h\colon\widehat\C \to \widehat\C$ so that the following statements are true.
    \begin{enumerate}[label=\normalfont(\arabic*)]
        \item The contact graph of $\mathcal J(f)$ is a spherical subdivision graph for $\mathcal R$.
        \item $h(\mathcal J(f)) = \Lambda(\mathcal P)$.
        \item There exists $N\in \N$ such that $h$ conjugates $f^{\circ N}|_{\mathcal J(f)}$ to $\Psi|_{\Lambda(\mathcal{P})}$.
        \item If each Fatou component of $f$ is a quasidisk, then $h$ is a David map on $\widehat\C\setminus \mathcal J(f)$.
        \item  If $\mathcal J(f)$ is a fat gasket, then $h$ is quasiconformal on $\widehat\C\setminus \mathcal J(f)$.
    \end{enumerate}
\end{theorem}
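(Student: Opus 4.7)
The plan is to assemble the ingredients developed in the preceding sections in four stages: produce the circle packing and its Markov map, build a topological conjugacy on the Julia set, extend across each periodic Fatou component via the extension theorem for piecewise quasiconformal circle maps, and finally propagate the extension to all remaining Fatou components via the dynamics.

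First, I would apply Proposition~\ref{prop:subdivision_g} to produce a simple, irreducible, acylindrical finite subdivision rule $\mathcal R$ whose spherical subdivision graph is the contact graph $\mathcal G$ of $\mathcal J(f)$, together with an iterate $N \in \N$ such that $f^{\circ N}$ fixes each periodic Fatou component and induces a subdivision homomorphism $\psi\colon \mathcal G^1 \to \mathcal G^0$. Theorem~\ref{thm:qch}~\ref{RealizationAcylindrical} realizes $\mathcal G$ as the tangency graph of a circle packing $\mathcal P$, and Proposition~\ref{prop:induceMarkovMap} converts $\psi$ into the sought Markov map $\Psi\colon \overline{\Omega^1} \to \overline{\Omega^0}$. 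I would then define the topological conjugacy $h_0\colon \mathcal J(f) \to \Lambda(\mathcal P)$ by matching shrinking nested cells on the two sides: for a point $x$ not on any Fatou boundary, Lemma~\ref{lemma:gasket_sub}~\ref{gs:1} gives a unique nested sequence of faces $F^n$ of $\mathcal G^n$ with $\{x\} = \bigcap_n W(F^n)$, and the corresponding Markov tiles $\Omega_{F^n}$ shrink by Proposition~\ref{prop:diam->0} to a unique image point in $\Lambda(\mathcal P)$; for a contact point, $h_0$ sends it to the corresponding tangency point, and the density statement in Lemma~\ref{cor:density} together with an analogous density of contact points in $\mathcal J(f)$ extends $h_0$ continuously to all of $\mathcal J(f)$. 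Proposition~\ref{prop:induceMarkovMap}~\ref{ma:11} and the definition of the subdivision homomorphism then yield the intertwining $h_0\circ f^{\circ N}=\Psi\circ h_0$.

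Third, for each fixed Fatou component $U$ of $f^{\circ N}$, I would extend $h_0|_{\partial U}$ across $\overline U$ using Theorem~\ref{theorem:extension_generalization}. On the $\Psi$ side, Theorem~\ref{thm:cmc} shows that the dynamics on $C = C_{v_U}$, equipped with the Markov partition by the tangency points with circles of neighbors of $v_U$, is an expansive covering satisfying \ref{condition:hp}, \ref{condition:uv}, \ref{condition:qs}, and \ref{condition:qs_strong}, with every partition point symmetrically parabolic. On the $f$ side, quasidiskness of $U$ furnishes a quasisymmetric parametrization of $\partial U$ conjugating $f^{\circ N}|_{\partial U}$ to an expansive covering of $\mathbb{S}^1$, with Markov partition given by the corresponding contact points on $\partial U$; holomorphicity of $f^{\circ N}$ near $\mathcal J(f)$ verifies \ref{condition:qs} trivially, while Lemma~\ref{lem:pt}~\ref{lem:pt:ii}, Lemma~\ref{lem:tpp}, and a Fatou-coordinate asymptotic analysis of preimages accumulating at each periodic contact point classify each partition point on $\partial U$ as symmetrically hyperbolic (when it is eventually mapped to a repelling periodic point) or symmetrically parabolic (when it is eventually mapped to a parabolic periodic point of multiplicity $2$ or $3$ in $\widehat \C$). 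In the fat gasket case, Lemma~\ref{lem:tpp} ensures that all partition points on both sides are symmetrically parabolic, so alternative \ref{HP} of Theorem~\ref{theorem:extension_generalization} never occurs and $h_0|_{\partial U}$ extends to a quasiconformal homeomorphism of $\overline U$; in the general quasidisk case, alternative \ref{HP} may occur at the repelling partition points, but since $\Psi|_C$ satisfies \ref{condition:qs_strong}, the extension theorem delivers a David extension.

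Finally, I would propagate the extension to all remaining Fatou components. By Lemma~\ref{lemma:bilip_blowup}, applied with a finite collection containing a representative of each periodic Fatou cycle, every other Fatou component $W$ is mapped conformally by some iterate of $f$ onto one of these representatives; defining $h|_W$ as the pullback of the already-constructed extension via this iterate on the $f$ side and the matching iterate of $\Psi$ on the $\mathcal P$ side yields a David or quasiconformal extension by Proposition~\ref{prop:david_qc_invariance}, whose boundary values on $\mathcal J(f)\cap \partial W$ agree with $h_0$ by the conjugacy property. The main obstacle is the third step: precisely identifying the one-dimensional symmetric hyperbolic/parabolic asymptotic structure on $\partial U$ that is induced by a two-dimensional repelling or parabolic periodic point of $f^{\circ N}$ (the relevant multiplicities in the sense of Definitions~\ref{definition:hyperbolic} and~\ref{definition:parabolic} need not coincide on the two sides, but the hyperbolic/parabolic dichotomy must), so that Theorem~\ref{theorem:extension_generalization} applies verbatim.
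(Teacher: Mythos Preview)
Your overall architecture matches the paper's: Proposition~\ref{prop:subdivision_g} plus Theorem~\ref{thm:qch} and Proposition~\ref{prop:induceMarkovMap} produce $\mathcal R$, $\mathcal P$, and $\Psi$; Theorem~\ref{theorem:extension_generalization} combined with Theorem~\ref{thm:cmc} handles the extension across fixed Fatou components; and the dynamics propagate the extension to the rest. The paper organizes the construction differently---it builds $h$ on $\overline{W^1}$ first (via Lemmas~\ref{lem:fixconjugatedynamics} and~\ref{lem:preperiodic}), extends arbitrarily to the complement, and then pulls back iteratively to obtain a limiting homeomorphism---whereas you construct $h_0$ directly on $\mathcal J(f)$ and then extend. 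Both routes are viable, and your identification of the hyperbolic/parabolic classification on $\partial U$ as the delicate step is correct; the paper resolves it by invoking \cite{LN24}*{Theorem~1.4} and Remark~\ref{remark:hyp_par_analytic} rather than reproving it.

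There is, however, a genuine gap in your propagation step. You propose to extend $h$ only across the \emph{periodic} Fatou components and then pull back conformally to every other component via Lemma~\ref{lemma:bilip_blowup}. But that lemma does not let you choose the finite family $V_1,\dots,V_l$ to consist only of periodic components: it requires that the family contain every Fatou component meeting a critical orbit. Since $\mathcal G^1 = (f^{\circ N})^{-1}(\mathcal G^0)$ and the critical values lie in $\mathcal G^0$, every critical point of $R=f^{\circ N}$ corresponds to a vertex of $\mathcal G^1$, so some of the $V_j$ are strictly pre-periodic components containing critical points. For such a $V_j$ the relevant iterate $R^{k}\colon V_j \to (\text{periodic component})$ is a branched cover of degree $\geq 2$, not a conformal map, and your pullback formula is undefined. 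The paper handles precisely these components in Lemma~\ref{lem:preperiodic}: one extends $\Psi|_{\partial D_v}$ to a quasiregular map $\widetilde\Psi\colon D_v\to D_u$ via Lemma~\ref{lem:qrextension}, extends the Blaschke-model map $H_{v\to u}$ likewise, and then \emph{lifts} the already-constructed $H_u$ through the two degree-$d_v$ covers to obtain $H_v$. Without this quasiregular lifting step you cannot reach the components $W_v$ with $v\in\mathcal G^1$ that carry critical points, and hence cannot invoke Lemma~\ref{lemma:bilip_blowup} for the remaining components.

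A secondary omission is the verification that $h$ is globally David on $\widehat\C\setminus\mathcal J(f)$, not just David on each Fatou component separately. This requires summing the exponential area bounds over all components with uniform constants, which in turn uses Corollary~\ref{corollary:quasidisks} (uniform quasidisks, hence $(\diam W)^2\lesssim\sigma(W)$), the uniform $K$-quasiconformality of $\Psi^{\circ n(W)}$ on the relevant tile from Proposition~\ref{prop:qce}, and the bi-Lipschitz scaling in Lemma~\ref{lemma:bilip_blowup}~\ref{lemma:bilip_blowup:2}. Your sketch cites Proposition~\ref{prop:david_qc_invariance} componentwise but does not address this summation; it is routine once the per-component estimates are uniform, but it does need to be said.
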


We establish some preliminaries before giving the proof. We apply Proposition \ref{prop:subdivision_g}, so we obtain a finite subdivision rule $\mathcal R$ corresponding to $f$. By part \ref{sg:3}, there exists $N$ such that every periodic Fatou component is fixed under $R=f^{\circ N}$. By Proposition \ref{prop:subdivision_g} \ref{sg:2}, each vertex corresponding to a fixed Fatou component of $R$ is contained in $\mathcal{G}^0$. By Proposition \ref{prop:subdivision_g} and Theorem \ref{thm:qch}, there exists a circle packing $\mathcal{P}$ with tangency graph $\mathcal{G}$. By Proposition \ref{prop:subdivision_g} \ref{sg:3}, $R$ induces a subdivision homomorphism between $\mathcal{G}^1$ and $\mathcal{G}^0$. By invoking Proposition \ref{prop:induceMarkovMap}, we see that this subdivision homomorphism induces a Markov map
$$
\Psi\colon \Omega^1 \to \Omega^0
$$
for the circle packing $\mathcal P$ that extends to a continuous map from $\overline{\Omega^1}$ to $\overline{\Omega^0}$. The idea for the proof of Theorem \ref{thm:JuliaHomeoCP} is to use David or quasiconformal surgery to construct a David or quasiconformal homeomorphism on the union of Fatou components associated to the vertices in $\mathcal{G}^0$. Then we use the dynamics to pull back this homeomorphism.

Let $n\geq 0$. Let $W_v$ be the Fatou component of $R$ associated to the vertex $v \in \mathcal{G}^n$.
Similarly, let $D_v$ be the disk of $\mathcal{P}$ associated to the vertex $v \in\mathcal{G}^n$.
We define
$$
W^n \coloneqq\bigcup_{{v\in \mathcal{G}^n}} W_v\quad \text{and}\quad  D^n \coloneqq\bigcup_{{v\in \mathcal{G}^n}} D_v.
$$

\begin{lemma}\label{lem:fixconjugatedynamics}
Let $W_v$ be a fixed Fatou component of $R$. Let $u_i$, $i\in \{0,\dots,r\}$, be the collection of vertices of $\mathcal G^1$ that are adjacent to $v$, numbered in counter-clockwise order. For $i\in \{0,\dots,r\}$, let $a_i=\partial W_v\cap \partial W_{u_i}$ and $b_i=\partial D_v\cap \partial D_{u_i}$. The following statements are true.
\begin{enumerate}[label=\normalfont(\arabic*)]
    \item\label{f:1} The set $\{a_0,\dots,a_r\}$ defines a Markov partition for $R|_{\partial W_v}$.
    \item\label{f:2} The set $\{b_0,\dots,b_r\}$ defines a Markov partition for $\Psi|_{\partial D_v}$.
    \item\label{f:3} There exists a homeomorphism $h_v\colon \partial W_v \to \partial D_v$ that maps $a_i$ to $b_i$ for each $i\in \{0,\dots,r\}$ and conjugates $R|_{\partial W_v}$ and $\Psi|_{\partial D_v}$.
    \item\label{f:4} If $W_v$ is a quasidisk, then $h_v$ extends to a David map from $W_v$ onto $D_v$. 
    \item\label{f:5} If $\mathcal J(f)$ is a fat gasket, then $h_v$ extends to a quasiconformal map from $W_v$ onto $D_v$.
\end{enumerate}
\end{lemma}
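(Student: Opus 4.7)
The plan is to establish (1) and (2) directly from the construction, and then to apply the extension theorem \ref{theorem:extension_generalization} to produce the conjugating homeomorphism together with its extension in one stroke, obtaining (3), (4), and (5) simultaneously.

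For (1), since $W_v$ is a Jordan region (as $\mathcal J(f)$ is a gasket) fixed by $R$, the map $R|_{\partial W_v}$ is an orientation-preserving covering map. The subdivision homomorphism $\psi\colon \mathcal G^1\to \mathcal G^0$ that induces $\Psi$ satisfies $\psi(v)=v$ (since $v\in \mathcal G^0\subset \mathcal G^1$ corresponds to a fixed Fatou component) and sends each neighbor $u_i$ of $v$ in $\mathcal G^1$ to a neighbor of $v$ in $\mathcal G^0$. Because $R$ maps $W_{u_i}$ onto $W_{\psi(u_i)}$, the point $a_i=\partial W_v\cap \partial W_{u_i}$ is sent to $a_{i'}$ where $u_{i'}=\psi(u_i)$. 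Hence $\{a_0,\dots,a_r\}$ is $R$-invariant, and together with the analyticity of $R$ and the subdivision structure this verifies Definition \ref{definition:markov_partition}. Part (2) is immediate from Theorem \ref{thm:cmc}(2) applied to $C_v=\partial D_v$.

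For (3)--(5), I would uniformize by Riemann maps $\phi_W\colon \D\to W_v$ and $\phi_D\colon \D\to D_v$, which extend to homeomorphisms of the closed disks because $W_v$ and $D_v$ are Jordan regions. Pulling back $R|_{\partial W_v}$ and $\Psi|_{\partial D_v}$ to $\mathbb S^1$ via the boundary values of these Riemann maps yields a pair of expansive covering maps with cyclically corresponding Markov partitions $\{\phi_W^{-1}(a_i)\}$ and $\{\phi_D^{-1}(b_i)\}$. Next I verify the hypotheses of Theorem \ref{theorem:extension_generalization}. Conditions \ref{condition:uv} and \ref{condition:qs} on the Julia side hold because $R$ extends holomorphically to a neighborhood of $\partial W_v$ (as $R$ has no critical points on $\mathcal J(f)$), so the local branches are $1$-quasiconformal; the corresponding conditions for the circle-packing side, along with the strong form \ref{condition:qs_strong}, come from Theorem \ref{thm:cmc}(2). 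To verify \ref{condition:hp} on the Julia side, each $a_i$ is eventually periodic under $R|_{\partial W_v}$ by finiteness, and by Lemma \ref{lem:pt}(ii) every periodic $a_i$ is either repelling or parabolic with multiplicity $2$ or $3$. Linearization combined with Koebe distortion shows that repelling periodic points are symmetrically hyperbolic; Fatou-coordinate analysis near a parabolic point of multiplicity $3$, mirroring the computation in the proof of Proposition \ref{prop:para}, shows that such points are symmetrically parabolic with multiplicity $2$; and a periodic point on $\partial W_v$ that is parabolic of multiplicity $2$ would produce a cusp of $W_v$ or of an adjacent Fatou component at that point. Pre-periodic points inherit the symmetric behavior of their eventual image via conformal pullback. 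On the circle-packing side, Theorem \ref{thm:cmc}(3) together with Proposition \ref{prop:para} gives that every $b_i$ is symmetrically parabolic with multiplicity $2$.

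For (4), the hypothesis that $W_v$ is a quasidisk rules out multiplicity-$2$ parabolic periodic points on $\partial W_v$, since such a point would force $W_v$ itself to have a cusp there. Hence each pairing $(a_i,b_i)$ is of type \ref{HH} with both sides parabolic, or of type \ref{HP}. Since the circle-packing side satisfies \ref{condition:qs_strong}, Theorem \ref{theorem:extension_generalization} provides a David extension to $\bar \D$; conjugating by $\phi_W$ and $\phi_D^{-1}$, which extend to quasiconformal maps of $\widehat \C$ because $W_v$ and $D_v$ are quasidisks, and applying Proposition \ref{prop:david_qc_invariance} yields the required David map $h_v\colon \overline{W_v}\to \overline{D_v}$. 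For (5), the fat-gasket hypothesis combined with Lemma \ref{lem:tpp} guarantees that every periodic contact point on $\partial W_v$ is parabolic of multiplicity $3$, so every pairing is of type \ref{HH} with both sides parabolic; alternative \ref{HP} never occurs, and Theorem \ref{theorem:extension_generalization} directly gives a quasiconformal extension, which is then transported through $\phi_W,\phi_D$ as above. The main technical obstacle I foresee is the careful verification of the precise decay rates in Definitions \ref{definition:hyperbolic} and \ref{definition:parabolic} at pre-periodic contact points, which requires controlling chains of conformal pullbacks using Koebe distortion on neighborhoods that stay away from the post-critical set.
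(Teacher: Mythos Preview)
Your outline is the paper's: uniformize to a Blaschke product $B_v$, check the Markov hypotheses on both sides, and invoke Theorem~\ref{theorem:extension_generalization}. But your verification of \ref{condition:hp}--\ref{condition:qs} on the Julia side has a gap. These conditions concern $B_v$ on $\mathbb S^1$, not $R$ on $\partial W_v$; the Riemann map $\phi_W$ is conformal only on the open disk and merely a boundary homeomorphism on $\mathbb S^1$, so neither ``$R$ extends holomorphically near $\partial W_v$'' nor ``linearization plus Koebe at a repelling $a_i$'' transfers directly to the required decay estimates for $B_v$ at $\widetilde a_i=\phi_W^{-1}(a_i)$. The correct argument is that $B_v$, being a finite Blaschke product, is itself analytic on a neighborhood of $\mathbb S^1$; then \ref{condition:uv}--\ref{condition:qs} are immediate and Remark~\ref{remark:hyp_par_analytic} gives \ref{condition:hp}, with the type of each periodic $\widetilde a_i$ determined by whether it is the Denjoy--Wolff point of $B_v$. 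The paper packages all this by citing \cite{LN24}*{Theorem~1.4}, which however requires first checking that this parabolic Denjoy--Wolff point, when present, actually lies in $\{\widetilde a_0,\dots,\widetilde a_r\}$---a step you omit and which the paper carries out using the contact-graph structure of $\mathcal G^0\subset \mathcal G^1$.

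A minor secondary difference: the paper establishes \ref{f:3} separately and cheaply from expansivity alone (the conjugacy between two expansive circle maps with matched Markov data is automatic), whereas you obtain it only as a byproduct of the extension theorem.
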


\begin{proof}
    Since $R$ has no critical points on the Julia set, it is a covering map from $\partial W_v$ onto itself.  By Proposition \ref{prop:subdivision_g} \ref{sg:3}, $R$ induces a subdivision homomorphism between $\mathcal G^1$ and $\mathcal G^0$. This implies that the set $\{a_0,\dots,a_r\}$ gives a Markov partition for $R|_{\partial W_v}$ (see also the proof of Theorem \ref{thm:cmc} \ref{cmc:2}). Consider the preimages under $R^{\circ n}$ of the arcs $A_k=\arc{[a_k,a_{k+1}]}$. Each preimage is contained in the boundary of $W(\inter F^n)$ for some face $F^n$ of $\mathcal G^n$ (recall the notation from Section \ref{section:gasket_subdivision}). Lemma \ref{lemma:gasket_sub} \ref{gs:0} implies that the diameters of those sets converge to $0$. In combination with \ref{expansive:diameters}, we see that $R|_{\partial W_v}$ is expansive.  By Theorem \ref{thm:cmc}, $\Psi|_{\partial D_v}$ is an expansive covering map and the set $\{b_0,\dots,b_r\}$ defines a Markov partition for $\Psi|_{\partial D_v}$.
    
    Consider the map $h_v\colon \{a_0,\dots,a_r\}\to \{b_0,\dots,b_r\}$ defined by $h_v(a_i)=b_i$, $i\in \{0,\dots,r\}$. By Proposition \ref{prop:induceMarkovMap} \ref{ma:4} and Proposition \ref{prop:subdivision_g} \ref{sg:3}, we see that $h_v$ conjugates $R$ to $\Psi$ on $\{a_0,\dots,a_r\}$. The expansivity of the maps implies that $h_v$ extends to a topological conjugacy between $R|_{\partial W_v}$ and $\Psi|_{\partial D_v}$ (see \cite{LN24}*{Lemma 3.4} or \cite{CovenReddy:expansive}*{Property $(2')$, p.~99}). We have therefore proved  \ref{f:1}, \ref{f:2}, and \ref{f:3}.

    Suppose that $W_v$ is a quasidisk, as in \ref{f:4}. Let $\phi_v\colon W_v \to \D$ be a conformal map that extends homeomorphically to the closures so that $B_v\coloneqq \phi_v \circ R \circ \phi_v^{-1}\colon  \overline \D \to \overline \D$ is a Blaschke product of degree $d=d_v$. Let $\{\widetilde{a_0},\dots, \widetilde{a_r}\} = \phi_v(\{a_0, \dots, a_r\})$ be the corresponding Markov partition of $B_v$ in $\mathbb S^1$. Since $W_v$ is a quasidisk, $\partial W_v$ contains no parabolic point of multiplicity $2$ (otherwise, $\partial W_v$ would contain a cusp).  We claim that $\{\widetilde{a_0},\dots, \widetilde{a_r}\}$ contains the parabolic fixed point of $B_v$ on $\mathbb S^1$, if there exists one. Suppose  that $\widetilde{a}$ is the parabolic fixed point of $B_v$ in $\partial \mathbb S^1$, so it attracts orbits of $\D$. Then $\widetilde{a}$ corresponds to a parabolic point $\phi_v^{-1}(\widetilde{a})$ in $\partial W_v$ that has multiplicity $3$ by Lemma \ref{lem:pt}. So it is a contact point between $W_v$ and another fixed Fatou component $W_u$ (recall that every periodic Fatou component of $R$ is fixed). Note that $[v,u]$ must be an edge of $\mathcal{G}^0 \subset \mathcal{G}^1$, so $\widetilde{a} \in \{\widetilde{a_0},\dots, \widetilde{a_r}\}$, as claimed.

    By \cite{LN24}*{Theorem 1.4} and the above claim, the Markov partition $\{\widetilde{a_0},\dots, \widetilde{a_r}\}$ satisfies conditions \ref{condition:hp}, \ref{condition:uv}, \ref{condition:qs}, as defined in Section \ref{section:extension}.  By Theorem \ref{thm:cmc},  $\{b_0,\dots,b_r\}$ satisfies conditions \ref{condition:hp}, \ref{condition:uv}, \ref{condition:qs} and \ref{condition:qs_strong} with every point in $\{b_0,\dots, b_r\}$ being symmetrically parabolic. Consider the map $H_v=h_v\circ \phi_v^{-1}\colon \D\to D_v$, which is a topological conjugacy between $B_v$ and $\Psi$. By Theorem \ref{theorem:extension_generalization}, $H_v$ extends to a David homeomorphism map from $\D$ onto the disk $D_v$. Since $W_v$ is a quasidisk, by Proposition \ref{prop:david_qc_invariance} (iv), $h_v$ extends to a David map from $W_v$ onto $D_v$, as required in \ref{f:4}.

    If $\J (f)$ and thus $\mathcal J(R)$ is a fat gasket, as in \ref{f:5}, then $W_v$ is a quasidisk by Lemma \ref{lem:tpp}. We can obtain a quasiconformal extension of $H_v$ as follows. By Remark \ref{remark:hyp_par_analytic}, each point in $\{\widetilde{a_0},\dots, \widetilde{a_r}\}$ is either symmetrically hyperbolic or symmetrically parabolic for $B_v$. Since each point in $\{\widetilde{a_0},\dots, \widetilde{a_r}\}$ corresponds to a contact point of two Fatou components, by Lemma \ref{lem:tpp} there is a unique periodic point $\widetilde a\in \{\widetilde{a_0},\dots, \widetilde{a_r}\}$. By the same lemma, this point corresponds to a parabolic contact point $a\in \partial W_v$ with multiplicity $3$, which therefore attracts orbits in $W_v$ (given that $\mathcal J(R)$ is a gasket so $a$ can be on the boundary of at most two Fatou components). We conclude that the point $\widetilde a\in \mathbb S^1$ is symmetrically parabolic. Each other point in $\{\widetilde{a_0},\dots, \widetilde{a_r}\}$ is mapped to $\widetilde a$ with an iterate of $B_v$, which is locally bi-Lipschitz. Hence, each point in $\{\widetilde{a_0},\dots, \widetilde{a_r}\}$ is symmetrically parabolic. Also, as discussed above, each point in $\{b_0,\dots,b_r\}$ is symmetrically parabolic by Theorem \ref{thm:cmc}. By Theorem \ref{theorem:extension_generalization}, the topological conjugacy $H_v\colon  \partial \D \to \partial D_v$ extends to a quasiconformal homeomorphism between the disks $\D$ and $D_v$. Therefore, $h_v$ extends to a quasiconformal homeomorphism from $W_v$ onto $D_v$.
\end{proof}

\begin{lemma}\label{lem:preperiodic}
    Let $W_v$ be a Fatou component of $R$, where $v$ is a vertex of $\mathcal G^1$.  Let $u_i$, $i\in \{0,\dots,r\}$, be the collection of vertices of $\mathcal G^1$ that are adjacent to $v$, numbered in counter-clockwise order. For $i\in \{0,\dots,r\}$, let $a_i=\partial W_v\cap \partial W_{u_i}$ and $b_i=\partial D_v\cap \partial D_{u_i}$. There exists a homeomorphism $h_v\colon \partial W_v\to \partial D_v$ satisfying the following conditions. 
    \begin{enumerate}[label=\normalfont(\arabic*)]
    \item\label{fp:1} If $u$ is a vertex of $\mathcal G_1$ such that $R(W_v)=W_u$, then $\Psi \circ h_v=h_u\circ R$ on $\partial W_v$.
    \item\label{fp:2} $h_v(a_i)=b_i$ for each $i\in \{0,\dots,r\}$. 
    \item\label{fp:3} If $W_v$ is a quasidisk, then $h_v$ extends to a David map from $W_v$ onto $D_v$. 
    \item\label{fp:4} If $\mathcal J(f)$ is a fat gasket, then $h_v$ extends to a quasiconformal map from $W_v$ onto $D_v$.
\end{enumerate}
\end{lemma}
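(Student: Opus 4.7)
The plan is to split the argument according to whether $v\in\mathcal G^0$ or $v\in\mathcal G^1\setminus\mathcal G^0$. If $v\in\mathcal G^0$, then $W_v$ is a fixed Fatou component of $R$, so $u=R(v)=v$, and both the boundary map $h_v$ and its David (respectively quasiconformal) extension are furnished directly by Lemma \ref{lem:fixconjugatedynamics}; with this choice, conclusions \ref{fp:1}--\ref{fp:4} all follow verbatim from the corresponding conclusions of Lemma \ref{lem:fixconjugatedynamics}.

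Assume now $v\in\mathcal G^1\setminus\mathcal G^0$. By Proposition \ref{prop:subdivision_g}\ref{sg:3} the subdivision homomorphism $\psi$ induced by $R$ sends $v$ to some $u=\psi(v)\in\mathcal G^0$, so $R(W_v)=W_u$ is a fixed Fatou component, and the map $h_u$ together with its David (resp.\ quasiconformal) extension $\widetilde h_u\colon W_u\to D_u$ is already available from the first case. Since $R$ has no Julia critical points, $R|_{\partial W_v}\colon \partial W_v\to\partial W_u$ is an orientation-preserving covering map of some degree $d$; by Proposition \ref{prop:induceMarkovMap}\ref{ma:3}, $\Psi|_{\partial D_v}\colon\partial D_v\to\partial D_u$ is an orientation-preserving covering map, and its degree coincides with $d$ because both are controlled by the combinatorial degree of $\psi$ at $v$. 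Covering space theory then produces a unique orientation-preserving homeomorphism $h_v\colon\partial W_v\to\partial D_v$ with $\Psi\circ h_v=h_u\circ R$ and $h_v(a_0)=b_0$; since the cyclically ordered adjacent vertices $u_0,\dots,u_r$ of $v$ are sent by $\psi$ in a manner compatible with the cyclic orders of the corresponding contact points on $\partial W_u$ and $\partial D_u$ (Proposition \ref{prop:induceMarkovMap}\ref{ma:4}), the equalities $h_v(a_i)=b_i$ for all $i$ follow automatically. This establishes \ref{fp:1} and \ref{fp:2}.

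For the extension claims \ref{fp:3}--\ref{fp:4}, the plan is a covering-space lift through a well-chosen quasiregular model. First, $\Psi|_{\partial D_v}$ is locally quasisymmetric by Theorem \ref{thm:cmc}\ref{cmc:2} (via condition \ref{condition:qs}), so Lemma \ref{lem:qrextension}\ref{qr:2} yields a quasiregular extension of $\Psi|_{\partial D_v}$ to $D_v$. Performing quasiconformal surgery to redistribute the critical set so that it matches the critical data of the proper holomorphic branched covering $R\colon W_v\to W_u$, we upgrade this to a quasiregular map $\widehat\Psi_v\colon D_v\to D_u$ of degree $d$ with $\widehat\Psi_v|_{\partial D_v}=\Psi|_{\partial D_v}$ and with critical structure identical to that of $R|_{W_v}$. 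Since $\widetilde h_u$ is David (resp.\ quasiconformal) and $R$ is holomorphic, $\widetilde h_u\circ R\colon W_v\to D_u$ is David (resp.\ quasiregular) by Proposition \ref{prop:david_qc_invariance}. Define $\widetilde h_v\colon W_v\to D_v$ as the unique branch of $\widehat\Psi_v^{-1}\circ\widetilde h_u\circ R$ that extends $h_v$; the matched critical structures make this lift a homeomorphism, and the chain rule for Beltrami coefficients together with the regularity of $\widetilde h_u$, $R$ and $\widehat\Psi_v$ delivers the required David or quasiconformal regularity of $\widetilde h_v$.

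The principal technical obstacle is constructing $\widehat\Psi_v$ so that it simultaneously extends $\Psi|_{\partial D_v}$, has the prescribed critical configuration, and lies in the quasiconformal class in the fat gasket setting rather than only the David class. The crucial input making this work is the symmetrically parabolic multiplicity-$2$ structure at the Markov points of $\Psi|_{\partial D_v}$ provided by Theorem \ref{thm:cmc}\ref{cmc:3}, which exactly matches the parabolic structure of the boundary dynamics of $R$ at contact points of $\partial W_u$ when $\mathcal J(f)$ is a fat gasket, as given by Lemma \ref{lem:tpp}; this compatibility allows Theorem \ref{theorem:extension_generalization} (applied in the surgery step to interpolate between the Blaschke model and $\Psi|_{\partial D_v}$) to operate in the quasiconformal mode, thereby transferring the quasiconformality of $\widetilde h_u$ to $\widetilde h_v$.
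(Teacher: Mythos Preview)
Your overall strategy---handle fixed components via Lemma~\ref{lem:fixconjugatedynamics}, then lift to non-fixed components through quasiregular extensions of the boundary covering maps---is the same as the paper's. However, several steps are miscited or incomplete.

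First, Theorem~\ref{thm:cmc} is stated only under the hypothesis $\Psi(C_v)=C_v$, i.e., for \emph{fixed} circles; it does not apply when $v\in\mathcal G^1\setminus\mathcal G^0$. The local quasisymmetry of $\Psi|_{\partial D_v}$ in this case must be argued directly: on each arc cut out by the level-$1$ tiles, $\Psi$ is the restriction of a global quasiconformal map (Proposition~\ref{prop:induceMarkovMap}\ref{ma:1} together with Proposition~\ref{prop:ext}), hence quasisymmetric; at the endpoints of adjacent arcs the asymptotic conformality of Theorem~\ref{thm:asympConf} gives a non-zero one-sided derivative, and Lemma~\ref{lemma:qs_glue} then glues the pieces. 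This is what the paper does, and your citation of Theorem~\ref{thm:cmc}\ref{cmc:2} does not cover it.

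Second, your ``quasiconformal surgery to redistribute the critical set so that it matches the critical data of $R\colon W_v\to W_u$'' is vague and unnecessary. The map $R|_{W_v}$ may have several critical points, and matching that configuration in a quasiregular extension of $\Psi|_{\partial D_v}$ is not obviously achievable by a one-line appeal to surgery. The paper sidesteps this entirely by first uniformizing: take Riemann maps $\phi_v\colon W_v\to\D$ and $\phi_u\colon W_u\to\D$, extend both $\phi_u\circ R\circ\phi_v^{-1}|_{\partial\D}$ and $\Psi|_{\partial D_v}$ via Lemma~\ref{lem:qrextension} to quasiregular maps with a \emph{single} critical point at the center, and then the lift $H_v$ is a homeomorphism for free. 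The David regularity of $h_v=H_v\circ\phi_v$ then follows from Proposition~\ref{prop:david_qc_invariance} applied on the $d_v$ sectors of $\D$, and (iv) of that proposition transfers the David property back through the quasidisk $W_v$.

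Third, your final paragraph invoking Theorem~\ref{theorem:extension_generalization} and the parabolic structure from Theorem~\ref{thm:cmc}\ref{cmc:3} for non-fixed $v$ is misplaced: Theorem~\ref{theorem:extension_generalization} concerns expansive \emph{self}-maps of the circle with Markov partitions, and for $v\in\mathcal G^1\setminus\mathcal G^0$ neither $R|_{\partial W_v}$ nor $\Psi|_{\partial D_v}$ is a self-map. In the paper, the quasiconformal upgrade in the fat gasket case \ref{fp:4} comes for free from the lifting construction: since $H_u$ is already quasiconformal by Lemma~\ref{lem:fixconjugatedynamics}\ref{f:5}, and the lift $H_v$ is locally a composition of $H_u$ with quasiconformal maps, $H_v$ (and hence $h_v$) is quasiconformal without any further appeal to Theorem~\ref{theorem:extension_generalization}.
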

\begin{proof}
     If $W_v$ is a fixed Fatou component of $R$, then $h_v$ is provided by Lemma \ref{lem:fixconjugatedynamics} and satisfies \ref{fp:1}, \ref{fp:3}, and \ref{fp:4}. Moreover, if $u$ is a vertex of  $\mathcal G^1$ different from $v$ such that $\partial W_v\cap \partial W_u=\{a\}$, then by Lemma \ref{lem:fixconjugatedynamics},  $h_v$ maps  the point $a$ to the point $b=\partial D_{v}\cap \partial D_u$. Hence \ref{fp:2} is also true in that case.
    
    Next, we define $h_v$ for components $W_v$ that are not fixed, where $v$ is a vertex of $\mathcal G^1$.  Note that $W_v$ is eventually mapped to a fixed component. Suppose that $W_u=R(W_v)$ is a fixed component.  Let $h_u\colon W_u \to D_u$ be the David or quasiconformal homeomorphism provided by Lemma \ref{lem:fixconjugatedynamics}, which extends to a homeomorphism of the closures, and define $H_u = h_u \circ \phi_u^{-1}\colon \bar \D \to \bar{D_u}$. Let $\phi_u, \phi_v$ be conformal maps from $W_u, W_v$ to $\D$, which extend to homeomorphisms from $\overline{W_u}, \overline{W_v}$ to $\overline \D$, respectively. By Schwarz reflection we see that the map $H_{{v\to u}}= \phi_u \circ R \circ \phi_v^{-1}\colon \partial \D \to \partial \D$ is an orientation-preserving analytic covering map of degree $d_v \geq 1$. Therefore, by Lemma \ref{lem:qrextension} (if $d_v\geq 2$) or by the Beurling--Ahlfors extension theorem \cite{BeurlingAhlfors:extension} (if $d_v=1$), we can extend $H_{v\to u}$ to a quasiregular map $\widetilde{H_{v\to u}}\colon \D \to \D$ with $\widetilde{H_{v\to u}}(0) = 0$ so that $0$ is the unique critical point if $d_v\geq 2$.
    
    We will also extend the map $\Psi\colon \partial D_v \to \partial D_u$, which is a covering map of degree $d_v$ (see Proposition \ref{prop:induceMarkovMap} \ref{ma:3}). By the properties of a Markov map (see Proposition \ref{prop:induceMarkovMap} \ref{ma:1}), $\Psi$ is quasisymmetric on each arc of the partition of $\partial D_v$ induced by the tiles $\Omega_F$, where $F$ is a face of $\mathcal G^1$. Let $I^\pm$ be two adjacent such arcs. By Theorem \ref{thm:asympConf}, $\Psi|_{\overline {I^+}}$ and $\Psi_{\overline {I^-}}$ are $C^{1+\alpha}$-conformal at the common endpoint $b = \overline {I^+} \cap \overline {I^-}$. By Lemma \ref{lemma:qs_glue}, we conclude that $\Psi|_{\overline {I^+\cup I^-}}$ is quasisymmetric.  Thus, $\Psi|_{\partial D_v}$ is locally quasisymmetric. By Lemma \ref{lem:qrextension}, we can extend $\Psi|_{\partial D_v}$ to a quasiregular map $\widetilde{\Psi}\colon D_v\to D_u$ having a unique branch point at the center $x_v$ of $D_v$ if $d_v \geq 2$. If $d_v=1$, we obtain a quasisymmetric extension as in the previous paragraph by the Beurling--Ahlfors theorem. By post-composing with a M\"obius transformation of $D_u$, we may also assume that $\widetilde \Psi(x_v)=H_u(0)$.

    Then $H_u\circ \widetilde{H_{v\to u}}\colon \bar \D \setminus \{0\} \to \bar{D_u} \setminus \{H_u(0)\}$ and $\widetilde{\Psi}\colon \bar{D_v} \setminus\{x_v\} \to \bar{D_u} \setminus \{H_u(0)\}$ are both coverings of degree $d_v$. Therefore, by the lifting property of covering maps, we can lift $H_u$  to a homeomorphism $H_v \colon \bar\D \setminus \{0\} \to \bar{D_v} \setminus\{x_v\}$; see the diagram in Figure \ref{fig:diagram}. We remark that the deck transformations of the covering $\widetilde{\Psi}\colon \bar{D_v} \setminus\{x_v\} \to \bar{D_u} \setminus \{H_u(0)\}$ are isomorphic to the group $\Z/(d_v\Z)$, so there are $d_v$ different choices of a lift of $H_u$, and any two of them differ by post-composition with an element of $\Z/(d_v\Z)$. Let $[u,q]$ be an edge of $\mathcal{G}^0$ adjacent to $u$ and let $[v,q']$ be an edge of $\mathcal{G}^1$ adjacent to $v$ that is mapped to $[u,q]$; equivalently, there exist points $a,a'$ such that $\partial W_u\cap \partial W_q=\{a\}$, $\partial W_v\cap\partial W_{q'}=\{a'\}$, and $R$ maps $W_{q'}$ to $W_q$. Let $\widetilde{a}=\phi_u(a)$, $\widetilde a'=\phi_v(a')$, $\{b\}=\partial D_u\cap \partial D_q$, and $\{b'\}=\partial D_v\cap \partial D_{q'}$. Note that $H_u(\widetilde{a}) = b$ and $H_{v\to u}(\widetilde a')=\widetilde a$. We choose the lift $H_v$ so that $H_v(\widetilde{a}') = b'$.
    Then $H_v$ extends to a homeomorphism between $\bar \D$ and $\bar{D_v}$.  We define $h_v= H_v \circ \phi_v\colon \bar{W_v} \to \bar{D_v}$. On $\partial W_v$ we have $\Psi\circ h_v= h_u\circ R$ by construction, so \ref{fp:1} holds.
    By this equality and the fact that the map $h_v$ preserves the cyclic ordering of the points associated to the edges adjacent to $v$, we see that \ref{fp:2} holds by our choice of the lift.
    
    \begin{figure}
        \centering
            $$
            \begin{tikzcd}
            W_v\setminus\phi_v^{-1}(0)\arrow{r}{\phi_v} & \D \setminus \{0\} \arrow{r}{H_v} \arrow[swap]{d}{\widetilde{H_{v\to u}}} & D_v \setminus\{x_v\} \arrow{d}{\widetilde{\Psi}} \\%
            W_u\setminus\phi_u^{-1}(0) \arrow{r}{\phi_u} & \D \setminus \{0\} \arrow{r}{H_u}& D_u \setminus \{H_u(0)\}
            \end{tikzcd}
            $$
        \caption{The commutative diagram in the proof of Lemma \ref{lem:preperiodic}.}
        \label{fig:diagram}
    \end{figure}

    If each Fatou component is a quasidisk, then $H_u$ is a David homeomorphism (see Lemma \ref{lem:fixconjugatedynamics} \ref{f:4} and its proof). Note that locally away from $0$, $H_v$ can be expressed as the composition of quasiconformal maps and a David map. By Proposition \ref{prop:david_qc_invariance} (i) and (iii), we conclude that $H_v$ is locally away from $0$ a David homeomorphism. Let $\mu$ be the Beltrami coefficient of $H_v$. Note that the preimage of $\D \setminus [0,1)$ under $\widetilde{H_{v\to u}}$ consists of $d_v$ components, denoted by $U_1,\dots, U_{d_v}$. Then $H_v|_{U_i}$ is a composition of quasiconformal and David maps. Thus, by Proposition \ref{prop:david_qc_invariance} (i) and (iii), there exist constants $C,\alpha>0$ so that for all $i=1,\dots, d_v$ and any $0 < \varepsilon < 1$, we have
    $$
    \sigma\left(\{z\in U_i\colon|\mu(z)|\geq 1-\varepsilon\}\right) \leq C e^{-\alpha/\varepsilon}.
    $$
    We conclude that 
    $$
    \sigma\left(\{z\in \D\colon|\mu(z)|\geq 1-\varepsilon\}\right)\leq d_vC e^{-\alpha/\varepsilon}.
    $$
    Thus, $H_v$ is a David homeomorphism on $\D$.
    By Proposition \ref{prop:david_qc_invariance} (iv), we conclude that $h_v$ is a David homeomorphism on $W_v$. Thus, \ref{fp:3} holds.
    
    If $\mathcal J(f)$ is a fat gasket, then $H_u$ is a quasiconformal homeomorphism. Therefore, $H_v$, and hence $h_v$, is quasiconformal.
    Thus, \ref{fp:4} holds.

    The general case that $R^{\circ n}(W_v)$ is a fixed component for some $n\geq 1$ is treated similarly by induction.  
\end{proof}

\begin{proof}[Proof of Theorem \ref{thm:JuliaHomeoCP}]
    By Lemma \ref{lem:preperiodic}, there exists a homeomorphism $h^1\colon \overline{W^1}\to \overline{D^1}$ such that $h^1=h_v$ on $\overline{W_v}$ for each vertex $v\in \mathcal G^1$ and $\Psi\circ h^1=h^1\circ R$ on $\partial W^1$. Note that the components of $\widehat\C \setminus \overline{W^n}$ (or $\widehat\C \setminus \overline{D^n})$ are in correspondence with the faces of $\mathcal G^n$, $n\geq 0$, and each component is a Jordan region. For each component $U_F$ of $\widehat \C\setminus \overline{W^1}$ and for the corresponding component $V_F$ of $\widehat \C\setminus \overline {D^1}$ we extend $h^1|_{\partial U_F}\colon \partial U_F\to \partial V_F$ arbitrarily to a homeomorphism between $\overline {U_F}$ and $\overline{V_F}$. In this way, $h^1$ is extended to a homeomorphism of $\widehat \C$.

    Now, we define a homeomorphism $h^2$ as follows. Let $F$ be a face of $\mathcal{G}^1$ that is mapped to a face $F'$ of $\mathcal{G}^0$ under the map induced by $R$. Let $U_F$ and $U_{F'}$ be the components of $\widehat\C \setminus \overline{W^1}$ and $\widehat\C \setminus \overline{W^0}$ associated to $F$ and $F'$. Similarly, let $V_F$ and $V_{F'}$ be the corresponding components of $\widehat\C \setminus \overline{D^1}$ and $\widehat\C \setminus \overline{D^0}$. Then $R\colon U_F \to U_{F'}$ and $\Psi\colon V_F \to V_{F'}$ are homeomorphisms. Moreover, we have $h_1(U_{F'}) = V_{F'}$ and $\Psi \circ h_1 = h_1 \circ R$ on $\partial U_F$. We define
    $$
    h^2(z)= \begin{cases}
        h^1(z), & \text{ if } z\in \overline{W^1},\\
        (\Psi|_{V_F})^{-1} \circ h^1 \circ R(z), & \text{ if } z\in U_F, \text{ where $F$ is a face of $\mathcal G^1$}.
    \end{cases}
    $$
    By construction, $h^2$ is a homeomorphism on $\widehat\C$ that sends $W^2$ to $D^2$ and satisfies $\Psi\circ h^2=h^1\circ R$ on $\widehat \C\setminus W^1$.
    Inductively, for each $n\geq 1$ we construct a homeomorphism $h^{n+1}$ that sends $W^{n+1}$ to $D^{n+1}$, satisfies $\Psi \circ h^{n+1}= h^{n}\circ R$ on $\widehat \C\setminus W^n$, and  satisfies $h^{n+1} = h^m$ on $W^m$ if $n+1\geq m$. By Lemma \ref{lemma:gasket_sub} \ref{gs:0} (see also Proposition \ref{prop:diam->0}), the diameter of each component of $\widehat\C \setminus \overline{W^n}$ and $\widehat\C \setminus  \overline{D^n}$ is shrinking to $0$.  Thus, we conclude that $h^n$ converges to some homeomorphism $h\colon \widehat\C \to \widehat\C$ such that 
    \begin{align}\label{h_conj}
        \textrm{$\Psi \circ h=h\circ R$ on $\widehat \C \setminus  W^1$.}
    \end{align}
    By construction, we have $h(\mathcal J(f)) = \Lambda(\mathcal P)$ and $h$ conjugates the dynamics of $R|_{\mathcal J(R)}$ and $\Psi|_{\Lambda(\mathcal P)}$. It remains to verify the last two conditions of the theorem.

We assume that each Fatou component is a quasidisk. Let $\mu$ be the Beltrami coefficient of $h$ on $\widehat\C\setminus \mathcal J(f)$. Let $V_1,\dots , V_l$ be the collection of Fatou components in Lemma \ref{lemma:bilip_blowup}. By the last part of the lemma, we may also include in this collection all Fatou components associated to vertices in $\mathcal{G}^1$. Let $W_v\in \{V_1,\dots,V_l\}$ be a Fatou component corresponding to a vertex $v$ of $\mathcal G$. If $v$ is a vertex of $\mathcal G^1$, by Lemma \ref{lem:preperiodic} we see that $\mu$ is a David--Beltrami coefficient on $W$. Suppose $v$ is a vertex of $\mathcal G_{n+1}\setminus \mathcal G_n$ for some $n\geq 1$. The disk $D_v$ is contained in an open tile $\Omega_F$ of level $n$ and the map $\Psi^{\circ n}$ is uniformly quasiconformal on $\Omega_F$ by Proposition \ref{prop:qce}. Hence, by \eqref{h_conj} and Proposition \ref{prop:david_qc_invariance} (i) and (iii), we conclude that $\mu$ is also a David--Beltrami coefficient on $W_v$. In particular, $\mu$ is a David--Beltrami coefficient in $\bigcup_{i=1}^l V_l$ and satisfies
\begin{align}\label{surgery:periodic}
    \sum_{j=1}^l \sigma(\{z\in V_j: |\mu(z)|\geq 1-\varepsilon\}) \leq  M e^{-\alpha/\varepsilon}
\end{align}
for some constants $M,\alpha>0$ and for all $\varepsilon \in (0,1)$. 

We now check the David condition on the union of all Fatou components. Let $W\notin \{V_1,\dots,V_l\}$ be a Fatou component and $D$ be the corresponding disk of $\mathcal P$. Note that $W$ and $D$ correspond to a vertex of $\mathcal G^{n+1}\setminus \mathcal G^n$ for some $n\geq 1$. Thus, $D$ is contained in an open tile $\Omega_F$ of level $n$. By Lemma \ref{lemma:bilip_blowup}, there exists a minimal $n(W)\geq 1$ such that $W$ is mapped onto $V_j$ for some $j\in \{1,\dots,l\}$ after $n(W)$ iterations. Let $D_j$ be the corresponding disk of $\mathcal{P}$. By the minimality of $n(W)$, $V_j$ and $D_j$ correspond to a vertex of $\mathcal G^k$ for some $k\geq 1$, thus $n\geq n(W)$. Then by Proposition \ref{prop:qce}, $\Psi^{n(W)}\colon D \to  D_j$ is $K$-quasiconformal for some uniform $K\geq 1$.

By Remark \ref{remark:composition}, the conjugation relation \eqref{h_conj}, and Lemma \ref{lemma:bilip_blowup}, we have
\begin{align}\label{surgery:area}
  \notag\sigma\left(\{z\in W\colon|\mu(z)|\geq 1-\varepsilon\}\right)&\leq \sigma\left( \{z\in W\colon|\mu_{\Psi^{\circ n(W)}\circ h}(z)|\geq 1-K\varepsilon\}\right)\\
  \notag&=\sigma\left( \{z\in W\colon|\mu_{h\circ R^{\circ n(W)}}(z)|\geq 1-K\varepsilon\}\right)\\
  \notag&=\sigma\left( \{z\in W\colon|\mu_{h}(R^{\circ n(W)}(z))|\geq 1-K\varepsilon\}\right)\\
  &\leq L^2(\diam{W})^2\sigma\left(\{z\in V_j\colon|\mu(z)|\geq 1-K\varepsilon\}\right).  
\end{align}
Since $W$ is a quasidisk with a uniform constant by Corollary \ref{corollary:quasidisks}, we conclude (see \cite{Bonk:uniformization}*{Proposition 4.3}) that there exists a uniform constant $C>0$ such that
\begin{align}\label{surgery:diameter}
  (\diam{W})^2\leq C\sigma\left(W\right).
\end{align}
Therefore, by \eqref{surgery:diameter}, \eqref{surgery:area}, and \eqref{surgery:periodic}, we have
\begin{equation}\notag
\begin{split}
\sigma&\left(\{z\in\widehat{\mathbb C}\colon|\mu(z)|\geq 1-\varepsilon\}\right)\\&=\sum_{W \notin \{V_1,..., V_l\}}\sigma\left(\{z\in{W}\colon|\mu(z)|\geq 1-\varepsilon\}\right)+\sum_{j=1}^l \sigma\left(\{z\in{V_j}\colon|\mu(z)|\geq 1-\varepsilon\}\right)\\
&\leq \left(L^2 C\left(\sum_{W \notin \{V_1,..., V_l\}}\sigma\left(W\right)\right)+1\right)\cdot\left(\sum_{j=1}^l\sigma\left(\{z\in V_j\colon|\mu(z)|\geq 1-K\varepsilon\}\right)\right)\\
&\leq \left(L^2C\sigma\left(\widehat{\mathbb C}\right)+1\right)Me^{-\alpha/(K\varepsilon)}
\end{split}
\end{equation}
for $\varepsilon \in (0,1/K)$. This completes the proof that $\mu$ is a David--Beltrami coefficient. 

In the case that $\mathcal J(R)$ is a fat gasket, then $|\mu|$ is bounded away from $1$ by Lemma \ref{lem:preperiodic} and Proposition \ref{prop:qce}. Thus, we conclude that $h$ is quasiconformal on $\widehat\C\setminus \mathcal J(f)$.
\end{proof}

\begin{proof}[Proof of Theorem \ref{thm:QU}] 
    By Lemma \ref{lem:tpp}, we have $(2) \Leftrightarrow (3)$. We show that $(1) \Rightarrow (3)$. Suppose that $(3)$ does not hold. Then by Lemma \ref{lem:pt}, there exists a periodic contact point $x$ that is either repelling or parabolic with multiplicity $2$. In the first case, by Lemma \ref{lemma:tangent_repelling} we obtain that  $\J(R)$ cannot be quasiconformally uniformized by a round gasket. In the second case, one of the Fatou components has a cusp at $x$, so it is not a quasidisk and $\J(R)$ cannot be quasiconformally uniformized by a round gasket.   
    
    To show that $(2) \Rightarrow (1)$, suppose that $\J(R)$ is a fat gasket. By Lemma \ref{lem:tpp}, there are no periodic parabolic points with multiplicity $2$. Corollary \ref{corollary:quasidisks} implies that the Fatou components of $R$ are uniform quasidisks. By Theorem \ref{thm:JuliaHomeoCP}, the contact graph of the gasket $\mathcal J(R)$ is a spherical subdivision graph for a simple, irreducible, acylindrical finite subdivision rule. Moreover, there exists a homeomorphism $\phi\colon \widehat \C\to \widehat \C$ that maps $\mathcal J(R)$ onto the limit set $\Lambda(\mathcal P)$ of a circle packing $\mathcal P$ such that $\phi$ is quasiconformal in each Fatou component. By Theorem \ref{theorem:qc_david_gasket} we conclude that $\phi$ is quasiconformal on $\widehat \C$.
    
    Regarding uniqueness, the contact graph of the circle packing $\mathcal P$ is a spherical subdivision graph $\mathcal G(X)$. By the last part of Theorem \ref{thm:qch}, there exists a unique circle packing up to M\"obius transformations with tangency graph $\mathcal G(X)$. This implies the uniqueness of the map $\phi|_{\J(R)}$.
\end{proof}

\begin{proof}[Proof of Theorem \ref{thm:DU}]
    Suppose that $R$ has a parabolic point $a$ of multiplicity $2$. Then the immediate basin $\Omega$ of $a$ is not a quasidisk as it has a cusp at $a$. Therefore, $(2) \Rightarrow (3)$.
    More generally, by \cite{LN24}*{Theorem 1.7 (1)}, there is no David map $\phi\colon \widehat\C \to \widehat\C$ so that $\phi(\Omega) = \D$. In particular, $\J(R)$ cannot be mapped to a round gasket with a David map. Therefore, $(1) \Rightarrow (3)$.
    By Lemma \ref{lem:pt} \ref{lem:pt:iii}, $(3) \Rightarrow (2)$. 
    
    Finally, we show that $(3) \Rightarrow (1)$ by arguing as in the previous proof.  Corollary \ref{corollary:quasidisks} implies that the Fatou components of $R$ are uniform quasidisks. By Theorem \ref{thm:JuliaHomeoCP}, the contact graph of the gasket $\mathcal J(R)$ is a spherical subdivision graph for a simple, irreducible, acylindrical finite subdivision rule.  Moreover, there exists a homeomorphism $\phi\colon \widehat \C\to \widehat \C$ that maps $\mathcal J(R)$ onto the limit set $\Lambda(\mathcal P)$ of a circle packing $\mathcal P$ such that $\phi$ is a David map in $\widehat \C\setminus \mathcal J(R)$. By Theorem \ref{theorem:qc_david_gasket} we conclude that $\phi$ is a David map on $\widehat \C$.  The uniqueness of $\phi|_{\J(R)}$ follows as in the proof of Theorem \ref{thm:QU}. Note here that the composition of a David map with a quasiconformal map is a David map; see Proposition \ref{prop:david_qc_invariance}.
\end{proof}

\section{Local quasiconformal conjugacy}\label{sec:lqc}
In this section we prove Theorem \ref{theorem:nlo}. Consider the finite subdivision rule $\mathcal{R}$ that is illustrated in Figure \ref{fig:subda}. The subdivision rule consists of a single quadrilateral $P$ that is subdivided into two smaller quadrilaterals $P_1, P_2$. We identify each new quadrilateral $P_i$ with $P$ so that the new vertex (the red vertex in Figure \ref{fig:subda}) is identified with the {lower right} vertex of $P$. The first four graphs $\mathcal{G}^i, i=0, 1, 2, 3$, are illustrated in Figure \ref{fig:subda}.

It is easy to verify that $\mathcal{R}$ is simple, irreducible, and acylindrical.
Let $\mathcal{G} = \lim_{\rightarrow} \mathcal{G}^n$ be the subdivision graph for $\mathcal R$.
We can glue two copies of $\mathcal{G}$ along the boundary of $P$ and obtain a spherical subdivision graph.
Depending on the gluing map on the boundary $\partial P$, we will obtain two different spherical subdivision graphs (see Figure \ref{fig:SSG}).
\begin{figure}[htp]
    \centering
    \begin{tikzpicture}
        \draw (-1,-1) -- (1,-1)--(1,1)--(-1,1)--cycle;
        \draw (-1.5,1.5)--(1.5,-1.5);
        \fill (-1,-1) circle (1.5pt);
        \fill (1,-1) circle (1.5pt);
        \fill (1,1) circle (1.5pt);
        \fill (-1,1) circle (1.5pt);
        \fill (0,0) circle (1.5pt);

        \begin{scope}[shift={(5,0)}]
            \draw (-1,-1) -- (1,-1)--(1,1)--(-1,1)--cycle;
            \draw (-1,1)--(1,-1);
            \draw (1,1)--(1.5,1.5);
            \draw (-1,-1)--(-1.5,-1.5);
            \fill (-1,-1) circle (1.5pt);
            \fill (1,-1) circle (1.5pt);
            \fill (1,1) circle (1.5pt);
            \fill (-1,1) circle (1.5pt);
            \fill (0,0) circle (1.5pt);
        \end{scope}
    \end{tikzpicture}
    \caption{The first level of two different spherical subdivision graphs constructed from $\mathcal{G}$, where one vertex is at $\infty$.}
    \label{fig:SSG}
\end{figure}
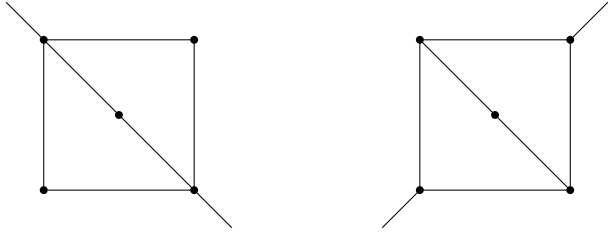

We denote the spherical subdivision graph from the left gluing in Figure \ref{fig:SSG} by $\mathcal{G}_1$ and the right one by $\mathcal{G}_2$.
By Theorem \ref{thm:qch} (I), there exist circle packings $\mathcal{P}_1$ and $\mathcal{P}_2$ corresponding to $\mathcal{G}_1$ and $\mathcal{G}_2$. As discussed, $\mathcal{G}_1$ and $\mathcal{G}_2$ are constructed by gluing two copies of $\mathcal{G}$. In fact, we have
\begin{align*}
    \mathcal{G}_1 &= \mathcal{G}_{1,+} \cup \mathcal{G}_{1,-} = \widetilde{\mathcal{G}}_{1,+} \cup \widetilde{\mathcal{G}}_{1,-}\quad \textrm{and} \quad  \mathcal{G}_2 = \mathcal{G}_{2,+} \cup \mathcal{G}_{2,-},
\end{align*}
where $\mathcal{G}_{i, \pm}, i =1, 2$, correspond to the graphs in the interior and exterior, respectively, of the squares in Figure \ref{fig:SSG}, and $\widetilde{\mathcal{G}}_{1,\pm}$ correspond to the graphs in the left and right side, respectively, of the diagonal line in the left of Figure \ref{fig:SSG}. Note that each graph $\mathcal{G}_{i, \pm}$ and $\widetilde{\mathcal{G}}_{1,\pm}$ is isomorphic to $\mathcal{G}$. By Theorem \ref{thm:qch} (ii)(a), the sub-circle packings of $\mathcal{P}_1$ and $\mathcal{P}_2$ corresponding to $\mathcal{G}_{i,\pm}$ and $\widetilde{\mathcal{G}}_{1,\pm}$ are quasiconformally homeomorphic.

To prove Theorem \ref{theorem:nlo}, we will construct a rational map $R$ with fat gasket Julia set whose contact graph is $\mathcal{G}_1$ and a Kleinian group $G$ whose limit set is a circle packing with tangency graph $\mathcal{G}_2$ (see Figure \ref{fig:FGJ}).

\subsection{Construction of the rational map $R$}
Consider an \textit{orientation-preserving} topological branched covering of degree $2$ defined as in Figure \ref{fig:TMM}, where each quadrilateral on the left is mapped homeomorphically to a quadrilateral on the right with the images of the vertices indicated. Shaded quadrilaterals in the left are mapped to the bounded quadrilateral in the right and unshaded quadrilaterals are mapped to the unbounded quadrilateral.
\begin{figure}[htp]
    \centering
    \begin{tikzpicture}
        \begin{scope}
        \clip (-1.8,-2) rectangle (2,1.5);
        \fill[color=black!20!white] (-1,1)--(-1,-1)--(1,-1)--cycle;
        \fill[color=black!20!white] (-2,2)--(-1,1)--(1,1)--(1,-1)--(2,-2)--(2,2)--cycle;
        \draw (-1,-1) node[left] {$D$}-- (1,-1) node[right] {$C$}--(1,1) node[right] {$B$}--(-1,1)node[left]{$A$}--cycle;
        \draw (-1.5,1.5)--(2,-2);
        \fill (1.8,-1.8) circle (1.5pt) node[left] {$F=\infty$};
        \fill (-1,-1) circle (1.5pt);
        \fill (1,-1) circle (1.5pt);
        \fill (1,1) circle (1.5pt);
        \fill (-1,1) circle (1.5pt);
        \fill (0,0) circle (1.5pt) node[anchor=south west] {$E$};
        \end{scope}

        \draw[->] (2.3,0)--(3,0) node[pos=0.5, above] {$S$};
        
        \begin{scope}[shift={(5,0)}]
            \draw[fill=black!20!white] (-1,-1) node[below] {$\scriptstyle D=S(C)$}-- (1,-1) node[below] {$\scriptstyle C=S(E)=S(F)$}--(1,1) node[above] {$\scriptstyle B=S(A)$}--(-1,1)node[above]{$\scriptstyle A=S(B)=S(D)$}--cycle;
            \fill (-1,-1) circle (1.5pt);
            \fill (1,-1) circle (1.5pt);
            \fill (1,1) circle (1.5pt);
            \fill (-1,1) circle (1.5pt);
            \end{scope}
    \end{tikzpicture}
    \caption{A degree $2$ topological branched covering.}
    \label{fig:TMM}
\end{figure}
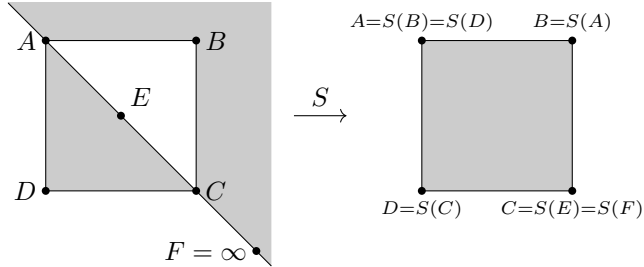

The two critical points are $A$ and $C$, where two shaded and two unshaded regions meet. The point $A$ has period $2$ while $C$ is strictly pre-periodic and is eventually mapped to the cycle of $A$. 
Note that the map $S$ is consistent with the subdivision rule, i.e., for each $n\geq 0$ we have we 
$$
S^{-n}(\partial P) = \mathcal{G}^n_1,
$$
where $P$ is the quadrilateral $ABCD$. Also, every edge is eventually mapped to the edge $AB$. By \cite{LuoZhang:Nonequiv}*{Theorem 4.1}, we obtain a quadratic rational map $R$ with fat gasket Julia set whose contact graph is $\mathcal{G}_1$. Moreover, since every critical point is mapped to a critical cycle, no critical point is on the Julia set. We remark that the proof of \cite{LuoZhang:Nonequiv}*{Theorem 4.1} consists of two steps.
\begin{itemize}
    \item First, one shows that there is no Thurston obstruction for the topological branched covering, so it is equivalent to some post-critically finite rational map whose Julia set is a gasket with contact graph $\mathcal{G}_1$.
    \item Second, since every edge is eventually mapped to the edge $AB$, every contact point is eventually mapped to a unique fixed contact point. One can perform a pinching deformation to obtain a parabolic rational map $R$ with a parabolic fixed point of multiplicity $3$ at the corresponding contact point, and thus the gasket is fat.
\end{itemize}

\subsection{Construction of the Kleinian group $G$}
We outline the construction of the Kleinian group.
The idea is to construct two conformal symmetries of $\mathcal{P}_2$ that generate a large group.
By the rigidity of circle packings (Theorem \ref{thm:qch} (II)), the conformal symmetries of $\mathcal{P}_2$ are in one-to-one correspondence with the symmetries of $\mathcal{G}_2$, i.e., isomorphisms of $\mathcal{G}_2$ as a plane graph. Therefore, it suffices to construct symmetries of the graph $\mathcal{G}_2$.

\begin{figure}[htp]
    \centering
    \begin{tikzpicture}
            \draw (-1,-1) node[left] {$D$}-- (1,-1) node[right] {$C$}--(1,1) node[right] {$B$}--(-1,1)node[left]{$A$}--cycle;
            \fill (1.8,1.8) circle (1.5pt) node[left] {$F=\infty$};
            \draw (-1,1)--(1,-1);
            \draw (1,1)--(2,2);
            \draw (-1,-1)--(-1.5,-1.5);
            \fill (-1,-1) circle (1.5pt);
            \fill (1,-1) circle (1.5pt);
            \fill (1,1) circle (1.5pt);
            \fill (-1,1) circle (1.5pt);
            \fill (0,0) circle (1.5pt) node[anchor=south west] {$E$};

            \draw[->] (2,0)--(3,0) node[pos=0.5, above]{$g_1$};
            \begin{scope}[shift={(5,0)}]
                \draw (-1,-1) node[left] {$\scriptstyle D=g_1(C)$}-- (1,-1) node[right] {$\scriptstyle C=g_1(E)$}--(1,1) node[right] {$\scriptstyle B=g_1(A)$}--(-1,1)node[left]{$\scriptstyle A=g_1(D)$}--cycle;
                \fill (1.8,1.8) circle (1.5pt) node[left] {$F=\infty$};
                \draw (-1,1)--(1,-1);
                \draw (1,1)--(2,2);
                \draw (-1,-1)--(-1.5,-1.5);
                \fill (-1,-1) circle (1.5pt);
                \fill (1,-1) circle (1.5pt);
                \fill (1,1) circle (1.5pt);
                \fill (-1,1) circle (1.5pt);
                \fill (0,0) circle (1.5pt);
            \end{scope}
        \end{tikzpicture}
    \caption{The construction of M\"obius symmetries.}
    \label{fig:MS}
\end{figure}
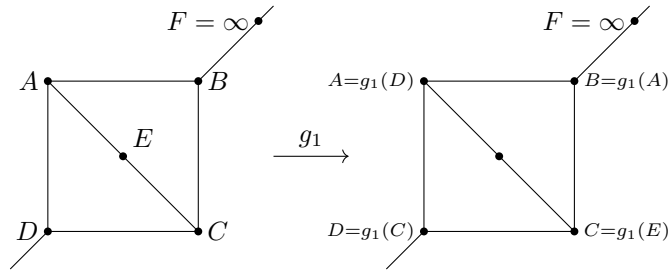
Consider a map $g_1$ as in Figure \ref{fig:MS}, which sends the quadrilateral $DAEC$ to $ABCD$.
Inductively, one can check that this map $g_1$ extends uniquely to a symmetry of $\mathcal{G}_2$. For example, the extension will send the quadrilateral $ABCE$ to $BFDC$.
Similarly, one can construct a symmetry $g_2$ that sends $EABC$ to $ABCD$.

Let $M_1$ and $M_2$ be the corresponding M\"obius symmetries of $\mathcal{P}_2$.
Consider the group $G$ generated by $M_1, M_2$.
Then $G$ acts almost transitively on the tangency points; namely, any tangency point can be mapped by an element of $G$ to one of the four tangency points associated to the edges of $ABCD$.
Since $\Lambda(\mathcal{P}_2)$ is invariant under $G$, the limit set of $G$ is contained in $\Lambda(\mathcal{P}_2)$. On the other hand, the limit set of $G$ contains all the tangency points as they are parabolic fixed points of some elements in $G$, and the set of tangency points is dense in $\Lambda(\mathcal{P}_2)$ (see Lemma \ref{cor:density} \ref{d:4}).
Thus the limit set of $G$ is $\Lambda(\mathcal{P}_2)$. 

We remark that this Kleinian circle packing also appears in the work on generalized Apollonian packings, where the construction comes from the $\mathbb{Q}(\sqrt{-7})$-Bianchi group (see \cite{Stange18}*{Figure 19} for a different presentation of this circle packing).

We now give a more precise statement of Theorem \ref{theorem:nlo} (see Figure \ref{fig:FGJ}).

\begin{theorem}\label{theorem:snlo}
    Let $R$ and $G$ be the rational map and Kleinian group, respectively,  constructed above.
    There exist Jordan domains $U^\pm, \widetilde{U}^\pm, V^\pm\subset \widehat\C$, and quasi\-con\-formal maps $\phi^\pm\colon \widehat \C\to \widehat\C$ and $\widetilde{\phi}^\pm\colon \widehat \C\to \widehat\C$ so that
    \begin{itemize}
        \item $U^+,U^-$ are disjoint and $\widetilde{U}^+,\widetilde{U}^-$ are disjoint, 
        \item $\overline{U^+ \cup U^-} = \overline{\widetilde{U}^+ \cup \widetilde{U}^-} = \widehat\C$, 
        \item $\mathcal{J}(R) \subset U^+ \cup U^- \cup \widetilde{U}^+ \cup \widetilde{U}^-$,
        \item $V^+,V^-$ are disjoint with $\overline{V^+ \cup V^-}=\widehat\C$, 
        \item $\phi^{\pm}(U^{\pm})=\widetilde{\phi}^{\pm}(\widetilde{U}^{\pm})=V^{\pm}$, and
        \item $\phi^\pm (\overline{U^\pm} \cap \mathcal{J}(R)) =\widetilde{\phi}^{\pm}(\overline{\widetilde{U}^{\pm}}\cap \mathcal{J}(R)) = \overline{V^\pm} \cap \Lambda(G)$.
    \end{itemize}
\end{theorem}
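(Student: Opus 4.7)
The plan is to reduce the problem to circle packings via Theorem \ref{thm:QU} and exploit that both $\mathcal{G}_1$ and $\mathcal{G}_2$ are obtained by gluing two copies of the same subdivision graph $\mathcal{G} = \mathcal{G}(P)$. First, since $\mathcal{J}(R)$ is a fat gasket with no Julia critical points, Theorem \ref{thm:QU} yields a quasiconformal homeomorphism $\Phi\colon\widehat{\C}\to\widehat{\C}$ with $\Phi(\mathcal{J}(R))$ a round gasket; by the combinatorial rigidity in Theorem \ref{thm:qch}\ref{RealizationAcylindrical}, this round gasket equals $\Lambda(\mathcal{P}_1)$ up to a M\"obius transformation, which I absorb into $\Phi$.

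Next, I identify three pairs of Markov tiles, in the sense of Section~\ref{subset:mt}: tiles $\Omega_{1,\pm}$ in $\mathcal{P}_1$ associated to the two faces of the spherical subdivision graph bounded by the outer-square chain (corresponding to $\mathcal{G}_1 = \mathcal{G}_{1,+}\cup\mathcal{G}_{1,-}$); tiles $\widetilde{\Omega}_{1,\pm}$ in $\mathcal{P}_1$ associated to the two faces bounded by the diagonal chain in Figure~\ref{fig:SSG} (corresponding to $\mathcal{G}_1 = \widetilde{\mathcal{G}}_{1,+}\cup\widetilde{\mathcal{G}}_{1,-}$); and tiles $\Omega_{2,\pm}$ in $\mathcal{P}_2$ associated to the analogous decomposition of $\mathcal{G}_2$. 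In each pair the two faces share the same four-circle boundary chain, so the associated reflection group from Section~\ref{subset:mt} is the same for both; the pair then consists of the two disjoint Jordan components of the complement of the resulting quasicircle, and closures of each pair cover $\widehat{\C}$. I set $U^\pm = \Phi^{-1}(\Omega_{1,\pm})$, $\widetilde{U}^\pm = \Phi^{-1}(\widetilde{\Omega}_{1,\pm})$, and $V^\pm = \Omega_{2,\pm}$.

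Since each of $\mathcal{G}_{1,\pm}$, $\widetilde{\mathcal{G}}_{1,\pm}$, and $\mathcal{G}_{2,\pm}$ is isomorphic to $\mathcal{G}(P)$ as a plane graph, Theorem \ref{thm:qch}\ref{thm:qch:ii}\ref{thm:qch:a} supplies quasiconformal homeomorphisms of $\widehat{\C}$ sending the corresponding sub-packings of $\mathcal{P}_1$ onto those of $\mathcal{P}_2$, respecting the labelings of circles. After a surgery outside the relevant tile (see the obstacle below), I promote these to QC maps $\psi^\pm,\widetilde{\psi}^\pm\colon\widehat{\C}\to\widehat{\C}$ with $\psi^\pm(\Omega_{1,\pm}) = \Omega_{2,\pm}$ and $\widetilde{\psi}^\pm(\widetilde{\Omega}_{1,\pm}) = \Omega_{2,\pm}$, and set $\phi^\pm = \psi^\pm\circ\Phi$ and $\widetilde{\phi}^\pm = \widetilde{\psi}^\pm\circ\Phi$.

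All the required disjointness, Jordan-domain, image, and closure conditions follow immediately from the construction; the only nontrivial verification is the coverage $\mathcal{J}(R) \subset U^+\cup U^-\cup \widetilde{U}^+\cup \widetilde{U}^-$. This follows from Lemma~\ref{lem:intersectionBoundary}\ref{lem:boundary:7}: the only points of $\Lambda(\mathcal{P}_1)$ lying on $\partial\Omega_{1,\pm}$ are the four tangency points between consecutive circles of the outer-square chain, and similarly the only such points on $\partial\widetilde{\Omega}_{1,\pm}$ are the four tangency points on the diagonal chain; these two finite sets involve disjoint pairs of adjacent circles, so every point of $\Lambda(\mathcal{P}_1)$ lies in the open interior of at least one of the four tiles, and applying $\Phi^{-1}$ gives the coverage of $\mathcal{J}(R)$. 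The main obstacle is in the third step: Theorem \ref{thm:qch}\ref{thm:qch:ii}\ref{thm:qch:a} guarantees a QC map preserving labels of circles but does not directly control the image of the enclosing quasicircle $\partial\Omega_{1,\pm}$. To upgrade to a tile-to-tile map I must produce a quasisymmetric correspondence between $\partial\Omega_{1,\pm}$ and $\partial\Omega_{2,\pm}$ conjugating the two reflection groups via the canonical isomorphism sending reflection in circle $C_v$ of $\mathcal{P}_1$ to reflection in the labeled circle of $\mathcal{P}_2$, and then glue a QC extension on the complementary Jordan region to the given sub-packing map on the tile; this is possible precisely because both tile boundaries are quasicircles.
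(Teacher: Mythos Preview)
Your overall strategy matches the paper's proof exactly: pass from $\mathcal{J}(R)$ to $\Lambda(\mathcal{P}_1)$ via Theorem~\ref{thm:QU} and combinatorial rigidity, define the regions as the two Markov tiles for each of the three chains of four circles, build QC maps between sub-packings of $\mathcal{P}_1$ and $\mathcal{P}_2$ (all of which lie in $\mathcal{M}(\mathcal{G}(P))$), and verify coverage by observing via Lemma~\ref{lem:intersectionBoundary}\ref{lem:boundary:7} that the four tangency points on the $ABCD$ chain are disjoint from the four on the $AECF$ chain. That coverage argument is correct.

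The one genuine gap is in your surgery step. A general QC homeomorphism between sub-packings supplied by Theorem~\ref{thm:qch}\ref{thm:qch:ii}\ref{thm:qch:a} sends labeled circles to labeled circles, but the tile boundary $\partial\Omega_{1,\pm}$ is the limit set of the reflection group $\Gamma_P$, which (apart from the four tangency points) is \emph{not} contained in $\Lambda(\mathcal{P}_{1,\pm})$. Consequently there is no reason your chosen map sends $\partial\Omega_{1,\pm}$ to $\partial\Omega_{2,\pm}$, and the proposed gluing along $\partial\Omega_{1,\pm}$ cannot be carried out: the boundary values of the sub-packing map and of your externally produced QS conjugacy need not agree.

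The paper sidesteps this entirely by citing Theorem~\ref{thm:existenceTeich} instead of Theorem~\ref{thm:qch}\ref{thm:qch:ii}\ref{thm:qch:a}. A Teichm\"uller mapping between $\mathcal{P}_{1,\pm},\mathcal{P}_{2,\pm}\in\mathcal{M}(\mathcal{G}(P))$ is by definition a global QC homeomorphism whose restriction to each $\Lambda_F$ conjugates $\Gamma_F$ to $\Gamma'_F$; applying this at the level-$0$ face $F=P$ gives $\partial\Omega_{1,\pm}\mapsto\partial\Omega_{2,\pm}$ and hence $\Omega_{1,\pm}\mapsto\Omega_{2,\pm}$ automatically, with no surgery needed. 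Replacing your citation accordingly closes the gap and leaves the rest of your argument intact.
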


\begin{proof}
    Recall that for $i=1,2$, we denote by $\mathcal{P}_i$ the circle packing for the spherical subdivision graph $\mathcal{G}_i$. By Theorem \ref{thm:QU}, the fat gasket Julia set $\mathcal{J}(R)$ is quasiconformally homeomorphic to a circle packing with tangency graph $\mathcal G_1$. By Theorem \ref{thm:qch}, $\mathcal J(R)$ is quasiconformally homeomorphic to the circle packing $\mathcal{P}_1$. Therefore, in order to prove the theorem, it suffices to construct appropriate quasiconformal maps between $\mathcal{P}_1$ and $\mathcal{P}_2$.  Recall that
    \begin{align*}
        \mathcal{G}_1 &= \mathcal{G}_{1,+} \cup \mathcal{G}_{1,-} = \widetilde{\mathcal{G}}_{1,+} \cup \widetilde{\mathcal{G}}_{1,-} \quad \textrm{and} \quad  \mathcal{G}_2 = \mathcal{G}_{2,+} \cup \mathcal{G}_{2,-},
    \end{align*}
    where each graph $\mathcal{G}_{i, \pm}$ and $\widetilde{\mathcal{G}}_{1,+}$ is isomorphic to the initial subdivision graph $\mathcal{G}$. We now construct open sets $U^\pm$ and maps $\phi^\pm$ associated to $\mathcal{G}_{1,\pm}$ as follows. Let $\mathcal{P}_{i, \pm}$ and $\widetilde{\mathcal{P}}_{1, \pm}$ be the corresponding sub-circle packings of $\mathcal{P}_i$, $i=1,2$. Note these circle packings all have the same combinatorics.  Let $\widehat{U}^\pm$ (resp.\ $V^\pm$) be the Jordan domains bounded by the limit set  of the group generated by reflections along the circles in $\mathcal{P}_1$ (resp.\ $\mathcal{P}_2$) associated to $A, B, C, D$ as illustrated in Figure \ref{fig:FGJ}. By Theorem \ref{thm:existenceTeich}, there exist quasiconformal maps $\widehat{\phi}^\pm\colon (\widehat\C, \widehat{U}^\pm) \to (\widehat\C, V^\pm)$ so that $\widehat{\phi}^\pm(\overline{\widehat{U}^\pm} \cap \Lambda(\mathcal{P}_1)) = \overline{V^\pm} \cap \Lambda(\mathcal{P}_2)$.  By pulling back $\widehat{U}^\pm$ and $\widehat{\phi}^\pm$ under the quasiconformal map between $\mathcal{J}(R)$ and $\Lambda(\mathcal{P}_1)$, we obtain regions $U^\pm$ and maps $\phi^\pm$ that satisfy the desired conclusion. Similarly, we can construct regions $\widetilde{U}^\pm$ and maps $\widetilde{\phi}^\pm$ for $\widetilde{\mathcal{G}}_{1,+}$. By construction we have $\mathcal{J}(R) \subset U^+ \cup U^- \cup \widetilde{U}^+ \cup \widetilde{U}^-$ and the theorem follows. 
\end{proof}

\begin{figure}[htp]
    \centering    
    \begin{tikzpicture}
    \node[anchor=south west,inner sep=0] at (0,0) {\includegraphics[width=0.6\textwidth]{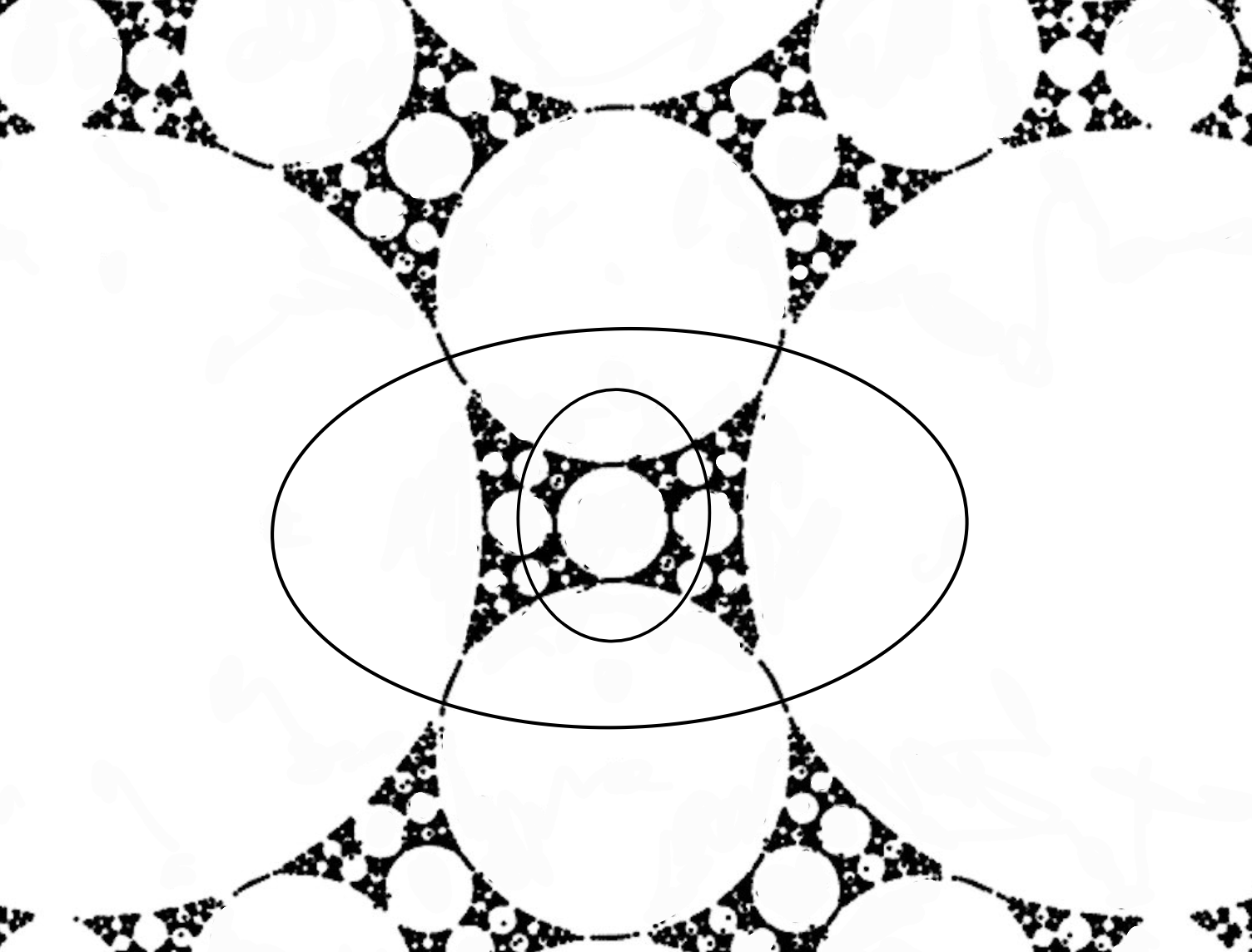}};
    \node at (2.2,2.8) {\begin{footnotesize}$V$\end{footnotesize}};
    \node at (3.8,3.2) {\begin{footnotesize}$W$\end{footnotesize}};
    \end{tikzpicture}
    \caption{A magnification of the limit set in Figure \ref{fig:FGJ}. The open sets $V, W$ for the quasiregular symmetry are illustrated.}
    \label{fig:QRS}
\end{figure}

\begin{proof}[Proof of Corollary \ref{qrsymmetry}]
    Let $\widetilde{U} = \widetilde{U}^+$, where $\widetilde U^+$ is as in Theorem \ref{theorem:snlo} (see Figure \ref{fig:FGJ}). By construction, the Fatou component of $R$ associated to the vertex $B$ is fixed under $R^{\circ 2}$. Let $\widetilde{W}$ be the component of $R^{-2}(\widetilde{U})$ that contains that Fatou component. Then by the construction of $R$ (see Figure \ref{fig:TMM}), one can check that $\widetilde{W}\cap \mathcal{J}(R) \subset \widetilde{U}$ and $R^{\circ 2}\colon  \widetilde{W} \to \widetilde{U}$ is a degree $2$ branched covering map. The corollary follows by setting $V = \widetilde{\phi}^+(\widetilde{U})$, $W = \widetilde{\phi}^+(\widetilde{W})$, and $f = \widetilde{\phi}^+ \circ R^2 \circ (\widetilde{\phi}^+)^{-1}$. See Figure \ref{fig:QRS}.
\end{proof}

\bibliography{biblio}
\end{document}